%% file: main.tex
\documentclass[11pt]{article}

\usepackage[utf8]{inputenc}
\usepackage{amsmath, amsthm, amssymb, comment}
\usepackage{graphicx}
\usepackage{tikz}
\usetikzlibrary{arrows.meta,decorations.markings}
\usepackage{overpic}
\usepackage{enumitem}
\usepackage[colorlinks=true]{hyperref}
\usepackage{pinlabel}
\input{bicontact_tikz_figures.tex}

\usepackage[margin=3cm]{geometry}

\usepackage[nameinlink]{cleveref}
\usepackage{microtype}

\theoremstyle{plain}
\newtheorem{introtheorem}{Theorem}

\newtheorem{introcorollary}{Corollary}[introtheorem]

\newtheorem{theorem}{Theorem}[section]
\newtheorem{corollary}[theorem]{Corollary}
\newtheorem{proposition}[theorem]{Proposition}
\newtheorem{definition}[theorem]{Definition}
\newtheorem{lemma}[theorem]{Lemma}

\newtheorem{remark}[theorem]{Remark}

\newtheorem*{theorem*}{Theorem}
\newtheorem*{proposition*}{Proposition}
\newtheorem*{definition*}{Definition}
\newtheorem*{lemma*}{Lemma}
\newtheorem*{claim*}{Claim}
\newtheorem*{corollary*}{Corollary}
\newtheorem*{convention*}{Convention}

\theoremstyle{definition}
\newtheorem{convention}[theorem]{Convention}
\newtheorem{example}[theorem]{Example}
\newtheorem{question}[theorem]{Question}

\theoremstyle{remark}

\newtheorem*{rem*}{Remark}

\title{Bi-contact plugs and dynamical hyperbolicity}

\author{Surena Hozoori}
\newcommand{\Addresses}{{
  \bigskip
  \footnotesize

Surena Hozoori, \textsc{Department of Mathematics, Brandeis University.}\par\nopagebreak
  \textit{E-mail address}: \texttt{hozoori@brandeis.edu}
  
  }}

\date{}

\begin{document}

\maketitle


\begin{abstract}
We develop a theory of {\em bi-contact
plugs} on oriented $3$-manifolds to construct Anosov flows or, more generally, structurally stable nonsingular flows. For a hyperbolic
plug with filling Morse-Smale boundary laminations, we
prove that every strongly transverse gluing
map can be isotoped through strongly transverse maps to identify a
strongly adapted unoriented contact form. The resulting flow is
hyperbolic and is Anosov when the quotient is closed. This answers a question of B\'eguin-Bonatti-Yu \cite{bby}. 
  The result also extends to gluing nonsingular partially hyperbolic flows. Anosov completion by reflection embeds
each such hyperbolic plug in an Anosov flow on its oriented double.

Further applications include a classification of structurally
generic projectively Anosov flows without
saddle periodic orbits and Morse-Smale examples with prescribed saddle
count. Generalizing the construction of Bonatti-Bowden-Potrie \cite{bbp}, finite capping
embeds any bi-contact plug in a closed projectively Anosov flow with
the same topological entropy, preserving partial hyperbolicity for
attracting plugs. We also construct flows with controlled invariant
torus dynamics on torus bundles and on doubles of plugs with
coorientable boundary bi-foliations. Finally, every cooriented
partially hyperbolic bi-contact plug admits a Liouville structure on its thickening, with Liouville
trajectories projecting to positively reparametrized flow lines.
Repelling plugs yield four-dimensional Liouville domains, often with chaotic skeletons.
\end{abstract}

{
  \hypersetup{linkcolor=black}
  \setcounter{tocdepth}{2}
  \tableofcontents
}

\section{Introduction}
\label{s1}

Recent years have seen a surge in our understanding of the intimate interactions between {\em contact geometry} and {\em hyperbolic dynamics} in dimension three.
It was observed in the work of Mitsumatsu \cite{mit} and Eliashberg-Thurston~\cite{et} in the mid 1990s that bi-contact structures characterize {\em dominated splittings} for nonsingular flows in dimension three. This condition, also called {\em projective Anosovity} when realized on a closed 3-manifold, is the cornerstone for the stability features appearing in hyperbolic dynamics, as emphasized by Mañé \cite{mane} and Palis \cite{palis} (see \cite{sam}). By now, we know that supporting
contact and {\em Liouville} geometries in fact provide a characterization of {\em Anosov
flows}~\cite{hoz3,hoz7} in purely geometric terms and connect their dynamics with {\em Floer
theory}~\cite{clmm} and contact geometric surgery~\cite{salmoiraghi}. Hence, it is natural to study further aspects of these flows in this geometric setting.

On the other hand, a complementary approach constructs flows by gluing compact pieces with prescribed dynamics. Inspired by the classic work of Franks-Williams \cite{fw}, a celebrated and fruitful recent work of B\'eguin-Bonatti-Yu~\cite{bby} developed such theory for {\em hyperbolic plugs}, using their entrance and exit laminations to control the dynamics of the result after gluing. Their self-gluing theorem guarantees that after an appropriate isotopy of the gluing map, the produced flow is Anosov under three conditions: (1) the exit and entrance laminations are {\em filling}, (2) deformation of the flow near its maximal invariant set is allowed, and (3) the interior dynamics contains no repelling or attracting pieces. The first condition provides topological convenience by enforcing the homeomorphism type of the extensions of such laminations to foliation. That is while condition (2) and (3) are somewhat technical in order to produce {\em affine Markov partitions} helpful for gluing. They then naturally ask whether one can drop conditions (2) and (3)~\cite[Question~1.4]{bby}.

In this paper, we develop a contact geometric theory of plugs that improves the construction of \cite{bby} and has applications beyond Anosov flows. In particular,
we give an affirmative answer to the aforementioned question in the filling MS setting, by
constructing {\em strongly adapted contact forms} compatible with
gluing. The required extension and boundary deformation methods
also apply beyond Anosov dynamics, yielding projectively
Anosov and partially hyperbolic flows, as well as Liouville
domains. A related contact geometric construction of Pinsky-Salmoiraghi \cite{pinsky} in the setting of {\em quasi-transverse} boundary and controlled geometry has recently produced new examples of Anosov flows.

\begin{definition}\label{bcp}
A \emph{bi-contact plug} is a compact oriented $3$-manifold $M$
with transverse contact structures $(\xi_-,\xi_+)$,
negative and positive respectively, whose supported line field
$\xi_-\cap\xi_+$ is oriented and transverse to $\partial M$. A nowhere-vanishing
vector field $X$ positively spanning this line field is a \emph{supported vector field}. See Section~\ref{s2} for related definitions and discussions.
\end{definition}

Contact forms are understood in the unoriented sense of
Section~\ref{s2.1}, with local representatives differing by locally
constant signs. The Liouville constructions use cooriented contact
structures and ordinary global forms.

The boundary carries the {\em characteristic bi-foliation}
$S_\pm=\xi_\pm\cap T\partial M$. On the maximal invariant set
$\Lambda=\operatorname{Inv}(M,X)$, the normal flow has a dominated
splitting. We call the plug \emph{partially hyperbolic} when its
dominating normal line is uniformly expanding. A positive contact
form $\alpha_+$ is \emph{strongly adapted} to $X$ if
$$
\alpha_+(X)=0,
\qquad
\mathcal L_X\alpha_+\text{ is a negative contact form}.
$$
This condition, introduced in \cite{hoz7}, is the contact geometric equivalent of saddle hyperbolicity on compact invariant sets
and provides our geometric criterion for hyperbolicity after gluing.

\paragraph{Contact geometric self-gluing.}
For a hyperbolic plug, the maximal invariant set has one stable
and one unstable direction. Its boundary laminations are
$$
\mathcal L^s=W^s(\Lambda)\cap\partial_{\mathrm{in}}M,
\qquad
\mathcal L^u=W^u(\Lambda)\cap\partial_{\mathrm{out}}M.
$$
We use the filling Morse-Smale (MS) condition recalled in
Section~\ref{s2.3}. Gluing maps are orientation-reversing diffeomorphisms, with
respect to the induced boundary orientations, from unions of
entrance components to unions of exit components. Such a map $\psi$ is \emph{strongly transverse}
when $\psi_*\mathcal L^s$ and $\mathcal L^u$ are transverse and
their complementary regions are quadrilaterals with alternating
sides. This allows simultaneous extensions of the two laminations into a bi-foliation on the boundary.

\begin{introtheorem}[Contact geometric self-gluing]
\label{intro:MS-filling-self-gluing}
\label{intro:filling-gluing}
Let $(M,X)$ be a smooth hyperbolic plug
with filling MS boundary laminations, and let $\psi_0$
be a strongly transverse gluing map. There are a smooth strongly
adapted contact form $\alpha_+$ for $X$ and a smooth isotopy
$(\psi_r)_{r\in[0,1]}$ through strongly transverse maps such that
$\psi_1$, extended to boundary neighborhoods using $X$, preserves
$\alpha_+$.

The quotient $M_{\psi_1}=M/(p\sim\psi_1(p))$ with the vector field
induced by $X$ is a hyperbolic plug. If it is closed,
its flow is Anosov.
\end{introtheorem}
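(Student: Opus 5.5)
The plan is to build $\alpha_+$ in three stages: near $\Lambda$, along the invariant manifolds, and near the boundary. The boundary form is then made to match across the gluing, and hyperbolicity of the quotient is read off from the strongly adapted criterion of \cite{hoz7}.

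\emph{Near $\Lambda$.} Saddle hyperbolicity of $\Lambda$ gives a strongly adapted form on a small neighborhood $U$ of $\Lambda$. Concretely, take $\alpha_+=\alpha_u+\alpha_s$ and $\mathcal L_X\alpha_+$ built from adapted metrics on $E^s\oplus E^u$, with the usual transverse contact condition. This works because strong adaptedness is equivalent to hyperbolicity on compact invariant sets \cite{hoz7}.

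\emph{Along $W^s$ and $W^u$.} Extend the form to all of $M$ by flowing. Every orbit not in $\Lambda$ either enters through $\partial_{\mathrm{in}}M$ or exits through $\partial_{\mathrm{out}}M$, and the maximal invariant set is saddle type with no attractors or repellers. So on a neighborhood of $W^s(\Lambda)\cup W^u(\Lambda)$ we can push $\alpha_+$ forward and backward. Away from the laminations, orbits cross $M$ in bounded time, and there we interpolate. The key point is that the strongly adapted condition is $X$-invariant under pushforward. Cutoffs along the flow direction can be arranged as $\alpha_+\mapsto e^{f}\alpha_+$ with $X f$ controlled, which keeps $\mathcal L_X\alpha_+$ negative contact provided the multiplying functions vary slowly transverse to $X$.

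\emph{Boundary normal form.} On a collar $\partial M\times[0,\epsilon)$ with $X=\partial_t$, a strongly adapted form is determined by a $t$-dependent family $\beta_t$ on the boundary, with $\ker\beta_t$ rotating positively. On $\partial_{\mathrm{in}}M$ the direction $\ker\beta$ is near $\mathcal L^s$, and on $\partial_{\mathrm{out}}M$ it is near $\mathcal L^u$. Using the filling MS hypothesis, we extend $\mathcal L^s$ and $\mathcal L^u$ to foliations whose homeomorphism types are prescribed. Strong transversality of $\psi_0$ then makes $\psi_{0*}\mathcal L^s$ and $\mathcal L^u$ a bi-foliation with quadrilateral complementary regions.

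\emph{Matching across $\psi$.} We deform $\psi_0$ through strongly transverse maps, and simultaneously deform the boundary germs of $\alpha_+$, so that $\psi_1^*\beta^{\mathrm{out}}=\beta^{\mathrm{in}}$ as germs along $X$. Since $\psi_1$ is extended by $X$, this is equality of the one-parameter families $\beta_t$ on a common collar. I would first align the kernel line fields, using the bi-foliation structure and the fact that in each quadrilateral region the two foliations can be straightened simultaneously. Then I would match the scaling and the rotation speed. Here the flexibility is that, on the exit and entrance collars, any positively rotating family interpolating between two admissible germs is strongly adapted, by a convexity argument in the space of positive contact forms containing $X$.

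\emph{Main obstacle.} I expect the hardest step to be reconciling the kernels near the laminations. There $\ker\beta$ is forced by the dynamics, namely by the contraction of $\mathcal L^s$ and the expansion of $\mathcal L^u$. So $\psi$ must be isotoped, rather than the form changed, until the lamination tangencies and the rotation rates agree, while strong transversality is preserved along the isotopy. I would try a local model near each lamination leaf where $\psi$ is affine in adapted coordinates, and then extend by the flexibility available in the complementary quadrilaterals.

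\emph{Conclusion.} Finally, $\alpha_+$ descends to $M_{\psi_1}$ as a strongly adapted form for the induced vector field, so the quotient is a bi-contact plug with strongly adapted form. By \cite{hoz7} its maximal invariant set is hyperbolic, so the quotient is a hyperbolic plug. If it is closed, it is Anosov.
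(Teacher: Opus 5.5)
The endpoints of your plan are right: a strongly adapted form near $\Lambda$, a boundary jet identified by $\psi_1$, and Theorem~\ref{adapted} on the quotient. But the step that carries the real difficulty is missing, and the flexibility you invoke in its place does not exist. Rotating $\ker\beta_t$ in the contact sense only gives the positive contact condition for $\alpha_+$. Strong adaptation is the additional convexity condition \eqref{streq}, namely $u_{\theta\theta}+u<0$ with $u=1/r$. For example, rotating at constant norm in a $t$-invariant coframe gives $u_{\theta\theta}+u=u>0$, and then $\mathcal L_X\alpha_+$ is again a positive contact form.

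On a transit tube, a strongly adapted extension with prescribed endpoint jets exists if and only if the chord inequalities \eqref{sadin}--\eqref{sadout} hold (Theorem~\ref{pros} and the moment identities in its proof). Write $\Gamma^*\mu^{\mathrm{out}}=a\mu^{\mathrm{in}}+b\lambda^{\mathrm{in}}$, $\Gamma^*\lambda^{\mathrm{out}}=c\mu^{\mathrm{in}}+d\lambda^{\mathrm{in}}$ and $\Delta=ad-bc$. Beyond the admissible order, the chord inequalities read $d>1$ and $a>\Delta$. Convexity of the solution space gives contractibility, not existence. These are quantitative conditions on $D\Gamma$ relative to the boundary forms. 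On orbits with bounded transit time they need not hold for $\psi_0$ and the jets it identifies.

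Rescaling cannot repair this, because $\psi$ ties the entrance jet to the exit jet. Rescaling $(\lambda,\mu)$ by $(e^{g},e^{k})$ compatibly with $\psi$ turns the conditions into $-\log d<g\circ R-g<\log(a/\Delta)$ along the return map $R=\Gamma\circ\psi^{-1}$. Summing along a periodic orbit of $R$ shows this demands a definite amount of expansion on every newly created periodic orbit. Likewise, cutoffs of the form $e^{f}\alpha_+$ cannot interpolate between forms with different kernels.

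For the same reason, your ``main obstacle'' is misplaced. Near $\mathcal L^s$ and $\mathcal L^u$ nothing beyond the straddling order needs to be matched. Lemma~\ref{lem:long-tube-jet-matching} gives an exponentially wide window for the radial scale along long transits, so strong admissibility is automatic there. The obstruction lives on the compact parts of the complementary strips. The missing idea is to isotope $\psi$ there by strip stretching.

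Concretely, one builds diffeomorphisms $h_{\mathrm{in}},h_{\mathrm{out}}$ with these properties:
\begin{itemize}
\item they are isotopic to the identity and equal to it near the laminations;
\item they preserve the leaves of $\ker\lambda^{\mathrm{in}}$ and $\ker\lambda^{\mathrm{out}}$;
\item $\Gamma_L=h_{\mathrm{out}}\circ\Gamma\circ h_{\mathrm{in}}$ expands the $\ker\lambda^{\mathrm{in}}$-direction by more than $L$ and contracts the complementary invariant direction (Lemma~\ref{lem:relative-boundary-stretching}).
\end{itemize}
For large $L$ the stretched jet satisfies $d>1$ and $a>\Delta$. Meanwhile $h_{\mathrm{out}}^{-1}\circ\psi\circ h_{\mathrm{in}}^{-1}$ still identifies jets and stays strongly transverse, because the laminations do not move (Lemma~\ref{lem:MS-filling-strong-jets}). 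Controlling the complementary direction along the spiraling ends of the strips requires the $C^{1+\vartheta}$ invariant completions of Lemma~\ref{lem:regularity-invariant-completions}. It also requires the uniform strip holonomy distortion of Lemma~\ref{lem:MS-filling-distortion}. This is where the filling hypothesis enters quantitatively, and affine models of $\psi$ near lamination leaves do not address it.

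Two smaller points. First, your kernel-alignment step needs a mechanism: with $X$ fixed, the outer bi-foliation can only move inside the cone of Lemma~\ref{lem:nonconvex-bicontact-realization}. The paper instead uses a coordinated divided deformation whose collar transit maps are absorbed into $\psi$ (Lemma~\ref{lem:MS-filling-matched-preparation}). Second, attracting or repelling basic pieces are allowed here. Excluding them is precisely the hypothesis of \cite{bby} that this theorem removes.
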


The vector field $X$ is unchanged. We allow repelling or attracting pieces in the interior dynamics, and $M$ may be disconnected. For complete gluing, this gives the conclusion requested
in~\cite[Question~1.4]{bby}.
The form $\alpha_+$ descends to a
strongly adapted contact form on the quotient. The relative version
preserves a prescribed positive characteristic foliation $S_+$, i.e. if $\psi_0$ identifies an {\em admissible} bi-foliation
$(S_-,S_+)$, every $\psi_r$ can be chosen to identify $S_+$, with
$\ker(\alpha_+|_{T\partial M})=S_+$. An analogous gluing result holds for structurally generic
partially hyperbolic plugs under the corresponding filling
MS boundary hypotheses (see Corollary~\ref{cor:partially-hyperbolic-self-gluing}).

\begin{introcorollary}[Anosov completion by reflection]
\label{intro:reflection}
Every smooth hyperbolic plug $(M,X)$ with
filling MS boundary laminations embeds in an Anosov
flow on its oriented double $D(M)=M\cup_{\partial M}(-M)$.
\end{introcorollary}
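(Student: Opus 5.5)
The plan is to realize $D(M)$ as the self-gluing of the disjoint union $M\sqcup(-M)$ and then apply Theorem~\ref{intro:filling-gluing}. The double carries the reflected flow $-X$ on the copy $-M$: reversing the orientation of $M$ together with reversing time exchanges the roles of the two contact structures. Concretely, $(\xi_-,\xi_+)$ on $M$ becomes a pair of contact structures on $-M$ with swapped signs, whose common line field is spanned by $-X$. So $(-M,-X)$ is again a bi-contact plug, and it is hyperbolic, since stable and unstable directions are swapped. Its entrance boundary is $\partial_{\mathrm{out}}M$ and its exit boundary is $\partial_{\mathrm{in}}M$. Its boundary laminations are $\mathcal L^u$ on its entrance and $\mathcal L^s$ on its exit, so they are again filling MS. The disjoint union $N=M\sqcup(-M)$ is allowed by the theorem, which permits disconnected plugs.

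Next I choose the initial gluing map. On $N$, glue the entrance $\partial_{\mathrm{in}}M\subset M$ to the exit $\partial_{\mathrm{in}}M\subset -M$, and the entrance $\partial_{\mathrm{out}}M\subset -M$ to the exit $\partial_{\mathrm{out}}M\subset M$. The naive choice is the identity, which produces $D(M)$ exactly and is orientation-reversing for the induced boundary orientations. However, the identity sends $\mathcal L^s$ to $\mathcal L^s$, which is not transverse. So I take instead $\psi_0=\phi\circ\mathrm{id}$, where $\phi$ is a diffeomorphism of each boundary component isotopic to the identity with $\phi_*\mathcal L^s$ strongly transverse to $\mathcal L^s$, and similarly on the out-components. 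Since $\phi$ is isotopic to the identity, the quotient is still diffeomorphic to $D(M)$, and it contains the copy of $M$ with its original flow.

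The main obstacle is constructing $\phi$. For filling MS laminations on a torus component, I plan to use the standard structure from Section~\ref{s2.3}: finitely many compact leaves with spiraling noncompact leaves, and complementary regions that are strips. A transverse filling MS lamination isotopic to the identity should exist by taking a lamination of the same type whose compact leaves lie in a different homotopy class, for instance obtained by a rotation. The same holds after a Dehn twist, but I need $\phi$ isotopic to the identity. So I instead precompose with an isotopy making the two laminations intersect minimally with quadrilateral complementary regions, and I must check the alternating-sides condition. If the homology classes of the compact leaves coincide, this fails. In that case I will use a twist map and then verify separately that the resulting quotient is still diffeomorphic to $D(M)$: twisting the collar of a boundary torus along $X$ is a diffeomorphism of $M$ fixing the interior dynamics up to conjugacy near the boundary. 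Precisely, $D(M)$ glued by $\tau$ is diffeomorphic to $D(M)$ glued by the identity whenever $\tau$ extends to a diffeomorphism of $M$.

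With $\psi_0$ fixed, Theorem~\ref{intro:filling-gluing} gives an isotopy through strongly transverse maps to $\psi_1$, together with a strongly adapted $\alpha_+$. The quotient $N_{\psi_1}$ is closed because every boundary component is glued, so its flow is Anosov. Since $\psi_1$ is isotopic to $\psi_0$, the quotient is diffeomorphic to $D(M)$. Finally, $X$ is unchanged on $M$, which gives the embedding of $(M,X)$ in an Anosov flow on $D(M)$.
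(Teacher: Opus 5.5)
Your overall architecture matches the paper. You realize $D(M)$ as the complete self-gluing of the disconnected plug $(M,X)\sqcup(-M,-X)$, replace the canonical identifications by isotopic strongly transverse maps, apply Theorem~\ref{thm:MS-filling-self-gluing}, and use isotopy invariance of the quotient.

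The gap is in the step you call the main obstacle, and both branches of your plan for it fail. First, any $\phi$ isotopic to the identity sends each compact leaf of $\mathcal L^s$ to a curve in the same isotopy class. So a lamination ``whose compact leaves lie in a different homotopy class, obtained by a rotation'' is never of the form $\phi_*\mathcal L^s$. The case of parallel compact leaves, which you expect to fail, is the only case that occurs. Second, it does not fail. For instance, $\partial_x+\sin(2\pi y)\partial_y$ and $\partial_x+(\sin(2\pi y)+\epsilon)\partial_y$ on $\mathbb R^2/\mathbb Z^2$ direct transverse MS foliations, since their determinant is $\epsilon$. Their compact leaves are disjoint horizontal circles, each crossed by the spiralling leaves of the other foliation.

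The missing idea is this small self-transverse displacement of an MS completion, which the paper obtains from the divided models of Section~\ref{s4}. Let $F=\langle Y\rangle$ be an MS completion of the common lamination. It is divided by some function $f$. If $\omega(Y,Z)=1$, then $\omega(Y,[fZ,Y])=f\operatorname{div}_\omega Y-Y(f)$, which is the coefficient of $Q$ in \eqref{eq:dividing-divergence} and so has a strict sign. Hence the time-$\delta$ flow $\phi$ of the globally defined field $fZ$ is isotopic to the identity and satisfies $\phi_*F\pitchfork F$ for small $\delta\neq0$. Transversality survives a $C^1$-small smoothing. Since $\phi_*F$ and $F$ contain $\phi_*\mathcal L$ and $\mathcal L$, these laminations are transverse, and the filling condition makes them strongly transverse. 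No Dehn twist is needed.

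Your fallback would not rescue the argument anyway. Your criterion that $M\cup_\tau(-M)\cong D(M)$ when $\tau$ extends over $M$ is correct, but twisting a collar cannot supply the extension. A diffeomorphism of $T^2\times[0,1]$ that is the identity on one end restricts to a map isotopic to the identity on the other end, so it cannot restrict to a Dehn twist. For general $M$, a Dehn twist of a boundary torus does not extend over $M$ at all, and then $M\cup_\tau(-M)$ need not be diffeomorphic to $D(M)$.
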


Apply self-gluing to $(M,X)\sqcup(-M,-X)$, after displacing the
corresponding boundary laminations into strongly transverse
position. The gluing maps are isotopic to the canonical
identifications, so the quotient is $D(M)$. This gives an
embedding with the original vector field on a specified closed
manifold, strengthening the embedding conclusion
of~\cite[Theorem~1.12(1)]{bby}.

\paragraph{Extension and boundary preparation.}
Section~\ref{s3} introduces {\em admissibility} for characteristic
bi-foliations and {\em strong admissibility} for boundary $X$-jets, as the necessary conditions on the boundary of a hyperbolic plug for extension to supporting (strongly adapted) contact geometry. We then prove that the space of such extensions is nonempty and contractible (see Theorems~\ref{hypb} and \ref{hyps}, with Theorems~\ref{prob} and \ref{pros} as precursors on {\em product} plugs). Away from the entrance and exit laminations, the geometry of admissibility can be encoded in a $C^0$-open projective cyclic-order
condition on characteristic bi-foliations. The condition for strong admissibility can be formulated as a $C^1$-open condition on the {\em $X$-jet} of the characteristic foliation, required for solving a curvature extension problem on each flow line. The construction is relative and
parametric.

In Section~\ref{s4}, we use Giroux’s {\em convex surface theory} \cite{gir,massot,hondanotes} to prepare characteristic bi-foliations for smooth gluing through coordinated divided deformations. The supported vector fields remain
transverse to parallel boundary surfaces, so the resulting collar
changes can be absorbed into the gluing map. Further boundary manipulation and smoothing theorems are also proved (see Theorem~\ref{thm:stable-gluing} for the culmination). For hyperbolic self-gluing, we need the more technical preparation of strongly admissible boundary conditions.
We show that regular invariant completions and uniform holonomy distortion
on complementary strips then permit stretching until the boundary
forms and their derivatives satisfy the requirements.
The same tools give a contact geometric proof of
acyclic hyperbolic gluing and a relative self-gluing result for
partially hyperbolic plugs. See Section~\ref{s5}.
\vskip0.3cm

Our theory is developed beyond its applications to Anosov flows and our techniques are useful in constructing larger classes of structurally stable flows in dimension three, namely projectively Anosov or nonsingular partially hyperbolic flows. These flows are known to be $C^1$-generically structurally stable, thanks to a structure theorem of Arroyo-Rodriguez-Hertz \cite{arh}. Hence, we use the term {\em structurally generic} to refer to the corresponding generic subclass. These flows can have isolated sink or source periodic orbits, and removing neighborhoods of such orbits leaves a hyperbolic plug in case the remainder has a saddle hyperbolic periodic orbit (and a product plug otherwise).

\paragraph{Projectively Anosov flows with finite recurrent dynamics.}
We first classify all structurally generic projectively Anosov flows with no saddle hyperbolic periodic orbits.

\begin{introtheorem}[Classification without saddle periodic orbits]
\label{intro:two-orbit}
A connected closed oriented $3$-manifold admits a structurally
generic projectively Anosov flow without
saddle periodic orbits if and only if it is diffeomorphic to
$L(n,1)$ or $L(4n,2n+1)$ for some $n\in\mathbb Z$.
After a suitable orientation-preserving identification, every such
flow is homotopic through projectively Anosov flows to a standard
model $X_n$ or $X_n^{\mathrm{tw}}$, according as it is transversely
orientable or not.
\end{introtheorem}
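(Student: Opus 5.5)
The plan is to reduce to a product plug and classify the gluings. Let $X$ be a structurally generic projectively Anosov flow on a closed connected oriented $M$ with no saddle periodic orbits. By the Arroyo--Rodriguez-Hertz structure theorem \cite{arh}, the nonwandering set is hyperbolic and consists of finitely many basic pieces. Domination excludes singularities. Since there are no saddle periodic orbits, every basic piece must be an isolated attracting or repelling periodic orbit: a nontrivial basic piece of saddle type would contain saddle periodic orbits.

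The first step is to bound the number of such orbits. Remove small tubular neighborhoods of all sinks and sources. What remains is a bi-contact plug with empty maximal invariant set, i.e.\ a product plug $\partial_{\mathrm{in}}M\times[0,1]$. Its entrance boundary is the union of the boundary tori of the source neighborhoods, and its exit boundary is the union of those of the sink neighborhoods. Since $M$ is connected, each component of the product plug joins one entrance torus to one exit torus, so sources and sinks are paired. Connectedness of $M$ then forces exactly one source $\gamma_-$ and one sink $\gamma_+$, with $M = V_-\cup (T^2\times[0,1])\cup V_+$ a union of two solid tori.

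The second step uses domination near the periodic orbits. Near $\gamma_\pm$ the dominated splitting, equivalently the bi-contact structure, forces the normal return map to have distinct real eigenvalues. Hence the weak and strong invariant directions define a characteristic bi-foliation on each boundary torus with closed leaves of a definite slope. The orientability of the weak direction is what separates the transversely orientable case from the twisted one, in which eigenvalues are negative and the invariant line turns by half a twist along $\gamma_\pm$. The admissibility condition of Section~\ref{s3} (Theorem~\ref{prob}) then says that the flow across $T^2\times[0,1]$ carries the characteristic bi-foliation of $\partial V_-$ to a bi-foliation on $\partial V_+$ in a prescribed projective cyclic order. This constrains the gluing map in $SL_2(\mathbb Z)$.

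Computing the admissible maps is the step I expect to be the main obstacle. The plan is to read off the slope of the invariant leaves in meridian--longitude coordinates on each side and impose two conditions. First, the positive contact structure must rotate by less than a half turn across the collar and in the correct direction. Second, it must be compatible with the contact structure near the core of each solid torus. Together these should force the meridians to be related by $\begin{pmatrix}1&n\\0&1\end{pmatrix}$ up to the twist, giving $L(n,1)$ in the orientable case. In the twisted case, the half-twist of the invariant line along each core changes the admissible slope. This should produce the lens spaces $L(4n,2n+1)$, the ones admitting two Seifert-type fibrations with half-twist cores.

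For existence, construct $X_n$ and $X_n^{\mathrm{tw}}$ explicitly. Take a standard bi-contact neighborhood of a hyperbolic sink and source, the latter with a twist in the nonorientable case. Extend across the product region using the existence part of Theorem~\ref{prob}, after arranging admissible boundary bi-foliations by the preparation results of Section~\ref{s4}.

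For the homotopy classification, deform any such flow near $\gamma_\pm$, through projectively Anosov flows, to the standard local model. Then use the contractibility of the space of extensions in Theorem~\ref{prob}, relative and parametric, together with isotopy of the gluing maps within the admissible class, to connect the product-region data to that of $X_n$ or $X_n^{\mathrm{tw}}$. Identifying the sign conventions, so that the invariant $n$ matches the lens space under an orientation-preserving diffeomorphism, is the remaining bookkeeping.
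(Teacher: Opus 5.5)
Your architecture matches the paper's. You get exactly one source and one sink from your degree-one argument, normalize to the standard models near the orbits, place a product plug between them, build explicit models, and obtain the homotopy from the contractibility in Theorem~\ref{prob}. However, the decisive step of the ``only if'' direction stops at ``these should force'', and your two conditions supply the inputs but not the argument.

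\textbf{The missing slope-matching lemma.} Product admissibility does not constrain a matrix in $SL_2(\mathbb Z)$. The crossing map $\Gamma$ of the product is an arbitrary diffeomorphism, and admissibility (your ``rotation by less than a half turn'') only gives that $\Gamma_*S_+^R$ is transverse to $S_+^A$. Your core condition only identifies the boundary foliations, via Proposition~\ref{reeb-boundary-source-sink} after normalization, as Reeb-annulus foliations with compact leaves of slope $\lambda$ (or $c$). You need a lemma turning transversality into slope matching.

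The paper proves it by Poincar\'e--Bendixson on the common coorientation cover. A vector field directing one lifted foliation points inward along both boundary circles of a Reeb annulus of the other, or outward along both. After reversing time if needed, it therefore has an essential closed orbit inside that annulus. Hence two transverse Reeb-annulus foliations have the same compact-leaf slope, so $\Gamma$ sends $\lambda_R$ to $\pm\lambda_A$. This forces $m_R\mapsto n\lambda_A-m_A$ for some $n$, giving $L(n,1)$.

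An alternative route: transverse line fields have equal projective winding classes. For these models the class is nonzero with kernel spanned by the compact-leaf class, so $\Gamma_*$ carries one kernel to the other.

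\textbf{Other points not addressed.}
\begin{itemize}
\item You never rule out a twisted orbit paired with an untwisted one. The paper excludes this because transverse line fields have the same coorientability, so both caps are twisted or both untwisted. Otherwise slope matching would give $\lambda_R\mapsto c_A$ and a different family.
\item In the twisted case you only name the answer. The compact leaf has slope $c=2\lambda-m$ for the half-turn longitude. Then $c_R\mapsto c_A$, together with orientation reversal, forces $\lambda_R\mapsto(n+1)c_A-\lambda_A$ and $m_R\mapsto 4n\lambda_A-(2n+1)m_A$, i.e.\ $L(4n,2n+1)$.
\item For existence, both caps in $X_n^{\mathrm{tw}}$ are twisted, not only the source. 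The required preparation is also specific: each gluing differs from the matching one by a Dehn twist along the compact-leaf slope. This twist is realized in a collar by the shear $A\,d\theta+(B+rnA)\,d\varphi$. The shear preserves the dividing inequality, so Theorem~\ref{prop:coordinated-divided-deformation} applies.
\end{itemize}
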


The models $X_n$ include examples on $L(1,1)=\mathbb{S}^3$ and $L(0,1)=\mathbb S^2\times\mathbb S^1$. On each $L(n,1)$, the construction also gives examples with a prescribed number $k\in\mathbb{N}$ of saddle hyperbolic periodic orbits. All these flows are {\em Morse-Smale} with zero topological entropy.

\begin{introtheorem}[Prescribed saddle count]
\label{intro:saddle-count}
For every $k\geq0$, some closed oriented $3$-manifold admits a
transversely oriented projectively Anosov Morse-Smale flow with
exactly $k$ saddle, one source and $k+1$ sink periodic orbits.
These flows have zero topological entropy.
\end{introtheorem}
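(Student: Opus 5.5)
Our plan is to build the examples by gluing elementary bi-contact plugs, in the same way as the models $X_n$ of Theorem~\ref{intro:two-orbit}. The building blocks are the following. A solid torus around a source periodic orbit is a repelling product plug with exit boundary a torus. A solid torus around a sink periodic orbit is an attracting product plug with entrance boundary a torus. For each saddle we take a neighborhood of a hyperbolic saddle periodic orbit with orientable stable and unstable bundles. This is a hyperbolic plug $P$ whose maximal invariant set is the single orbit. Its entrance boundary is a torus, or an annulus union pieces, and it carries two parallel closed stable leaves. Its exit boundary carries two parallel closed unstable leaves.

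To keep the count exact and the topology simple, we would use a round-handle style plug. Take a saddle orbit $\gamma$ in a thickened torus $T^2\times[0,1]$ whose entrance boundary is one torus and whose exit boundary is two tori. Such a plug is obtained by round-handle attachment (Asimov's round handle decomposition). The entrance lamination is a pair of parallel essential curves, and the exit lamination is also a pair of parallel curves, split between the two exit tori. These laminations are Morse--Smale in the sense of Section~\ref{s2.3}, though not filling. Because we only need projective Anosovity and not hyperbolicity of the quotient, we would invoke the projectively Anosov gluing of Section~\ref{s4}, culminating in Theorem~\ref{thm:stable-gluing}, rather than Theorem~\ref{intro:filling-gluing}. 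Admissible characteristic bi-foliations can be arranged on each boundary torus, and the extension results (Theorems~\ref{hypb}, \ref{prob}) give bi-contact structures supporting the flows.

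The assembly proceeds as follows. Begin with the source solid torus, whose exit boundary is one torus. Attach $k$ saddle round-handle plugs successively in a binary-tree pattern, with each new saddle plug's entrance torus glued to one currently free exit torus. Each attachment increases the number of free exit tori by one, so after $k$ steps there are $k+1$ exit tori. Finally cap each free exit torus with a sink solid torus. The periodic orbits are then exactly $1$ source, $k$ saddles and $k+1$ sinks.

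The flow is Morse--Smale: every orbit of the glued flow is forward asymptotic to a sink or to a saddle and backward asymptotic to the source or to a saddle. Transversality of stable and unstable manifolds is automatic, since there are no saddle connections: each saddle's unstable manifold exits into tori that lead only downstream through the tree to sinks. A Morse--Smale flow has zero topological entropy, because its nonwandering set consists of finitely many periodic orbits. Transverse orientability will be kept by choosing all stable and unstable bundles orientable, so that $\xi_\pm$ are cooriented compatibly across the gluing.

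The main obstacle is the gluing step. The boundary bi-foliations must match under orientation-reversing gluing maps; in particular, the exit lamination circles of one plug must be placed transverse to the entrance circles of the next. The contact structures must also extend across the glued tori without losing transversality. I would first try to choose the gluing maps so that the characteristic foliations on each torus are linear with slopes in the required projective cyclic order. Then I would apply the relative divided deformations of Section~\ref{s4} to identify $(S_-,S_+)$ exactly, absorbing collar changes into the gluing maps as described there. The resulting closed manifold depends on these choices. The statement asks only for some manifold, so any consistent choice suffices.
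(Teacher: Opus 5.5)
There is a genuine gap at the step you flag as the main obstacle, and the remedy you suggest cannot work. Linear characteristic foliations are ruled out on every torus in your assembly. By Proposition~\ref{prop:solid-torus-meridional-parity}, the boundary bi-foliation of any bi-contact solid-torus plug has $d(m)\equiv 2\pmod 4$, so your source and sink caps have nonzero meridional winding; the elementary models carry two Reeb annuli (Proposition~\ref{reeb-boundary-source-sink}). On the one-entrance, two-exit saddle plug, which is $\mathbb S^1\times P$ for a pair of pants $P$ rather than a thickened torus, Lemma~\ref{lem:gradient-winding-balance} gives $\rho_{\mathrm{out},1}+\rho_{\mathrm{out},2}-\rho_{\mathrm{in}}=1$, so its boundary windings cannot all vanish.

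Section~\ref{s4} does not close the gap either. Theorem~\ref{thm:stable-gluing} assumes that the preparations already produce bi-foliations identified by the gluing map; it does not produce them. A coordinated divided deformation needs a path of divided foliations joining the two boundary foliations, with a uniform dividing inequality, and you have not produced one. The only mechanism in the paper that manufactures cappable boundaries is Lemma~\ref{lem:finite-cap-preparation} together with Proposition~\ref{stable-nonhyperbolic-cap-realization}. It works by inserting extra twisted-saddle and two-saddle blocks, which would destroy the exact count of $k$ saddles.

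To rescue the tree, you would have to do the boundary bookkeeping by hand at every saddle. First, place the two stable circles in bands near compact leaves where $S_\pm$ straddle the $\theta$-direction, for instance with one old compact leaf in each complementary arc. Second, check via \eqref{eq:gradient-saddle-transfer} that the new compact leaf on each exit torus has the same holonomy sign as the transported old leaf. Then each terminal exit torus carries two turning annuli and admits a compatible annular double for an attracting cap. With opposite signs it carries two parallel annuli and has zero winding, so no solid-torus cap fits at all. Third, keep each transported unstable circle disjoint from the subsequent stable circles, so that there are no saddle connections.

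The paper avoids every intermediate gluing. It writes down one suspension $X=\partial_\theta-ax\partial_x+g(x,y)\partial_y$ on $V=\mathbb S^1\times D$. Here $g=f(y)$ near $x=0$, with $2k+1$ simple zeros of alternating derivative sign, and $g=-y$ near $\partial D$. It takes the global forms $\alpha_\pm=(dy-g\,d\theta)\mp(dx+ax\,d\theta)$, whose contact conditions reduce to $a>\|\partial_xg\|_\infty+\|\partial_yg\|_\infty$. Near $\partial V$ this is exactly the elementary sink model, so $V$ simply replaces the sink cap of a source--sink example from Theorem~\ref{thm:two-orbit-projectively-anosov}, with no matching problem to solve.
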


These examples raise further classification questions for Morse-Smale projectively Anosov flows.

\begin{question}
Which manifolds admit a Morse-Smale projectively Anosov flow? Is there a bi-contact homotopy class admitting infinitely many distinct examples of such flows? How is this condition related to the contact topological features of the underlying contact structures?
\end{question}

\paragraph{Finite capping.}
The Morse-Smale zero-entropy plugs used above cap off an arbitrary bi-contact plug by adding only finitely many periodic orbits.
This generalizes the capping constructions of~\cite{bbp}. We use
suspended-gradient and twisted-saddle pieces to prepare boundary
bi-foliations for solid-torus caps, without coorientability or winding
restrictions. See
Section~\ref{subsec:finite-PA-capping}.

\begin{introtheorem}[Finite projectively Anosov capping]
\label{intro:finite-capping}
The flow supported on any bi-contact plug embeds in a projectively
Anosov flow on a closed manifold, with the same topological entropy
as the flow on the original maximal invariant set. The added nonwandering dynamics consist
of finitely many hyperbolic periodic orbits. If the original bi-contact
plug is an attracting partially hyperbolic plug, the result is a
nonsingular partially hyperbolic flow on a closed manifold.
\end{introtheorem}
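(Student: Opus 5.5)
The plan is to cap off each boundary component of the bi-contact plug $(M,\xi_-,\xi_+)$ with pieces whose only recurrent dynamics is a finite set of hyperbolic periodic orbits. The closed manifold is then $M$ glued to these caps along $\partial M$, and the bi-contact structure must extend across the gluing.

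\textbf{Step 1: reduce to adjusting the bi-foliation.} Every boundary component of $M$ is a closed surface transverse to the supported vector field $X$. It carries the characteristic bi-foliation $(S_-,S_+)$, which is admissible by the results of Section~\ref{s3}. By the divided-deformation and stable-gluing machinery of Section~\ref{s4} (Theorem~\ref{thm:stable-gluing}), we may deform $(S_-,S_+)$ within admissible bi-foliations. The parallel collars stay transverse to $X$, so these deformations do not change $\operatorname{Inv}(M,X)$. The extension theorems (Theorem~\ref{prob} on product plugs) then show that gluing along boundaries with matching admissible bi-foliations yields a smooth bi-contact structure on the union. So the problem reduces to finding, for each boundary component $\Sigma$, a capping bi-contact plug $C$ with three properties:
\begin{itemize}[label={}]
\item (a) $\partial C$ is identified orientation-reversingly with $\Sigma$, with entrance and exit swapped;
\item (b) $C$ has Morse-Smale zero-entropy invariant dynamics consisting of finitely many hyperbolic periodic orbits;
\item (c) its boundary bi-foliation can be matched with $(S_-,S_+)$ after admissible deformation.
\end{itemize}

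\textbf{Step 2: build the caps.} I would first reduce each component $\Sigma$ to a union of tori. To do this, glue suspended-gradient pieces, i.e. products of surface gradient flows with circles or twisted variants, which are bi-contact plugs of the kind used for Theorems~\ref{intro:two-orbit} and~\ref{intro:saddle-count}. A saddle-type piece splits off handles until only torus boundaries remain, at the cost of adding saddle periodic orbits.

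On each resulting torus the admissible bi-foliation is, after deformation, close to a linear pair. Twisted-saddle pieces adjust its slopes and its coorientation data. The torus can then be capped by a solid torus carrying one attracting or repelling hyperbolic periodic orbit, such as the standard Reeb-like bi-contact neighbourhood of a sink or source. Because twisted saddles handle non-coorientable behaviour, this avoids the coorientability and winding hypotheses of~\cite{bbp}.

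\textbf{Step 3: entropy and partial hyperbolicity.} Every added orbit wanders into or out of $M$ or into a cap orbit. So the nonwandering set of the closed flow is $\Lambda$ together with finitely many periodic orbits, and by the variational principle the topological entropy is unchanged.

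Now suppose $M$ is an attracting partially hyperbolic plug. Then every boundary component is an entrance, so each cap must supply the dynamics flowing into $M$: sources, and possibly saddles. These orbits must be chosen so that the dominating direction expands uniformly there. Hyperbolic sources expand in both normal directions, and saddles can be arranged with the dominating direction expanding, so partial hyperbolicity holds on the whole nonwandering set. Domination, and hence global partial hyperbolicity, then follows from the bi-contact structure, as in Section~\ref{s5}.

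\textbf{Main obstacle.} I expect the hardest part to be Step 2's matching problem. One must show that an arbitrary admissible boundary bi-foliation, which may be non-coorientable and have arbitrary winding of its divides, can be deformed through admissible bi-foliations to one realised by a finite composition of the model pieces. I would attack this by first using Giroux flexibility to make both $S_\pm$ Morse-Smale with controlled dividing sets. I would then classify the resulting data up to the moves that suspended-gradient and twisted-saddle plugs realise, inducting on genus and on the number of dividing curves.
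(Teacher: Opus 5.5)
Your overall architecture matches the paper's. You cap each boundary component by saddle blocks followed by solid-torus caps, read the entropy off the nonwandering set, and use only sources and saddles at entrances. But Step~2, as written, cannot work.

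Your reduction to tori is vacuous. Since $X\in\xi_\pm$ is transverse to $\partial M$, the foliations $S_\pm$ are nonsingular line fields on a closed orientable surface, so every boundary component is already a torus.

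The serious problem is the claim that the boundary bi-foliation becomes, after deformation, close to a linear pair, which is then capped by the standard source or sink neighbourhood. This fails for two reasons.
\begin{itemize}
\item Every deformation available to you (collar realizations, divided deformations, smoothing) passes through transverse pairs of nonsingular line fields. So the projective winding class $d(S_-,S_+)\in H^1(T;\mathbb Z)$ is unchanged, whereas a linear pair has $d=0$.
\item Proposition~\ref{prop:solid-torus-meridional-parity} shows that any bi-contact solid-torus cap satisfies $d(m)\equiv2\pmod4$ on its meridian. So a linear pair can never bound a cap.
\end{itemize}
Indeed, the source/sink neighbourhood of Proposition~\ref{reeb-boundary-source-sink} has two Reeb annuli on its boundary ($|w(m)|=1$), not a linear foliation. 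What twisted saddles actually change is the winding class and coorientability, not a ``slope.'' Nor does the paper cap with a single source or sink: its caps carry as many periodic orbits as the matching requires.

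The missing ingredients are moves that change the winding class in a controlled way, and a mechanism for exact matching. In the paper (Lemma~\ref{lem:finite-cap-preparation}), this goes in three stages.
\begin{itemize}
\item At most three twisted-saddle blocks make the bi-foliation coorientable with odd oriented winding. Each block shifts $d$ by one in section-fibre bases (Proposition~\ref{prop:twisted-saddle-preparation}).
\item After a Peixoto perturbation to Morse--Smale, two-saddle gradient blocks (Lemma~\ref{lem:gradient-boundary-simplification}) each remove a positively turning Reeb annulus of $S_-$ and lower $w$ by $2$. This ends at $w=-n$ with $n$ odd. The induction is on positively turning Reeb annuli, not on genus or dividing curves.
\item Exact matching comes from inserting cancelling compact leaves to reach the reflection-symmetric word $(R_-P^{2p})^{2n}$, that is, a compatible annular double. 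This is realized by a coordinated divided deformation (Theorem~\ref{prop:coordinated-divided-deformation}).
\end{itemize}
The cap is then $\overline Y=Y+Kz\partial_z$ for an annular Morse--Smale field $Y$ with several periodic orbits (Propositions~\ref{prop:odd-winding-cap-models} and~\ref{stable-nonhyperbolic-cap-realization}).

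Finally, in Step~3, domination alone does not upgrade expansion of $F$ on the nonwandering set to uniform expansion on the whole closed manifold. You need the averaging argument of Lemma~\ref{lem:partially-hyperbolic-plug}.
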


\paragraph{Bi-contact doubling and invariant tori.}
Normal models $Y+Kz\partial_z$, with $|K|$ sufficiently large,
realize prescribed nonsingular vector fields $Y$ on normally
repelling or attracting invariant tori. They give a doubling
construction for bi-contact plugs with coorientable boundary
bi-foliations. Unlike Anosov completion by
reflection, this construction uses $X$ on both copies.

\begin{introtheorem}[Bi-contact doubling]
\label{intro:doubling}
Let $(M,\xi_-,\xi_+,X)$ be a compact bi-contact plug with coorientable
boundary characteristic bi-foliations. Its oriented
double carries a projectively Anosov flow extending $X$ on two
disjoint copies of $M$. Between the copies, each boundary component
gives an invariant torus with Morse-Smale dynamics, normally
repelling at an entrance and attracting at an exit. If the original bi-contact plug is an attracting  partially hyperbolic plug, the result is a nonsingular partially hyperbolic flow on a closed manifold.
\end{introtheorem}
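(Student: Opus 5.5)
The plan is to build the flow on the double $D(M)=M\cup_{\partial M}(-M)$ by keeping $X$ on both copies. The two copies are joined through a thickened boundary collar $\partial M\times[-1,1]$ carrying normal models $Y+Kz\partial_z$. The first step uses the coorientability of the boundary bi-foliation $(S_-,S_+)$ on each component $T\subset\partial M$; each $T$ is a torus, since it carries a nonsingular transverse pair of foliations. On $T$ we choose a nonsingular vector field $Y$ that is Morse-Smale on $T$, meaning finitely many periodic orbits, all hyperbolic, with a Reeb-free or Kronecker-free structure. We also require that $Y$ lies in the cone between $S_-$ and $S_+$, so that $S_\pm$ are transverse to $Y$ with the correct cyclic order. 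Coorientability of $S_\pm$ is what allows such a $Y$ to be chosen globally, for instance as a positive combination of sections rotated from $S_\pm$. By the convex-surface boundary preparation of Section~\ref{s4} (Theorem~\ref{thm:stable-gluing}), we first deform the characteristic bi-foliation, absorbing collar changes, into a standard form in which $S_\pm$ are linear-like and rotated by a fixed small angle from a Morse-Smale direction field.

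The second step is a local computation on $T\times[-\epsilon,\epsilon]$ with $Z=Y+Kz\partial_z$ for an exit component, or $-Kz\partial_z$ at an entrance after reversing roles. The goal is to exhibit explicit contact forms $\alpha_\pm=\beta_\pm+f_\pm(z)\,dz$, where $\beta_\pm$ are 1-forms on $T$ with kernels $S_\pm$-like, such that $\alpha_\pm(Z)=0$ and $\pm\alpha_\pm\wedge d\alpha_\pm>0$ once $|K|$ is large. This is the dominated splitting: the normal contraction or expansion rate $K$ dominates the tangential derivative of $Y$. The torus $T\times\{0\}$ is then an invariant torus with Morse-Smale dynamics that is normally repelling or attracting. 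The surfaces $T\times\{\pm\epsilon\}$ are transverse to $Z$, with induced bi-foliations isotopic to the given ones, respectively to their mirror on $-M$.

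The third step glues $M$ to the collar along $T\times\{\epsilon\}$ and $-M$ along $T\times\{-\epsilon\}$ using the relative extension results, Theorems~\ref{hypb} and \ref{prob} and their admissible-bi-foliation versions. Since the space of supporting bi-contact extensions with given admissible boundary data is nonempty and contractible, we can interpolate in product collars between the boundary data of $M$ and that of the normal model. On $-M$ the vector field is still $X$, and the orientation reversal swaps the roles of $\xi_\pm$. We therefore check that on $-M$ the pair $(\xi_+,\xi_-)$ serves as (negative, positive), and that $X$ exits $M$ into the collar, hits the attracting torus from the side $z>0$, and comes from the side $z<0$ on $-M$. Here $X$ on $-M$ also exits its boundary toward the torus, consistent with both sides flowing into an attracting torus. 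The entrance case is symmetric with a repelling torus.

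Partial hyperbolicity in the attracting case follows because the new nonwandering set consists of $\Lambda$ in each copy together with the periodic orbits on the tori. On these tori the expanding direction is tangential, coming from the hyperbolic saddles or sources of $Y$; the alternative is to arrange $Y$ with only attracting and repelling orbits, each uniformly expanded in some direction. In the attracting plug case, every torus is attracting, which is presumably where the hypothesis enters; the dominating line along them is tangential and expanding at the repelling periodic orbits of $Y$.

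I expect the main obstacle to be the first step: reconciling the prescribed boundary bi-foliation with a Morse-Smale $Y$ in its cone while ensuring that the extension across the collar respects the cyclic order on both the $M$ side and the orientation-reversed $-M$ side.
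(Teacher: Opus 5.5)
Your construction of the double follows the paper's route. You keep $X$ on both copies, interchange the contact structures on $-M$, and insert on each boundary torus $T$ a normal model $Y_T+K_Ts\partial_s$, with $K_T>0$ at an entrance and $K_T<0$ at an exit. Here $Y_T$ is a Morse-Smale field in an open sector cut out by $S_\pm$. Coorientability is exactly what lets you choose $Y_T$ globally, and a small perturbation inside the open sector makes it Morse-Smale.

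The convex-surface preparation in your first step is unnecessary. What you call the main obstacle is settled simply by naming the sector: the paper takes $\eta_+(Y_T)>0$ and $-\varepsilon_T\eta_-(Y_T)>0$, with $\varepsilon_T=-1$ at an entrance and $\varepsilon_T=1$ at an exit. Then $(\eta_-,\eta_+)$ on one face and $(\varepsilon_T\eta_+,\varepsilon_T\eta_-)$ on the other are core-compatible. Indeed, $-M$ contributes the ordered pair $(S_+,S_-)$, so the negative-face condition $\ell_Y,S_+^-,S_-^-$ is the same cyclic condition $\ell_Y,S_-,S_+$ as on the positive face, and one sector serves both copies. Proposition~\ref{two-ended-boundary-realization} then realizes the original boundary bi-foliations exactly, and Theorem~\ref{thm:smooth-matched-gluing} attaches $M$ and $-M$ without touching their boundary data. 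Your product-collar interpolation via Theorem~\ref{prob} would also work, but only after this same cyclic-order check, which your proposal leaves as ``the correct cyclic order''.

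The genuine error is in the partial-hyperbolicity step. An attracting plug has $\partial_{\mathrm{out}}M=\varnothing$, so every boundary component is an entrance and every inserted torus is normally \emph{repelling}. Your claim that all tori are attracting is backwards, and the mechanism you propose for attracting tori cannot succeed. The closed orbits of a nonsingular Morse-Smale field on $T^2$ alternate between attracting and repelling. Hence an attracting torus always carries sink periodic orbits of the three-dimensional flow, on which the dominating line contracts. No other choice of $Y$ helps either: a uniformly expanding invariant line tangent to an invariant torus and transverse to the flow would make the torus flow expand area uniformly, which is impossible.

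The correct argument uses the repelling tori. With $K_T>0$ large compared to the derivatives of $Y_T$, the dominating normal line along each inserted torus is the $\partial_s$-direction, expanding at rate about $K_T$ on the whole torus. On both copies of $\operatorname{Inv}(M,X)$ it is the original expanding line, because the flow on $-M$ is still $X$. All other orbits are wandering, so Lemma~\ref{lem:partially-hyperbolic-plug} applies.
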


We conjecture a negative answer to the following question about the above construction.

\begin{question}
Can the (partially hyperbolic) projectively Anosov flows of Theorem~\ref{intro:doubling} be homotopic through (partially hyperbolic) projectively Anosov flows to an Anosov flow?
\end{question}

Our toric constructions can also be used to generalize the regular
$\mathbb T^2\times I$-models of Noda~\cite{noda2000,noda}
by allowing prescribed nonsingular dynamics on successive
invariant tori, subject to explicit transversality conditions.
Although arbitrary dynamics on a
single invariant torus can already be realized by
\cite[Proposition~3.3]{noda2000}, our construction prescribes
these dynamics simultaneously and controls the gluing and
the intervening orbits, without requiring smooth global
invariant foliations.


\begin{introtheorem}[Projectively Anosov flows on torus bundles]
\label{intro:toric-stacks}
Every oriented torus bundle over $\mathbb S^1$ admits a cooriented
bi-contact structure supporting a projectively Anosov flow with
any prescribed even number $2m\geq2$ of distinguished invariant
tori, alternating between normal repulsion and attraction.
Every other orbit is wandering and approaches a repelling torus
backward in time and an attracting torus forward in time.
\end{introtheorem}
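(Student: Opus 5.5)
We build a stack of $2m$ thickened-torus bi-contact plugs with prescribed invariant-torus dynamics and close it up by the monodromy. Write the torus bundle as $T_A=\mathbb T^2\times[0,1]/(x,1)\sim(Ax,0)$ with $A\in SL(2,\mathbb Z)$. Split $[0,1]$ into $2m$ consecutive intervals. The interval centred at $z_i$ carries the normal model $Y_i+K_i(z-z_i)\partial_z$ from the bi-contact doubling construction. Here $Y_i$ is a nonsingular vector field on $\mathbb T^2\times\{z_i\}$ and $K_i=(-1)^{i}K$ with $|K|$ large. The tori $\mathbb T^2\times\{z_i\}$ are then invariant and alternate between normal repulsion and attraction. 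In between, a repelling torus $z_{2j}$ and the next attracting torus $z_{2j+1}$ are joined by a product slab $\mathbb T^2\times[a,b]$. On this slab the flow is transverse to the fibres and moves from the repelling side toward the attracting side. Across an attracting torus the transverse direction reverses, so the slabs on its two sides flow into it from opposite directions.

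The key steps are as follows.

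\emph{Step 1.} On each thickened invariant torus, use the normal-model result quoted before Theorem~\ref{intro:doubling}: for $|K|$ large, $Y_i+K_iz\partial_z$ is supported by a cooriented bi-contact structure. The boundary bi-foliations on $\mathbb T^2\times\{z_i\pm\epsilon\}$ are approximately given by $Y_i$ together with the transverse rotation produced by $K_i$. I would take $Y_i$ to be linear irrational or a Morse-Smale suspension. I would choose it so that $\xi_\pm$ is coorientable and its boundary bi-foliation is linear in suitable coordinates.

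\emph{Step 2.} Fill each slab between consecutive tori with a product bi-contact plug $\mathbb T^2\times[a,b]$ whose flow is transverse to the fibres. The extension theorems for product plugs (Theorem~\ref{prob}) reduce this to an admissibility condition: the bi-foliations on the two boundary tori must be compatible, in the projective cyclic-order sense, after transport along the flow. Every orbit in the slab is wandering. It is forced backward to the repelling torus and forward to the attracting torus by the sign of the normal component, which gives the wandering-orbit statement.

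\emph{Step 3.} Close up across $z=1\sim 0$ with the gluing map $A$. Place the seam inside a product slab, so that the seam is transverse to the flow. Then apply the boundary-preparation and gluing results of Section~\ref{s4} (Theorem~\ref{thm:stable-gluing}). These show that the prepared bi-foliations can be identified by a map isotopic to $A$, using the divided deformations to make the foliations match. Since only cooriented forms are used and each piece is cooriented, the result is a cooriented bi-contact structure. The supported flow is projectively Anosov: the bi-contact condition gives a dominated splitting everywhere on the closed manifold.

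The main obstacle is Step 2 combined with the monodromy. The linear boundary bi-foliations of adjacent tori (twisted by $A$ at the seam) must lie in the admissible cyclic order, so that a product plug can interpolate between them. For hyperbolic or parabolic $A$, the pushforward $A_*$ of a linear foliation may fail to be in the right position. I would first use the freedom in choosing the $Y_i$ and the direction of $K$, together with $m\ge1$ tori, to rotate the slopes. I would also insert extra twisting regions of $\mathbb T^2\times I$ contact type in the slab, which realise any rotation of the characteristic slope. With these, the interpolation becomes a monotone path in $\mathbb{RP}^1$ of slopes, which can always be realised.
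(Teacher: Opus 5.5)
Your architecture of alternating normal models $Y_i+K_iz\partial_z$, joined through regions transverse to the fibres and closed up by the monodromy, is the same as the paper's. Two steps fail as written, though.

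\textbf{Coorientability.} The real gap is the claim in Step~3 that the glued structure is cooriented because each piece is. A coorientation is transported by continuity through each slab. In your pieces, the line $\xi_+\cap T(\mathbb T^2\times\{z\})$ turns monotonically in $z$, equals $\langle Y_i\rangle$ on $T_i=\mathbb T^2\times\{z_i\}$, and stays transverse to the oppositely turning $\xi_-$-line. So between consecutive cores it turns by an angle in $(0,\pi)$. Hence the sign $\sigma_i$ of the model at $T_i$, relative to $Y_i$, is forced from one torus to the next by the rule \eqref{eq:s6-toric-coorientation}: $\sigma_{i+1}=-\delta_i\sigma_i$, where $\delta_i=\operatorname{sgn}\omega\bigl(Y_i,(g_i^{-1})_*Y_{i+1}\bigr)$. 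A global coorientation therefore exists only if $\prod_i\delta_i=1$, and this is a genuine constraint. On $\mathbb T^3$ with $2m=2$ and identity gluings, $\delta_1\delta_2=\operatorname{sgn}\bigl(\omega(Y_1,Y_2)\,\omega(Y_2,Y_1)\bigr)=-1$ for every transverse pair. So that glued structure is never coorientable, however each piece is cooriented.

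The paper (Theorem~\ref{thm:cyclic-toric-stack}, Corollary~\ref{cor:pa-on-every-torus-bundle}) meets this constraint as follows. It takes $g_i=\mathrm{id}$ for $i<2m$ and $g_{2m}=A$, with constant fields $v_i$ at lifted angles $(i-1)\Theta/(2m)$. Here $\Theta\in(0,2\pi]$ is the angle from $v_1$ to $A^{-1}v_1$, with $\Theta<2\pi$ when $2m=2$. Every increment then lies in $(0,\pi)$ and every $\delta_i=+1$. The exception is $A=I$, $2m=2$, where no choice with identity gluings works. There it uses $Y_1=\cos u\,\partial_u+\sin u\,\partial_v$, $Y_2=-\sin u\,\partial_u+\cos u\,\partial_v$, and $g_2(u,v)=(u+\pi,v)$. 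This $g_2$ is isotopic to the identity but reverses $Y_1$.

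\textbf{The twisting remedy.} The fix you propose for the obstacle you identified is not available. A slab on which the supported flow is transverse to the fibres cannot contain twisting regions realising arbitrary rotation. By the product analysis of Section~\ref{s3.1}, $\xi_+$ and $\xi_-$ turn in opposite senses there and stay transverse, so their combined turning is less than $\pi$. The cyclic-order condition excludes any extra turn; unbounded twisting is possible only for a single contact structure, not for a bi-contact pair.

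The twisting is also not needed. By Proposition~\ref{two-ended-boundary-realization}, each horizontal face of a normal model carries any core-compatible bi-foliation. So the passage from $T_i$ to $T_{i+1}$ requires only $(g_i)_*\langle Y_i\rangle\pitchfork\langle Y_{i+1}\rangle$. Take the interface bi-foliation in cyclic order $\langle Y_i\rangle,S_-,(g_i^{-1})_*\langle Y_{i+1}\rangle,S_+$ and glue by Theorem~\ref{thm:smooth-matched-gluing}, as in Lemma~\ref{lem:toric-transition}. For linear $Y_i$ this transversality is trivial for every $A\in\mathrm{SL}(2,\mathbb Z)$, hyperbolic or parabolic included. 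The seam is glued by $A$ itself, so neither divided deformations nor Theorem~\ref{thm:stable-gluing} are required. That matters, because linear foliations of a torus are not divided. The coordinated divided deformation would therefore not apply to the linear boundary data you proposed.
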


\paragraph{Liouville geometry of partially hyperbolic plugs.}
Our final construction extends the relation between hyperbolic
dynamics and Liouville geometry~\cite{mit,hoz3,hoz6} to partially
hyperbolic plugs, based on their underlying contact geometry.

\begin{introtheorem}[Liouville structures from partially hyperbolic plugs]
\label{intro:liouville}
Let $(M,\xi_-,\xi_+,X)$ be a compact cooriented partially hyperbolic
bi-contact plug. There are smooth defining contact forms
$\alpha_\pm$ for $\xi_\pm$ such that
$$
\lambda=(1-s)\alpha_-+(1+s)\alpha_+
$$
is a nowhere-vanishing Liouville form on $[-1,1]_s\times M$, whose
Liouville vector field has the form
$Z=A\partial_s+BX$ with $B>0$.
If $M$ is repelling, rounding the corners gives a Liouville domain whose skeleton projects homeomorphically onto
$\operatorname{Inv}(M,X)$.
\end{introtheorem}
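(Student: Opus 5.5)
We will follow the Mitsumatsu–Hozoori construction of Liouville forms from bi-contact pairs, adapted to a manifold with boundary where the flow is transverse to $\partial M$. For $\lambda=(1-s)\alpha_-+(1+s)\alpha_+$ we have
$$
d\lambda = ds\wedge(\alpha_+-\alpha_-)+(1-s)\,d\alpha_-+(1+s)\,d\alpha_+ .
$$
The volume form $(d\lambda)^2$ is
$$
2\,ds\wedge(\alpha_+-\alpha_-)\wedge\big((1-s)\,d\alpha_-+(1+s)\,d\alpha_+\big).
$$
Expanding, the terms $\alpha_\pm\wedge d\alpha_\pm$ carry signs that must be controlled. The cross terms $\alpha_\mp\wedge d\alpha_\pm$ are where the dynamics enters, since $\alpha_\pm(X)=0$ and $\iota_X d\alpha_\pm=\mathcal L_X\alpha_\pm$.

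The plan is to choose the forms adapted to the dominated splitting. Write $\xi_\pm$ through a pair of transverse invariant-ish plane fields containing $X$. The partially hyperbolic hypothesis gives a uniformly expanding dominating normal line $E^u$ over $\Lambda$, which we extend to a continuous line field on $M$. Equivalently, it gives the contact-geometric criterion analogous to \cite{hoz7}: there is a form $\alpha_u$ with kernel containing $X$ and a complementary line, and $\mathcal L_X\alpha_u=r_u\alpha_u$ with $r_u>0$, up to an $\alpha_s$-error. Concretely, we set $\alpha_\pm=\alpha_u\pm\alpha_s$, where $\alpha_u$ is approximately invariant with positive expansion rate $r_u$, and $\alpha_s$ is approximately invariant with rate $r_s<r_u$ (domination).

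The boundary causes no obstruction here, because $M$ is compact and $X$ is transverse to $\partial M$. Away from $\Lambda$ every orbit is wandering and spends bounded time in $M$. We therefore average or smooth the forms along finite orbit segments, using the standard Lyapunov-metric trick: replace $\alpha_u$ by $\int_0^T\phi_t^*\alpha_u\,e^{-ct}\,dt$-type averages. This yields smooth forms whose rates satisfy $r_u>0$ and $r_u>r_s$ everywhere on $M$, not only on $\Lambda$. This uses that a partially hyperbolic plug admits an adapted metric near $\Lambda$ together with the wandering structure elsewhere. Coorientability of $\xi_\pm$ makes $\alpha_\pm$ global.

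With these choices, the Liouville condition reduces to a pointwise inequality of the form
$$
(1-s)\,(\text{terms}\approx r_u-r_s)+(1+s)\,(\text{terms}\approx r_u+\cdots)>0
\quad\text{for all } s\in[-1,1].
$$
The $(1+s)$ coefficient involves the positive contact condition of $\alpha_+$. The $(1-s)$ coefficient involves domination. Positivity at both endpoints $s=\pm1$ suffices by linearity in $s$. At $s=1$ we need $\alpha_+\wedge d\alpha_+>0$ combined with $\alpha_-\wedge d\alpha_+$; at $s=-1$ we need the analogous mixed term with $d\alpha_-$, where $r_u>0$ is exactly what compensates the negative contact sign.

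We then compute $Z$ from $\iota_Z d\lambda=\lambda$. Contracting with $X$ gives $\lambda(X)=0$. Since $\ker\lambda\cap\ker d s$ is spanned by $X$ and the forms are $X$-adapted, solving yields $Z=A\partial_s+BX$ with $B$ equal to a positive multiple of the ratio of $\lambda$ to the $ds\wedge(\alpha_+-\alpha_-)$-component, hence $B>0$. The nonvanishing of $\lambda$ is immediate from the transversality of $\xi_\pm$.

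For repelling $M$, $X$ points outward on $\partial M$. Together with $A$ having the sign of $s$ near $s=\pm1$ (from the linear structure), $Z$ is outward along all faces, so rounding corners gives a Liouville domain. The skeleton lies in $\{Z\text{-trajectories staying forever}\}$. Since $Z$ projects to positive reparametrizations of $X$ and the $s$-dynamics contracts to a graph over $M$, the skeleton projects onto $\operatorname{Inv}(M,X)$ injectively.

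The main obstacle is the uniform construction of $\alpha_u$ and $\alpha_s$ with the strict rate inequalities on all of $M$ including near $\partial M$. A secondary obstacle is verifying that the $s$-direction is a graph attractor backward, which is needed for the homeomorphism.
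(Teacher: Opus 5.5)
Your framework is sound: the Mitsumatsu interpolation, reducing $(d\lambda)^2$ to a contraction with $X$, linearity in $s$, and solving for $Z=A\partial_s+BX$. The central construction, however, does not prove the statement.

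\textbf{First gap: the given contact structures are not preserved.} You build $\alpha_\pm=\alpha_u\pm\alpha_s$ from separately averaged forms. This changes the kernels, so at best you obtain a Liouville form for some other supporting bi-contact pair, not defining forms of the \emph{given} $\xi_\pm$ as required. With $\xi_\pm$ fixed, your only freedom is $\alpha_\pm=h_\pm\gamma_\pm$. The single ratio $h_+/h_-$ then determines both $\ker(\alpha_+-\alpha_-)$ and $\ker(\alpha_++\alpha_-)$, so you cannot make $\alpha_u$ and $\alpha_s$ both nearly invariant.

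The idea you are missing is that only one of them needs to be. Write $\mathcal L_X\alpha_u\equiv r_u\alpha_u \pmod{\alpha_s}$ and $\mathcal L_X\alpha_s\equiv e\,\alpha_u\pmod{\alpha_s}$. Your own expansion then gives
\[
(d\lambda)^2=8\,ds\wedge\alpha_s\wedge(d\alpha_u+s\,d\alpha_s),\qquad
\iota_X\bigl(\alpha_s\wedge(d\alpha_u+s\,d\alpha_s)\bigr)=(r_u+se)\,\alpha_u\wedge\alpha_s .
\]
So $r_s$, domination, and the $\alpha_s$-error of $\alpha_u$ all drop out. Your tolerance of that error is right, but your heuristic that the $(1-s)$-coefficient is $\approx r_u-r_s$ is not: for $e=0$ both endpoint coefficients equal $r_u$. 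What must hold is invariance of $\ker(\alpha_+-\alpha_-)$ and expansion of the common restriction of $\alpha_\pm$ to it.

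The paper arranges exactly this. It takes a continuous $X$-invariant completion $\widetilde F$ of the dominant line. It then positively rescales the given $\gamma_\pm$ so that both agree on $\widetilde F$ with an expanding form $\alpha$, $\mathcal L_X\alpha=r\alpha$, $r>0$. Hence $\alpha_+-\alpha_-$ annihilates $\widetilde F$, its Lie derivative is a multiple of itself, and $-\partial_s\widehat\lambda\wedge\mathcal L_X\widehat\lambda=2r\,\widehat\alpha_-\wedge\widehat\alpha_+$ for every $s$. The contact conditions are inherited from $\xi_\pm$ for free.

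\textbf{Second gap: the step you call the main obstacle.} It is left unresolved, and your justification is incorrect. Orbits near $W^s(\Lambda)\cup W^u(\Lambda)$ spend arbitrarily long times in $M$. Partial hyperbolicity only controls orbit segments near $\Lambda$. A forward average over $[0,T]$ is undefined at points that exit before time $T$.

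The paper proceeds as follows:
\begin{enumerate}
\item Take the continuous invariant completion $\widetilde E$ of the dominated line, with unit defining form $\beta$ and $\mathcal L_X\beta=\rho\beta$.
\item Extend the flow and $\beta$ past $\partial_{\mathrm{out}}M$, with $\rho\equiv1$ beyond a collar.
\item Use the ergodic averaging from the proof of Lemma~\ref{lem:partially-hyperbolic-plug} to get $(\varphi^T)^*\beta=c_T\beta$ with $c_T>1$ uniformly on $M$.
\item Set $\alpha=\int_0^T(\varphi^t)^*\beta\,dt$. Then $\mathcal L_X\alpha=(c_T-1)\beta$, a positive multiple of $\alpha$.
\end{enumerate}

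Averaging along orbits also does not make objects built from the merely continuous invariant bundles smooth. The paper avoids this by smoothing only the scalar factors $h_\pm$ together with $Xh_\pm$ in flow boxes. This suffices because, with smooth $\gamma_\pm$ fixed, $-\partial_s\lambda\wedge\mathcal L_X\lambda$ depends only on $h_\pm$ and $Xh_\pm$.

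For the skeleton, the point over $x\in\operatorname{Inv}(M,X)$ is the unique $s$ with $\lambda_{(s,x)}(E_x)=0$. It is unique because $\alpha_-$ and $\alpha_+$ have opposite signs on $E_x$, and $\lambda$ is linear in $s$.
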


Both horizontal faces are convex. Vertical faces are convex over
exits and concave over entrances. Thus general plugs give exact
symplectic cobordisms with corners. The condition $B>0$ makes
projected Liouville trajectories positively reparametrized
trajectories of $X$.

The construction uses an expanding defining form
$\mathcal L_X\alpha=r\alpha$, $r>0$, followed by smoothing that
preserves the contact planes. For a strongly adapted contact
form $\alpha_+$ on a hyperbolic plug, one can instead set
$\alpha_-=\mathcal L_X\alpha_+$ and use directly
$$
\lambda_C=C(1-s)\alpha_-+(1+s)\alpha_+
$$
for sufficiently large $C>0$.

In most examples, the Liouville dynamics of the above examples is chaotic, in contrast to the gradient-like dynamics of {\em Weinstein} Liouville domains. These examples provide candidates for Liouville domains with at most half-dimensional topological type that are not Liouville homotopic to Weinstein domains. The existence of such domains is a well-known open problem in symplectic topology.

\begin{question}
Are all Liouville domains constructed in Theorem~\ref{intro:liouville} homotopic, through Liouville domains, to some Weinstein Liouville domain?
\end{question}

\paragraph{Organization of the paper.}
Section~\ref{s2} recalls the dynamical and contact geometric
preliminaries, including ingredients from the theory of hyperbolic plugs \cite{bby} and the structure theorem for dominated splittings \cite{arh}. Section~\ref{s3} focuses on the construction of various hyperbolic and non-hyperbolic bi-contact plugs and proves useful extension results. Section~\ref{s4} develops our
boundary deformation, smoothing and gluing methods.
Section~\ref{s5} proves contact geometric self-gluing and Anosov
completion by reflection. Section~\ref{s6} gives applications beyond Anosov flows, including the classification,
finite capping, toric and bi-contact doubling constructions discussed in this introduction.
Section~\ref{sec:liouville-domains-repelling-plugs} constructs Liouville structures from partially hyperbolic plugs.

\paragraph{Acknowledgement.}
We are grateful to Federico Salmoiraghi for many fruitful discussions on the topic. This work has also benefitted from insightful feedback and conversations with Masayuki Asaoka, Rafael Potrie and Julian Chaidez.

\paragraph{AI statement.} AI was used solely for proofreading and improving the prose in this manuscript, with the exception of Lemma~\ref{lem:regularity-invariant-completions}, where the sketch of the proof was conducted by the author and some computational details were provided by LLM. The author takes responsibility for all the material in this paper.

%

\section{Invariant sets in bi-contact geometry and hyperbolic dynamics}
\label{s2}

This section recalls the contact geometric and dynamical notions used
throughout the paper.  

\begin{convention}
Unless stated otherwise, all manifolds and vector fields are smooth ($C^\infty$), and $3$-manifolds are oriented and compact. We impose no coorientability assumption on contact structures or orientability assumption on invariant line bundles.
\end{convention}

\subsection{From bi-contact structures to dominated splittings}\label{s2.1}

Let $M$ be an oriented $3$-manifold. A plane field $\xi$ is a
\emph{contact structure} if locally $\xi=\ker\alpha$ for a $1$-form satisfying
$
\alpha\wedge d\alpha\neq0.
$
It is positive or negative according to whether $\alpha\wedge d\alpha$
agrees or disagrees with the orientation of $M$. This sign is unchanged
when $\alpha$ is multiplied by any nowhere-zero function. One can consult \cite{gei} for the basic theory of contact structures in dimension three.

An \emph{unoriented contact form} is given by local contact forms
$\alpha_i$ with $\alpha_j=\varepsilon_{ij}\alpha_i$ on overlaps of the domains, where
$\varepsilon_{ij}\in\{\pm1\}$ is locally constant. Equivalently, it is a contact structure $\xi$ together with a norm on $TM/\xi$, given by $|\alpha_i|$.
Except in Section~\ref{sec:liouville-domains-repelling-plugs}, we henceforth call these simply \emph{contact forms}, unless stated otherwise, and interpret formulas in local representatives.

\begin{definition}
A \emph{bi-contact structure} on $M$ is a transverse pair
$
(\xi_-,\xi_+)
$
consisting of a negative and a positive contact structure.  A
nowhere-vanishing vector field $X$ is \emph{supported} by this structure if
$
\langle X\rangle=\xi_-\cap\xi_+.
$
\end{definition}
\begin{convention}
Given a supported vector field $X$, choose local representatives
$\alpha_\pm$ so that $\alpha_-\wedge\alpha_+$ and $\iota_X\Omega$ define the same orientation on $TM/\langle X\rangle$, where $\Omega$ is an orientation form of $M$.
Their transition signs then coincide. Sums and differences of these forms are taken in such compatible local representatives.
\end{convention}

Below are some key definitions in our theory.

\begin{definition}
Let $\Sigma\subset M$ be a smoothly embedded surface and let $\xi$ be a
contact structure on $M$.  The \emph{characteristic foliation} of $\xi$ on
$\Sigma$ is the possibly singular foliation
$
S(\xi,\Sigma):=\ker\bigl(\alpha|_{T\Sigma}\bigr)
$,
where $\alpha$ is any defining contact form for $\xi$.  If
$(\xi_-,\xi_+)$ supports $X$ and $X\pitchfork\Sigma$, then the two
characteristic foliations
$
S_\pm(\Sigma):=S(\xi_\pm,\Sigma)
$
are nonsingular and transverse.  Their ordered pair
$$
\mathcal S(\Sigma):=(S_-(\Sigma),S_+(\Sigma))
$$
is called the \emph{characteristic bi-foliation} induced on $\Sigma$.
More generally, a \emph{bi-foliation} on a surface is an ordered pair of
transverse nonsingular one-dimensional foliations.
\end{definition}

\begin{definition}
A \emph{bi-contact plug} is a compact oriented $3$-manifold $M$ with
boundary, equipped with a bi-contact structure and an orientation of its supported line field $\xi_- \cap \xi_+$, which is
transverse to $\partial M$. After choosing a positively oriented supported vector field $X$, we
write
$$
\partial M=\partial_{\mathrm{in}}M\sqcup
\partial_{\mathrm{out}}M,
$$
where $X$ points inward along $\partial_{\mathrm{in}}M$ and outward along
$\partial_{\mathrm{out}}M$.  Its maximal invariant set is
$$
\operatorname{Inv}(M,X)
:=
\bigcap_{t\in\mathbb R}\varphi_X^t(M),
$$
where the intersection is considered using the partially defined flow on
$M$. See Figure~\ref{fig:bicontact-plug}.
\end{definition}

\begin{figure}[htbp]
  \centering
  \includegraphics[width=.5\textwidth]{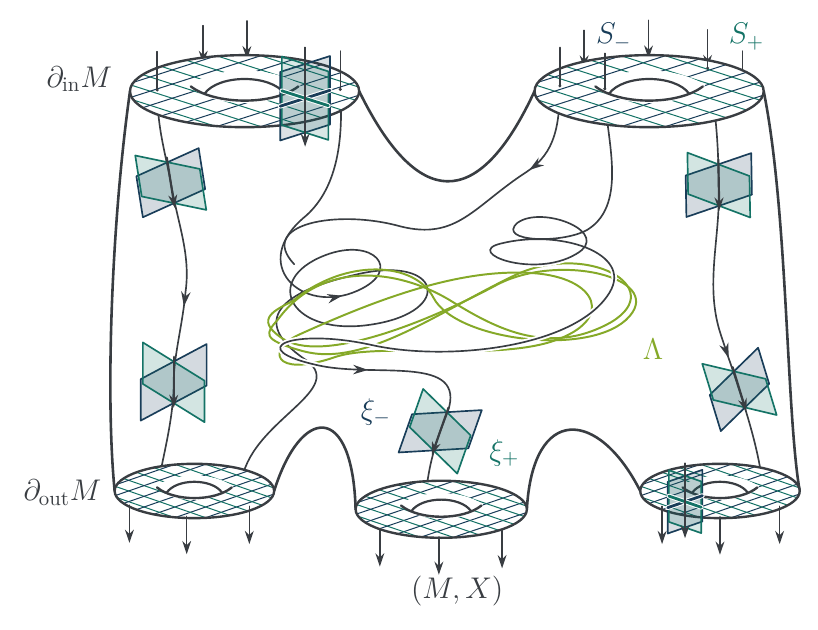}
  \caption{A schematic bi-contact plug, with characteristic
  bi-foliation $\mathcal S =(S_-,S_+)$ on its boundary and maximal invariant set $\Lambda=\operatorname{Inv}(M,X)$. Compare with Figure 4 in \cite{bby}.}
  \label{fig:bicontact-plug}
\end{figure}

To discuss the interplay with hyperbolic dynamics, we first recall some definitions. One should consult \cite{hflows} for a thorough reference on the topic. Let $X$ be nowhere vanishing and let $\varphi_X^t$ denote its flow.  The
normal bundle of the flow is denoted by
$
\mathcal N_X:=TM/\langle X\rangle
$.
The derivative $D\varphi_X^t$ induces the \emph{linear Poincar\'e flow}
$$
P_X^t\colon\mathcal N_X(x)\longrightarrow
\mathcal N_X(\varphi_X^t(x)).
$$

\begin{definition}\label{defdom}
Let $\Lambda$ be a compact invariant set disjoint from the zero set of $X$. We call $\Lambda$ \emph{projectively hyperbolic} or \emph{dominated} if the linear Poincar\'e flow has a \emph{dominated splitting} over $\Lambda$, i.e.
there is a continuous $P_X^t$-invariant splitting into line bundles
$$
\mathcal N_X|_\Lambda=E\oplus F
$$
and constants $C>0$ and $\lambda>0$ such that, for every $x\in\Lambda$ and
$t\geq0$,
$$
\left\|P_X^{t}|_{F(x)}\right\| / \left\|P_X^t|_{E(x)}\right\|
\geq Ce^{\lambda t}.
$$
That means that vectors in $F$ expand relative to vectors in $E$ at a uniform
exponential rate and we write $E\prec F$. When $\Lambda=M$ is a closed manifold, $X$ is called \emph{projectively Anosov}.
\end{definition}

The following contact geometric characterization of domination was proved for closed manifolds by Mitsumatsu \cite{mit} and later by Eliashberg-Thurston \cite{et}. By openness of the contact condition, the same cone-field argument gives the stated local form.

\begin{theorem}[Mitsumatsu 1995; local version for invariant sets]
\label{dom}
Let $X$ be a nowhere-vanishing vector field on a $3$-manifold and
$\Lambda$ a compact invariant set.  The following are equivalent:

\begin{enumerate}
  \item $\Lambda$ is projectively hyperbolic;

  \item there is a neighborhood $U$ of $\Lambda$ carrying a bi-contact
  structure $(\xi_-,\xi_+)$ supporting $X|_U$.
\end{enumerate}

In particular, the maximal invariant set of every bi-contact plug admits a
dominated splitting.  If $M$ is closed, a vector field is projectively
Anosov if and only if it is supported by a bi-contact structure.
\end{theorem}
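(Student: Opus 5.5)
The plan is to prove both implications by reducing to the cone-field description of a dominated splitting on the projectivized normal bundle, and then to obtain the final sentences as special cases.

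\emph{(2)$\Rightarrow$(1).} Suppose $(\xi_-,\xi_+)$ on $U\supset\Lambda$ supports $X$. In the normal bundle $\mathcal N_X$, the planes $\xi_\pm$ project to two transverse line fields $\ell_\pm=\xi_\pm/\langle X\rangle$. I would compute how $P_X^t$ rotates these lines. Take local contact forms $\alpha_\pm$ with $\alpha_\pm(X)=0$, so that $\mathcal L_X\alpha_\pm=\iota_Xd\alpha_\pm$. Then $\alpha_\pm\wedge d\alpha_\pm\neq0$ says exactly that $\mathcal L_X\alpha_\pm$ does not vanish on $\ell_\pm$. Equivalently, the image $P_X^t(\ell_\pm)$ turns strictly away from $\ell_\pm$ for small $t\neq0$. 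The opposite signs of the two contact conditions (negative versus positive) make both lines turn in the same rotational sense relative to the orientation of $\mathcal N_X$ from the convention on $\alpha_-\wedge\alpha_+$. Hence the two sectors complementary to $\ell_-\cup\ell_+$ in each fibre behave as follows: one closed sector $\mathcal C^u$ is mapped strictly into its interior by $P_X^t$ for $t>0$, and the other, $\mathcal C^s$, is mapped strictly into its interior by $P_X^{-t}$. On the compact invariant set $\Lambda$, this strict invariance is uniform; I would use compactness and a fixed time $T$ to get strict inclusion with a uniform angle margin. The standard cone-field criterion then produces a continuous invariant splitting $E\oplus F$, with $F=\bigcap_{t>0}P_X^t\mathcal C^u$ and $E=\bigcap_{t>0}P_X^{-t}\mathcal C^s$. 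The same criterion gives the exponential domination estimate of Definition~\ref{defdom}. The uniform contraction of the projective cone maps, through the Hilbert metric or a direct cross-ratio estimate on the $2$-dimensional normal fibres, is what yields $\|P^t|_F\|/\|P^t|_E\|\ge Ce^{\lambda t}$.

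\emph{(1)$\Rightarrow$(2).} This is the harder direction. Given $E\prec F$ over $\Lambda$, I would first extend $E$ and $F$ to continuous line fields on a neighbourhood and smooth them to nearby smooth lines $\tilde E,\tilde F$. Next I would average an adapted metric so that domination is visible at time $T$; Gourmelon's adapted metric, or the usual trick of integrating over $[0,T]$, works here. I would then take planes $\xi_\pm\supset X$ whose normal images are the two bisecting lines between $\tilde E$ and $\tilde F$, relative to the adapted metric. Domination means that $P^t$ rotates every line strictly towards $F$ and away from $E$. Consequently each bisector rotates with a definite sign, and by the computation above that sign is exactly the contact condition: positive for one bisector and negative for the other. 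This gives contact structures on $\Lambda$, and by openness of $\alpha\wedge d\alpha\neq0$ together with continuity, the structures persist on a neighbourhood $U$. I expect the main obstacle to be the smoothing step. The splitting is only continuous, so the bisectors must be smooth, and the derivative along $X$ has to be controlled. I would handle this by replacing the flow-derivative condition by the time-$T$ cone inclusion, which is $C^0$-open in the line fields, and by choosing the smoothings so close that strict cone inclusion survives. The contact inequality then follows from the integrated rotation being of fixed sign. If the pointwise derivative still fails, I would average the line field over flow time $[0,T]$, which turns the integrated rotation into a pointwise one.

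Finally, the maximal invariant set of a bi-contact plug is compact and invariant, and $\xi_\pm$ are defined on all of $M$, so (2)$\Rightarrow$(1) applies directly. For closed $M$, take $\Lambda=M=U$.
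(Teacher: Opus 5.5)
Your (2)$\Rightarrow$(1) is the paper's cone-field argument. The overall shape of your (1)$\Rightarrow$(2) also matches the paper: bisectors of $E$ and $F$ in an adapted metric are exactly the kernels of $\alpha_F\pm\alpha_E$, the sums and differences of adapted dual forms that the paper uses.

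There is one slip in (2)$\Rightarrow$(1). The lines $\ell_\pm$ turn in \emph{opposite} senses with respect to the orientation of $\mathcal N_X$: in a flow-invariant coframe a positive form rotates clockwise and a negative one counterclockwise, as in Section~\ref{s3.1}. This is what makes one sector shrink into itself instead of merely rotating. Your conclusion about $\mathcal C^u$ and $\mathcal C^s$ is nevertheless correct.

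The genuine gap is the regularity step in (1)$\Rightarrow$(2), and your proposed repairs do not close it.

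\emph{Why the step fails.} For $\alpha(X)=0$ one has $\iota_X(\alpha\wedge d\alpha)=-\alpha\wedge\mathcal L_X\alpha$. So the contact condition is a pointwise first-order condition along $X$, not a $C^0$-open condition on line fields. Your claim that each bisector rotates with a definite sign uses the invariance of $E$ and $F$. It therefore holds only for bisectors of the invariant bundles; for bisectors of smoothings $\tilde E,\tilde F$ the $X$-derivative is uncontrolled.

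\emph{Why the time-$T$ inclusion does not help.} Perturb the invariant bisector by a $C^0$-small term oscillating rapidly along orbits. The time-$T$ inclusion survives, since domination gives a margin of order $\lambda T$. But the contact inequality fails wherever the $X$-derivative of the perturbation exceeds $r_F-r_E$. So the contact inequality does not follow from the integrated rotation having a fixed sign.

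\emph{Why averaging the line field does not help as stated.} Averaging over $[0,T]$ converts integrated rotation into pointwise rotation only in the log-slope coordinate relative to $E\oplus F$, where $P_X^t$ acts by translations. There the result is again only continuous. In a smooth angular chart, the averaging does not commute with the projective action.

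\emph{What is missing.} Keep the invariant bundles in the construction and smooth only afterwards, with control of the $X$-derivative.
\begin{enumerate}
\item Take dual forms $\alpha_E,\alpha_F$ on $\Lambda$: both annihilate $X$, $\alpha_F$ annihilates $E$, and $\alpha_E$ annihilates $F$. They are continuous and differentiable along $X$, with $\mathcal L_X\alpha_E=r_E\alpha_E$, $\mathcal L_X\alpha_F=r_F\alpha_F$, and $r_F>r_E$ pointwise in an adapted metric. Then $(\alpha_F\pm\alpha_E)\wedge\mathcal L_X(\alpha_F\pm\alpha_E)=\mp(r_F-r_E)\,\alpha_F\wedge\alpha_E$.
\item Choose smooth forms $\tilde\alpha_\pm$ in $\operatorname{Ann}(X)$ that are $C^0$-close to $\alpha_F\pm\alpha_E$ on $\Lambda$.
\item Replace them by $\frac1T\int_0^T(\varphi^t)^*\tilde\alpha_\pm\,dt$. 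Their $X$-derivative $\frac1T\bigl((\varphi^T)^*\tilde\alpha_\pm-\tilde\alpha_\pm\bigr)$ is controlled by $C^0$ data alone.
\item The same average applied to $\alpha_F\pm\alpha_E$ still satisfies the strict inequality. This holds because $(c_T-1)/\int_0^Tc_t\,dt$, with $c_t=\exp\int_0^t r\circ\varphi^s\,ds$, is strictly increasing in the rate $r$.
\end{enumerate}
Openness of the contact and transversality conditions on $\Lambda$ then gives the neighbourhood $U$.
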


The bi-contact condition has a useful dynamical interpretation in this theorem.
Along the flow, the two contact planes rotate strictly in opposite senses.
Their images in the projectivized normal bundle bound invariant cone fields,
and compactness turns this strict separation into exponential domination.
No transverse orientability is needed: since $\mathcal N_X$ is oriented, adapted local dual forms for the invariant line bundles can be chosen with common sign transitions; their sums and differences define unoriented contact forms. The converse uses only projective cone fields.
There are two important strengthenings of this condition: \emph{partial hyperbolicity} and \emph{saddle hyperbolicity}.

\begin{definition}
For a nowhere-vanishing $X$, a compact invariant set $\Lambda$ is \emph{partially hyperbolic} if the linear Poincar\'e flow has a \emph{dominated splitting} over $\Lambda$, i.e.
there is a continuous $P_X^t$-invariant splitting into line bundles
$$
\mathcal N_X|_\Lambda=E\oplus E^u
$$
with $E\prec E^u$, as well as $Id \prec E^u$, i.e. $$\left\|P_X^{t}|_{E^u(x)}\right\|\geq Ce^{\lambda t},$$
for every $x\in\Lambda$ and $t\geq0$, with $C>0$ and $\lambda>0$.

Moreover, a compact nonsingular invariant set $\Lambda$ is \emph{hyperbolic}
if there is a continuous $P_X^t$-invariant splitting
$$
\mathcal N_X|_\Lambda=E^s\oplus E^u
$$
and constants $C\geq1$, $\lambda>0$ such that
$$
\|P_X^t|_{E^s(x)}\|\leq Ce^{-\lambda t},
\qquad
\|P_X^{-t}|_{E^u(x)}\|\leq Ce^{-\lambda t}
$$
for every $x\in\Lambda$ and $t\geq0$, whenever the corresponding
bundle is nonzero. Either bundle may have rank zero.
If both are line bundles, $\Lambda$ is \emph{saddle hyperbolic} (note that saddle hyperbolic invariant sets are dominated).
 By Doering's lemma \cite{doering}, saddle hyperbolicity is equivalent to the
existence of a continuous invariant splitting
$$
T_\Lambda M=E^s\oplus\langle X\rangle\oplus E^u
$$
into line bundles and constants $C\geq1$ and $\lambda>0$ such that
$$
\left\|D\varphi_X^t|_{E^s(x)}\right\|\leq Ce^{-\lambda t},
\qquad
\left\|D\varphi_X^{t}|_{E^u(x)}\right\|\geq C^{-1}e^{\lambda t}
$$
for all $x\in\Lambda$ and $t\geq0$.  The vector field $X$ is \emph{Anosov} or \emph{partially hyperbolic}
if $M$ itself is a saddle hyperbolic set or partially hyperbolic, respectively. 

Finally, $X$ is called \emph{uniformly hyperbolic}, if its nonwandering set is hyperbolic, and \emph{Morse-Smale} if its nonwandering set
consists of finitely many hyperbolic periodic orbits and their
stable and unstable manifolds intersect transversely.
\end{definition}
\begin{remark}
We note that in the hyperbolic dynamics literature, the more common convention is to define partial hyperbolicity based on the existence of strong stable bundle $E^s$ in a dominated splitting (for instance, see \cite{mpp}), which is the opposite of our convention. However, our definition is more natural from the viewpoint of contact and symplectic geometry \cite{hoz6}.
\end{remark}
Note that by Theorem~\ref{dom}, the maximal invariant set of a flow supported by a bi-contact plug is always projectively hyperbolic. Hence, the following terminology is natural.

\begin{definition}
We call a bi-contact plug {\em hyperbolic} or {\em partially hyperbolic}, if the maximal invariant set of its supported flow is saddle hyperbolic or partially hyperbolic, respectively.
\end{definition}

The following lemma is useful for characterizing when partial hyperbolicity is established.

\begin{lemma}[Partial hyperbolicity from recurrent dynamics]
\label{lem:partially-hyperbolic-plug}
Let $K$ be a compact nonsingular invariant set with dominated normal
splitting $E\oplus F$, where $E\prec F$. If its nonwandering set
is contained in finitely many compact invariant subsets on each of
which $F$ is uniformly expanding, then $K$ is partially hyperbolic.

In particular, a $C^1$ structurally stable bi-contact plug is
partially hyperbolic if and only if the nonwandering set of its
maximal invariant set is a disjoint union of finitely many saddle
hyperbolic basic sets and finitely many repelling periodic orbits.
\end{lemma}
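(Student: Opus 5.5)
The argument has two parts. The first part is a general upgrade from expansion on the nonwandering set to uniform expansion on all of $K$. The second part applies this to structurally stable plugs via the structure theorem of Arroyo--Rodriguez-Hertz \cite{arh}.

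\emph{First part.} Let $\Omega(K)\subset K_1\cup\dots\cup K_m$, where on each $K_i$ the bundle $F$ is uniformly expanding. It suffices to find $T>0$ such that $\|P_X^T|_{F(x)}\|\geq 2$ for every $x\in K$; iterating then gives uniform exponential growth.

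\begin{enumerate}[label=(\roman*)]
\item By uniform expansion on each $K_i$ and continuity of $F$ and of $P_X^t$, there is a neighborhood $U=\bigcup U_i$ of $\bigcup K_i$ and a time $t_0$ with $\|P_X^{t_0}|_{F(y)}\|\geq 2$ whenever the orbit segment $\varphi^{[0,t_0]}(y)$ lies in a single $U_i$. By domination, this extends to long segments: a segment of length $nt_0$ in $U_i$ gains a factor $2^n$.
\item Since $\Omega(K)\subset U$ and $K\setminus U$ is compact and consists of wandering points, there is a uniform bound $N$ on the total time an orbit spends outside $U$. The same compactness argument bounds the number of transits between distinct $U_i$'s, after arranging the $U_i$ to be disjoint and the $K_i$ to form a filtration-ordered (no-cycle) family. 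This is where finiteness and compactness enter.
\item Outside $U$ the possible contraction of $F$ over time $N$ is bounded below by a constant $c>0$. Hence for $T$ large every orbit segment of length $T$ spends most of its time in long stays in the $U_i$. These stays gain a factor $2^{n}$ that beats the bounded losses, so $\|P_X^T|_F\|\geq 2$.
\end{enumerate}

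The main obstacle is step (ii). An orbit could in principle wander near several $K_i$ repeatedly. I would handle this with a Lyapunov-function (Conley) argument: wandering orbits pass near each basic piece at most once, so there are at most $m$ visits. A cruder alternative avoids the ordering entirely: the total time outside $U$ is already uniformly bounded by compactness, since $K\setminus U$ is covered by finitely many wandering boxes, each visited at most once.

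\emph{Second part (``in particular'').} By structural stability and \cite{arh}, the nonwandering set of $\Lambda=\operatorname{Inv}(M,X)$ consists of finitely many hyperbolic basic sets and periodic orbits. Since $\Lambda$ is dominated with one-dimensional $E$ and $F$, each piece is a saddle basic set, a repelling orbit, or an attracting orbit.

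\begin{itemize}
\item On saddle sets $F=E^u$ by uniqueness of dominated splittings. On repelling orbits $F$ is expanding. So if no attracting orbits occur, the first part gives partial hyperbolicity.
\item Conversely, partial hyperbolicity forces $F$ to be expanding on every periodic orbit, which excludes attracting orbits. This leaves exactly saddle basic sets and repellers.
\end{itemize}
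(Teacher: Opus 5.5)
Your proof is correct, and your second part matches the paper's: hyperbolic spectral decomposition into saddle basic sets and attracting or repelling orbits, $F=E^u$ on saddle pieces by uniqueness of dominated splittings, and attracting orbits excluded because $F$ must expand.

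The first part takes a different route. The paper argues ergodically. It sets $r_F(x)=\frac{d}{dt}\big|_{t=0}\log\|P_X^t|_{F_x}\|$ and notes that every ergodic invariant measure on $K$ lives on the nonwandering set, hence on a single $K_i$. Ergodic decomposition then gives $\int r_F\,d\mu\geq\kappa>0$ for all invariant $\mu$. A weak-$*$ limit of normalized orbit measures excludes long orbit segments with average rate below $\kappa/2$. You instead bound the time orbits spend away from $\bigcup_i K_i$, using a finite cover of $K\setminus U$ by wandering boxes. Your route is elementary and gives an explicit $T$ from the wandering times of the cover. The paper's is shorter and needs no bookkeeping of how orbits enter and leave neighborhoods of the $K_i$.

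To make your version airtight, a few points need tightening.
\begin{itemize}
\item Drop the filtration/no-cycle ordering in (ii). It is not available for arbitrary compact invariant $K_i$, and it is not needed. Your cruder alternative is right: if $\varphi^t(V_j)\cap V_j=\varnothing$ for $|t|\geq T_j$, then the visit times of any orbit to $V_j$ lie in an interval of length less than $T_j$. So the times spent outside $U$ lie in at most $J$ windows of total length at most $N=\sum_jT_j$.
\item With (i) as you set it up, (iii) also needs the number of stays in $U$ to be bounded, not only the total excursion time. Otherwise many short stays could each waste up to $t_0$ of gain. This follows because the complement of $J$ windows in $[0,T]$ has at most $J+1$ components.
\item Alternatively, make (i) pointwise and then chop greedily. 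The map $y\mapsto\|P_X^{t_0}|_{F(y)}\|$ is continuous on $K$, so it is $\geq 2$ on a neighborhood of $\bigcup_iK_i$ once it is $\geq 3$ there.
\item You can replace the $K_i$ by their union. $F$ is still uniformly expanding on it, since each orbit stays in one $K_i$, and this avoids arranging the $U_i$ to be disjoint.
\item The extension of (i) to long segments comes from the cocycle property of the invariant line bundle $F$, not from domination.
\end{itemize}
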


\begin{proof}
Fix a smooth metric and let
$$
r_F(x)=\left.\frac{d}{dt}\right|_{t=0}
\log\|P_X^t|_{F_x}\|.
$$
This function is continuous. Uniform expansion on the finitely
many invariant subsets gives $\kappa>0$ such that
$\int r_F\,d\mu\geq\kappa$ for every invariant probability
measure on $K$. Each ergodic measure is supported on one of these
subsets, and the general case follows by ergodic decomposition.

Consequently, for all sufficiently large $t$, uniformly in $x\in K$,
$$
\log\|P_X^t|_{F_x}\|
=\int_0^t r_F(\varphi^s x)\,ds\geq\frac{\kappa t}{2}.
$$
Otherwise, a weak limit of the corresponding normalized orbit
measures would be invariant with integral at most $\kappa/2$.
Thus $F$ is uniformly expanding on $K$.

For a structurally stable plug, the hyperbolic spectral decomposition
consists of saddle basic sets and attracting or repelling periodic
orbits. Partial hyperbolicity excludes attracting periodic orbits.
Conversely, $F$ expands on every saddle basic set and repelling
periodic orbit, so the first assertion applies.
\end{proof}


\subsection{The structure theorem for dominated splittings}\label{s2.2}

The structure theorem of Arroyo-Rodríguez Hertz \cite{arh} describes the obstructions to saddle hyperbolicity for a projectively hyperbolic invariant set.

\begin{theorem}[Arroyo-Rodr\'iguez Hertz 2003]
\label{arh}
Let $X$ be a $C^2$ vector field on a closed $3$-manifold and let $\Lambda$
be a compact invariant set such that
$
\operatorname{Sing}(X)\cap\Lambda=\varnothing
$.
Suppose that $\Lambda$ is projectively hyperbolic and every periodic orbit contained in $\Lambda$ is a
hyperbolic saddle.  Then
$$
\Lambda=\widetilde\Lambda\cup\mathcal T,
$$
where $\widetilde\Lambda$ is a saddle hyperbolic set and $\mathcal T$ is a
finite union of normally hyperbolic invariant tori on which the flow is
topologically equivalent to an irrational linear flow.
\end{theorem}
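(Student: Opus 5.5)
This result is quoted from \cite{arh}. The plan is to follow the strategy of Pujals-Sambarino for $C^2$ surface diffeomorphisms with a dominated splitting, transported to the linear Poincar\'e flow by working on local cross sections. Throughout, fix $\Lambda$ with $\mathcal N_X|_\Lambda=E\oplus F$, $E\prec F$. Since domination is symmetric under time reversal, with $E$ and $F$ exchanged, it suffices to prove one statement: on the complement of finitely many tori, $E$ is uniformly contracting. The same argument for $-X$ then gives uniform expansion of $F$. Combined, these yield saddle hyperbolicity of $\widetilde\Lambda$.

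\emph{Step 1 (local geometry).} Cover $\Lambda$ by finitely many small transverse disks, so that the holonomies of $X$ between them are $C^2$ local diffeomorphisms of surfaces. The splitting $E\oplus F$ is then a dominated splitting for this pseudo-group. A Hadamard graph transform, applied to extensions of $E$ and $F$ to a neighborhood, produces locally invariant $C^2$ fake center-stable and center-unstable curve families tangent to $E$ and $F$ at points of $\Lambda$. Domination gives the usual uniform control: forward images of $F$-curves and backward images of $E$-curves stay tangent to the respective cone fields.

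\emph{Step 2 (non-contracting points and distortion).} Suppose $E$ is not uniformly contracting on $\Lambda$. Then, by a Pliss-lemma and ergodic-decomposition argument like the one in Lemma~\ref{lem:partially-hyperbolic-plug}, there is an ergodic invariant measure whose Lyapunov exponent along $E$ is $\ge0$. Hyperbolic times for $F$ give uniform-size unstable curves at generic points. The $C^2$ hypothesis is now used for bounded distortion along $E$-curves, via a Denjoy-Schwartz type estimate: products of derivatives along iterates of a short $E$-arc are comparable to those at a single point, as long as the total length of the iterates stays summable. This is the \textbf{main obstacle}. One must show that the forward iterates of some $E$-arc $J$ through a non-contracting point have lengths that neither grow nor shrink to zero. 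I would argue as follows.
\begin{itemize}
\item If the lengths grew, domination would force a contradiction with the bounded geometry of the fake foliations.
\item If the lengths were bounded and the iterates returned, the arcs would contain a periodic interval. A periodic orbit inside it would then be non-saddle or non-hyperbolic, which is excluded by hypothesis.
\end{itemize}
Consequently the $\omega$-limit of $J$ consists of arcs with no periodic points and bounded distortion. Schwartz's theorem then shows that these arcs close up into a $C^1$ invariant curve in a section, on which the return map is conjugate to an irrational rotation.

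\emph{Step 3 (tori, finiteness, conclusion).} Saturating such an invariant circle by the flow gives an invariant torus $T$ tangent to $E\oplus\langle X\rangle$. On $T$ the flow is topologically equivalent to an irrational linear flow, and domination of $F$ over $E$ makes $T$ normally hyperbolic. Such tori are isolated and have uniform size in the sections, so by compactness there are only finitely many; let $\mathcal T$ be their union. Apply Steps 1 and 2 to $\Lambda\setminus U$, where $U$ is a small neighborhood of $\mathcal T$ that is open in $\Lambda$. Any remaining non-contracting measure would produce, by Step 2, yet another torus, which contradicts the choice of $U$. Hence $E$ contracts uniformly there, and by the symmetric argument $F$ expands uniformly. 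The compact invariant set $\widetilde\Lambda=\Lambda\setminus U$ is therefore saddle hyperbolic. Since $\mathcal T$ is normally hyperbolic, points of $\Lambda$ near $\mathcal T$ either lie in $\mathcal T$ or lie on orbits accumulating on it, and the latter are absorbed into $\widetilde\Lambda$ after shrinking $U$. This gives $\Lambda=\widetilde\Lambda\cup\mathcal T$.
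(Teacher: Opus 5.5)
The paper gives no proof of this theorem; it is quoted from \cite{arh}. Your Steps~1--2 are therefore best read as a paraphrase of the Pujals--Sambarino mechanism used there, and they are heuristic rather than a proof. The first bullet supplies no mechanism, and nothing rules out the lengths of the iterates of $J$ tending to zero at a point whose $E$-exponent is merely $\geq0$.

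The decisive gap is at the end of Step~3. The set $\Lambda\setminus U$ is not invariant unless $\Lambda\cap U=\mathcal T$. Your remedy, that orbits of $\Lambda$ accumulating on $\mathcal T$ are ``absorbed into $\widetilde\Lambda$ after shrinking $U$'', cannot work. Suppose $x\in\Lambda\setminus\mathcal T$ lies in a trapping neighborhood of a normally repelling torus $T$ (use forward time for attracting tori). Its backward orbit stays there, so $\alpha(x)\subset T$. By minimality of the irrational flow, the closure of the orbit of $x$ contains $T$, so every compact invariant $\widetilde\Lambda$ containing $x$ contains $T$. But no point of an irrational torus lies in a saddle hyperbolic set. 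Over the minimal set $T$, the tangent normal line of $T$ is a continuous invariant line field, hence equal to $E$ or $F$. It would then be uniformly contracted or expanded by a flow mapping the closed surface $T$ onto itself, so the area of $T$ would decay or grow exponentially. Shrinking $U$ does not help, since such orbits enter every neighborhood of $\mathcal T$.

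This cannot be repaired by a better argument: as quoted, the statement is false without a hypothesis isolating the tori in $\Lambda$. Take the constant-field construction in the proof of Corollary~\ref{cor:pa-on-every-torus-bundle}, which is an instance of Theorem~\ref{thm:cyclic-toric-stack}, and choose $v_1$ so that every torus carries a constant field of irrational slope. Then $\Lambda=M$ is closed and carries a smooth projectively Anosov flow with no periodic orbits, so the periodic-orbit hypothesis holds vacuously. For any finite union of tori $\mathcal T$, the set $M\setminus\mathcal T$ is dense. Hence a compact $\widetilde\Lambda\supseteq M\setminus\mathcal T$ equals $M$, which is impossible by the previous paragraph.

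The missing ingredient is a hypothesis such as $\Lambda\subset\Omega(X)$, the setting of the original Pujals--Sambarino and Arroyo--Rodr\'iguez Hertz results. It gives exactly the isolation you need. A normally repelling torus $T$ has a neighborhood $U$ along whose boundary $X$ points outward. Every point of $U\setminus T$ leaves $U$ and never returns, so it is wandering. Thus $\Lambda\cap U=T$, and the same holds for attracting tori under time reversal. Then $\widetilde\Lambda=\Lambda\setminus\mathcal T$ is compact and invariant, and it contains no irrational torus once $\mathcal T$ collects all of them. Your contradiction argument in Step~3 then applies to $\widetilde\Lambda$, granted Step~2.
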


We record important consequences of the above theorem.

\begin{corollary}[Hyperbolicity criterion]
\label{cor:arh-hyperbolicity-criterion}
Let $\Lambda$ be a compact nonsingular invariant set of a $C^2$ flow on a
$3$-manifold.  If

\begin{enumerate}
  \item $\Lambda$ is projectively hyperbolic;
  \item every periodic orbit in $\Lambda$ is a hyperbolic saddle; and
  \item $\Lambda$ contains no invariant irrational torus,
\end{enumerate}

then $\Lambda$ is saddle hyperbolic.
\end{corollary}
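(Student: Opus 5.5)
The plan is to apply Theorem~\ref{arh} directly and use hypothesis (3) to kill the torus part of the decomposition. The one mismatch is that Theorem~\ref{arh} is stated on a closed $3$-manifold, while $\Lambda$ here may sit inside a manifold with boundary, for instance the maximal invariant set of a plug. So the first step is a localization, after which the argument is essentially formal.

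\emph{Step 1: reduction to a closed manifold.} Hyperbolicity, domination and the decomposition are all properties of the flow on $\Lambda$ and of its linear Poincar\'e flow there, so only a neighborhood of $\Lambda$ matters. If the ambient manifold is not closed, I would choose a compact neighborhood $U$ of $\Lambda$ in the interior. Then I would embed $U$ in a closed $3$-manifold $N$, for example by doubling a smooth compact submanifold containing $U$. Next I would extend $X|_U$ to a $C^2$ vector field $\widetilde X$ on $N$, using a cutoff that is the identity near $U$; singularities are allowed far from $\Lambda$. The set $\Lambda$ remains a compact invariant set of $\widetilde X$ and is disjoint from $\operatorname{Sing}(\widetilde X)$. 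Its dominated splitting and its periodic orbits are unchanged, since they depend only on the flow along $\Lambda$.

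\emph{Step 2: invoking the structure theorem.} Hypotheses (1) and (2) are exactly the hypotheses of Theorem~\ref{arh}. That theorem gives $\Lambda=\widetilde\Lambda\cup\mathcal T$, where $\widetilde\Lambda$ is saddle hyperbolic and $\mathcal T$ is a finite union of normally hyperbolic invariant tori carrying flows topologically equivalent to irrational linear flows. Each such torus lies in $\Lambda$ and is an invariant irrational torus. Hypothesis (3) therefore forces $\mathcal T=\varnothing$. Hence $\Lambda=\widetilde\Lambda$ is saddle hyperbolic, and this conclusion is intrinsic to $\Lambda$, so it holds for the original flow.

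\emph{Main obstacle.} The only real point is the interpretation of hypothesis (3) and the extension in Step 1. For (3), ``invariant irrational torus'' should be read as an invariant torus in $\Lambda$ on which the flow is topologically equivalent to an irrational linear flow, which is precisely what Theorem~\ref{arh} produces. For Step 1, the extended field must be genuinely $C^2$ and must not change the dynamics along $\Lambda$. Both are routine because the cutoff is supported away from $\Lambda$.
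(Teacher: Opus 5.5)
Your proposal is correct and matches the paper, which records this corollary as an immediate consequence of Theorem~\ref{arh}: hypotheses (1) and (2) are exactly its hypotheses, and (3) forces the torus part $\mathcal T$ of the decomposition to be empty. Your localization to a closed manifold is a reasonable detail that the paper leaves implicit; if $\Lambda$ could meet a boundary to which the flow is tangent, first extend across an exterior collar before applying your cutoff.
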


In particular, a projectively Anosov vector field on a closed $3$-manifold
is Anosov as soon as all of its periodic orbits are hyperbolic saddles and it
has no invariant irrational torus.

The structure theorem also has the following standard generic consequence \cite{arh,bbp}.
Here, genericity is taken inside the $C^1$-open set of projectively Anosov
vector fields. 

\begin{corollary}[Generic projectively Anosov flows]
\label{corgeneric}
A $C^1$-generic projectively Anosov vector field on a closed oriented
$3$-manifold is uniformly hyperbolic and satisfies the strong transversality condition.
In particular, it is structurally stable and its nonwandering set is the
disjoint union of a saddle hyperbolic set and finitely many attracting or
repelling periodic orbits.
\end{corollary}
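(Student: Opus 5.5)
The plan is to reduce Corollary~\ref{corgeneric} to Theorem~\ref{arh}, applied to the part of the nonwandering set that is not isolated sinks or sources, together with the classical Kupka--Smale and closing-lemma genericity results. The set $\mathcal{PA}$ of projectively Anosov vector fields is $C^1$-open: by Theorem~\ref{dom} it is characterized by a supporting bi-contact structure, and transversality plus the contact condition are open. So it is a Baire space, and residual subsets of $\mathcal{X}^1(M)$ intersected with $\mathcal{PA}$ stay residual in $\mathcal{PA}$.

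I would intersect three residual conditions. First, Kupka--Smale: every periodic orbit is hyperbolic, and stable and unstable manifolds of periodic orbits meet transversely. Second, Pugh's general density theorem (via the $C^1$ closing lemma): the nonwandering set is the closure of the periodic orbits. Third, a generic condition excluding invariant tori carrying irrational linear flow. This last one I expect to need care. Such a torus $T$ is normally hyperbolic, so it persists under $C^1$ perturbation. By a local perturbation on $T$ one can create a hyperbolic periodic orbit on the continuation of $T$, which is stable under further perturbation. So irrational tori are nowhere dense, and their absence is generic; the structure theorem itself is only stated for $C^2$ fields, so I would apply it to a dense set of smooth fields in each open set and use openness of the resulting hyperbolic configuration. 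This $C^1$ versus $C^2$ bookkeeping, and the claim that the set of fields with hyperbolic nonwandering set has open-dense interior, is where I expect the main obstacle. I would follow \cite{arh,bbp} here, arguing via the open set of fields whose nonwandering set is hyperbolic rather than pointwise genericity.

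Now fix a generic $X$. Under domination $E\prec F$, each hyperbolic periodic orbit is a saddle, a sink, or a source. A sink or source is an isolated point of the periodic set. Domination with uniform constants forces the contracting, respectively expanding, rates to be bounded away from zero, so local stable/unstable neighbourhoods have uniform size. Hence there are finitely many sinks and sources: otherwise an accumulation point would lie in a dominated set containing both expanding and contracting directions arbitrarily close, contradicting the uniform estimates.

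Let $\Lambda$ be the closure of the saddle orbits. It is compact, invariant, nonsingular and dominated, and all its periodic orbits are hyperbolic saddles, since a sink or source is isolated and cannot lie in this closure. Because $X$ has no irrational torus, Theorem~\ref{arh} (equivalently Corollary~\ref{cor:arh-hyperbolicity-criterion}) gives that $\Lambda$ is saddle hyperbolic. By Pugh density, $\Omega(X)=\Lambda\sqcup\{\text{finitely many sinks and sources}\}$, which is hyperbolic, so $X$ is uniformly hyperbolic (Axiom~A with no singularities). Kupka--Smale transversality, together with the hyperbolic spectral decomposition, upgrades to the strong transversality condition on stable and unstable manifolds of basic sets; this is generic by the standard argument used in \cite{bbp}. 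Robinson's structural stability theorem then gives structural stability.
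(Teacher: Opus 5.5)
There is a genuine gap at the finiteness of sinks and sources. Domination $E\prec F$ only bounds the \emph{difference} of the two normal rates. At a sink it makes $E$ uniformly contracting, but the weak direction $F$ can be contracted arbitrarily slowly, and symmetrically for $E$ at a source. So basins need not have uniform size. A Hausdorff limit of sinks is then just a compact invariant dominated set with nonpositive $F$-exponent, which contradicts nothing.

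Concretely, apply Theorem~\ref{thm:cyclic-toric-stack} with $g_1=g_2=\mathrm{id}$, $Y_2=\partial_y$, and $Y_1=\partial_x+f(y)\partial_y$ on the attracting torus, where $f$ has infinitely many simple zeros accumulating at a degenerate one. This gives a projectively Anosov flow with infinitely many hyperbolic sinks whose weak exponents tend to zero. It is not Kupka--Smale, so it does not contradict the corollary. But your finiteness step uses only domination and hyperbolicity of individual orbits, so it cannot be correct as stated. Finiteness needs perturbative input. Pliss' lemma gives uniform basins for sinks whose weak exponents are uniformly negative. Sinks with weak exponents tending to zero must be excluded using Franks-lemma perturbations, and handling them is part of what \cite{arh} accomplishes.

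The regularity problem you flagged is also not resolved by your final paragraph. Theorem~\ref{arh} needs $C^2$, and the $C^2$ fields form a meager subset of $\mathcal X^1(M)$. So a residual set may contain no $C^2$ field, and you cannot fix a generic $X$ and apply the theorem to it. Smooth approximants do not rescue this directly either. Your identification $\Omega(X)=\Lambda\sqcup\{\text{sinks, sources}\}$ rests on Pugh's general density theorem, which is only $C^1$-generic and is not known for those approximants.

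What the citation \cite{arh,bbp} supplies is an open and dense statement. The main theorem of \cite{arh} says that every $C^1$ field on a closed $3$-manifold is $C^1$-approximated by fields with a homoclinic tangency, by fields with a singular cycle, or by Axiom~A fields with no cycles. Fields in the $C^1$-open set $\mathcal{PA}$ have no singularities, hence no singular cycles. They also have no homoclinic tangencies, since domination forces the tangent planes of $W^s(\gamma)$ and $W^u(\gamma)$ to project onto $E$ and $F$ in $\mathcal N_X$. Hence Axiom~A with no cycles is dense in $\mathcal{PA}$. It is also open by $\Omega$-stability, and finiteness of sinks and sources then comes for free.

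Strong transversality is a further generic condition within this open set, after which Robinson's theorem applies. Kupka--Smale only concerns periodic orbits and does not ``upgrade'' to strong transversality. Finally, your separate condition excluding irrational tori can be dropped. Such a torus is normally attracting or repelling, so no periodic orbit off the torus meets a neighbourhood of it. Hence it can never lie in the closure of the saddle orbits.
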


We call a projectively Anosov flow satisfying the conditions of Corollary~\ref{corgeneric} {\em structurally generic}.

\subsection{Hyperbolic plugs and their boundary laminations}\label{s2.3}

\emph{Hyperbolic plugs} as treated by Béguin-Bonatti-Yu \cite{bby} provide the chief motivation behind our study of bi-contact plugs. We refer to their paper for more thorough discussions and detailed definitions.

\begin{definition}\label{hplugdef}
A \emph{hyperbolic plug} is a pair $(M,X)$ consisting of a compact
$3$-manifold with boundary and a vector field transverse to $\partial M$
whose nonempty maximal invariant set
$
\Lambda=\operatorname{Inv}(M,X)
$
is saddle hyperbolic.  The plug is \emph{attracting} if
$\partial_{\mathrm{out}}M=\varnothing$ and \emph{repelling} if
$\partial_{\mathrm{in}}M=\varnothing$.
\end{definition}

The stable and unstable manifolds of $\Lambda$ meet the boundary in the
one-dimensional laminations
$$
\mathcal L^s
:=W^s(\Lambda)\cap\partial_{\mathrm{in}}M,
\qquad
\mathcal L^u
:=W^u(\Lambda)\cap\partial_{\mathrm{out}}M,
$$
called the \emph{entrance} and \emph{exit laminations}.  The crossing map is
the diffeomorphism
$$
\Gamma\colon
\partial_{\mathrm{in}}M\setminus\mathcal L^s
\longrightarrow
\partial_{\mathrm{out}}M\setminus\mathcal L^u
$$
obtained by following positive flow lines.

\begin{remark}
It is noteworthy that while our build-up to Definition~\ref{hplugdef} would make it more natural to call such plugs {\em saddle} hyperbolic, in \cite{bby} the term {\em saddle hyperbolic plug} is reserved for hyperbolic plugs whose spectral decomposition has no attractor or repeller components. In this paper, we stick with the definition of hyperbolic plugs as in \cite{bby} and do not use the terminology {\em saddle hyperbolic plug} to avoid confusion.
\end{remark}

We next recall the boundary terminology of B\'eguin-Bonatti-Yu.  An
\emph{MS (Morse-Smale) lamination} on a closed surface is a one-dimensional lamination
such that

\begin{enumerate}
  \item it has finitely many compact leaves;
  \item every half noncompact leaf is asymptotic to a compact leaf; and
  \item every compact leaf admits an orientation for which its holonomy is a
  contraction.
\end{enumerate}

The entrance and exit laminations of every hyperbolic plug are MS
laminations.  An MS lamination is a \emph{prefoliation} if it is contained in
a nonsingular foliation of the ambient surface.  For a hyperbolic plug,
$\mathcal L^s$ is a prefoliation if and only if $\mathcal L^u$ is a
prefoliation.  Note that this is a necessary condition for the embedding of a hyperbolic plug in an Anosov flow. An MS lamination is \emph{filling} if every component of its
complement is a strip whose width \emph{tends to 0 at both ends}.  For entrance and exit laminations of an orientable
hyperbolic plug which are prefoliations, this is equivalent to the absence of \emph{exceptional annular
components}.  Filling MS laminations are prefoliations and the filling condition is equivalent to having a unique extension, up to homeomorphism, to a foliation.

Here are our favorite sources of hyperbolic plugs.

\begin{example}[Reduction of a nonsingular uniformly hyperbolic flow]
Let $X$ generate a nonsingular uniformly hyperbolic flow satisfying the strong transversality condition.  All attracting and repelling
periodic orbits in its maximal invariant set form a finite collection of
hyperbolic orbits.  Excise pairwise disjoint filtrating solid torus
neighborhoods of these orbits.  If the maximal invariant set of the remaining
plug is nonempty, it is saddle hyperbolic and the resulting compact manifold equipped with $X$ is a hyperbolic plug. In the case when $X$ generates a structurally generic projectively Anosov flow (see Corollary~\ref{corgeneric}), the induced entrance and exit laminations are prefoliations.
\end{example}

\begin{example}[Derived-from-Anosov plugs]
Start with an Anosov flow and perform attracting or repelling
derived-from-Anosov modifications along finitely many periodic orbits.  After
removing suitable tubular neighborhoods of the new attracting or repelling
orbits, the complement is a hyperbolic plug.  B\'eguin-Bonatti-Yu show that
these plugs have filling MS entrance and exit laminations.  The
removed neighborhoods provide the elementary attracting and repelling
pieces used in their gluing constructions. See \cite{fw} or Section~8.1.1 in \cite{bby}.
\end{example}

\begin{example}[Suspended gradient plugs]
Let $\Sigma$ be a compact surface with boundary and let $f\colon\Sigma\to
\mathbb R$ be a Morse function whose critical points all have index one.
Choose a metric for which $\nabla f$ is transverse to $\partial\Sigma$ and
Morse-Smale.  Then
$
X=\partial_\theta+\nabla f
$
defines a hyperbolic plug on $\mathbb S^1_\theta\times\Sigma$.  Indeed, the
absence of saddle connections implies
$$
\operatorname{Inv}(\mathbb S_\theta^1\times\Sigma,X)
=
\coprod_{p\in\operatorname{Crit}(f)}
\mathbb S_\theta^1\times\{p\}.
$$
Each component is a saddle periodic orbit, since the Hessian of $f$ at an
index-one critical point has one positive and one negative eigenvalue. See Figure~\ref{fig:suspended-gradient-plug}.
\end{example}

\begin{figure}[htbp]
  \centering
  \includegraphics[width=.7\textwidth]{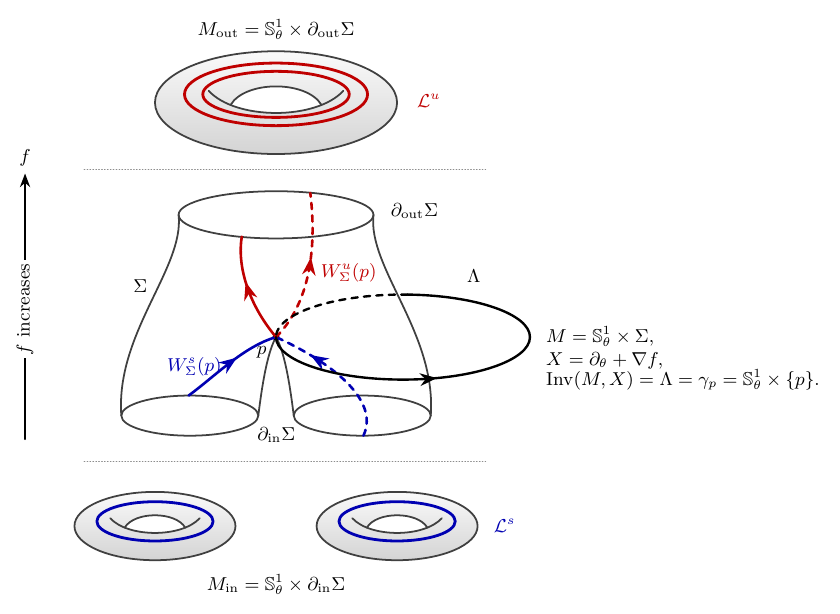}
  \caption{The suspended-gradient plug associated with a pair of pants
  $\Sigma$ and a Morse function having one index-one critical point $p$.
  The stable separatrices (blue) meet the two entrance components, while the
  unstable separatrices (red) meet the exit component.  After taking the
  product with $\mathbb S^1_\theta$, the point $p$ becomes the saddle
  periodic orbit $\gamma_p$, and the endpoints of the separatrices become
  the compact leaves of the entrance and exit laminations.}
  \label{fig:suspended-gradient-plug}
\end{figure}

We shall also use the following extension result of
B\'eguin-Bonatti-Yu.  Their \emph{$\lambda$-lemma} argument gives an invariant
foliation of class $C^{0,1}$, meaning that its leaves are $C^1$ and tangent
to a continuous plane field.  In
Lemma~\ref{lem:regularity-invariant-completions}, we establish a regularity refinement of this lemma, showing that for a smooth
three-dimensional hyperbolic plug these invariant completions are in fact
of class $C^{1+\vartheta}$ for some $\vartheta>0$.

\begin{lemma}[B\'eguin-Bonatti-Yu 2017; Invariant extension]
\label{lem:bby-invariant-extension}
Assume that the entrance and exit laminations of $(M,X)$ are prefoliations.
Every foliation $F_{\mathrm{out}}$ on $\partial_{\mathrm{out}}M$ transverse
to $\mathcal L^u$ extends to an $X$-invariant $C^{0,1}$ foliation of $M$
containing $W^s(\Lambda)$ as a sublamination.  Its restriction to
$\partial_{\mathrm{in}}M$ is a foliation containing $\mathcal L^s$.
The time-reversed statement holds with $s$ and $u$ interchanged.
\end{lemma}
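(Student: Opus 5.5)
The plan follows the $\lambda$-lemma strategy of B\'eguin-Bonatti-Yu: push the boundary foliation $F_{\mathrm{out}}$ backward by the flow on the part of $M$ that escapes, fill in $W^s(\Lambda)$ as a closed sublamination, and show that the two pieces fit together continuously. We treat only the forward statement; the time-reversed one follows by applying it to $-X$, which exchanges entrance and exit and $\mathcal L^s$ with $\mathcal L^u$.

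\textbf{Step 1: the escaping region.} Let $U=M\setminus W^s(\Lambda)$ be the set of points whose forward orbit reaches $\partial_{\mathrm{out}}M$. On $U$ we define the foliation by saturation: the leaf through $x\in U$ is $\bigcup_t\varphi_X^t(\ell)\cap M$, where $\ell$ is the leaf of $F_{\mathrm{out}}$ through the exit point of $x$. Since $X$ is transverse to $\partial_{\mathrm{out}}M$, this gives an $X$-invariant foliation of $U$ by surfaces. It is smooth wherever $F_{\mathrm{out}}$ is smooth, because the exit-point map is a submersion onto $\partial_{\mathrm{out}}M$.

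\textbf{Step 2: the stable lamination.} On $W^s(\Lambda)$ we take the leaves of the stable lamination. These are $C^1$ injectively immersed surfaces tangent to the continuous plane field $E^s\oplus\langle X\rangle$, so the candidate foliation on all of $M$ is $W^s(\Lambda)\cup(\text{saturation of }F_{\mathrm{out}})$.

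\textbf{Step 3: continuity across $W^s(\Lambda)$ (main obstacle).} The key point is that the tangent planes of the Step 1 leaves converge to $E^s\oplus\langle X\rangle$ as $x\to W^s(\Lambda)$. Take a point $x$ close to $W^s(\Lambda)$. Its orbit spends a long time $T$ near $\Lambda$ before exiting near $\mathcal L^u\subset W^u(\Lambda)$. At the exit point, $F_{\mathrm{out}}$ is transverse to $\mathcal L^u$. Writing the leaf tangent as $\langle X\rangle\oplus L$, the line $L$ is therefore uniformly transverse to $E^u$, with the angle bounded below by compactness of $\partial_{\mathrm{out}}M$ and continuity of the splitting near $\Lambda$. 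Flowing backward for time $T$, domination contracts the $E^u$-component of $L$ relative to its $E^s$-component exponentially. This is the standard cone argument for $P^{-t}_X$, and it is exactly the $\lambda$-lemma. Hence the angle between the leaf and $E^s\oplus\langle X\rangle$ at $x$ is at most $Ce^{-\lambda T}$, and $T\to\infty$ as $x\to W^s(\Lambda)$. The delicate part is making this uniform. Points of $U$ can pass near $\Lambda$ several times, through different basic pieces, and also pass near compact leaves of the prefoliation. I would control this with a filtration adapted to the spectral decomposition: by Lemma~\ref{lem:bby-invariant-extension}'s hypotheses the leaf structure only needs to be controlled in a neighborhood of $\Lambda$ where the cone fields are defined, and orbits spend bounded time outside such a neighborhood, so the constants stay uniform.

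\textbf{Step 4: leaf structure and the entrance boundary.} The prefoliation hypothesis is what allows the saturated leaves to accumulate on $W^s(\Lambda)$ without holonomy obstructions. Because $\mathcal L^u$ sits inside a foliation transverse to $F_{\mathrm{out}}$, a local product-chart argument shows that near each point of $W^s(\Lambda)$ the union is a genuine foliation chart: $C^1$ leaves varying continuously. This gives class $C^{0,1}$. Restricting to $\partial_{\mathrm{in}}M$, where $X$ is transverse, we get a foliation of $\partial_{\mathrm{in}}M$ that contains $\mathcal L^s=W^s(\Lambda)\cap\partial_{\mathrm{in}}M$ as a sublamination.
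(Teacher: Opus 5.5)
This is essentially the argument the paper relies on: the paper gives no proof of its own but cites the $\lambda$-lemma construction of B\'eguin--Bonatti--Yu. That construction is your scheme: backward saturation of $F_{\mathrm{out}}$ on the escaping set, completion by the stable lamination, continuity of the tangent planes from domination along the long passage near $\Lambda$ (with uniformly bounded time outside a neighborhood of $\Lambda$), and an application to $-X$ for the reversed statement.

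The one loose point is Step~4, and it affects only the justification there, not the approach. The foliation charts near $W^s(\Lambda)$ come from the disc form of the inclination lemma: backward images of small plaques crossing $W^u(\Lambda)$ contain uniform-size discs $C^1$-close to stable plaques. They do not come from the prefoliation hypothesis, which your argument does not actually use once $F_{\mathrm{out}}$ is given.
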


Two filling MS laminations $\mathcal L_1$ and $\mathcal L_2$ on the same
surface $S$ are \emph{strongly transverse} if they are transverse and every
component of
$
S\setminus(\mathcal L_1\cup\mathcal L_2)
$
is a topological disk whose boundary consists of four alternating segments,
two contained in leaves of each lamination.



Gluing two distinct hyperbolic plugs is comparatively straightforward. B\'eguin-Bonatti-Yu showed that any gluing diffeomorphism which maps the exit lamination transversely to the entrance lamination produces a hyperbolic plug, without requiring strong transversality. Self-gluing is considerably more subtle because it creates return orbits. We give an independent bi-contact treatment of both situations in Section~\ref{s5}. The main self-gluing theorem of B\'eguin-Bonatti-Yu may be stated as follows.


\begin{theorem}[B\'eguin-Bonatti-Yu 2017; Self-gluing]
\label{thm:bby-main-gluing}
Let $(M,X)$ be a hyperbolic plug with filling MS laminations, and suppose that
its maximal invariant set has no basic piece which is an attractor or a
repeller.  Let
$$
\psi\colon\partial_{\mathrm{out}}M
\longrightarrow\partial_{\mathrm{in}}M
$$
be strongly transverse, meaning that
$\psi_*(\mathcal L^u)$ and $\mathcal L^s$ are strongly transverse.  Then one
may deform $X$ through topologically equivalent hyperbolic plugs and deform
$\psi$ through strongly transverse gluing maps so that the vector field on
the resulting closed quotient is Anosov.
\end{theorem}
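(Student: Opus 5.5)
The quickest route is to deduce the statement from the contact geometric self-gluing result, Theorem~\ref{intro:MS-filling-self-gluing}, which is stated earlier and which we may assume. That result is strictly stronger here: it does not exclude attracting or repelling basic pieces, and it does not deform $X$ at all. So we take the given strongly transverse $\psi$ and view it as a map $\psi_0$ from exit components to entrance components (or its inverse, matching the orientation convention of Theorem~\ref{intro:MS-filling-self-gluing}). Strong transversality of $\psi_*(\mathcal L^u)$ and $\mathcal L^s$ is exactly the hypothesis needed there. Theorem~\ref{intro:MS-filling-self-gluing} then gives an isotopy $(\psi_r)$ through strongly transverse maps and a strongly adapted contact form $\alpha_+$ preserved by $\psi_1$.

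Since $\psi_1$ identifies all of $\partial_{\mathrm{out}}M$ with all of $\partial_{\mathrm{in}}M$, the quotient is closed. The form $\alpha_+$ descends to a strongly adapted contact form for the induced flow on the whole closed manifold. By the criterion of \cite{hoz7} recalled in the introduction, this flow is Anosov. The deformation of $X$ allowed in the statement is then the constant one, and the no-attractor/repeller hypothesis is not used.

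If instead one wants an argument independent of Section~\ref{s5}, I would follow the original strategy of \cite{bby}, in three steps.

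\begin{enumerate}
\item Use the filling MS hypothesis and Lemma~\ref{lem:bby-invariant-extension} to extend $\mathcal L^s$ and $\mathcal L^u$ to invariant stable and unstable foliations of $M$. Strong transversality makes the pulled-back boundary foliations form a product structure on each complementary quadrilateral.
\item Deform $X$ near $\Lambda$ by a topological equivalence so that $\Lambda$ admits an \emph{affine} Markov partition, with linear holonomies along the stable and unstable foliations. Here the absence of attractors and repellers is used, since every piece then has both laminations genuinely crossing the boundary.
\item Isotope $\psi$ through strongly transverse maps so that, in these affine coordinates, it maps unstable boundary strips to stable boundary strips with expansion and contraction factors compatible with the Markov rectangles. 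This produces cone fields on the closed quotient that are invariant under the return map, and hence hyperbolicity.
\end{enumerate}

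The main obstacle on this route is step 3. One must control orbits that return arbitrarily many times through the glued boundary, crossing the complementary strips where the holonomy is not governed by $\Lambda$. The plan there is to shrink the widths of the complementary strips by the isotopy, using that their widths tend to $0$ at both ends (the filling condition). This makes the time spent in the non-hyperbolic strip regions negligible compared with the uniform contraction and expansion gained near $\Lambda$.
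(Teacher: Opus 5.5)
Your first argument is correct and matches how the paper relates to this statement. The paper does not reprove the B\'eguin--Bonatti--Yu theorem; instead its Theorem~\ref{thm:MS-filling-self-gluing}, applied to $\psi^{-1}$ in the complete-gluing case, yields it with $X$ kept fixed and without using the no-attractor/repeller hypothesis, exactly as you deduce. Your second, BBY-style outline is not needed, and its step~3 is only heuristic: the stretching actually used (Lemma~\ref{lem:relative-boundary-stretching}, following \cite{bby}) exploits the unbounded expansion of the crossing map along the strips near $\mathcal L^s$, rather than shrinking strip widths.
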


It is noteworthy that B\'eguin-Bonatti-Yu also proved that every hyperbolic plug with filling
MS laminations embeds, up to topological equivalence, into an Anosov flow.



These results form the dynamical starting point for our constructions.  Our
goal is to replace part of the dynamical adjustment in
Theorem~\ref{thm:bby-main-gluing} by deformation and gluing of the supporting
contact geometry.

\subsection{Strongly adapted contact geometry}

There is a stronger contact geometric criterion which detects actual
hyperbolicity rather than domination alone. The idea was developed by the author in \cite{hoz7} as a purely contact geometric characterization of Anosov flows in dimension three. The same criterion applies to an arbitrary compact invariant set of a flow on a $3$-manifold.

\begin{definition}
Let $\Lambda$ be a compact invariant set and let $\alpha_+$ be a positive
contact form defined on a neighborhood of $\Lambda$.  We say that
$\alpha_+$ is \emph{strongly adapted} to $X$ near $\Lambda$ if
$
\alpha_+(X)=0
$
and
$
\mathcal L_X\alpha_+
$
is a negative contact form.  Its first $X$-jet is denoted by
$$
j_X^1\alpha_+
:=
[(\alpha_+,\mathcal L_X\alpha_+)]_{\pm},
$$
where the sign acts simultaneously on both entries.
\end{definition}

If $\alpha_+$ is strongly adapted and
$\alpha_-:=\mathcal L_X\alpha_+$, then Cartan's formula gives
$$
\alpha_-\wedge\alpha_+
=
\iota_X(\alpha_+\wedge d\alpha_+)>0.
$$
Thus
$$
(\ker\alpha_-,\ker\alpha_+)
$$
is a supporting bi-contact structure.  Strong adaptation is precisely the
additional differential condition which upgrades the resulting domination
to saddle hyperbolicity.

The following is the compact invariant-set version of the author's strongly
adapted contact geometric characterization of Anosov flows \cite{hoz7}.  Although the
original statement concerns closed manifolds, the very same proof applies locally along the invariant set.  Indeed, one chooses adapted norms on the
stable and unstable line bundles and applies asymptotic synchronization so
that the associated infinitesimal rates satisfy the strict contact
inequality. Compactness makes all the
estimates uniform. The openness of the contact condition then allows one to extend and smoothen the
resulting form on a neighborhood of $\Lambda$.
The norms and rates are independent of orientations, and the local forms have common sign transitions. Extension and smoothing may be performed equivariantly on the corresponding double cover, so the proof applies to unoriented contact forms.


\begin{theorem}[H. 2025; local version for invariant sets]
\label{adapted}
Let $X$ be a nowhere-vanishing smooth vector field on an oriented
$3$-manifold and let $\Lambda$ be a compact invariant set.  Then $\Lambda$
is a saddle hyperbolic set if and only if there is a
neighborhood of $\Lambda$ carrying a positive contact form $\alpha_+$ such
that
$
\alpha_+(X)=0
$
and
$
\mathcal L_X\alpha_+
$
is a negative contact form.
\end{theorem}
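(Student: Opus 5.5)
We prove the two implications of Theorem~\ref{adapted} separately. The reverse implication is the direct one. The forward implication is where the real work lies.

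\emph{Strongly adapted form $\Rightarrow$ saddle hyperbolicity.} Suppose $\alpha_+$ is strongly adapted near $\Lambda$, and set $\alpha_-:=\mathcal L_X\alpha_+$. As observed above, $(\ker\alpha_-,\ker\alpha_+)$ supports $X$, so by Theorem~\ref{dom} there is a dominated splitting $E\oplus F$ over $\Lambda$. Next we show that $F$ is uniformly expanded and $E$ uniformly contracted. Write $\alpha_\pm$ in terms of dual forms to $E,F$ along $\Lambda$, say $\alpha_+=a\,e^*+b\,f^*$. Since $\alpha_+(X)=0$, we have $\mathcal L_X\alpha_+=\iota_Xd\alpha_+$. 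Hence the condition that $\mathcal L_X\alpha_+$ be a negative contact form becomes a pointwise inequality on the infinitesimal rates $r_E,r_F$ of $P^t_X$ on $E,F$, namely $\alpha_-\wedge d\alpha_-<0$. Using the structure $\alpha_+\wedge d\alpha_+>0$ and $\alpha_-\wedge\alpha_+>0$, the goal is to extract an inequality of the form $r_F>0>r_E$ after averaging. This can be done via invariant measures, as in the proof of Lemma~\ref{lem:partially-hyperbolic-plug}: it suffices that $\int r_F\,d\mu>0>\int r_E\,d\mu$ for every invariant probability measure $\mu$. The ergodic decomposition and weak-limit argument of that lemma then upgrade this to uniform exponential rates. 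Doering's lemma \cite{doering} converts the normal statement into saddle hyperbolicity.

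\emph{Saddle hyperbolicity $\Rightarrow$ strongly adapted form.} This follows \cite{hoz7}. Take the hyperbolic splitting $E^s\oplus E^u$ of $\mathcal N_X|_\Lambda$. Choose adapted norms, via Lyapunov-type averaging $\int_0^T\|P^t v\|\,dt$, so that the rates satisfy $r_s<-\lambda<0<\lambda<r_u$ pointwise. Then perform asymptotic synchronization: smooth along the flow direction so that $r_s,r_u$ become differentiable along $X$ with controlled derivatives. Let $\alpha_s,\alpha_u$ be the dual forms vanishing on $X$, with $\mathcal L_X\alpha_u=r_u\alpha_u$ and $\mathcal L_X\alpha_s=r_s\alpha_s$. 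Put $\alpha_+=\alpha_u-\alpha_s$, with signs chosen by the orientation convention. Then
\[
\mathcal L_X\alpha_+=r_u\alpha_u-r_s\alpha_s .
\]
The contact conditions reduce to inequalities on $\alpha_u\wedge\alpha_s$, $d\alpha_u$, $d\alpha_s$ and the rates. The inequality for $\mathcal L_X\alpha_+$ involves $X(r_u)$, $X(r_s)$ and $r_ur_s<0$. Synchronization, which makes the rates nearly equal and nearly constant along orbits, makes the strict negative-contact inequality hold on $\Lambda$. These forms are only continuous transversally, so we then approximate by smooth forms on a neighborhood. Since $\alpha_+(X)=0$ is linear, we can first approximate $\alpha_+$ in $C^0$ together with $\mathcal L_X\alpha_+$, for instance by convolution in a flow box. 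Openness of both contact conditions, in the topology controlling $\alpha_+$, $d\alpha_+$ and $\mathcal L_X\alpha_+$, then keeps them valid on a neighborhood. For unoriented forms, all norms and rates are orientation-independent, so we can carry out the construction on the orientation double cover of $E^s$, $E^u$ equivariantly and descend.

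\emph{Main obstacle.} The hard step is the regularity and smoothing in the forward direction. The forms $\alpha_s,\alpha_u$ are only Hölder transversally, but contactness involves $d\alpha_+$, so the derivatives must be controlled. We handle this by encoding contactness through the flow derivatives only, using $\alpha_+\wedge d\alpha_+=\alpha_+\wedge\iota_X^{-1}$-type expressions via $\mathcal L_X$ and $\alpha_-\wedge\alpha_+=\iota_X(\alpha_+\wedge d\alpha_+)$. This makes the conditions depend only on $\alpha_\pm$ and their $X$-derivatives, which are preserved under flow-box convolution in the transverse directions.
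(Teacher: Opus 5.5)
Your forward direction (saddle hyperbolic $\Rightarrow$ strongly adapted) follows essentially the route the paper indicates by deferring to \cite{hoz7}: adapted norms, asymptotic synchronization, then extension and smoothing by openness, equivariantly on a double cover. Two small corrections there. First, with $\alpha_+=\alpha_u-\alpha_s$ the negative-contact condition holds as soon as $|r_u\,Xr_s-r_s\,Xr_u|<|r_ur_s|\,(r_u-r_s)$, so synchronization must make the rates nearly constant along orbits, not nearly equal. Second, negativity of $\mathcal L_X\alpha_+$ is the sign of $\mathcal L_X^2\alpha_+\wedge\mathcal L_X\alpha_+$ on $\mathcal N_X$, so the extension off $\Lambda$ and the flow-box smoothing must also control $\mathcal L_X^2\alpha_+$ in $C^0$.

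The reverse direction, however, has a genuine gap. You state the target $\int r_F\,d\mu>0>\int r_E\,d\mu$ but never derive it, and the mechanism you point to cannot deliver it. For arbitrary dual forms, negativity of $\mathcal L_X\alpha_+$ is not a pointwise inequality on $r_E,r_F$. It involves the coefficients $a,b$ and their first two $X$-derivatives, and, like positivity of $\alpha_+$, it only encodes domination. Indeed, the three sign conditions you list also hold for the dominated source model of Section~\ref{s3} with $0<\nu<\mu$, where both normal directions expand. So no averaging of those inequalities can give contraction of $E$. The identity $\alpha_-=\mathcal L_X\alpha_+$ itself must be used.

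The missing step is to measure the invariant lines with the norm $\|v\|=|\alpha_+(v)|$. For $v$ in $E$ or $F$,
\[
\frac{d}{dt}\log\bigl|\alpha_+(P_X^tv)\bigr|
=\frac{(\mathcal L_X\alpha_+)(P_X^tv)}{\alpha_+(P_X^tv)}
=\frac{\alpha_-(P_X^tv)}{\alpha_+(P_X^tv)}.
\]
By the cone-field picture behind Theorem~\ref{dom}, the sector $\{\alpha_-(v)\alpha_+(v)>0\}$ is forward invariant and contains $F$. The sector $\{\alpha_-(v)\alpha_+(v)<0\}$ is backward invariant and contains $E$. Hence, at every point of $\Lambda$,
\[
r_F:=\alpha_-/\alpha_+|_F>0>r_E:=\alpha_-/\alpha_+|_E .
\]
These functions are continuous on the compact set $\Lambda$. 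Compactness therefore gives $\kappa>0$ with $\|P_X^t|_F\|\geq e^{\kappa t}$ and $\|P_X^t|_E\|\leq e^{-\kappa t}$ in this norm, and Doering's lemma finishes. This is precisely the norm the paper uses in the proof of Lemma~\ref{lem:regularity-invariant-completions}.

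Once you have this observation, the ergodic argument is unnecessary. For any other normalization, the rates differ from these by $X$-derivatives of bounded continuous functions, which integrate to zero against invariant measures. That is the only way your averaged inequality would have been obtained.
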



When $\Lambda=M$ is closed, Theorem~\ref{adapted} gives the purely
contact geometric characterization of Anosov flows in \cite{hoz7} without requiring transverse orientability.


Theorems~\ref{arh} and~\ref{adapted} give two criteria for hyperbolicity. The first uses a dominated splitting together with the absence of dynamical obstructions. The second uses a strongly adapted contact form.
\section{Construction of bi-contact plugs}\label{s3}

This section develops the three local models used throughout the paper.  We
first treat product plugs and their strongly adapted refinement.  We then
extend the product construction across the stable and unstable sets of a
hyperbolic plug.  Finally, we construct non-hyperbolic plugs around normally
attracting or repelling invariant surfaces. We use the unoriented forms of
Section~\ref{s2.1}, also for defining forms of characteristic foliations.
Forms used together have common sign transitions. Local coframes and
representatives are transported consistently along the flow. The scalar
coefficients and inequalities below are unchanged by their simultaneous
sign reversal.

\subsection{Product bi-contact plugs}\label{s3.1}




We begin with bi-contact plugs whose maximal invariant set $\operatorname{Inv}(M,X)$ is empty. The supported flow carries each entrance component to an exit component in finite time, giving $M$ a product structure. After reparametrizing the flow, we have
$$
M=[0,1]_t\times\Sigma,
\qquad
X=\partial_t,
$$
where $\Sigma$ is a union of tori and the product flow gives an identification of $\partial_{in}M$ and $\partial_{out}M$. We call these \emph{product bi-contact plugs}. By boundary transversality, the same model describes a neighborhood of each boundary component of any bi-contact plug.

Let
$\alpha_-,\alpha_+$ be supporting bi-contact forms and put
$$
S_\pm^{\mathrm{in}}
=\ker(\alpha_\pm|_{\{0\}\times\Sigma}),
\qquad
S_\pm^{\mathrm{out}}
=\ker(\alpha_\pm|_{\{1\}\times\Sigma}).
$$
Choose a $t$-invariant positive coframe $(\beta_1,\beta_2)$ on the slices.
A local representative of a form $\alpha$ annihilating $\partial_t$ has a unique expression
$$
\alpha=\phi_1\beta_1+\phi_2\beta_2,
\qquad
\gamma=(\phi_1,\phi_2).
$$
Since tangential three-forms vanish,
$$
\iota_{\partial_t}(\alpha\wedge d\alpha)
=(\mathcal L_{\partial_t}\alpha)\wedge\alpha
=\det(\partial_t\gamma,\gamma)\,\beta_1\wedge\beta_2.
$$
Writing $\gamma$ in local polar coordinates, with $\theta$ defined modulo $\pi$,
$$
\gamma=r(\cos\theta,\sin\theta),
\qquad r>0,
$$
we obtain
$
\det(\partial_t\gamma,\gamma)=-r^2\partial_t\theta.
$
Thus a positive contact form rotates clockwise and a negative contact form
rotates counterclockwise in the oriented projective line.  Since the two
contact planes remain transverse, their four boundary directions must occur
in the order
\begin{equation}
\label{adnec}
S_-^{\mathrm{in}},
\quad
S_-^{\mathrm{out}},
\quad
S_+^{\mathrm{out}},
\quad
S_+^{\mathrm{in}}.
\end{equation}

\begin{figure}[htbp]\label{fig2}
    \centering
    \ProductAdmissibilityDiagram
    \caption{The cyclic order of an admissible boundary bi-foliation.}
    \label{fig:admissible-product}
\end{figure}

\begin{definition}
\label{admissible-bifoliation}
A bi-foliation on $\partial([0,1]\times\mathbb T^2)$ is
\emph{admissible} if its four projective directions satisfy the cyclic
order in Equation~\eqref{adnec}. Choose local defining covectors in that
order in a common open half-plane. This fixes their relative signs,
and their transition signs coincide. Boundary defining forms are called
admissible when their kernels are admissible, and are compared using these
representatives. See Figure~\ref{fig:admissible-product}.
\end{definition}

The cyclic-order condition is stronger than the four adjacent wedge-sign
conditions considered separately. It fixes compatible lifts of all four
directions to one interval in the universal cover of the projective line and
therefore excludes an additional turn between the endpoints.

\begin{theorem}[Product extension]
\label{prob}
Every admissible boundary bi-foliation on $[0,1]\times\mathbb T^2$
is induced by a bi-contact structure supporting $\partial_t$.
If such a structure is prescribed near the boundary, then, after
restricting it to a sufficiently small boundary neighborhood, its
space of supporting extensions is nonempty and contractible.

These extension results are relative and parametric for compact
parameter spaces. In particular, every homotopy of admissible
boundary bi-foliations lifts to a homotopy of supporting bi-contact
structures starting from any prescribed initial extension.

The same statements hold for $[0,1]\times\Sigma$ when $\Sigma$ has
boundary, provided a supporting bi-contact structure compatible
with the prescribed boundary bi-foliation is fixed near the
vertical boundary $[0,1]\times\partial\Sigma$.
\end{theorem}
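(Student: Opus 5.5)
The plan is to reduce everything to a pointwise statement about paths in the projective line, using the computation just made. A pair of forms $\alpha_\pm$ annihilating $\partial_t$ is written as $\alpha_\pm=\phi^\pm_1\beta_1+\phi^\pm_2\beta_2$, with coefficient vectors $\gamma_\pm(t,x)$, $x\in\Sigma$. The conditions to be met are the following.
\begin{enumerate}
\item The form $\alpha_+$ is positive contact and $\alpha_-$ is negative contact.
\item The two forms are transverse, so that $\ker\alpha_-\cap\ker\alpha_+=\langle\partial_t\rangle$.
\end{enumerate}
Since $\alpha\wedge d\alpha=dt\wedge(\ldots)+(\text{tangential terms})$ and tangential three-forms vanish, the identity $\iota_{\partial_t}(\alpha\wedge d\alpha)=-r^2\partial_t\theta\,\beta_1\wedge\beta_2$ shows that the contact conditions reduce to the angle inequalities
\[
\partial_t\theta_+<0,\qquad \partial_t\theta_->0,
\]
with no condition in the $\Sigma$-directions. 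Transversality is the requirement $\theta_-\neq\theta_+$ modulo $\pi$. So a supporting bi-contact structure is the same thing as a family, smooth in $x\in\Sigma$, of pairs of paths $t\mapsto(\theta_-(t,x),\theta_+(t,x))$ in $\mathbb{RP}^1$. The path $\theta_+$ must be strictly decreasing, $\theta_-$ strictly increasing, the two must never meet, and their endpoints must be the prescribed boundary directions. The radii $r_\pm$ are arbitrary positive functions.

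Existence comes next. Lift to the universal cover $\mathbb R$ of the projective line. The admissibility condition \eqref{adnec}, in the precise sense of Definition~\ref{admissible-bifoliation}, gives continuous lifts on $\Sigma$ with
\[
\tilde\theta_-^{\mathrm{in}}<\tilde\theta_-^{\mathrm{out}}<\tilde\theta_+^{\mathrm{out}}<\tilde\theta_+^{\mathrm{in}}<\tilde\theta_-^{\mathrm{in}}+\pi.
\]
These are fixed up to a global deck translation, and here the cyclic-order condition is used rather than the four wedge conditions separately. The linear interpolations
\[
\tilde\theta_\pm(t,x)=(1-t)\tilde\theta_\pm^{\mathrm{in}}(x)+t\,\tilde\theta_\pm^{\mathrm{out}}(x)
\]
are then monotone in the required senses. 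Both stay inside the interval $(\tilde\theta_-^{\mathrm{in}},\tilde\theta_-^{\mathrm{in}}+\pi)$, and at every time $\tilde\theta_-(t)<\tilde\theta_+(t)$ because $\tilde\theta_-\le\tilde\theta_-^{\mathrm{out}}<\tilde\theta_+^{\mathrm{out}}\le\tilde\theta_+$. The two paths therefore never coincide modulo $\pi$. Taking $r_\pm\equiv1$ gives the structure. If $\Sigma$ is not transversely orientable relative to the coframe, the unoriented convention handles it, since the constructions are invariant under simultaneous sign change. A short smoothing near $t=0,1$ matches any prescribed boundary jets.

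For contractibility and the relative and parametric statements, use the characterization above. The space of admissible data at a point $x$ is the space of pairs of lifted paths with $\tilde\theta_+$ strictly decreasing, $\tilde\theta_-$ strictly increasing, $\tilde\theta_-<\tilde\theta_+<\tilde\theta_-+\pi$, and fixed endpoints, together with positive radii. This set is convex. Convex combinations of such pairs preserve strict monotonicity and both strict inequalities, and the endpoint lifts are determined continuously by the boundary data, so all elements share one lift class. Extensions are required to agree with a prescribed germ near $\partial$, and the convex combination $s\,\gamma^{(1)}+(1-s)\gamma^{(0)}$, taken in lifted angles and radii, again agrees with that germ. Hence the space is convex, so it is nonempty and contractible, and a family over a compact parameter space $P$ with prescribed values on a subcomplex $Q$ is extended by a partition of unity on $P$ combined with convexity. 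Lifting a homotopy of admissible boundary bi-foliations works the same way. The moving endpoints are absorbed by a fiberwise affine reparametrization of the angle interval, and then interpolation with the initial extension is applied. The one step needing care is restricting the prescribed structure to a small enough collar so that it is expressible as the lifted paths within the allowed interval. This is the likely main obstacle, namely checking that a prescribed boundary germ has angles in the same lift class as the interpolation. It follows from admissibility together with continuity near $t=0,1$. For $\Sigma$ with boundary, the same convex argument runs relative to the fixed structure near $[0,1]\times\partial\Sigma$.
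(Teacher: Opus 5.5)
Your proposal is correct and follows essentially the paper's proof: the contact conditions reduce to strict monotonicity of the angle paths between the boundary directions, the cyclic order \eqref{adnec} places the two paths in disjoint sectors (which gives transversality), and convexity of the spaces of monotone angle functions and positive radii gives contractibility together with the relative and parametric statements. One correction: when the bi-foliation has nonzero projective winding on $\mathbb T^2$, the boundary angles admit no global continuous lift to $\mathbb R$, so lift locally or on the universal cover and observe that your interpolation, convex-combination and affine-reparametrization formulas commute with the common deck shift by $\pi$; alternatively, as the paper does, use local $t$-invariant coframes built from the boundary forms themselves, in which the endpoint angles are the constants $\pi/2$ and $0$.
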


\begin{proof}
Choose compatible local representatives $\lambda_+^{\mathrm{in/out}}$ for the
prescribed foliations $S_+^{\mathrm{in/out}}$.
For the positive form, use the local $t$-invariant basis
$$
\beta_1=\pi^*\lambda_+^{\mathrm{out}},
\qquad
\beta_2=\pi^*\lambda_+^{\mathrm{in}},
$$
oriented so that $\beta_1\wedge\beta_2>0$, and write
$$
\alpha_+
=r_+(\cos\theta_+\,\beta_1+\sin\theta_+\,\beta_2).
$$
The endpoint values are $\theta_+(0)=\pi/2$ and $\theta_+(1)=0$, and
the contact condition is exactly $\partial_t\theta_+<0$.  Positive radial functions and
strictly decreasing angular interpolations with prescribed behavior near the endpoints form
convex spaces.

The negative form is constructed in the same way using the oppositely
oriented basis determined by its outgoing and incoming boundary forms.  The
two angular interpolations take place in the two disjoint sectors specified by
Equation~\eqref{adnec}. Hence, their kernels remain transverse. The radial
and angular functions agree on overlaps, so the local forms glue up to sign.
All choices are convex and depend continuously on parameters.  The same construction on
the complementary subcylinder proves the relative statement when the bi-contact structure is fixed near the boundary.
\end{proof}

\subsubsection{Strongly adapted product plugs}

Given a product plug $([0,1]_t\times \Sigma,\partial_t)$, define the space of strongly adapted contact forms
$$
\mathcal{SA}^+(\partial_t)
=
\left\{
\alpha_+\ \middle|\
\alpha_+(\partial_t)=0,\quad
\alpha_+\wedge d\alpha_+>0,\quad
(\mathcal L_{\partial_t}\alpha_+)
\wedge d(\mathcal L_{\partial_t}\alpha_+)<0
\right\}.
$$
For $\alpha_+\in\mathcal{SA}^+(\partial_t)$, the forms $\mathcal L_{\partial_t}\alpha_+$ and $\alpha_+$ are supporting bi-contact forms.

Write $\alpha_+=\gamma_1\beta_1+\gamma_2\beta_2$ in any positive
$t$-invariant coframe and put $\gamma=(\gamma_1,\gamma_2)$.  Then,
$$
\alpha_+\text{ is positive contact}
\quad\Longleftrightarrow\quad
\det(\partial_t\gamma,\gamma)>0,
$$
whereas
$$
\mathcal L_{\partial_t}\alpha_+\text{ is negative contact}
\quad\Longleftrightarrow\quad
\det(\partial_t\gamma,\partial_t^2\gamma)>0.
$$
Since the first condition gives $\partial_t\theta<0$, we may use $\theta$ as
a reversed parameter and write $\gamma=r(\theta)(\cos\theta,\sin\theta)$.
A direct calculation gives
\begin{equation}
\label{streq}
\det(\partial_t\gamma,\partial_t^2\gamma)>0
\quad\Longleftrightarrow\quad
r^2+2r_\theta^2-r r_{\theta\theta}<0
\quad\Longleftrightarrow\quad
u_{\theta\theta}+u<0,
\end{equation}
where $u=1/r$.  In other words, strong adaptation is the strict signed
curvature condition for the coefficient curve.

In the following, a \emph{boundary first $\partial_t$-jet}, written
$(\lambda,\mu)$, consists of local boundary one-forms modulo simultaneous
sign change, prescribing
$$
\alpha_+|_{\partial_{\mathrm{in/out}}M}
=\lambda^{\mathrm{in/out}},
\qquad
(\mathcal L_{\partial_t}\alpha_+)
|_{\partial_{\mathrm{in/out}}M}
=\mu^{\mathrm{in/out}}
$$
for a form $\alpha_+$ satisfying $\alpha_+(\partial_t)=0$.
When these boundary forms come from supporting bi-contact forms
$(\alpha_-,\alpha_+)$, we take $\lambda$ and $\mu$ to be their
positive and negative boundary restrictions, respectively.
Thus the negative boundary form is interpreted as the prescribed
$\partial_t$-derivative of the positive form to be extended.

\begin{definition}
\label{strongly-admissible-jet}
The boundary $\partial_t$-jet $(\lambda,\mu)$ is \emph{strongly admissible}
if compatible local representatives satisfy
$$
\lambda^{\mathrm{out}}\wedge\lambda^{\mathrm{in}}>0
$$
and
\begin{equation}
\label{sadin}
-\lambda^{\mathrm{in}}\wedge\mu^{\mathrm{in}}>0,
\qquad
(\lambda^{\mathrm{out}}-\lambda^{\mathrm{in}})
\wedge\mu^{\mathrm{in}}<0,
\end{equation}
\begin{equation}
\label{sadout}
\lambda^{\mathrm{out}}\wedge\mu^{\mathrm{out}}<0,
\qquad
(\lambda^{\mathrm{out}}-\lambda^{\mathrm{in}})
\wedge\mu^{\mathrm{out}}>0.
\end{equation}
These inequalities say that the two endpoint tangent vectors lie on the
correct sides of the chord joining the endpoint values. Simultaneous sign
change preserves every inequality. See Figure~\ref{fig:strongly-admissible}.
\end{definition}

\begin{figure}[htbp]
    \centering
    \StrongAdmissibilityDiagram
    \caption{The endpoint tangent cones defining strong admissibility.}
    \label{fig:strongly-admissible}
\end{figure}

To express these conditions radially, choose a positive
$t$-invariant coframe in which
$\lambda^{\mathrm{out}}=r_{\mathrm{out}}\beta_1$ and
$\lambda^{\mathrm{in}}=r_{\mathrm{in}}\beta_2$.
Thus the endpoint angles are $0$ and $\pi/2$, respectively.
Under the positive contact condition $\partial_t\theta<0$,
Equations~\eqref{sadin} and~\eqref{sadout} are equivalent to
\begin{equation}
\label{radad}
r_\theta(0)<-\frac{r_{\mathrm{out}}^2}{r_{\mathrm{in}}},
\qquad
r_\theta(\pi/2)>\frac{r_{\mathrm{in}}^2}{r_{\mathrm{out}}}.
\end{equation}

The following elementary observation will be used only for preparing
forms near the boundary before applying the extension theorem.  It is not, by itself,
a compactly supported modification of a form already defined in the
interior.

\begin{lemma}[Scaling near the boundary]
\label{scaling-strong-admissibility}
Suppose strongly adapted forms are prescribed near the two ends and their
radial derivatives satisfy
$$
r_\theta(0)<0<r_\theta(\pi/2).
$$
Then their boundary first jets can be made strongly admissible either by sufficiently large
angular scaling at both ends, or by constant radial scaling at one end
followed there by angular scaling.
\end{lemma}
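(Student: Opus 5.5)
The plan is to work entirely in the radial description of Equation~\eqref{radad}, after checking that both operations preserve strong adaptation. Fix near each end a positive $t$-invariant coframe with $\lambda^{\mathrm{out}}=r_{\mathrm{out}}\beta_1$ and $\lambda^{\mathrm{in}}=r_{\mathrm{in}}\beta_2$. Since $\partial_t\theta<0$, use $\theta$ as the reversed parameter, and write the prescribed forms near the two ends as coefficient curves $\gamma=r(\theta)(\cos\theta,\sin\theta)$, defined for $\theta$ near $0$ and near $\pi/2$ respectively. By Equation~\eqref{streq}, strong adaptation is equivalent to $u_{\theta\theta}+u<0$ with $u=1/r>0$. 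The target is the pair of inequalities $r_\theta(0)<-r_{\mathrm{out}}^2/r_{\mathrm{in}}$ and $r_\theta(\pi/2)>r_{\mathrm{in}}^2/r_{\mathrm{out}}$.

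\emph{Angular scaling.} I interpret this at the outgoing end as replacing the germ $r$ by $\tilde r(\theta)=r(k\theta)$ for a constant $k\geq1$. At the incoming end the analogous replacement is $\tilde r(\theta)=r(\pi/2-k(\pi/2-\theta))$. Geometrically, the same radial profile is compressed into a thinner angular sector adjacent to the boundary direction. This is realized by a reparametrization of the coefficient curve in $t$ near that end, so the endpoint values $r_{\mathrm{out}}$ and $r_{\mathrm{in}}$, and hence the boundary foliations, are unchanged. The check that strong adaptation survives is one line: $\tilde u_{\theta\theta}+\tilde u=k^2u_{\theta\theta}+u$ evaluated at the rescaled point. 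Since $u_{\theta\theta}<-u$ and $u>0$, this is less than $-(k^2-1)u\leq 0$, so the sign is preserved. The endpoint derivatives become $k\,r_\theta(0)$ and $k\,r_\theta(\pi/2)$. The hypothesis $r_\theta(0)<0<r_\theta(\pi/2)$ then gives both inequalities of \eqref{radad} once $k$ is large enough, because their right-hand sides are fixed. This proves the first alternative.

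\emph{Radial scaling at one end.} For the second alternative, multiply the form at the outgoing end by a constant $c>0$. Since $u\mapsto u/c$ is linear, the condition $u_{\theta\theta}+u<0$ is preserved. The new data are $r_{\mathrm{out}}\mapsto c\,r_{\mathrm{out}}$ and $r_\theta(0)\mapsto c\,r_\theta(0)$. The outgoing inequality becomes $r_\theta(0)<-c\,r_{\mathrm{out}}^2/r_{\mathrm{in}}$, which holds for all sufficiently small $c$ because $r_\theta(0)<0$; no angular scaling is needed at that end. The incoming inequality becomes $r_\theta(\pi/2)>r_{\mathrm{in}}^2/(c\,r_{\mathrm{out}})$. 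Its right-hand side is now larger but finite, and angular scaling at the incoming end by a large enough $k$ restores it. Scaling the incoming end instead is symmetric: choose $c$ so that the incoming inequality holds, then angular-scale at the outgoing end.

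\emph{Main obstacle.} The delicate point is not the inequalities but making angular scaling honest. The rescaled germ must still be the coefficient curve of a form near the boundary with the same boundary foliations, and its sign conventions must agree with the compatible representatives of Definition~\ref{admissible-bifoliation}. I would handle this by noting that the rescaling is a reparametrization of the angular variable fixing the endpoint angle, and that the coefficient curve only needs to be defined on a germ there. Consistent with the remark preceding the lemma, I would not try to match the rescaled germ with any interior form; that matching is left to the extension theorem.
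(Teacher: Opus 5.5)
Your treatment of angular scaling and your proof of the first alternative are essentially the paper's argument: the identity $\widetilde u_{\theta\theta}+\widetilde u=k^2u_{\theta\theta}+u<0$, endpoint derivatives multiplied by $k$, and endpoint radii unchanged. One wording point: $r(k\theta)(\cos\theta,\sin\theta)$ is a new coefficient curve, not a reparametrization of the old one. What is free is only the choice of $\theta(t)$. This is harmless.

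\textbf{The gap is in the second alternative.} The statement says constant radial scaling at one end \emph{followed there} by angular scaling. Only a single end is modified, and the other end's form is left untouched. That is the point of this alternative, for instance when one end carries a fixed strongly adapted form near $\Lambda$.

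Your version multiplies the outgoing form by a \emph{small} $c$ and then angular-scales the \emph{incoming} end. It therefore alters both ends and gives nothing beyond the first alternative. With small $c$ the incoming inequality becomes $r_\theta(\pi/2)>r_{\mathrm{in}}^2/(c\,r_{\mathrm{out}})$, which is harder to satisfy. No further modification of the outgoing end can repair it, because angular scaling there changes only $r_\theta(0)$.

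\textbf{Missing observation.} Each endpoint radius enters the inequality of Equation~\eqref{radad} at the \emph{opposite} end, so you should scale by a \emph{large} constant.

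\begin{enumerate}
\item Multiply the outgoing form by $c\gg1$. The incoming inequality becomes $r_\theta(\pi/2)>r_{\mathrm{in}}^2/(c\,r_{\mathrm{out}})$, which holds for large $c$ because $r_\theta(\pi/2)>0$.
\item The outgoing inequality becomes $r_\theta(0)<-c\,r_{\mathrm{out}}^2/r_{\mathrm{in}}$, which may now fail.
\item Angular scaling by $k$ at that same end turns it into $k\,r_\theta(0)<-c\,r_{\mathrm{out}}^2/r_{\mathrm{in}}$. This holds for $k$ large once $c$ is fixed, since $r_\theta(0)<0$.
\item Angular scaling leaves the endpoint radius $c\,r_{\mathrm{out}}$ unchanged, so the incoming inequality established in step 1 is not disturbed.
\end{enumerate}

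The incoming end is symmetric. Multiply the incoming form by $c\gg1$; the outgoing inequality becomes $r_\theta(0)<-r_{\mathrm{out}}^2/(c\,r_{\mathrm{in}})$, which holds for large $c$. Then angular scaling at the incoming end gives $k\,r_\theta(\pi/2)>c\,r_{\mathrm{in}}^2/r_{\mathrm{out}}$.
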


\begin{proof}
Angular scaling by $K\geq1$ replaces $u(\theta)$ near an endpoint by
$u(\theta_0+K(\theta-\theta_0))$, where $\theta_0$ is the angle at the endpoint.  Hence
$$
\widetilde u_{\theta\theta}+\widetilde u
=K^2u_{\theta\theta}+u<0,
$$
so strong adaptation is preserved, while the endpoint radial derivative is
multiplied by $K$.  Constant radial scaling also preserves both contact
conditions.  The conclusion now follows from Equation~\eqref{radad}.
\end{proof}

\begin{theorem}[Strong product extension]
\label{pros}
Every strongly admissible boundary first $\partial_t$-jet extends
to a strongly adapted positive contact form on
$[0,1]\times\mathbb T^2$.
If a strongly adapted form realizing this jet is prescribed near
the boundary, it can be preserved on a sufficiently small boundary
neighborhood, and the space of extensions agreeing there is
contractible.

The construction is relative and parametric for compact parameter
spaces. It also holds for $[0,1]\times\Sigma$ when $\Sigma$ has
boundary, provided a compatible strongly adapted form is prescribed
near $[0,1]\times\partial\Sigma$.
\end{theorem}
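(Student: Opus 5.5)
We reduce the statement to a pointwise problem about planar curves, just as in the proof of Theorem~\ref{prob}. Fix a local positive $t$-invariant coframe $(\beta_1,\beta_2)$ in which $\lambda^{\mathrm{out}}=r_{\mathrm{out}}\beta_1$ and $\lambda^{\mathrm{in}}=r_{\mathrm{in}}\beta_2$. Any $\alpha_+$ with $\alpha_+(\partial_t)=0$ is then a family of coefficient curves $\gamma_x(t)$, $x\in\Sigma$. The two conditions $\alpha_+\wedge d\alpha_+>0$ and $\mathcal L_{\partial_t}\alpha_+\wedge d(\mathcal L_{\partial_t}\alpha_+)<0$ are fiberwise in $t$, since only $t$-derivatives of $\gamma$ appear. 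Concretely, they say that $\det(\partial_t\gamma,\gamma)>0$ and $\det(\partial_t\gamma,\partial_t^2\gamma)>0$.

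So for each $x$ we need a curve $\gamma_x:[0,1]\to\mathbb R^2\setminus 0$ with the following properties:
\begin{itemize}
\item its angle decreases strictly from $\pi/2$ to $0$;
\item it is strictly convex in the sense of Equation~\eqref{streq};
\item its position and velocity at $t=0,1$ are prescribed by $(\lambda^{\mathrm{in}},\mu^{\mathrm{in}})$ and $(\lambda^{\mathrm{out}},\mu^{\mathrm{out}})$.
\end{itemize}
Here $\mu=\mathcal L_{\partial_t}\alpha_+|$ gives $\partial_t\gamma$ at the endpoints.

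\textbf{Existence.} Strong admissibility is exactly what makes this possible. The inequalities \eqref{sadin} and \eqref{sadout} say that the velocities at the two endpoints point inside the triangle formed by the origin and the chord, on the correct sides of it. Conditions $-\lambda^{\mathrm{in}}\wedge\mu^{\mathrm{in}}>0$ and $\lambda^{\mathrm{out}}\wedge\mu^{\mathrm{out}}<0$ give clockwise motion at the ends. The chord conditions say the tangent lines at the two endpoints meet at a point $P$ inside the sector, beyond the chord. A locally strictly convex arc with these tangents therefore exists: take a smoothed quadratic Bézier curve with control point $P$, which has strictly positive curvature and lies in the open triangle, and adjust its speed so that the velocities match exactly, not just up to direction.

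Positivity of $\det(\partial_t\gamma,\gamma)$ follows from convexity. Because the arc lies inside the convex triangle away from the origin, seen from the origin the angle is monotone. Reparametrizing in $t$ to match endpoint speeds does not affect either condition, since both are invariant under orientation-preserving reparametrization up to positive factors. More precisely, the second condition becomes $\dot s^{3}\det(\gamma',\gamma'')$.

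Smooth dependence on $x$ is obtained by building $P$ and the speeds from the data continuously. Alternatively, use the polar formulation: work with $u=1/r$ as a function of $\theta\in[0,\pi/2]$ subject to $u''+u<0$. The endpoint values of $u$ and $u'$ are fixed by \eqref{radad}, and we then choose any $\theta(t)$ with $\theta'<0$ and prescribed endpoint derivative.

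\textbf{Contractibility and relative version.} In the $(u,\theta(t))$ description the space is a product. The time parametrizations $\theta(t)$ with $\theta'<0$ and fixed germs at the ends form a convex set. The functions $u$ with $u''+u<0$ and fixed germs at the ends also form a convex set, since the condition is linear in $u$ and $u>0$ is likewise convex. Hence the space of extensions agreeing near the boundary is convex, hence contractible, and the constructions are parametric for compact parameter spaces.

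If a strongly adapted form is prescribed near the boundary, its restriction gives germs of $(u,\theta)$ near $\theta=0$ and $\theta=\pi/2$ satisfying $u''+u<0$. We must fill in a function on the middle interval, and this is the main obstacle. Shrink the boundary neighborhood so that the germs are defined on $[0,\epsilon]\cup[\pi/2-\epsilon,\pi/2]$. One candidate interpolant is $v=c\cos(\theta-\pi/4)$ minus a large multiple of a convex bump; a convex combination with cutoffs cannot be used naively, because $\chi''$ terms spoil the inequality.

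Instead, pass to $w=u\,/\cos(\theta-\pi/4+\delta)$-type Sturm comparison. Note that $u''+u<0$ on an interval of length less than $\pi$ is equivalent to the strict concavity of $u/\sin(\theta+a)$ under the change of variable $\tan$. The exact identity is that $u''+u<0$ is equivalent to strict concavity of the projective coordinate function $\rho=u\circ\arctan\cdot\sec$, i.e.\ of $u$ expressed as a function on the chord.

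Extending a strictly concave function from germs at the ends of an interval is standard. First, the strong admissibility inequalities \eqref{radad} say precisely that the one-sided slopes are compatible with concavity: the left slope exceeds the chord slope, which exceeds the right slope. After shrinking $\epsilon$, the germs extend by taking the infimum of supporting affine functions and smoothing, or more simply by gluing with partitions of unity in the second derivative, i.e.\ prescribing $\rho''<0$ and integrating with two linear constraints. Finally, the $\theta(t)$ parametrization is interpolated convexly. The case of $\Sigma$ with boundary follows by doing the same relative construction with the form already fixed near $[0,1]\times\partial\Sigma$.
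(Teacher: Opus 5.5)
Your reduction and the substance of your relative construction agree with the paper. In the coframe $(\pi^*\lambda^{\mathrm{out}},\pi^*\lambda^{\mathrm{in}})$ the problem splits fiberwise into two independent choices: a convex choice of the angular speed $-\partial_t\theta$, and a choice of $u(\theta)$ with $u_{\theta\theta}+u<0$ and prescribed endpoint values and slopes.

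The paper handles $u$ as follows. It solves $u_{\theta\theta}+u=-p$ with $u(0)=u(\pi/2)=1$ by an explicit Green's formula. This turns the two slope conditions into the moment equations $\int_0^{\pi/2}p\cos s\,ds=a$ and $\int_0^{\pi/2}p\sin s\,ds=b$, with $a,b>0$. It then adds two bumps to a small positive $p_0$. Your ``prescribe $\rho''<0$ subject to two linear constraints'' is the same argument in a projective chart. It nicely exhibits strong admissibility as ``the endpoint slopes straddle the chord.''

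As written, though, your chart fails at one end: $x=\tan\theta$ sends $\theta=\pi/2$ to infinity. Take instead $x=\tan(\theta-\pi/4)$ and $\rho=u\sec(\theta-\pi/4)$. Then $\rho_{xx}=(u_{\theta\theta}+u)\cos^3(\theta-\pi/4)$ on the bounded interval $[-1,1]$.

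The B\'ezier construction is a valid alternative for bare existence. Two corrections:
\begin{itemize}
\item The point $P$ lies in the open triangle $OAB$, on the origin side of the chord, not beyond it.
\item Lying in the triangle does not by itself give a monotone angle. The clean justification is bilinearity: $\gamma'$ is a positive combination of $P-A$ and $B-P$, $\gamma$ is a convex combination of $A,P,B$, and all six resulting determinants $\det(\cdot,\cdot)$ are positive.
\end{itemize}
Since the B\'ezier curve cannot respect prescribed germs, the chart argument is the one you actually need.

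The genuine gap is in the contractibility claim. Your product/convexity description covers only extensions whose angle lift runs from $\pi/2$ to exactly $0$, and you imposed this in your first bullet. The boundary jet does not fix the lift of the terminal angle: for unoriented forms it is fixed only modulo $\pi$. The positive contact condition alone allows additional clockwise turns. Extensions whose lift ends at $-k\pi$, $k\geq 1$, would form other path components. So convexity of your component shows neither contractibility nor even connectedness of the space.

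You must exclude such extensions. An extra turn produces a $\theta$-interval of length $\pi$, say $[0,\pi]$ after translation, on which $u>0$ and $u_{\theta\theta}+u<0$. This is impossible, because $0>\int_0^\pi(u_{\theta\theta}+u)\sin\theta\,d\theta=u(0)+u(\pi)>0$. This is the final step of the paper's proof. Your projective chart cannot detect this obstruction, since it exists only on intervals of length less than $\pi$.
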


\begin{proof}
Using the product identification, take the positive local $t$-invariant
coframe
$$
\beta_1=\pi^*\lambda^{\mathrm{out}},
\qquad
\beta_2=\pi^*\lambda^{\mathrm{in}}.
$$
We seek the extension in the form
$$
\alpha_+
=\frac{\cos\theta\,\beta_1+\sin\theta\,\beta_2}{u(\theta,x)},
\qquad
u>0,
$$
where $\theta(0,x)=\pi/2$, $\theta(1,x)=0$, and
$\partial_t\theta<0$.
The prescribed first jet determines the endpoint values of
$\partial_t\theta$. Interpolating the positive speed
$-\partial_t\theta$, with integral $\pi/2$, gives such angular
functions. Their space is convex. If a form is prescribed near
the boundary, the interpolation can agree with its angular
function on smaller boundary neighborhoods.

Suppressing $x$ from the notation, the radial boundary conditions
are $u(0)=u(\pi/2)=1$, and strong admissibility gives
$$
a:=u_\theta(0)-1>0,
\qquad
b:=-1-u_\theta(\pi/2)>0.
$$
Set $p=-(u_{\theta\theta}+u)$.
By Equation~\eqref{streq}, strong adaptation requires $p>0$.
The solution with the prescribed endpoint values is
\begin{align*}
u(\theta)={}&\cos\theta+\sin\theta\\
&+\sin\theta\int_\theta^{\pi/2}p(s)\cos s\,ds
+\cos\theta\int_0^\theta p(s)\sin s\,ds.
\end{align*}
In particular, $u>0$. Differentiating at the endpoints shows that
the prescribed radial derivatives are equivalent to
\begin{equation}
\label{strong-product-moments}
\int_0^{\pi/2}p(s)\cos s\,ds=a,
\qquad
\int_0^{\pi/2}p(s)\sin s\,ds=b.
\end{equation}

Choose a smooth positive function $p_0$ whose two moments are
smaller than $a$ and $b$. If a strongly adapted form is prescribed
near the endpoints, require $p_0$ to agree there with its function
$-(u_{\theta\theta}+u)$. This is possible after shrinking those
neighborhoods, since their moment contributions tend to zero.
Elsewhere, choose $p_0$ sufficiently small.

The residual moments are positive. Choose two nonnegative
interior bump functions, one near each endpoint, whose ratios
of sine to cosine moments lie on opposite sides of the
corresponding residual ratio. Adding uniquely determined positive
multiples of these functions to $p_0$ gives
\eqref{strong-product-moments}. For compact parameter families,
the endpoint neighborhoods and bump functions can be chosen
uniformly, and the coefficients depend continuously on parameters,
smoothly for smooth families. When a form is prescribed near the
boundary, uniqueness for $u_{\theta\theta}+u=-p$, with the
prescribed endpoint value and derivative, ensures agreement there.

For fixed boundary first jets, the positive functions $p$
satisfying \eqref{strong-product-moments} form a convex space.
The same holds when their values near the endpoints are fixed.
Together with the convex angular choices, this proves
contractibility and gives the relative and parametric
constructions. In particular, when a compatible form is fixed
near the vertical boundary, interpolate the angular functions
and the functions $p$ separately using a cutoff on $\Sigma$.
All the required inequalities and moment equations are preserved.

Finally, these coordinates describe every strongly adapted
extension. An additional angular turn would give an interval
of length $\pi$ on which $u>0$ and $u_{\theta\theta}+u<0$,
contrary to
$$
0>\int_0^\pi (u_{\theta\theta}+u)\sin\theta\,d\theta
=u(0)+u(\pi)>0.
$$
\end{proof}

\paragraph{Winding conventions.}
In a specified coordinate framing of a torus $T$, a nonsingular line
field $S$ defines a map $T\to\mathbb{RP}^1$ and its \emph{projective
winding class}
$$
d(S)\in H^1(T;\mathbb Z),\qquad d(S)\bmod2=w_1(S).
$$
If $S$ is orientable, its \emph{oriented winding class} is
$w(S)=d(S)/2$. Reversing its orientation does not change $w$.
The two members of a transverse bi-foliation have the same class.
We call a coorientable bi-foliation's oriented winding \emph{even}
if $w\equiv0\pmod2$, and \emph{odd} otherwise. These conditions are
independent of the torus coordinates. Thus even projective winding
means coorientability, while odd oriented winding means
$d\not\equiv0\pmod4$ with $d\equiv0\pmod2$.

\begin{remark}[Homotopy type of product extensions]
\label{product-homotopy-type}
Let $\mathcal F_{\mathrm{ad}}^b(\partial M)$ denote the space of
admissible boundary bi-foliations for
$M=[0,1]\times\mathbb T^2$, with the $C^\infty$ topology. Then
$$
\mathcal F_{\mathrm{ad}}^b(\partial M)
\simeq C^\infty(\mathbb T^2,\mathbb{RP}^1)
\simeq \coprod_{\mathbf k\in\mathbb Z^2}\mathbb S^1.
$$
In a coordinate framing of $T\mathbb T^2$, the two integers record
the projective winding of any one of the four boundary foliations along a
homology basis. Coorientable bi-foliations correspond to $\mathbf k\in2\mathbb Z^2$.
Each component has the homotopy type of a circle.

Indeed, after fixing $S_-^{\mathrm{in}}$, the remaining directions
are described by smooth angular differences satisfying
$$
0<\delta_1(x)<\delta_2(x)<\delta_3(x)<\pi,
\qquad x\in\mathbb T^2,
$$
with relative signs determined by admissibility.
This is a convex condition, so the space of choices is
contractible. The displayed homotopy equivalence therefore follows
from the homotopy classification of maps to $\mathbb S^1$.

For a fixed admissible boundary bi-foliation, the choices of
unoriented defining forms are contractible, as are those satisfying
the strong-admissibility inequalities. The relative parametric
extension theorems consequently give the same homotopy type for
the spaces of supporting bi-contact structures and strongly
adapted positive contact forms, with $X=\partial_t$ fixed.
\end{remark}

\subsection{Hyperbolic bi-contact plugs}
Our goal in this subsection is to equip hyperbolic plugs of \cite{bby} with (strongly) adapted contact geometry.
Let $(M,X)$ be a hyperbolic plug with maximal
invariant set $\Lambda$.  Assume that its entrance and exit laminations
$$
\mathcal L^s
=W^s(\Lambda)\cap\partial_{\mathrm{in}}M,
\qquad
\mathcal L^u
=W^u(\Lambda)\cap\partial_{\mathrm{out}}M
$$
are prefoliations.  By Lemma~3.17 in \cite{bby}, it is enough to make
this assumption for either one of them.  We denote the crossing map by
$$
\Gamma\colon
\partial_{\mathrm{in}}M\setminus\mathcal L^s
\longrightarrow
\partial_{\mathrm{out}}M\setminus\mathcal L^u.
$$

\subsubsection{Admissible boundary bi-foliations}

We describe the necessary conditions on the boundary characteristic bi-foliation and prove that they are sufficient for extension to a supporting bi-contact structure.

Let $\lambda_-,\lambda_+$ be boundary defining forms with
$\lambda_-\wedge\lambda_+>0$ in compatible local representatives, and write
$S_\pm=\ker\lambda_\pm$.

\begin{definition}
\label{admissible-hyperbolic-boundary}
These forms are \emph{admissible} if the following conditions hold.

\begin{enumerate}
   \item after transport by the crossing map $\Gamma$, the entrance
and exit bi-foliations satisfy the product admissibility
condition of Definition~\ref{admissible-bifoliation}. That is, for every
    $p\in\partial_{\mathrm{in}}M\setminus\mathcal L^s$, with
    $q=\Gamma(p)$, the directions occur in the cyclic order
    $$
    D\Gamma_p(S_-^{\mathrm{in}}(p)),
    \quad
    S_-^{\mathrm{out}}(q),
    \quad
    S_+^{\mathrm{out}}(q),
    \quad
    D\Gamma_p(S_+^{\mathrm{in}}(p));
    $$

    \item for every nonzero $v_s\in T\mathcal L^s$,
    $$
    \lambda_-^{\mathrm{in}}(v_s)
    \lambda_+^{\mathrm{in}}(v_s)<0;
    $$

    \item for every nonzero $v_u\in T\mathcal L^u$,
    $$
    \lambda_-^{\mathrm{out}}(v_u)
    \lambda_+^{\mathrm{out}}(v_u)>0.
    $$
\end{enumerate}
\end{definition}

The last two conditions say that the contact directions straddle the stable
and unstable laminations in the limiting orders forced by the flow.  Their
signs are independent both of the orientation chosen on a lamination leaf
and of simultaneous sign change of the defining forms. Thus no orientation
of $T\mathcal L^s$ or $T\mathcal L^u$ is required, even near the laminations.
See Figure~\ref{fig:admissible-hyperbolic}.

\begin{figure}[htbp]
    \centering
    \resizebox{\linewidth}{!}{\HyperbolicAdmissibilityDiagram}
    \caption{Admissible boundary bi-foliations for a hyperbolic bi-contact plug. On
    transit orbits one has the product cyclic order. At $\mathcal L^s$ and
    $\mathcal L^u$ the contact directions straddle the corresponding
    lamination in opposite orders.}
    \label{fig:admissible-hyperbolic}
\end{figure}

\begin{theorem}[Hyperbolic boundary extension]
\label{hypb}
Let $(M,X)$ be as above.  Every admissible choice of boundary defining forms extends
to supporting bi-contact forms on $M$.  More precisely, one may fix a
supporting bi-contact structure on an isolating neighborhood of $\Lambda$ and a prescribed bi-contact structure near $\partial M$ with admissible characteristic bi-foliation, and find an extension agreeing with both
on smaller neighborhoods.  The construction is relative and parametric for
compact parameter spaces.
\end{theorem}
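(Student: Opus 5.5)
The plan is to decompose $M$ into three regions and glue local extensions with cutoffs that respect the angular sectors. The first region is a small isolating neighborhood $U$ of $\Lambda$, where a supporting bi-contact structure is given; if it is not prescribed, it exists by Theorem~\ref{dom}. The second region consists of collar neighborhoods of $\partial M$, where the prescribed structure lives. The third is the complement. On the complement, every orbit is either a transit orbit, crossing from $\partial_{\mathrm{in}}M\setminus\mathcal L^s$ to $\partial_{\mathrm{out}}M\setminus\mathcal L^u$ in finite time, or lies in $W^s(\Lambda)\cup W^u(\Lambda)$ and enters $U$. I would choose $U$ with boundary transverse to $X$, so that $\partial U=\partial_{\mathrm{in}}U\sqcup\partial_{\mathrm{out}}U$. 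Then $M\setminus\operatorname{int}U$ splits into two pieces. One is a product region from $\partial_{\mathrm{in}}M$ to $\partial_{\mathrm{in}}U$ together with the transit orbits. The other is a product region from $\partial_{\mathrm{out}}U$ to $\partial_{\mathrm{out}}M$. Up to flow-box reparametrization, each piece is a product $[0,1]\times\Sigma$, where $\Sigma$ may have boundary along the transit/nontransit interface.

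The key step is to check that the boundary bi-foliations of these product pieces are admissible in the sense of Definition~\ref{admissible-bifoliation}; Theorem~\ref{prob}, in its relative form for $\Sigma$ with boundary, then fills each piece.

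On the transit region, condition (1) of Definition~\ref{admissible-hyperbolic-boundary} is exactly product admissibility after transport by $\Gamma$. Theorem~\ref{prob} therefore applies directly, away from a neighborhood of the laminations.

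Near $\mathcal L^s$, I use the dominated splitting. By the cone-field interpretation following Theorem~\ref{dom}, flowing forward, the planes $\xi_\pm$ of the structure on $U$ converge projectively toward the weak-stable plane $W^s$. They remain on opposite sides of it, in the order forced by the opposite rotation senses. Condition (2) says that the entrance directions $S_\pm^{\mathrm{in}}$ straddle $T\mathcal L^s$ in the same order. Hence along the entrance product piece, whose outgoing end lies in $\partial_{\mathrm{in}}U$, the four directions $S_-^{\mathrm{in}}$, $S_-|_{\partial U}$, $S_+|_{\partial U}$, $S_+^{\mathrm{in}}$ are in the cyclic order \eqref{adnec} near $W^s$. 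Condition (3) gives the analogous statement near $\mathcal L^u$ by time reversal.

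To make this hold on all of $\partial_{\mathrm{in}}U$, and not merely near $W^s(\Lambda)$, I would shrink $U$. I would also pre-adjust the structure on $U$ near its boundary using the flow. Pulling back by $\varphi^{-T}$ for large $T$ pushes the characteristic directions on $\partial_{\mathrm{in}}U$ arbitrarily close to the stable direction on a fixed neighborhood of $W^s\cap\partial U$. The $C^0$-openness of the cyclic-order condition then gives admissibility there.

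The remaining part of $\partial_{\mathrm{in}}U$ consists of points whose orbits exit $U$ and later reach $\partial_{\mathrm{out}}M$. These belong to the transit region. There I would treat the whole orbit segment, from $\partial_{\mathrm{in}}M$ through $U$ to $\partial_{\mathrm{out}}M$, as a single product piece. Admissibility of that piece follows from condition (1), provided the structure on $U$ is only used where it is prescribed, namely on a smaller neighborhood of $\Lambda$.

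Concretely, I would cover $M\setminus(\text{small }U'\cup\text{collars})$ by two overlapping families. The first consists of flow boxes of transit orbits, handled by (1). The second consists of flow boxes of orbits near $W^s\cup W^u$, handled by (2), (3) and domination. The pieces are glued using the convexity of the angular and radial interpolation spaces from the proof of Theorem~\ref{prob}. Specifically, the angles are interpolated within the fixed sectors, with a partition of unity on the transverse parameter $\Sigma$. This is exactly the vertical-boundary relative clause of Theorem~\ref{prob}.

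Parametric and relative versions follow because every choice — the shrinking time $T$, the neighborhoods, and the convex interpolations — can be made uniformly over a compact parameter space.

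I expect the main obstacle to be the uniformity near the laminations. The transit time blows up as $p\to\mathcal L^s$. One must check that the cyclic order supplied by condition (1) degenerates compatibly with the limiting order supplied by (2), (3) and domination, so that the overlap between the two families is genuinely admissible. My first attempt would use the $\lambda$-lemma together with the invariant extension of Lemma~\ref{lem:bby-invariant-extension} as a reference foliation, and measure all angles relative to its leaves.
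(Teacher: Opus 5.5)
Your ingredients match the paper's: tubes near $\mathcal L^s$ and $\mathcal L^u$ justified by domination and conditions (2) and (3), condition (1) on the remaining transit orbits, and the vertical-boundary relative form of Theorem~\ref{prob} for gluing. \textbf{However, the setup you build them on fails.} A small neighborhood $U$ of $\Lambda$ with $\partial U\pitchfork X$ does not exist in general. Take $\Lambda=\gamma$ a saddle orbit, and $U$ connected inside a thin isolating solid torus $N$.
\begin{itemize}
\item Each component $S$ of $\partial U$ separates $N$, because $H_2(N;\mathbb Z/2)=0$.
\item The side of $S$ not meeting $\partial N$ is compact and forward or backward invariant. It therefore contains a compact invariant set, hence $\gamma$, hence $\operatorname{int}U$.
\item Two distinct components cannot both enclose $\operatorname{int}U$ this way, so $\partial U$ is connected.
\item Then $U$ is an attracting or repelling region around a saddle orbit, which is absurd.
\end{itemize}
So the splitting of $M\setminus\operatorname{int}U$ into product pieces ending on $\partial_{\mathrm{in}}U$ is not available. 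The pre-adjustment near $\partial U$ does not repair this. Replacing the structure there by a flow pullback needs its own interpolation back to the original, and that interpolation is itself the long-tube extension. Also, $(\varphi^{-T})^*$ pushes characteristic directions toward the unstable direction, not the stable one.

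The obstacle you flag at the end is not resolved by the $\lambda$-lemma with a reference foliation. It disappears with the paper's order of choices.
\begin{enumerate}
\item Fix the structure on $N(\Lambda)$.
\item By compactness of $\mathcal L^s$ and the inclination lemma, choose one time $T_s$ such that $\varphi^t(\mathcal L^s)\subset N(\Lambda)$ for $t\geq T_s$. Require also that the product cyclic order holds between the prescribed entrance bi-foliation and the one induced by the $N(\Lambda)$-structure on the terminal slice at time $T_s$.
\item Openness gives a compact neighborhood $N_s$ of $\mathcal L^s$. Theorem~\ref{prob} fills the tube $\varphi^{[0,T_s]}(N_s)$ relative to both ends. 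Argue symmetrically backward from $\mathcal L^u$.
\item This defines the structure on a neighborhood $V$ of $W^s\cup W^u\cup\partial M$.
\item The entrance points of orbits through $M\setminus V$ form a compact set $A\subset\partial_{\mathrm{in}}M\setminus\mathcal L^s$. For a compact subsurface $\Sigma_0$ with $A$ in its interior, the transit saturation $R\cong[0,1]\times\Sigma_0$ has bounded transit time. Its vertical collar already lies in $V$.
\item Condition (1) alone lets the relative form of Theorem~\ref{prob} fill $R$.
\end{enumerate}
Nothing needs checking on the overlap. In a product, the contact condition is only strict monotonicity of the two angles along the flow. Both structures share the prescribed end data. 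The two planes rotate oppositely and never coincide, so each total rotation is less than $\pi$, which fixes the angular lifts. Hence convex interpolation in the $\Sigma_0$-direction is automatically supporting. Agreement near $\Lambda$ then holds on a smaller neighborhood disjoint from the compact set $R$. Your two overlapping families become exactly this construction once the near-lamination tubes end on time-$T_s$ slices inside $N(\Lambda)$ instead of on $\partial_{\mathrm{in}}U$.
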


\begin{proof}
Write $\varphi^t$ for the flow of $X$. Fix the prescribed bi-contact structure near $\partial M$
and a supporting bi-contact structure in the neighborhood $N(\Lambda)$ of $\Lambda$ given by Theorem~\ref{dom}. We only require agreement with this bi-contact structure on a smaller
neighborhood of $\Lambda$.

First, we extend near the stable set. By admissibility, the incoming
bi-foliation is transverse to the stable direction along $\mathcal{L}^s$, so the forward images of its two foliations
approach the unstable direction by the inclination lemma (see \cite{hflows}). 
By compactness, we may choose a single time $T_s$ for which $\varphi^{t}(\mathcal L^s)$ lies in $N(\Lambda)$ for $t\geq T_s$ and the cyclic order in Equation~\eqref{adnec} holds on the product $[0,T_s]\times \mathcal L^s$ (here $S_\pm^{in}$ is the push-forward of the prescribed $S_\pm$ on $\mathcal{L}^s$ and $S_\pm^{out}$ is given by the bi-contact structure we constructed on $N(\Lambda)$). Openness then gives a compact neighborhood
$N_s$ of $\mathcal L^s$ on which the flow tube
$$
\{\varphi^t(x):x\in N_s,\ 0\leq t\leq T_s\}
$$
has admissible characteristic bi-foliations induced by the prescribed structure at its entrance and the structure near $\Lambda$ at its terminal slice. Theorem~\ref{prob} extends the bi-contact structure across this tube, preserving the prescribed structures near both ends. Applying the same argument backward from $\mathcal L^u$
fills the unstable tubes. Shrink the neighborhood of $\Lambda$ on which the original structure is preserved and the
tube neighborhoods so that their overlaps lie in the terminal collars,
where the constructions agree. We have therefore defined the bi-contact structure on a
neighborhood $V$ of the stable and unstable sets in $M$, including
$\Lambda$, and of $\partial M$.

It remains to fill a compact product region. Every point of $M\setminus V$
lies on a finite transit orbit. Projecting these points backward to
$\partial_{\mathrm{in}}M$ gives a compact set
$A\subset\partial_{\mathrm{in}}M\setminus\mathcal L^s$.
Choose a compact subsurface $\Sigma_0$ of this complement with
$A\subset\operatorname{int}(\Sigma_0)$. Its full transit saturation is
$$
R=\{\varphi^t(x):x\in\Sigma_0,\ 0\leq t\leq\tau(x)\}
\simeq[0,1]\times\Sigma_0,
$$
where $\tau$ is the smooth transit-time function. The vertical boundary
of $R$ has a collar contained in $V$, since its orbits avoid $M\setminus V$.
Moreover, $R$ is compact and disjoint from the stable and unstable sets.

Preserve the previous construction outside $R$ and near its vertical
boundary. The characteristic bi-foliations on the horizontal faces of $R$ satisfy the product cyclic order by
admissibility along the crossing map $\Gamma$. The relative form of
Theorem~\ref{prob} fills $R$ and agrees with the preserved portion. This gives
the required bi-contact structure on all of $M$, agreeing with the prescribed bi-contact structure near
$\Lambda$.

For compact parameter families, the inclination estimates and strict
inequalities allow uniform choices of times and neighborhoods. Applying
the relative parametric version of Theorem~\ref{prob} at each step gives
the corresponding relative parametric extension.
\end{proof}

\subsubsection{Strongly admissible boundary jets}

We now give necessary and sufficient conditions for a boundary $X$-jet to extend to a strongly adapted contact form.

For a positive contact form annihilating $X$, write
$$
j_X^1\alpha_+|_{\partial M}
=[(\lambda,\mu)]_{\pm}
:=[(\alpha_+,\mathcal L_X\alpha_+)|_{\partial M}]_{\pm}.
$$
We call this jet \emph{strongly admissible} if $(\mu,\lambda)$ is admissible
in the sense of Definition~\ref{admissible-hyperbolic-boundary} and, on every
transit orbit, the pulled-back incoming and outgoing jets satisfy
Equations~\eqref{sadin} and~\eqref{sadout}.  Explicitly, after pulling the
outgoing forms back by $\Gamma$ and choosing the common signs fixed by
product admissibility, the additional inequalities are
$$
(\Gamma^*\lambda^{\mathrm{out}}-\lambda^{\mathrm{in}})
\wedge\mu^{\mathrm{in}}<0,
$$
and
$$
(\Gamma^*\lambda^{\mathrm{out}}-\lambda^{\mathrm{in}})
\wedge\Gamma^*\mu^{\mathrm{out}}>0.
$$
The two remaining endpoint inequalities are already contained in
$\mu\wedge\lambda>0$.

The following estimate supplies the radial bounds needed for the strongly adapted construction.

\begin{lemma}[Long-tube jet matching]
\label{lem:long-tube-jet-matching}
Let a strongly adapted contact form be fixed near $\Lambda$, and let a positive
boundary jet $(\lambda_0,\mu_0)$, with
$\mu_0\wedge\lambda_0>0$, be given near $\mathcal L^s$ such that
$(\mu_0,\lambda_0)$ straddles $T\mathcal L^s$ in the admissible order.  For a
sufficiently long entrance tube, the jet induced by this fixed form at
the inner end and $(\lambda_0,\mu_0)$ at the outer end are strongly
admissible for the product plug. The same holds on an exit tube after
reversing the flow.  The required length can be chosen uniformly in compact
families.
\end{lemma}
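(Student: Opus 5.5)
The plan is to make everything explicit in the product coordinates of the entrance tube and compare growth rates. Take the entrance tube over a small neighborhood $N_s$ of $\mathcal L^s$ in $\partial_{\mathrm{in}}M$, of length $T$, and identify it with $[0,T]\times N_s$ by the flow. Transport the fixed strongly adapted form $\alpha_+$ from the slice $\varphi^T(N_s)$ back to $N_s$, obtaining the inner-end jet
$$
\lambda^{\mathrm{out}}_T=(\varphi^T)^*\alpha_+|_{TN_s},\qquad
\mu^{\mathrm{out}}_T=(\varphi^T)^*(\mathcal L_X\alpha_+)|_{TN_s}.
$$
These restrictions are meaningful because $\alpha_+(X)=0$ and $\mathcal L_X$ commutes with $(\varphi^T)^*$. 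The outer end carries $(\lambda^{\mathrm{in}},\mu^{\mathrm{in}})=(\lambda_0,\mu_0)$. We will check the inequalities of Definition~\ref{strongly-admissible-jet} after rescaling time to $[0,1]$. This rescaling multiplies both $\mu$'s by $T$. That factor is only polynomial in $T$, so it will be swamped by the exponential rates below; we note it once and otherwise ignore it.

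\textbf{Asymptotics of the pulled-back jet.} Use an adapted metric and the splitting $E^s\oplus E^u$ over $\Lambda$, extended continuously to a neighborhood. By Theorem~\ref{adapted} and its proof, $\alpha_+$ and $\mathcal L_X\alpha_+$ are, near $\Lambda$, nondegenerate combinations of the dual forms of $E^s$ and $E^u$, with nonzero $E^u$-dual coefficients. For $x\in N_s$ near $\mathcal L^s$, the slice direction $T\mathcal L^s$ is mapped by $D\varphi^T$ into a cone about $E^s$ and is contracted at a rate about $e^{-\lambda_s T}$. A transverse slice direction is pushed by the inclination lemma toward $E^u$ and expanded at a rate about $e^{\lambda_u T}$.

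Hence, with $\sigma$ a fixed form on $N_s$ whose kernel is $T\mathcal L^s$ (extended to a nearby foliation), one gets
$$
\lambda^{\mathrm{out}}_T=e^{c(x,T)}\bigl(a\,\sigma+o(1)\bigr),\qquad
\mu^{\mathrm{out}}_T=e^{c(x,T)}\bigl(b\,\sigma+o(1)\bigr),
$$
with $c(x,T)\geq \lambda_u T-C$ and $a,b$ bounded away from $0$. The signs of $a$ and $b$ are fixed by the sign conventions of strong adaptation, which identify $\mathcal L_X\alpha_+$ as the negative form. Moreover, the $2$-form $\lambda^{\mathrm{out}}_T\wedge\mu^{\mathrm{out}}_T$ is the pullback of the positive area form $\iota_X(\alpha_+\wedge d\alpha_+)$ on the normal bundle. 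Its size is therefore the transverse Jacobian, about $e^{(\lambda_u-\lambda_s)T}$, which is smaller than $e^{2c}$ by the factor $e^{-\lambda_s T}$.

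\textbf{Checking the inequalities.} The inequality $\lambda^{\mathrm{out}}\wedge\lambda^{\mathrm{in}}>0$ reduces to $a\,\sigma\wedge\lambda_0>0$, up to terms of relative size $o(1)$. This sign holds because $\lambda_0$ is transverse to $T\mathcal L^s$ on the correct side, by the straddling hypothesis. For the chord inequality at the outer end, $(\lambda^{\mathrm{out}}-\lambda^{\mathrm{in}})\wedge\mu^{\mathrm{in}}<0$, the left side is $e^{c}a\,\sigma\wedge\mu_0+O(1)$. Its sign is that of $a\,\sigma\wedge\mu_0$, which is correct because $\mu_0$ and $\lambda_0$ lie on opposite sides of $T\mathcal L^s$. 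The inequality $-\lambda^{\mathrm{in}}\wedge\mu^{\mathrm{in}}>0$ is the hypothesis $\mu_0\wedge\lambda_0>0$. At the inner end, $\lambda^{\mathrm{out}}\wedge\mu^{\mathrm{out}}<0$ holds because the form is strongly adapted. In
$$
(\lambda^{\mathrm{out}}-\lambda^{\mathrm{in}})\wedge\mu^{\mathrm{out}},
$$
the term $-\lambda_0\wedge\mu^{\mathrm{out}}_T\approx -e^{c}b\,\lambda_0\wedge\sigma$ has the correct positive sign. It dominates $\lambda^{\mathrm{out}}_T\wedge\mu^{\mathrm{out}}_T$ because the latter is smaller by $e^{-\lambda_s T}$ relative to $e^{2c}$, and also relative to $e^{c}$ once we normalize by the expansion (this point needs the precise comparison below). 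The exit tube follows by applying the same argument to $-X$, with the roles of $s$ and $u$ exchanged.

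\textbf{Main obstacle and uniformity.} The delicate step is making the asymptotic expansion uniform on all of $N_s$. Points of $N_s\setminus\mathcal L^s$ leave a neighborhood of $\Lambda$ at some time, so we must shrink $N_s$ as $T$ grows, uniformly. We must also compare $e^{c}$ with the Jacobian precisely, rather than only up to constants: the last inequality compares a term of order $e^{c}$ with one of order $e^{(\lambda_u-\lambda_s)T}$. I would handle this by writing everything in the dual coframe of the (continuous) extended splitting along the orbit segment. Then $\lambda^{\mathrm{out}}_T\wedge\mu^{\mathrm{out}}_T=J_T\cdot\omega$ while $\lambda_0\wedge\mu^{\mathrm{out}}_T=e^{c}\cdot(\text{bounded}\neq0)$, with $J_T/e^{c}\asymp\|D\varphi^T|_{E^s}\|\to0$ uniformly by the inclination lemma. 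Compactness of $\mathcal L^s$ and of compact parameter families gives one $T$ working for all points and all parameters.
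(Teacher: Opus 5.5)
Your proposal is correct and is essentially the paper's argument: you pull the fixed strongly adapted form back by the time-$T$ flow, so its jet has coefficients of size $U_T$ on a direction transverse to $T\mathcal L^s$ and $S_T$ on $T\mathcal L^s$, which gives $\det(\lambda_T,\mu_0)\sim U_T\gg1$ and $|\det(\lambda_T,\mu_T)|\sim U_TS_T\ll U_T\sim|\det(\lambda_0,\mu_T)|$; the paper packages these as $L_T<1<R_T$, then extends to a neighborhood of $\mathcal L^s$ by openness and treats the exit tube by time reversal, just as you do. Two small corrections: rescaling the tube to $[0,1]$ does not need to be dominated, because every inequality in Definition~\ref{strongly-admissible-jet} is linear in a single $\mu$ and is therefore unaffected by it, and the Jacobian $U_TS_T$ should be compared with $e^{c}\approx U_T$ (ratio $S_T\to0$), not with $e^{2c}$, which is what your final paragraph correctly does.
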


\begin{proof}
We prove the entrance statement.  Represent this prescribed form on a neighborhood $N(\Lambda)$ given by Theorem~\ref{adapted} and pull it back to the entrance
cross-section by the time-$T$ flow (for sufficiently large $T$). Along $\mathcal L^s$, choose a local adapted
coframe $(e^u,e^s)$ for the dual unstable and stable directions, oriented so
that the positive contact condition is
$\det(\lambda_0,\mu_0)>0$.  Write
$$
\lambda_0=p e^u+q e^s,
\qquad
\mu_0=r e^u+s e^s.
$$
The straddling condition and compactness allow compatible local signs to be
chosen so that
$$
q\leq-\delta<0<\delta\leq s,
$$
while $p$ and $r$ remain uniformly bounded.

Write the pulled-back jet of the fixed form as
$$
\lambda_T=U_Ta\,e^u+S_Tb\,e^s,
\qquad
\mu_T=U_Tc\,e^u+S_Td\,e^s.
$$
The invariant lines are the forward and backward limits of the two contact
cones.  Hence the defining forms near $\Lambda$ straddle both invariant directions and, after
choosing compatible local signs, we have
$$
a,c>0,
\qquad
b<0<d,
\qquad
ad-bc>0,
$$
uniformly.  Hyperbolicity gives constants $C,\chi>0$ such that
$$
U_T\geq C^{-1}e^{\chi T},
\qquad
0<S_T\leq Ce^{-\chi T}.
$$

Allow temporarily a constant rescaling $h>0$ of the inner jet.  The two
chord inequalities in Definition~\ref{strongly-admissible-jet} are
equivalent, with the above orientation, to
\begin{equation}
\label{eq:long-tube-radial-interval}
L_T<h<R_T,
\end{equation}
where
$$
L_T=
\frac{\det(\lambda_0,\mu_0)}
     {\det(\lambda_T,\mu_0)},
\qquad
R_T=
\frac{\det(\lambda_0,\mu_T)}
     {\det(\lambda_T,\mu_T)}.
$$
For large $T$, all denominators and numerators displayed here have the
required positive sign.  More precisely,
$$
\det(\lambda_T,\mu_0)
=U_Tas-S_Tbr,
$$
$$
\det(\lambda_0,\mu_T)
=pS_Td-qU_Tc,
$$
and
$$
\det(\lambda_T,\mu_T)
=U_TS_T(ad-bc).
$$
Uniform boundedness of all coefficients therefore gives
$$
0<L_T\leq C_1e^{-\chi T},
\qquad
R_T\geq C_1^{-1}e^{\chi T}
$$
for a constant $C_1>0$.  For sufficiently large $T$, the interval in \eqref{eq:long-tube-radial-interval} contains $h=1$, so the fixed form near $\Lambda$ requires no modification.

These estimates initially hold on $\mathcal L^s$. By openness and uniformity, they also hold on a sufficiently small compact neighborhood of $\mathcal L^s$. The resulting
finite flow tube, together with the prescribed neighborhood
of $\Lambda$, covers $W^s(\Lambda)\cap M$.
Compactness also makes the estimates uniform in auxiliary
parameters. Simultaneous reversal of the local coframe and jet
representatives leaves the coefficients and estimates unchanged, so they
hold without orientability assumptions. The exit statement is the time-reversed argument.
\end{proof}

\begin{theorem}[Strong hyperbolic boundary extension]
\label{hyps}
Let $(M,X)$ be a hyperbolic plug whose entrance and
exit laminations are prefoliations.  Every strongly admissible boundary
$X$-jet extends to a strongly adapted positive contact form on $M$.
Moreover, the extension may agree on smaller neighborhoods with strongly adapted forms prescribed near $\Lambda$ and near $\partial M$.  The
construction is parametric for compact parameter spaces.
\end{theorem}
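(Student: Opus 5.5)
The plan is to follow the architecture of the proof of Theorem~\ref{hypb}, replacing each use of Theorem~\ref{prob} by its strongly adapted counterpart, Theorem~\ref{pros}. The radial estimates are supplied by Lemma~\ref{lem:long-tube-jet-matching}.

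First, fix a strongly adapted form $\alpha_\Lambda$ on an isolating neighborhood $N(\Lambda)$; if none is prescribed, take the one given by Theorem~\ref{adapted}. Fix also the prescribed strongly adapted form near $\partial M$, whose boundary jet $(\lambda,\mu)$ is strongly admissible. By definition $(\mu,\lambda)$ is admissible, so along $\mathcal L^s$ the jet straddles $T\mathcal L^s$ in the admissible order and satisfies $\mu\wedge\lambda>0$. These are exactly the hypotheses of Lemma~\ref{lem:long-tube-jet-matching}. That lemma gives a time $T_s$ and a compact neighborhood $N_s$ of $\mathcal L^s$ with the following property: on the tube $\{\varphi^t(x):x\in N_s,\ 0\le t\le T_s\}$, the jet of the boundary form at the entrance and the jet of $\varphi^{T_s *}\alpha_\Lambda$ at the terminal slice form a strongly admissible product jet. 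Theorem~\ref{pros} then fills the tube, relative to strongly adapted forms already given near both ends. Time reversal does the same near $\mathcal L^u$. Where different tubes and $N(\Lambda)$ overlap, I arrange the overlaps to lie in terminal collars, where all constructions coincide with $\alpha_\Lambda$ or with the boundary form. The result is a strongly adapted form on a neighborhood $V$ of $W^s(\Lambda)\cup W^u(\Lambda)\cup\partial M$.

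Next, as in Theorem~\ref{hypb}, I choose a compact subsurface $\Sigma_0\subset\partial_{\mathrm{in}}M\setminus\mathcal L^s$ whose transit saturation $R\simeq[0,1]\times\Sigma_0$ contains $M\setminus V$ and whose vertical boundary has a collar in $V$. Here the strongly admissible jet inequalities transported by $\Gamma$ are precisely the product conditions \eqref{sadin} and \eqref{sadout} on the horizontal faces of $R$. The relative version of Theorem~\ref{pros} for $[0,1]\times\Sigma$ with prescribed data near $[0,1]\times\partial\Sigma$ therefore fills $R$. The parametric statement follows because every choice made (times, neighborhoods, the moment-solving bump coefficients) can be taken uniformly over compact parameter spaces.

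The main obstacle is the compatibility between the tube construction and the final product region $R$. Near the vertical boundary of $R$, the form on $V$ was produced by the tubes and is strongly adapted. However, the horizontal jets of $R$ there are the prescribed boundary jet at the entrance and, at the exit, the jet coming out of a tube. I must check that the exit-face jet near $\partial\Sigma_0$ is indeed the prescribed boundary jet, and not one altered by the tube. I will handle this by shrinking $N_s$ and $N_u$ so that the tubes only modify the form near $W^{s}\cup W^u$, away from the exit faces of orbits in $R$. I would also verify that agreement near $\partial M$ is obtained by first shrinking the prescribed boundary collars, so that the jets used in the tube ends are those of the prescribed form. If this fails near the laminations, I would alter the bump functions in Theorem~\ref{pros} only in the interior of each orbit segment. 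This keeps strong adaptation, since the conditions are pointwise convex in $p=-(u_{\theta\theta}+u)$.
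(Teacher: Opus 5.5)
Your proposal is correct and is essentially the paper's proof. You fix a strongly adapted form near $\Lambda$, use Lemma~\ref{lem:long-tube-jet-matching} to make the jets on long tubes around $\mathcal L^s$ and $\mathcal L^u$ strongly admissible, and fill these tubes by Theorem~\ref{pros} relative to both ends. You then fill the remaining compact transit region from the proof of Theorem~\ref{hypb} by the relative version of Theorem~\ref{pros}, where strong admissibility is exactly the required endpoint condition.

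The obstacle you flag at the end does not arise. The exit tubes are filled relative to the prescribed boundary form, so the exit-face jets of $R$ near $\partial\Sigma_0$ are already the prescribed ones, and no adjustment of the bump functions is needed.
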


\begin{proof}
Fix a strongly adapted form near $\Lambda$ given by Theorem~\ref{adapted}.  Apply
Lemma~\ref{lem:long-tube-jet-matching} near $\mathcal L^s$ and
$\mathcal L^u$.  The two endpoint jets on each resulting flow tube are
strongly admissible, so Theorem~\ref{pros} extends the form across the tube,
relative to both ends.

The complement is the same finite union of compact product pieces used in
the proof of Theorem~\ref{hypb}.  On each such piece, strong admissibility is
exactly the collection of endpoint inequalities imposed in the definition.
The relative form of Theorem~\ref{pros} therefore fills the complement.
Strictness of the determinant estimates and of all endpoint cones gives the
relative parametric statement.
\end{proof}

\begin{remark}
The inclination lemma controls the limiting projective directions and is enough
for Theorem~\ref{hypb}. Strong adaptation also requires control of the radial values and first derivatives. These bounds follow from the exponentially
wide interval in Equation~\eqref{eq:long-tube-radial-interval}.
\end{remark}

\begin{remark}[Homotopy classification of admissible boundary bi-foliations]
\label{homotopy-hyperbolic-bicontact}
Let $(M,X)$ be a hyperbolic plug as above, with no product components.
Fix the transverse sectors along the boundary laminations determined
by the hyperbolic core.
Let $I$ index the exceptional annular components of
$\partial_{\mathrm{in}}M\setminus\mathcal L^s$. The space
$\mathcal F_{\mathrm{ad}}^b(\partial M)$ of admissible boundary
bi-foliations respecting these choices satisfies
$$
\mathcal F_{\mathrm{ad}}^b(\partial M)
\simeq \coprod_{\mathbf n\in\mathbb Z^I}\{*\}.
$$
The integers are relative projective twisting numbers across the exceptional
annuli. A reference admissible bi-foliation fixes their origins.
Prescribing a sign system restricts the allowed components.
Every component is contractible. In particular, the space is
contractible in the filling MS case.

The key point is that the incoming positive foliation determines
the remaining boundary directions up to contractible choices.
Its directions near $\mathcal L^s$ vary in prescribed transverse
intervals and may therefore be fixed. After compact truncation,
the complementary strips are disks and contribute no relative
homotopy invariant, whereas each exceptional annulus $A$ contributes
$$
H^1(A,\partial A;\mathbb Z)\simeq\mathbb Z.
$$
Once these integers are fixed, angular lifts identify the remaining
choices with convex spaces of functions. The relative parametric
extension results give the same classification for supporting
bi-contact structures and strongly adapted positive contact forms,
with $X$ and the transverse sectors fixed. Product components
contribute the factors described in Remark~\ref{product-homotopy-type}.
We leave the details to the reader.
\end{remark}

Finally, the following extension lemma will be useful in our gluing constructions.

\begin{lemma}[One-sided strong admissibility]
\label{lem:one-sided-strong-admissibility}
Let $(M,X)$ be as in Theorem~\ref{hyps}. Suppose an ordered
entrance bi-foliation admits an admissible completion to $\partial M$.
Every defining coframe $(\mu^{\mathrm{in}},\lambda^{\mathrm{in}})$, understood modulo simultaneous sign,
extends to a strongly admissible boundary $X$-jet $(\lambda,\mu)$,
preserving any chosen admissible exit bi-foliation. The same holds
with entrance and exit interchanged.
\end{lemma}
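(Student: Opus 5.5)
The plan is to keep the prescribed entrance coframe $(\mu^{\mathrm{in}},\lambda^{\mathrm{in}})$ and the chosen admissible exit bi-foliation $(S_-^{\mathrm{out}},S_+^{\mathrm{out}})$ fixed. We then choose exit defining forms $\lambda^{\mathrm{out}},\mu^{\mathrm{out}}$ with these kernels. The only freedom is a choice of two positive scalar functions on $\partial_{\mathrm{out}}M$, namely the radial sizes of the exit representatives. Admissibility of $(\mu,\lambda)$ then holds automatically, since it is a condition on kernels and relative signs only. It remains to arrange the two chord inequalities
$$
(\Gamma^*\lambda^{\mathrm{out}}-\lambda^{\mathrm{in}})\wedge\mu^{\mathrm{in}}<0,
\qquad
(\Gamma^*\lambda^{\mathrm{out}}-\lambda^{\mathrm{in}})\wedge\Gamma^*\mu^{\mathrm{out}}>0
$$
on every transit orbit.

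First, rewrite these as pointwise conditions on $\partial_{\mathrm{out}}M\setminus\mathcal L^u$. Write $\lambda^{\mathrm{out}}=f\,\lambda_1$ and $\mu^{\mathrm{out}}=g\,\mu_1$ for fixed admissible representatives $\lambda_1,\mu_1$ and unknown functions $f,g>0$. The second inequality does not involve $g$ beyond its sign. Exactly as in the proof of Lemma~\ref{lem:long-tube-jet-matching}, the two inequalities become an interval condition $L<f<R$. Here
$$
L=\frac{\det(\lambda^{\mathrm{in}},\mu^{\mathrm{in}})}{\det(\Gamma^*\lambda_1,\mu^{\mathrm{in}})},
\qquad
R=\frac{\det(\lambda^{\mathrm{in}},\Gamma^*\mu_1)}{\det(\Gamma^*\lambda_1,\Gamma^*\mu_1)},
$$
computed in pulled-back coordinates and transported to $\partial_{\mathrm{out}}M$ via $\Gamma$. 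The product cyclic order from Definition~\ref{admissible-bifoliation} fixes the signs of all four determinants. Checking $L<R$ amounts to the elementary planar fact that, for a product-admissible configuration, the radial interval in Equation~\eqref{radad} is nonempty once the other endpoint is fixed. Indeed, scaling $\lambda^{\mathrm{out}}$ moves the chord endpoint along a ray, and the two tangent-cone conditions cut out an open interval of that ray. So on each compact subset of the transit region there is an open convex set of admissible $f$, and a partition of unity gives a smooth choice there. The choice of $g$ is free, for instance $g=1$.

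The main obstacle is behaviour near $\mathcal L^u$, equivalently near $\mathcal L^s$ on the entrance, where $\Gamma$ degenerates. There $L$ and $R$ must be controlled uniformly so that a smooth positive $f$ on all of $\partial_{\mathrm{out}}M$, including across $\mathcal L^u$, satisfies $L<f<R$. I would use the hyperbolic coframe $(e^u,e^s)$ adapted to the stable and unstable directions, as in Lemma~\ref{lem:long-tube-jet-matching}. Near $\mathcal L^u$, the pullback by $\Gamma$ of forms at the exit behaves like pullback by a long-time flow: unstable components are multiplied by $U_T$ and stable components by $S_T$, with $T\to\infty$ as the orbit approaches the laminations. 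The straddling conditions (2) and (3) of Definition~\ref{admissible-hyperbolic-boundary} keep the sign-determining coefficients bounded away from $0$. The same determinant expansions as in that lemma then give $L\to0$ and $R\to\infty$ exponentially in the transit time. Hence the constraint becomes vacuous near the laminations, and the constant $f=1$, blended into the interior choice, works. A cleaner alternative is to fix strongly adapted forms near $\Lambda$ via Theorem~\ref{adapted} and compare directly with the long-tube estimate.

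The parametric and relative versions follow because each constraint is an open convex condition on $f$. The statement with entrance and exit interchanged is the time-reversed argument, which replaces $X$ by $-X$ and swaps the roles of $\mathcal L^s$ and $\mathcal L^u$.
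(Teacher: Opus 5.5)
Your proposal is correct and follows the paper's proof: your $L$ and $R$ are exactly the paper's bounds $1/d$ and $a/\Delta$ for the positive transition matrix of $\Gamma^*$ in the coframes $(\mu,\lambda)$, the interval is nonempty because the cyclic order gives $bc>0$, and the paper also takes $h=1$ near the laminations and a smooth choice in the interval on the compact transit remainder. The only nuance is how $d>1$ and $a/\Delta>1$ are obtained near the laminations. The paper uses your ``cleaner alternative'': it applies Lemma~\ref{lem:long-tube-jet-matching} at both ends, with a strongly adapted form near $\Lambda$ in between, and notes that these inequalities persist under products of positive transition matrices. Your direct asymptotic estimate also works, but it needs boundary frames in which $D\Gamma$ is diagonal, such as invariant completions.
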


\begin{proof}
Choose defining forms $(\mu^{\mathrm{out}},\lambda^{\mathrm{out}})$
for the exit completion. By Theorem~\ref{hypb}, the crossing map has
positive transition matrices in compatible local representatives of these coframes,
$$
\begin{pmatrix}\Gamma^*\mu^{\mathrm{out}}\\
\Gamma^*\lambda^{\mathrm{out}}\end{pmatrix}
=\begin{pmatrix}a&b\\c&d\end{pmatrix}
\begin{pmatrix}\mu^{\mathrm{in}}\\\lambda^{\mathrm{in}}\end{pmatrix},
\qquad a,b,c,d>0,\quad \Delta=ad-bc>0.
$$
Replacing $\lambda^{\mathrm{out}}$ by $h\lambda^{\mathrm{out}}$
makes the jet strongly admissible precisely when
$$
\frac1d<h\circ\Gamma<\frac a\Delta.
$$
This interval is nonempty because $bc>0$. Its endpoints are unchanged by
simultaneous sign reversal, so the choice of $h$ is global.

For sufficiently long transits, Lemma~\ref{lem:long-tube-jet-matching}
at both ends, with a strongly adapted form near $\Lambda$ between them, gives
$d>1$ and $a/\Delta>1$. These inequalities are preserved under
multiplication of positive transition matrices. Thus $h$ can equal
$1$ near $\mathcal L^u$ and be chosen smoothly in the displayed
interval on the remaining compact transit region. Positive rescaling
preserves the exit bi-foliation. For a prescribed exit coframe, reverse the flow direction and do the same.
\end{proof}


\subsubsection{Admissibility and suspended-gradient plugs}
\label{subsubsec:gradient-topological-admissibility}

For suspended-gradient plugs, admissibility imposes explicit relations
on boundary winding. We also describe two-saddle attachments that
simplify the Reeb annuli of a characteristic bi-foliation.

Let $\Sigma$ be a connected compact oriented surface with boundary and
$Y$ a gradient-like vector field transverse to the boundary, with only
index-one critical points and no saddle connections. Choose $Y$ linear
near its critical points. Set
$$
M=\mathbb S^1_\theta\times\Sigma,
\qquad X=\partial_\theta+Y.
$$
The maximal invariant set consists of the saddle periodic orbits over the
critical points. We distinguish oriented winding from projective winding
according to the convention above.

\begin{lemma}[Boundary winding balance]
\label{lem:gradient-winding-balance}
For a coorientable admissible boundary bi-foliation and each boundary
circle $C$, choose a tangent vector $\tau_C$ such that
$(Y,\tau_C)$ is positive. On $T_C=\mathbb S^1\times C$, use the frame
$(-\partial_\theta,\tau_C)$ and write $(q_C,\rho_C)$ for the oriented winding
of either member of an admissible boundary bi-foliation, along the
$\theta$-circle and the $\tau_C$-oriented circle, respectively. The two foliations $S_-$ and $S_+$ have the same winding class, all $q_C$ agree, and
\begin{equation}\label{eq:gradient-winding-balance}
\sum_{C\subset\partial_{\mathrm{out}}\Sigma}\rho_C
-\sum_{C\subset\partial_{\mathrm{in}}\Sigma}\rho_C
=-\chi(\Sigma).
\end{equation}
In particular, for genus $g$ and one entrance and one exit circle,
$$
\rho_{\mathrm{out}}=\rho_{\mathrm{in}}+2g.
$$
Transversality to a longitudinal lamination leaf also forces $q_C=0$.
Without coorientability, the same balance holds for the projective winding
classes, with right-hand side $-2\chi(\Sigma)$.
\end{lemma}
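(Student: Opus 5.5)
We pass from the boundary tori to a single surface invariant: a homotopy class of line fields on $\Sigma$, relative to the boundary. The plan is to show that the admissible boundary data extend to a nonsingular line field on $\Sigma$ away from the critical points, and then count local indices using Poincaré--Hopf.

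First, the equal winding classes. Since $S_-$ and $S_+$ are transverse nonsingular line fields on each boundary torus, the straight-line homotopy through the transverse sector connects them. Hence $d(S_-)=d(S_+)$, and the same holds for $w$ in the coorientable case. The product admissibility of Definition~\ref{admissible-bifoliation} already places the four directions in a common open half-plane after transport by $\Gamma$.

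Next, the $\theta$-components $q_C$. The vector field $X=\partial_\theta+Y$ is $\mathbb S^1_\theta$-equivariant, and the saddle orbits $\mathbb S^1\times\{p\}$ are longitudinal. The compact leaves of $\mathcal L^s$ and $\mathcal L^u$ on each $T_C$ are the $\theta$-circles $\mathbb S^1\times\{x\}$, where $x$ is an endpoint of a separatrix. Conditions (2)--(3) of Definition~\ref{admissible-hyperbolic-boundary} say that $S_\pm$ is transverse to $T\mathcal L^{s/u}$. A line field transverse to a closed leaf along the $\theta$-circle has zero winding along that circle, relative to the frame $(-\partial_\theta,\tau_C)$, which is tangent to $\partial_\theta$. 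So $q_C=0$ on every $T_C$ meeting a separatrix. Because $\Sigma$ is connected with no saddle connections, every boundary circle meets a separatrix, or else the component is a product annulus; in the latter case $\Gamma$ transports the class and $q$ is constant. Either way all $q_C$ agree, and they vanish when a lamination leaf is present.

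The main step is the balance formula \eqref{eq:gradient-winding-balance}. I would invoke Theorem~\ref{hypb} to obtain a supporting bi-contact structure on all of $M$ with the given boundary bi-foliation. Consider the surface $\{\theta_0\}\times\Sigma$, which is transverse to $X$ because of the $\partial_\theta$ component. By the definition of characteristic bi-foliation, $\xi_+$ induces on it a nonsingular line field $S_+(\Sigma)$, with no singularities even at the critical points of $Y$. Its restriction to each boundary circle $C$ compares to the torus foliation through the projection along $X$. I would then compute the winding of this line field along $C$ relative to $\tau_C$ in two ways.

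In the first count, the surface line field is nonsingular on all of $\Sigma$. The framings $\tau_C$ come from the boundary orientation twisted by the gradient, and Poincaré--Hopf applied to $Y$ gives $\sum\pm\mathrm{ind}=\chi(\Sigma)$. Comparing the frame $\tau_C$ with a global trivialization of $T\Sigma$ shows that $\sum_C \pm\rho_C$ equals the obstruction $-\chi(\Sigma)$, with a sign that flips between entrance and exit because $(Y,\tau_C)$ positive reverses $\tau_C$ against the boundary orientation on entrance circles.

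In the second count, I must check that the slice winding equals the $\tau_C$-component $\rho_C$ of the torus winding. This holds because the slice direction along $C$ is the $\tau_C$-oriented circle, and the projection along $X$ is isotopic to the identity near the boundary. I expect this sign bookkeeping to be the main obstacle: the orientation conventions for $\tau_C$ on entrance versus exit circles, and the factor of $2$ between oriented and projective winding, must be matched carefully.

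In the non-coorientable case, the same count uses projective winding, which doubles the right-hand side to $-2\chi(\Sigma)$. For genus $g$ with one entrance and one exit circle, $\chi(\Sigma)=-2g$, which gives $\rho_{\mathrm{out}}=\rho_{\mathrm{in}}+2g$.
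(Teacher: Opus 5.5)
Your proposal is correct and is essentially the paper's argument: extend by Theorem~\ref{hypb}, identify the supported normal line fields with line fields in $T\Sigma$ (your slice picture is exactly the paper's identification $[a\partial_\theta+v]\mapsto v-aY$), and compare the vanishing total boundary degree over a slice with the $2\chi(\Sigma)$ contributed by the boundary frames. Two small remarks. First, the torus and slice windings agree because projection along $X=\partial_\theta+Y$ sends the frame $(-\partial_\theta,\tau_C)$ exactly to $(Y,\tau_C)$; an isotopy of the projection is not what makes them agree. Second, you get agreement of the $q_C$ from transversality to the longitudinal lamination leaves (plus $\Gamma$-transport in the annulus case), whereas the paper gets it from constancy of the longitudinal degree of the extended normal line field on $\mathbb S^1\times\Sigma$; both routes work.
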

Note that in the above, equality of the longitudinal windings and
Equation~\eqref{eq:gradient-winding-balance} characterize extension as
ordered transverse line fields in the normal bundle
$N_X:=TM/\langle X\rangle$. They do not by themselves characterize
admissibility.

\begin{proof}
The normal bundle has the canonical identification (here $\pi_\Sigma:M\rightarrow \Sigma$ is the projection)
$$
TM/\langle X\rangle\longrightarrow\pi_\Sigma^*T\Sigma,
\qquad [a\partial_\theta+v]\longmapsto v-aY,
$$
which sends the boundary frame to $(Y,\tau_C)$. Transverse projective
directions have angular difference in an interval, so their winding
classes agree. A global framing of $T\Sigma$ identifies a normal line
field with a map $M\to\mathbb{RP}^1$. Its longitudinal degree is constant,
and its total signed boundary degree on each $\Sigma$-slice is zero.
The boundary frame contributes $2\chi(\Sigma)$ to projective winding.
Dividing by two for coorientable boundary bi-foliations gives
\eqref{eq:gradient-winding-balance}. Conversely, standard obstruction theory implies these are exactly the
conditions for the boundary class to extend in $H^1(M;\mathbb Z)$.
The interval-valued angular difference then extends the second direction.
Extend the admissible boundary bi-foliation by
Theorem~\ref{hypb} and apply the preceding calculation to the
resulting transverse line fields in $N_X$.
\end{proof}

\paragraph{The two-saddle transfer.}
Take $\Sigma$ with $g=1$ and one entrance and one exit boundary component. The entrance lamination consists of four longitudinal circles,
labelled $a,b,c,d$ in cyclic order, with stable branches paired as
$(a,c)$ and $(b,d)$. Consider the annuli
$$
I_1=(a,b),\quad I_2=(b,c),\quad I_3=(c,d),\quad I_4=(d,a).
$$
The crossing map sends the four annuli $\mathbb S^1\times I_j$ to the
exit in the following cyclic order (see Figure~\ref{fig:tw-saddle})
\begin{equation}\label{eq:gradient-four-annulus-transfer}
\ I_4,\ I_3,\ I_2,\ I_1.
\end{equation}
\begin{figure}[htbp]
    \centering
    \includegraphics[page=1,width=0.25\linewidth]
        {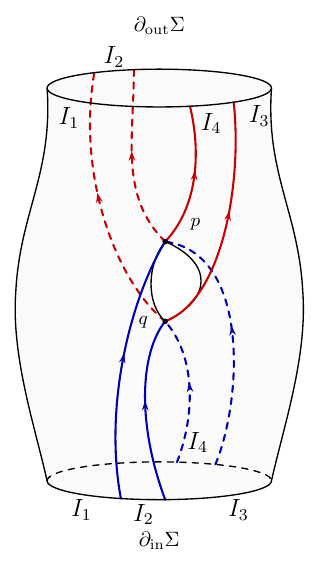}
    \caption{Two-saddle transfer}
    \label{fig:tw-saddle}
\end{figure}
Indeed, crossing a saddle joins the end of an interval to the beginning
at its paired endpoint. On each annulus the map has the form
$$
\Gamma(\theta,u)=(\theta+T(u),\gamma(u)),\qquad
D\Gamma(A\partial_\theta+B\partial_u)
=(A+BT')\partial_\theta+B\gamma'\partial_r.
$$
Here $T(u)$ is the transit time of the surface gradient flow $Y$,
and $\gamma(u)$ is the exit coordinate of the gradient trajectory
starting at entrance coordinate $u$.

By Lemma~\ref{lem:bby-invariant-extension} applied to $-X$, 
an entrance foliation transverse to the four circles can be pushed forward to the exit complement and extended across the four leaves of $\mathcal L^u$.
The required local information follows from the linear saddle model
$$
X=\partial_\theta-\alpha x\partial_x+\beta y\partial_y,
\qquad \alpha,\beta>0.
$$
Take $x=\pm1$ as incoming sections and $y=\pm1$ as outgoing sections.
On each outgoing section, use coordinates $(\theta,r)$ with $r=x$.
Thus $r$ is the signed coordinate transverse to the exit lamination,
which is locally the circle $\{r=0\}$.

For an incoming foliation transverse to the stable separatrices,
a direct computation of the crossing map gives a $C^1$ extension
of the transported foliation across $r=0$, with transverse linearization
\begin{equation}\label{eq:gradient-saddle-transfer}
\frac{dr}{d\theta}=-\alpha r.
\end{equation}
Each saddle therefore contributes two hyperbolic compact leaves of
negative sign, one on each outgoing section. When the four entrance
circles avoid the original compact leaves, those leaves persist under the crossing map and exactly four
compact leaves are added. 

To record the resulting Reeb arrangement, use coordinates $(\theta,u)$
with $(Y,\partial_u)$ positive. A compact leaf has holonomy sign
$\delta:=\operatorname{sgn}\lambda$ when its local equation is
$du/d\theta=\lambda u+O(u^2)$. Write $R_+$ and $R_-$ for Reeb annuli with net turning
$+\pi$ and $-\pi$, respectively, measured in the frame
$(\partial_\theta,\partial_u)$ as $u$ increases. Consecutive leaves with the same sign
bound a turning annulus $R_\delta$ and opposite signs give a parallel
annulus $P$. In this frame the transverse projective winding is
$$
d=\#R_+-\#R_-.
$$
For coorientable bi-foliations, the oriented winding is $w=d/2=-\rho$
(see Lemma~\ref{lem:gradient-winding-balance}).
For the upcoming boundary simplification in
Lemma~\ref{lem:gradient-boundary-simplification}, choose $a,b$
in one gap between the original compact leaves and $c,d$ in
another. Thus the annuli
$\mathbb S^1\times I_1$ and $\mathbb S^1\times I_3$ contain no old
compact leaves. In the exit ordering
\eqref{eq:gradient-four-annulus-transfer}, each transported annulus
is bounded by two new compact leaves. These form consecutive pairs
in the compact-leaf ordering, so each of the two selected gaps
receives two new leaves.

All four new leaves have $\lambda<0$ by
\eqref{eq:gradient-saddle-transfer}. Inserting two such leaves between
consecutive old compact leaves therefore changes the annular
decomposition as follows (see Figure~\ref{fig:reeb-positive-substitution}).
\begin{equation}\label{eq:gradient-reeb-substitution}
\begin{aligned}
R_+&\longmapsto P\,R_-\,P,
&
R_-&\longmapsto R_-\,R_-\,R_-,
\\
P&\longmapsto P\,R_-\,R_-
\quad\text{or}\quad R_-\,R_-\,P.
\end{aligned}
\end{equation}

\begin{figure}[htbp]
    \centering
    \includegraphics[page=1,width=0.5\linewidth]
        {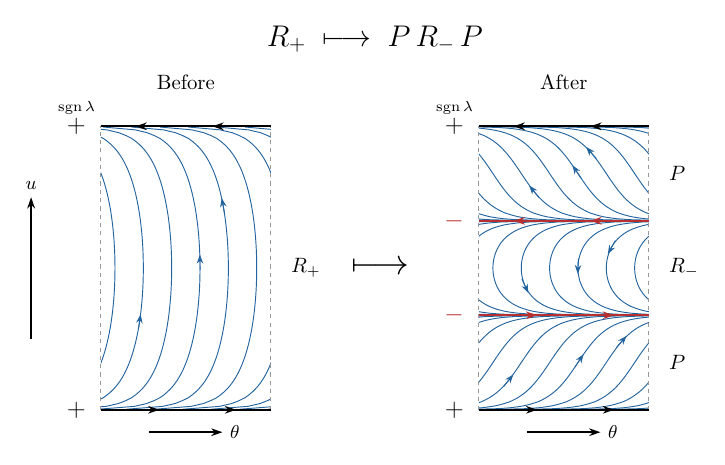}
    \caption{The substitution $R_+\mapsto P\,R_-\,P$.
    Read the annuli from bottom to top. Red compact leaves are new,
    and the signs indicate holonomy in the positive $\theta$-direction.}
    \label{fig:reeb-positive-substitution}
\end{figure}

The two cases for $P$ correspond to the boundary sign orders
$(+,-)$ and $(-,+)$, respectively.

We will use these plugs to modify boundary characteristic bi-foliations. The following lemma describes the required modification.

\begin{lemma}[Simultaneous boundary simplification]
\label{lem:gradient-boundary-simplification}
Let $(S_-,S_+)$ be a smooth transverse Morse-Smale
bi-foliation on a torus, and suppose $S_-$ has a positive (counterclockwise) turning Reeb annulus.
One can attach a two-saddle suspended-gradient bi-contact plug with this
exact entrance bi-foliation and an admissible exit bi-foliation for which
the number of positive turning annuli of $S_-$ decreases. Neither
foliation acquires a new positive turning annulus, and
$$
d_{\mathrm{out}}=d_{\mathrm{in}}-4,
\qquad
w_{\mathrm{out}}=w_{\mathrm{in}}-2\quad\text{in the coorientable case}.
$$
\end{lemma}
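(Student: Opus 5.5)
We will use the two-saddle transfer plug $M=\mathbb S^1_\theta\times\Sigma$ described above, with $\Sigma$ of genus one and one entrance and one exit circle. The boundary is identified with the given torus so that the prescribed bi-foliation $(S_-,S_+)$ is the entrance bi-foliation. The first step places the four entrance circles $a,b,c,d$ of $\mathcal L^s$. Choose $a,b$ inside the given positive turning Reeb annulus $R_+$ of $S_-$, away from its compact leaves. Choose $c,d$ in another gap between consecutive compact leaves; if there is no other gap, choose them in a different portion of the same annulus. The circles must be transverse to both $S_-$ and $S_+$ and must straddle them in the order required by condition~(2) of Definition~\ref{admissible-hyperbolic-boundary}. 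Since $S_\pm$ are transverse Morse-Smale foliations, a leaf of either foliation inside an annulus between consecutive compact leaves crosses each meridian circle. Hence a circle isotopic to the compact leaves can be chosen transverse to both foliations. Its holonomy direction then fixes the straddling sign, and reversing the roles of $\pm\theta$ if necessary gives the correct sign. We arrange this by choosing the saddle model, that is, the sign of $\partial_\theta$ relative to the leaves.

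The second step transports the foliations. Apply Lemma~\ref{lem:bby-invariant-extension} to $-X$, separately to $S_-^{\mathrm{in}}$ and to $S_+^{\mathrm{in}}$, to obtain exit foliations containing $\mathcal L^u$. The linear saddle computation \eqref{eq:gradient-saddle-transfer} shows that these are $C^1$, and they can be smoothed near the new leaves. The exit foliations are $\Gamma_*S_\pm^{\mathrm{in}}$ off $\mathcal L^u$. On the complement, the pair is therefore transverse and satisfies the product cyclic order. Near $\mathcal L^u$ the pair must be rotated slightly inside the open cones, so that $S_-^{\mathrm{out}}$ and $S_+^{\mathrm{out}}$ straddle $T\mathcal L^u$ in the order of condition~(3). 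This rotation preserves the cyclic-order condition~(1) on nearby transit orbits, because the condition is $C^0$-open and the transported directions converge to $T\mathcal L^u$ from the correct sides. The four new compact leaves all have $\lambda<0$, as in the substitution analysis, while the old compact leaves persist. The result is an admissible boundary bi-foliation, and Theorem~\ref{hypb} extends it to a bi-contact plug with the exact entrance bi-foliation.

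The third step counts annuli using \eqref{eq:gradient-reeb-substitution}. The selected $R_+$ of $S_-$ receives two new negative leaves and becomes $P\,R_-\,P$. The second selected gap becomes $R_-R_-R_-$, or $P\,R_-R_-$, or $R_-R_-P$. No $R_+$ is created, so the number of positive turning annuli of $S_-$ drops by one. The same placement makes the circles lie in gaps of $S_+$ as well. The substitution rules apply to $S_+$, whose new leaves are also negative, so no new $R_+$ appears there either. Using $d=\#R_+-\#R_-$, each of the two gap substitutions lowers $d$ by $2$, giving $d_{\mathrm{out}}=d_{\mathrm{in}}-4$. This agrees with Lemma~\ref{lem:gradient-winding-balance}, where $g=1$ gives $\rho_{\mathrm{out}}=\rho_{\mathrm{in}}+2$ and $w=-\rho$. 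Hence $w_{\mathrm{out}}=w_{\mathrm{in}}-2$ in the coorientable case.

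The main obstacle is to place $a,b,c,d$ simultaneously for $S_-$ and $S_+$. The circles must lie in the chosen gaps of both foliations and satisfy the straddling sign. The compact leaves of $S_-$ and $S_+$ need not coincide. I would first isotope $S_+$ through transverse foliations, using the convexity of Remark~\ref{product-homotopy-type}, so that near the chosen annuli its compact leaves are close to those of $S_-$. Failing that, I would verify the substitution rules for $S_+$ directly from the local sign computation.
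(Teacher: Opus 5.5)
Your overall architecture is the paper's: the same two-saddle plug, forward transport and completion across $\mathcal L^u$, a rotation to reach admissibility, then the count via \eqref{eq:gradient-reeb-substitution}. But the step that carries the lemma, the placement of $a,b,c,d$, does not work as written.

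Condition~(2) of Definition~\ref{admissible-hyperbolic-boundary}, $\eta_-(\partial_\theta)\eta_+(\partial_\theta)<0$ on each circle, is unchanged when the circle's orientation is reversed or when the forms change sign together. So neither ``reversing the roles of $\pm\theta$'' nor a choice of saddle model can fix it. It is a positional condition saying that $\partial_\theta$ lies in the stable sector, and it fails on part of every gap. Across a turning annulus both $\eta_\pm$ turn by $\pi$. Hence $\eta_+(\partial_\theta)$ has opposite signs at the two boundary leaves, while $\eta_-(\partial_\theta)$, for a continuous representative, has a single sign near both ends inside. It follows that $\eta_+(\partial_\theta)$ vanishes somewhere in $R_+$, and near one of its boundary leaves the inside of $R_+$ lies in the \emph{unstable} sector. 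So ``$a,b$ inside $R_+$ away from its compact leaves'' and ``$c,d$ in another gap'' are not valid choices in general. Putting $c,d$ in the same annulus would also break your two-gaps count.

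The missing idea is the paper's band argument.
\begin{itemize}
\item Straighten the compact leaves of both foliations in their common slope.
\item At a compact leaf $L$ of $S_-$, hyperbolic holonomy makes $\eta_-(\partial_\theta)$ change sign transversally, while transversality keeps $\eta_+(\partial_\theta)\neq0$. So exactly one side of $L$ carries a thin band where (2) holds.
\item At the two boundary leaves of $R_+$, these bands lie one inside and one outside $R_+$. They carry opposite ordered sign patterns $(\operatorname{sgn}\eta_-(\partial_\theta),\operatorname{sgn}\eta_+(\partial_\theta))$. This is what the pairing $(a,c)$, $(b,d)$ uses, since on the two incoming sections of a linear saddle the patterns are opposite.
\item Put $a,b$ close together in the first band and $c,d$ close together in the second, avoiding the finitely many compact leaves of \emph{both} foliations.
\end{itemize}
Then $I_1$ and $I_3$ contain no old compact leaves of either foliation. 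Consequently \eqref{eq:gradient-four-annulus-transfer} preserves the old cyclic order for $S_-$ and $S_+$ alike. $R_+$ and the gap adjacent to it across the leaf carrying the outer band each receive two negative leaves, and the substitution rules give the conclusions for both foliations.

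This disposes of your ``main obstacle'' without isotoping $S_+$. That isotopy is forbidden anyway, because the entrance bi-foliation must be kept exactly.

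Finally, in your second step the untouched transported pair $S_\pm^{\mathrm{out}}=D\Gamma(S_\pm^{\mathrm{in}})$ satisfies the cyclic order in (1) only with equalities, so openness gives nothing. The rotation, with $S_-$ slightly forward and $S_+$ slightly backward within the ordered angular interval, must be made uniformly on the whole exit torus, not only near $\mathcal L^u$. It is then followed by a small $C^1$ smoothing preserving the Morse-Smale types.
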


\begin{proof}
Write $S_\pm=\ker\eta_\pm$, with $\eta_-\wedge\eta_+>0$ in the
flow orientation, using local representatives. Straighten their compact leaves in their common
longitudinal slope. At the two boundary leaves of a positive turning
annulus of $S_-$, the signs of $\eta_+(\partial_\theta)$ are opposite.
Hyperbolicity supplies nearby transverse bands on which
$\eta_-(\partial_\theta)\eta_+(\partial_\theta)<0$, with opposite
ordered sign patterns. One band lies inside the chosen turning annulus.
Choose two nearby circles $a,b$ in the first band and $c,d$ in the second,
avoiding all compact leaves, and pair them as above. This gives the
entrance straddling condition and compatible local completions.

Push both foliations forward and complete them. Their defining directions
have angular separation in $(0,\pi)$ off $\mathcal L^u$, tending to
$\pi$ on $\mathcal L^u$. Rotate the negative direction slightly forward
and the positive direction slightly backward inside this ordered angular
interval. Compactness gives a uniform choice. A sufficiently small
$C^1$ smoothing preserves the Morse-Smale types and yields
$$
D\Gamma(S_-),\quad S_-^{\mathrm{out}},\quad
S_+^{\mathrm{out}},\quad D\Gamma(S_+)
$$
in the required order, together with exit straddling. Theorem~\ref{hypb}
therefore extends these boundary bi-foliations, preserving the
prescribed entrance bi-foliation.

The annuli $I_1$ and $I_3$ contain no old compact leaves. Hence
\eqref{eq:gradient-four-annulus-transfer} preserves their old cyclic
order and inserts two negative compact leaves in each selected gap.
The substitutions \eqref{eq:gradient-reeb-substitution} remove the chosen
$R_+$ and create no $R_+$. Each gap lowers the projective winding by two.
\end{proof}

Iterating Lemma~\ref{lem:gradient-boundary-simplification} keeps the
original entrance bi-foliation fixed and makes the chosen exit foliation
have only negative turning annuli, after finitely many two-saddle blocks.
For coorientable bi-foliations, odd oriented winding remains odd, so the final reduced Reeb
arrangement is nonempty. This is the boundary simplification used in the
cap construction below. It concerns one of the characteristic foliations.
Matching an
auxiliary annular double and its smooth holonomy is a separate step.
For iteration, the new bands may be chosen near compact characteristic
leaves and disjoint from the preceding exit lamination, avoiding saddle
connections between successive blocks. Each block adds two saddle periodic
orbits. Since the interfaces are crossed only in one direction, no periodic
orbit crosses an interface.

\subsubsection{Twisted saddle blocks and parity change}
\label{subsubsec:twisted-saddle-blocks}

The following twisted version of the pair-of-pants plug changes
coorientability at the boundary. Put
$$
F(x,y)=(x^2-1)^2+y^2,\qquad
P=\{\tfrac14\leq F\leq2\},\qquad V=\nabla F,
$$
and let $r(x,y)=(-x,-y)$. The involution exchanges the two entrance
circles of $P$, preserves its exit circle, and commutes with $V$.
Thus
$$
Q=(P\times\mathbb R)/((p,t)\sim(r(p),t+1)),
\qquad X=\partial_t+V
$$
is an oriented plug with one entrance torus and one exit torus.
Since $XF=|\nabla F|^2$, its maximal invariant set is the saddle
orbit over $(0,0)$, with multipliers $-e^{-4}$ and $-e^2$.
This is a model of the twisted saddle plug in \cite[Lemma~9.6]{bby}.

For an explicit supporting structure, set
$$
\begin{gathered}
u=dx-4x(x^2-1)\,dt,\qquad v=dy-2y\,dt,\qquad a=e^{-CF},\\
\alpha_-=v+au,\qquad\alpha_+=v-au.
\end{gathered}
$$
For large $C>0$, the rates of $au$ and $v$ are respectively
$12x^2-4-C|\nabla F|^2<0$ and $2$, giving the indicated contact
signs for $dt\wedge dx\wedge dy$. Both forms change sign under the
identification, so they descend as unoriented contact forms.

The suspension circles make $Q$ Seifert fibered over an annulus
with one exceptional fiber of order two. Let $h$ denote a regular
fiber, represented by a time-two suspension circle. On the entrance,
let $s$ be either inner circle, oriented counterclockwise. On the
exit, let $m$ be the counterclockwise outer circle and $\ell$ the
time-one longitude with a positive half-turn around that circle.
Use the positive section-fiber bases
$$
(s,h)_{\mathrm{in}},\qquad
(s,h)_{\mathrm{out}}=(\ell,2\ell-m).
$$
The entrance and exit laminations each consist of one regular fiber.

\begin{proposition}[Twisted-saddle boundary preparation]
\label{prop:twisted-saddle-preparation}
Every smooth ordered transverse bi-foliation on an oriented torus
can be prescribed exactly on the entrance of $Q$, after choosing
the entrance identification. The resulting supporting bi-contact
structure satisfies
\begin{equation}\label{eq:twisted-saddle-projective-transfer}
d_{\mathrm{in}}=(k,0),\qquad d_{\mathrm{out}}=(k+1,0)
\end{equation}
in the section-fiber bases, for some $k\in\mathbb Z$.
In particular, a noncoorientable entrance gives a coorientable exit.
Two successive blocks take coorientable even oriented winding to
coorientable odd oriented winding. Time reversal gives the
corresponding attachments at an entrance boundary.
\end{proposition}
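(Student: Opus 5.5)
We plan to combine the extension machinery of Section~\ref{s3} with an explicit computation of the crossing map of $Q$. There are four steps: prescribe the entrance data, check the admissibility conditions, compute the windings, and read off the parity consequences.

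First we prescribe the entrance. Given an ordered transverse bi-foliation $(S_-,S_+)$ on an oriented torus $T$, we choose the entrance identification $T\to\partial_{\mathrm{in}}Q$ so that the entrance lamination, a single regular fibre $h$, is transverse to both foliations. We would do this as follows.
\begin{itemize}
\item If both foliations have compact leaves, their common slope class lets us pick a primitive curve transverse to them.
\item Otherwise we use a closed transversal of $S_-$; since $S_+$ is transverse to $S_-$, after an isotopy the same curve can be made transverse to $S_+$.
\item Composing with a diffeomorphism of $T$ then sends this curve to $h$.
\end{itemize}
Next we arrange the entrance straddling condition~(2) of Definition~\ref{admissible-hyperbolic-boundary}, namely that $S_\mp$ lie on opposite sides of $T\mathcal L^s$. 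This holds because both foliations are transverse to $h$ and the ordered pair $(S_-,S_+)$ rotates in a fixed sense. If the sign comes out wrong, we reverse the orientation of $h$ together with that of the section, staying within orientation-preserving identifications. With this done, Lemma~\ref{lem:bby-invariant-extension} applied to $-X$ pushes both foliations forward and extends them across $\mathcal L^u$.

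Second, we perturb this to an admissible exit bi-foliation, exactly as in the proof of Lemma~\ref{lem:gradient-boundary-simplification}. Off $\mathcal L^u$ the transported directions have angular separation in $(0,\pi)$, tending to $\pi$ at $\mathcal L^u$ (the linear model at the saddle is the same up to the $\mathbb Z/2$ twist). We rotate $S_-$ slightly forward and $S_+$ slightly backward, uniformly by compactness, and then apply a small $C^1$ smoothing. This gives the cyclic order of condition~(1) and the exit straddling condition~(3). Theorem~\ref{hypb} then yields a supporting bi-contact structure that agrees with the prescribed entrance data and with the explicit forms $\alpha_\pm$ near the orbit.

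Third, we compute the windings, and we expect this to be the main obstacle. We would argue as in Lemma~\ref{lem:gradient-winding-balance}.
\begin{itemize}
\item The normal bundle is $N_X\cong\pi^*TP$, now twisted by $r$. Since $dr=-\mathrm{id}$ acts trivially on $\mathbb{RP}^1$, the identification descends projectively.
\item The fibre component of $d$ vanishes on both boundary tori, because each lamination is a regular fibre and the foliations are transverse to it. This gives the second coordinate $0$.
\item For the section component, the projective boundary degrees of a line field on $P$ over a framing of $TP$ sum to the contribution $2\chi(P)=-2$. Here $P$ is a pair of pants, and the two entrance circles are identified by $r$ into the single curve $s$.
\item On the exit, the section curve is $\ell$, with $2\ell-m=h$ and $2\ell\sim m+h$. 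So a class of $m$-degree $2k'$ computed on $P$ becomes $\ell$-degree $k'+(\text{twist})$, with the half-turn of $\ell$ around the exceptional fibre contributing a correction.
\end{itemize}
Carefully bookkeeping the framing rotation along $\ell$ (the factor $-1$ in the multiplier $-e^{-4}$ gives a half-turn of $E^s,E^u$) should produce $d_{\mathrm{out}}=(k+1,0)$ when $d_{\mathrm{in}}=(k,0)$. The risk is a sign error in this step, so we would check it on the explicit forms $\alpha_\pm=v\pm au$ near the boundary.

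Finally, the parity consequences follow at once. A noncoorientable entrance has odd $k$, so $k+1$ is even and the exit is coorientable. Two successive blocks shift the section degree by $2$, taking $d\equiv0\pmod4$ to $d\equiv2\pmod4$, that is, even oriented winding to odd oriented winding. Time reversal of $X$ gives the entrance version.
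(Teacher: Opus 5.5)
Your first step has a genuine gap. Transversality of $c$ to both $S_-$ and $S_+$ only places $Tc$ in one of the two complementary sectors. Condition~(2) of Definition~\ref{admissible-hyperbolic-boundary} requires the specific sector $\{\lambda_-(v)\lambda_+(v)<0\}$, and this condition is necessary for any supporting bi-contact structure on $Q$.

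The sign of $\lambda_-(v)\lambda_+(v)$ depends only on the tangent line of $c$ in $T$. It is unchanged under $v\mapsto -v$ and under simultaneous sign change of the forms. Orientation-preserving identifications also keep $\lambda_-\wedge\lambda_+>0$. So reversing $h$ together with the section cannot repair a badly chosen $c$. For example, for the horizontal and vertical linear foliations, the slopes $(1,1)$ and $(1,-1)$ are both transverse to both foliations, but only one of them straddles correctly.

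Your recipe for producing a transverse curve is also unsupported. Transverse foliations with compact leaves need not share a slope, as the same example shows. The claim that a closed transversal of $S_-$ can be isotoped to be transverse to $S_+$ is not argued.

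What is needed, as in the paper, is the following. Choose a line field inside the open straddling sector. Perturb its foliation within that sector until it has a compact leaf: use a Morse--Smale perturbation in the orientable case, while a nonorientable nonsingular torus foliation already has one. That leaf is essential and lies in the correct sector, and it is the curve to identify with $\mathcal L^s$. Once this is fixed, your second step works: forward transport, opposite tilts, then Theorem~\ref{hypb}. It is the device of Lemma~\ref{lem:gradient-boundary-simplification}, and a reasonable substitute for the paper's projective automorphism $B$ extended by linear Poincar\'e transport.

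The winding step is left unfinished, and the ``twist correction'' you anticipate does not occur. On a pair-of-pants slice with counterclockwise circles, the balance of Lemma~\ref{lem:gradient-winding-balance} gives $d_{\mathrm{out}}(m)=d(C_1)+d(C_2)+2=2k+2$. This uses that $r$ exchanges the inner circles compatibly with their orientations and with the frame $(V,\tau)$, so both circles represent $s$. Since $d_{\mathrm{out}}$ is a cohomology class with $d_{\mathrm{out}}(h)=0$, the relation $h=2\ell-m$ gives $d_{\mathrm{out}}(\ell)=k+1$ at once. The signs of the multipliers play no role.

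Finally, ``two blocks shift the section degree by $2$'' requires a choice you do not make. The nonzero intermediate class $(2n+1,0)$ forces the second stable curve to have fibre slope. The second identification must then match the fibre orientations so that $k_2=2n+1$. The opposite choice gives $k_2=-(2n+1)$ and $d_{\mathrm{out}}=(-2n,0)$, with no change of parity.
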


\begin{proof}
Choose a smooth auxiliary foliation in the entrance straddling
sector $\lambda_-(v)\lambda_+(v)<0$ and a compact leaf $c$ after
a small perturbation within that sector. Such a leaf can always
be obtained on a torus. An orientable foliation admits a
Morse-Smale perturbation, and a nonorientable nonsingular torus
foliation already has a compact leaf. The curve $c$ is essential
and transverse to both prescribed foliations. Identify the stable
entrance curve of $Q$ with $c$.

The long-tube construction in the proof of Theorem~\ref{hypb}
realizes the prescribed bi-foliation near $c$, keeping the structure
near the saddle fixed. To realize the bi-foliation on the rest of the entrance,
choose an orientation-preserving projective automorphism $B$ of
$\mathcal N_X$, equal to the identity near $c$, whose pullback carries
the current bi-foliation to the prescribed one. The ordering and
transversality give such an automorphism. The choices of positive
rescalings form a contractible space. Extend $B$ by linear Poincar\'e transport along transit
orbits and by the identity elsewhere. Its saturated support is
compact and misses the invariant set. Determinant-one local lifts
satisfy $\mathcal L_XB=0$, so
$$
\mathcal L_X(\alpha_\pm\circ B)
=(\mathcal L_X\alpha_\pm)\circ B.
$$
Consequently transversality and both contact signs are preserved.
Lifts of $B$ may differ by a sign, which is allowed for unoriented
forms. This realizes the entire prescribed entrance bi-foliation.

Transversality to the two lamination curves gives $d(h)=0$ on both
boundaries. On a pair-of-pants slice, the projective winding balance
of Lemma~\ref{lem:gradient-winding-balance}, with counterclockwise
boundary circles, gives
$$
d_{\mathrm{out}}(m)=2d_{\mathrm{in}}(s)+2=2k+2.
$$
Since $h=2\ell-m$, this yields $d_{\mathrm{out}}(\ell)=k+1$.
Thus odd $k$ gives a coorientable exit.

For two blocks, prescribe the first exit bi-foliation exactly on the second
entrance and apply Theorem~\ref{thm:smooth-matched-gluing}.
When the first entrance is coorientable, write $k=2n$. The successive
projective degrees, in compatible section-fiber bases, are
$$
(2n,0)\longmapsto(2n+1,0)\longmapsto(2n+2,0),
$$
so the exterior oriented winding changes from $(n,0)$ to $(n+1,0)$.
The intermediate projective class is nonzero, so the auxiliary compact
leaf for the second entrance has fiber slope and misses the first exit
lamination, which is transverse to that auxiliary foliation. Thus the
maximal invariant set consists of exactly the two saddle orbits.
More generally, opposite
straddling sectors ensure transverse gluing, and no periodic orbit
crosses an interface.
\end{proof}

For the finite capping construction in Section~\ref{subsec:finite-PA-capping},
first use one twisted block if the boundary bi-foliation is
noncoorientable. If the resulting oriented winding is even, use
two more blocks. The terminal bi-foliation is then coorientable
with odd oriented winding, so the boundary simplification and
annular-double preparation apply. Thus at most three twisted
saddles remove the orientability and winding restrictions, while
preserving the initial boundary bi-foliation. The subsequent
preparation supplies the geometric conditions for capping.

\subsection{Non-hyperbolic bi-contact plugs}
In this subsection, we construct bi-contact plugs by thickening nonsingular surface
flows with strong normal expansion or contraction, then specialize
to Morse-Smale solid-torus models.

\begin{theorem}[Normal thickening]
\label{surface-extension-plug}
Let $\Sigma$ be a compact oriented surface and let $Y$ be a smooth
nowhere-vanishing vector field on $\Sigma$.  If $\partial\Sigma\neq
\varnothing$, assume $Y\pitchfork\partial\Sigma$.  On
$$
M_0=[-1,1]_\theta\times\Sigma
$$
set
$$
\overline Y=Y+K\theta\partial_\theta.
$$
For all sufficiently large $|K|$, the vector field $\overline Y$ is supported
by a bi-contact structure.  If $K>0$ and $Y$ points outward along
$\partial\Sigma$, corner rounding gives a repelling plug.  If $K<0$ and $Y$
points inward, it gives an attracting plug.  In either case,
$$
\operatorname{Inv}(M_0,\overline Y)
=\{0\}\times\operatorname{Inv}(\Sigma,Y).
$$
\end{theorem}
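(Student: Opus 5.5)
We write explicit bi-contact forms adapted to the product splitting and choose $|K|$ large enough that normal expansion or contraction dominates the surface dynamics. Since $Y$ is nowhere vanishing on the oriented surface $\Sigma$, we fix a coframe $(\eta,\nu)$ on $\Sigma$ with $\eta(Y)=1$ and $\nu(Y)=0$. The kernel of $\nu$ is the line field $\langle Y\rangle$, and $\nu$ is globally defined up to a positive function because $\Sigma$ is oriented. We may assume $\eta\wedge\nu$ agrees with the orientation after a sign choice.

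On $M_0$ we set
\[
\alpha_\pm=\nu\pm a\,(d\theta-K\theta\,\eta),
\]
where $a>0$ is a constant to be chosen. Both forms vanish on $\overline Y=Y+K\theta\partial_\theta$, since $\nu(Y)=0$ and $(d\theta-K\theta\eta)(\overline Y)=K\theta-K\theta=0$. Their wedge is
\[
\alpha_-\wedge\alpha_+=2a\,\nu\wedge(d\theta-K\theta\eta),
\]
which is nonzero on $T M_0/\langle\overline Y\rangle$, so the kernels are transverse and $\xi_-\cap\xi_+=\langle\overline Y\rangle$.

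For the contact condition we use the criterion from Section~\ref{s3.1}: $\iota_{\overline Y}(\alpha\wedge d\alpha)=\mathcal L_{\overline Y}\alpha\wedge\alpha$, with the sign determined by the rotation of the coefficient curve. The two rates are as follows.
\begin{itemize}
\item $\mathcal L_{\overline Y}\nu=\mathcal L_Y\nu=g\,\nu$ for a smooth function $g$ on $\Sigma$, since $\mathcal L_Y\nu$ annihilates $Y$ on a surface. The function $g$ is bounded on the compact $\Sigma$.
\item $\mathcal L_{\overline Y}(d\theta-K\theta\eta)=K(d\theta-K\theta\eta)+(\text{terms} \ K\theta\,\mathcal L_Y\eta$ corrections$)$, whose rate is $K+O(1)$ modulo $\nu$.
\end{itemize}
Writing $\beta=d\theta-K\theta\eta$, the forms $\alpha_\pm=\nu\pm a\beta$ then satisfy
\[
\mathcal L_{\overline Y}\alpha_\pm\wedge\alpha_\pm=\pm a\,(K+O(1)-g)\,\beta\wedge\nu,
\]
where the $O(1)$ comes from $\mathcal L_Y\eta=h\nu$ (modulo $\eta$-terms, which cancel against $d\theta$-terms after rewriting). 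For $|K|>\sup|g|+C$ the sign is uniform and the two signs are opposite. This gives contact forms of opposite signs, and orienting $\overline Y$ appropriately makes $\xi_-$ negative and $\xi_+$ positive; for $K<0$ we swap the labels. The main computational obstacle is to carry out the $\mathcal L_Y\eta$ term carefully, because $\theta$ ranges over $[-1,1]$ and the factor $K\theta$ multiplies it. This produces a term of size $K^2\theta\,h$ in the $\nu$-coefficient. To keep it from spoiling the estimate, I would first try replacing $\beta$ by $\beta'=d\theta-K\theta\eta$ with $\eta$ chosen $Y$-invariant modulo $\nu$ wherever possible. If that fails, I would rescale $a=a(K)$ so the contribution of $\nu$ is dominated by $K$, or shrink the $\theta$-interval and rescale, since the invariant set lives at $\theta=0$.

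For the boundary and invariant set we argue as follows. On $\theta=\pm1$, $\overline Y$ has normal component $\pm K$, which points outward for $K>0$ and inward for $K<0$. On $[-1,1]\times\partial\Sigma$ it is transverse by hypothesis, with the same direction in the two cases specified. The corner $\{\pm1\}\times\partial\Sigma$ is rounded by a convex combination of the two boundary defining functions. At every corner point $\overline Y$ points strictly out of (respectively into) both faces, so it stays transverse to the rounded surface and we obtain a repelling (respectively attracting) plug. The supporting bi-contact structure is unaffected because we only shrink the domain. Finally, $\theta(t)=\theta_0e^{Kt}$ is unbounded in one time direction unless $\theta_0=0$, so invariant points lie in $\{0\}\times\Sigma$, where the flow is $Y$. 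This gives
\[
\operatorname{Inv}(M_0,\overline Y)=\{0\}\times\operatorname{Inv}(\Sigma,Y).
\]
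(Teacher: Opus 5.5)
Your ansatz is the right one, and it is essentially the paper's. With $\eta=\beta_Y$, $\nu=\beta_Z$ and $a=1/|K|$, the pair $\nu\pm a(d\theta-K\theta\eta)$ is exactly the paper's $\widetilde\alpha_-$ and $\alpha_+$, up to labelling. Your transversality, corner-rounding and invariant-set arguments are also fine. The gap is the contact estimate itself, which you state incorrectly and then leave open among untested fallbacks.

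Write $d\eta=f_\eta\,\eta\wedge\nu$ and $d\nu=f_\nu\,\eta\wedge\nu$. Then, exactly, $\mathcal L_{\overline Y}\nu=f_\nu\nu$ and $\mathcal L_{\overline Y}\beta=K\beta-K\theta f_\eta\,\nu$. There is no $K^2$ term, but the $\nu$-component of $\mathcal L_{\overline Y}\beta$ is of order $K$, not $O(1)$. Hence
\[
\alpha_\pm\wedge d\alpha_\pm
=\pm a\bigl(f_\nu-K(1\pm a\theta f_\eta)\bigr)\,d\theta\wedge\eta\wedge\nu .
\]
The cross term is of order $a^2K$ against the leading $aK$, so its relative size $a\theta f_\eta$ does not shrink as $|K|$ grows. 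Your claim that the sign is uniform once $|K|>\sup|g|+C$ therefore fails for a general fixed $a$. If $a\sup|f_\eta|>1$, the factor $1\pm a\theta f_\eta$ changes sign on $[-1,1]\times\Sigma$, and large $|K|$ then forces $\alpha_\pm\wedge d\alpha_\pm$ to change sign as well.

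The missing step is to make $a$ small first and only then take $|K|$ large. For instance, take $a\sup|f_\eta|\leq\frac12$ and $|K|>2\sup|f_\nu|$. Alternatively take $a=1/|K|$ as the paper does, which gives the coefficient $1-(f_Z+\theta f_Y)/K$. This is your second fallback. But it works only because the offending term carries a factor of $a$, which your estimate missed; against the $K^2$ term you describe, rescaling $a$ would not help.

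Your first fallback is vacuous. Since $\mathcal L_Y\eta=f_\eta\nu$, the form $\eta$ is always $Y$-invariant modulo $\nu$. Removing the term would instead require a closed $\eta$ with $\eta(Y)=1$, which is obstructed in general. For example, a turning Reeb annulus on $\mathbb T^2$ has boundary orbits that are homologous with opposite orientations, so no such closed $\eta$ exists.

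Finally, the contact signs do not depend on the orientation of $\overline Y$. With $\eta\wedge\nu>0$ and $M_0$ oriented by $d\theta\wedge\eta\wedge\nu$, the form $\nu+a\beta$ is positive exactly when $K<0$. So the labels must be swapped for $K>0$, not for $K<0$.
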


\begin{proof}
Choose $Z$ so that $(Y,Z)$ is a positive frame of $T\Sigma$, and let
$(\beta_Y,\beta_Z)$ be its dual coframe.  Write
$$
d\beta_Y=f_Y\beta_Y\wedge\beta_Z,
\qquad
d\beta_Z=f_Z\beta_Y\wedge\beta_Z.
$$
Define
$$
\alpha_+
=\beta_Z+\theta\beta_Y-\frac1Kd\theta,
\qquad
\widetilde\alpha_-
=\beta_Z-\theta\beta_Y+\frac1Kd\theta.
$$
Both annihilate $\overline Y$, their kernels are transverse, and, for
$$
\Omega=\beta_Y\wedge\beta_Z\wedge d\theta,
$$
a direct calculation gives
$$
\alpha_+\wedge d\alpha_+
=\left(1-\frac{f_Z+\theta f_Y}{K}\right)\Omega,
$$
$$
\widetilde\alpha_-\wedge d\widetilde\alpha_-
=\left(-1+\frac{f_Z-\theta f_Y}{K}\right)\Omega.
$$
Thus the first form is positive contact and the second is negative contact
for large $|K|$.  Setting
$$
\alpha_-=-\operatorname{sgn}(K)\widetilde\alpha_-
$$
gives
$$
\alpha_-\wedge\alpha_+
=\frac{2}{|K|}\iota_{\overline Y}\Omega.
$$

Finally, $\dot\theta=K\theta$.  Hence every point off the central surface
reaches a horizontal face in the expanding time direction, while the
central surface is invariant.  The asserted boundary behavior follows from
the signs of $K$ and of the normal component of $Y$ along
$\partial\Sigma$.
\end{proof}

In particular, every nonsingular flow on $\mathbb T^2$ occurs on the central
invariant torus of both a normally repelling and a normally attracting
bi-contact plug.  For large $|K|$, the normal rate dominates the derivative
of the flow tangent to that torus.

\begin{remark}
The same construction applies to the interval bundle of the orientation
line of a nonorientable surface $\Sigma$. Reversing the local orientation
changes $(Z,\theta)$ to $(-Z,-\theta)$. Thus $\theta\partial_\theta$
and $\overline Y$ are well-defined, both contact forms change sign,
and $\Omega$ is unchanged. The total space is oriented, and the
contact estimates descend.
\end{remark}

For the product model, the two horizontal characteristic bi-foliations are
coupled.  This is a feature of the formula, not an intrinsic restriction.
Let $\ell_Y=\langle Y\rangle$.  A bi-foliation $(S_-^+,S_+^+)$ on
$\{1\}\times\Sigma$ is called \emph{core-compatible} if the projective
directions occur in the order
$$
\ell_Y,\quad S_-^+,\quad S_+^+,
$$
and a bi-foliation $(S_-^-,S_+^-)$ on $\{-1\}\times\Sigma$ is core-compatible if
they occur in the order
$$
\ell_Y,\quad S_+^-,\quad S_-^-.
$$
The statement concerns the kernel foliations.  If coorientations are also
prescribed, they must agree with the coorientations induced by these lifted
orders.

\begin{proposition}[Independent prescription of horizontal bi-foliations]
\label{two-ended-boundary-realization}
Any two core-compatible smooth bi-foliations
$$
(S_-^+,S_+^+),
\qquad
(S_-^-,S_+^-)
$$
are realized independently as the characteristic bi-foliations on the two
horizontal faces of a bi-contact structure supporting
$\overline Y=Y+K\theta\partial_\theta$, after increasing $|K|$ if necessary.
The structure may be kept equal to the affine model near
$\{0\}\times\Sigma$.  The construction is relative and parametric while the
cyclic orders remain strict.
\end{proposition}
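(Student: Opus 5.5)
We keep the affine model of Theorem~\ref{surface-extension-plug} on a thin central slab $|\theta|\le\delta$. On the two outer collars we use the product extension, Theorem~\ref{prob}, separately on each side. Fix the frame $(Y,Z)$ and dual coframe $(\beta_Y,\beta_Z)$. Every normal form annihilating $\overline Y$ can be written, after eliminating $d\theta$, as a combination of $\beta_Y-\tfrac{1}{K\theta}d\theta$ and $\beta_Z$. Away from $\theta=0$ the region $\delta\le\theta\le1$ is a product plug $[\delta,1]\times\Sigma$ for the flow of $\overline Y$, and likewise for $-1\le\theta\le-\delta$. The flow from the inner slice to the outer slice is the flow of $Y$ for time $\tfrac1K\log(1/\delta)$. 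This flow is a diffeomorphism of $\Sigma$, combined with the radial motion in $\theta$.

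For $\theta\in[\delta,1]$ we transport the outer bi-foliation $(S_-^+,S_+^+)$ back to the slice $\theta=\delta$ by this flow. We must check the product cyclic order \eqref{adnec} between the transported outer directions and the model's directions at $\theta=\delta$. On the slice $\theta=\delta$ the model gives $S_\pm=\ker(\beta_Z\pm\delta\beta_Y)$ restricted to the slice. These two lines lie very close to $\ell_Y$, on opposite sides of it, in the orientation fixed by the computation in the proof of Theorem~\ref{surface-extension-plug}. The outer directions are pulled back by $D\varphi_Y^{-T}$ with $T=\tfrac1K\log(1/\delta)$. Choose $\delta$ first, with $K$ large enough that $T$ is small. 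Then the pullback is $C^0$-close to the identity on projective directions, uniformly on compact $\Sigma$. Core-compatibility says the outer lines sit in the order $\ell_Y,S_-^+,S_+^+$ with a definite angular gap from $\ell_Y$. The model lines are within $O(\delta)$ of $\ell_Y$. So the four directions occur in the order $S_-^{\mathrm{in}}$, $S_-^{\mathrm{out}}$, $S_+^{\mathrm{out}}$, $S_+^{\mathrm{in}}$, provided the model's sides relative to $\ell_Y$ match the lifted order. Checking that these sides match is the sign bookkeeping step.

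Granted this, the relative form of Theorem~\ref{prob} fills $[\delta,1]\times\Sigma$ and agrees with the model near $\theta=\delta$ and with a collar realizing $(S_-^+,S_+^+)$ near $\theta=1$. Its vertical boundary version covers the case $\partial\Sigma\neq\varnothing$, using the model near $[\delta,1]\times\partial\Sigma$ after a small cutoff interpolation that preserves strict order. The side $\theta\in[-1,-\delta]$ is treated in the same way. There the flow direction in $\theta$ is reversed for $K>0$: the face $\theta=-1$ becomes the outgoing end, and the face $\theta=-\delta$ the incoming end. This reversal is precisely why the core-compatible order on $\{-1\}\times\Sigma$ is reversed to $\ell_Y,S_+^-,S_-^-$. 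The case $K<0$ is the time reversal.

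The main obstacle is the uniform projective estimate together with the consistency of orientations. First, we need a single choice of $\delta$ and $K$ such that the model lines are squeezed into the cone between $\ell_Y$ and the outer lines, while the short-time $Y$-transport does not move the outer lines past $\ell_Y$. Second, we need the model's ordering to agree with the orientation convention $\alpha_-\wedge\alpha_+\sim\iota_X\Omega$. I would first verify the orientation at a single point, using the explicit formulas $\alpha_\pm$, and then argue by continuity and compactness. Relative and parametric versions follow because all choices ($\delta$, $K$, angular interpolations) are made uniformly over compact families while the orders stay strict. Coorientations, if prescribed, are then forced by the lifted orders, as noted before the statement.
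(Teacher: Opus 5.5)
Your route is genuinely different from the paper's, and it works when $\Sigma$ is closed (a union of tori). The paper splits off no collars and transports nothing along the flow. It chooses angle functions $\varphi_\pm(\theta,x)$ directly in the $\theta$-coordinate, with $\partial_\theta\varphi_->0>\partial_\theta\varphi_+$, with $\varphi_-=\arctan\theta$ and $\varphi_+=-\arctan\theta$ near $\theta=0$, and with the prescribed angles at $\theta=\pm1$. It then sets $\eta_\pm=-\sin\varphi_\pm\,\beta_Y+\cos\varphi_\pm\,\beta_Z$ and $a_\pm=\sin\varphi_\pm/(K\theta)$, so that $\eta_\pm+a_\pm\,d\theta$ annihilates $\overline Y$. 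Finally it checks that the contact term is $-(\partial_\theta\varphi_\pm)\,d\theta\wedge\omega$ up to $O(|K|^{-1})$.

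So in the paper, large $|K|$ absorbs all $Y$-dependent error terms in one explicit formula. You instead use large $K$ only to make the transit time $T=K^{-1}\log(1/\delta)$ short, and let the exact product extension of Theorem~\ref{prob} handle the contact conditions on $\delta\le|\theta|\le1$.

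The sign bookkeeping you defer does close. At $\theta=\pm\delta$ the model lines are $S_\pm=\langle Y\mp\theta Z\rangle$. For $K>0$, the slices of $[\delta,1]\times\Sigma$ are oriented by $\omega=\beta_Y\wedge\beta_Z$, and \eqref{adnec} becomes $\arctan\delta<\angle S_-^{\mathrm{out}}<\angle S_+^{\mathrm{out}}<\pi-\arctan\delta$ in the $(Y,Z)$ frame. On $[-1,-\delta]\times\Sigma$ the slices are oriented by $-\omega$, and it becomes $\arctan\delta<\angle S_+^{\mathrm{out}}<\angle S_-^{\mathrm{out}}<\pi-\arctan\delta$. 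These conditions are exactly core-compatibility plus a gap from $\ell_Y$. Since $D\varphi_Y^{-T}$ preserves $\ell_Y$, only that gap needs the smallness of $T$. Your version buys modularity, since no new contact computation is needed. The paper's buys a single global formula, explicit side foliations through \eqref{nh-side-characteristic}, and no case distinction on $\partial\Sigma$.

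That last point is a real gap in your write-up. When $\partial\Sigma\neq\varnothing$, $\overline Y$ is transverse to $[\delta,1]\times\partial\Sigma$, so orbits cross the side and $[\delta,1]\times\Sigma$ is not a flow product. The vertical-boundary clause of Theorem~\ref{prob} presupposes a flow tangent to $[0,1]\times\partial\Sigma$, so it does not apply here. Keeping the affine model near $[\delta,1]\times\partial\Sigma$, as you propose, would also force the bi-foliation on $\{1\}\times\Sigma$ near $\partial\Sigma$ to be the model's, which contradicts independent prescription.

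To repair this, enlarge $\Sigma$ by a collar and extend $Y$ and the prescribed bi-foliations there with strict order. Carry out the pointwise angular interpolation from the proof of Theorem~\ref{prob} on the flow tube over the enlarged surface; once $T$ is small this tube contains $[\delta,1]\times\Sigma$. Then restrict. Alternatively, simply interpolate in $\theta$ as the paper does.
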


\begin{proof}
Fix a positive frame $(Y,Z)$ and let $\omega=\beta_Y\wedge\beta_Z$.  The two
cyclic-order conditions give smooth angle functions
$\varphi_\pm(\theta,x)$ such that
$$
\partial_\theta\varphi_->0,
\qquad
\partial_\theta\varphi_+<0,
$$
with
$$
\varphi_-(\theta,x)=\arctan\theta,
\qquad
\varphi_+(\theta,x)=-\arctan\theta
$$
near $\theta=0$, and with the prescribed endpoint kernels.  They may be
chosen so that $\varphi_-$ and $\varphi_+$ remain distinct modulo $\pi$ for $\theta\neq 0$.
Put
$$
\eta_\pm
=-\sin(\varphi_\pm)\beta_Y
+\cos(\varphi_\pm)\beta_Z,
\qquad
a_\pm
=\frac{\sin(\varphi_\pm)}{K\theta},
$$
where the second expression extends smoothly across $\theta=0$, and define
$$
\widetilde\alpha_-=\eta_-+a_-d\theta,
\qquad
\alpha_+=\eta_++a_+d\theta.
$$
Both forms annihilate $\overline Y$.  Moreover,
$$
\alpha_\pm\wedge d\alpha_\pm
=d\theta\wedge
\left(
a_\pm d_\Sigma\eta_\pm
-\eta_\pm\wedge\partial_\theta\eta_\pm
+\eta_\pm\wedge d_\Sigma a_\pm
\right),
$$
and
$$
\eta_\pm\wedge\partial_\theta\eta_\pm
=(\partial_\theta\varphi_\pm)\omega.
$$
The other two terms in $\alpha_\pm\wedge d\alpha_\pm$ are uniformly $O(|K|^{-1})\omega$.  Hence the two forms
have the required opposite contact signs for large $|K|$.  Their angular
separation makes their kernels transverse.  Multiplying
$\widetilde\alpha_-$ by $-\operatorname{sgn}(K)$ gives the prescribed
relative sign. All angle choices are made in convex intervals,
which proves the relative and parametric assertion.
\end{proof}

\paragraph{Side boundary and corner rounding.}
Let $B=[-1,1]\times\partial\Sigma$, let $T$ be tangent to
$\partial\Sigma$, and write a supporting form as
$$
\alpha=a\,d\theta+\eta_\theta.
$$
Then
\begin{equation}
\label{nh-side-characteristic}
\ker(\alpha|_{TB})
=\operatorname{span}
\{aT-\eta_\theta(T)\partial_\theta\}.
\end{equation}
Thus the side foliation is determined explicitly by the same angle functions $\varphi_\pm$.
At an edge $E=H\cap B$, the prescribed characteristic lines
$L_H\subset TH|_E$ and $L_B\subset TB|_E$ must satisfy the
pointwise compatibility condition
$$
\operatorname{span}\{\overline Y|_E,L_H\}
=
\operatorname{span}\{\overline Y|_E,L_B\}.
$$
Thus both facewise prescriptions determine the same plane containing
$\overline Y$. Where $L_H\ne L_B$, this is equivalent to
$\overline Y|_E\in\operatorname{span}\{L_H,L_B\}$.
The equality of planes also allows the valid case $L_H=L_B=TE$,
which the latter condition would incorrectly exclude.

\begin{remark}[Corner rounding]
\label{side-boundary-rounding}
Corner rounding of characteristic foliations is well understood in contact geometry \cite{hondanotes}. Near an edge, let $T$ be tangent to the edge and let $R$ be tangent to a
monotone rounding curve in the normal plane.  The characteristic line on the
rounded face is generated by
$$
\alpha(R)T-\alpha(T)R.
$$
It converges to the two facewise characteristic directions at the ends of
the rounding curve.  If the supported vector field points strictly inward,
or strictly outward, along both incident faces, the same holds after a
sufficiently small monotone rounding.  The two rounded characteristic
foliations remain transverse, since a common characteristic vector would lie
in the supported line field and would force it to be tangent to the rounded
boundary.
\end{remark}

\subsubsection{A dominated source or sink orbit}

Consider
$$
M_0=\mathbb S^1_\vartheta\times[-1,1]_x\times[-1,1]_y,
\qquad
Z=\partial_\vartheta+\nu x\partial_x+\mu y\partial_y,
$$
where either $0<\nu<\mu$ or $\nu<\mu<0$.  Put
$$
\alpha_u=dy-\mu y\,d\vartheta,
\qquad
\alpha_s=dx-\nu x\,d\vartheta,
$$
and
$$
\alpha_+=\alpha_u-\alpha_s,
\qquad
\alpha_-=\alpha_u+\alpha_s.
$$
Then
$$
\mathcal L_Z\alpha_u=\mu\alpha_u,
\qquad
\mathcal L_Z\alpha_s=\nu\alpha_s,
$$
and, for $\Omega=d\vartheta\wedge dx\wedge dy$,
$$
\alpha_-\wedge\alpha_+=2\iota_Z\Omega,
$$
$$
\alpha_+\wedge d\alpha_+=(\mu-\nu)\Omega,
\qquad
\alpha_-\wedge d\alpha_-=-(\mu-\nu)\Omega.
$$
After rounding, $M_0$ is a solid torus whose maximal invariant set is the
periodic orbit
$$
\gamma=\mathbb S^1\times\{(0,0)\}.
$$
Its normal multipliers are $e^{\nu t}$ and $e^{\mu t}$.  Thus $\gamma$ is a
dominated source in the first case and a dominated sink in the second.  It is
hyperbolic as a periodic orbit, but it is not saddle hyperbolic.

\begin{proposition}
\label{reeb-boundary-source-sink}
Each characteristic foliation induced by $\ker\alpha_-$ and
$\ker\alpha_+$ on the rounded boundary torus is the union of two Reeb
annuli.
\end{proposition}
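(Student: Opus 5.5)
Since the statement is about characteristic foliations, which depend only on kernels, we will compute on an explicit boundary. A convenient choice is the unrounded square $[-1,1]^2$, which we round afterwards using Remark~\ref{side-boundary-rounding}. We will also compare with a round torus $\{x^2+y^2=1\}$ (or a level set of a convex function) as a cleaner model. The flow $Z$ is transverse to all these surfaces, since $Z(x^2+y^2)=2(\nu x^2+\mu y^2)$ has a definite sign. So any two such boundary choices are connected through transverse tori. By the flow-box argument used throughout Section~\ref{s3.1} (or simply by pushing along $Z$), the characteristic foliations on them are conjugate. We therefore work on the round torus parametrized by $(\vartheta,\phi)$ with $x=\cos\phi$, $y=\sin\phi$.

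Restricted to this torus we have $dx=-\sin\phi\,d\phi$ and $dy=\cos\phi\,d\phi$. This gives
\[
\alpha_\pm|_{T}=(\cos\phi\pm\sin\phi)\,d\phi-(\mu\sin\phi\pm\nu\cos\phi)\,d\vartheta .
\]
The characteristic line field is spanned by
$V_\pm=(\cos\phi\pm\sin\phi)\partial_\vartheta+(\mu\sin\phi\pm\nu\cos\phi)\partial_\phi$.
Its $\partial_\phi$-component $g_\pm(\phi)=\mu\sin\phi\pm\nu\cos\phi$ is a sinusoid $R\sin(\phi\pm\phi_0)$ with $R>0$. Hence it vanishes at exactly two values of $\phi$, differing by $\pi$. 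Because $V_\pm$ is nonsingular there, those values give exactly two closed leaves $\{\phi=\phi_i\}$, which are longitudinal circles. Between them the leaves are graphs $d\phi/d\vartheta=g_\pm/h_\pm$, where $h_\pm=\cos\phi\pm\sin\phi$.

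It remains to show that each complementary annulus is a Reeb annulus rather than a parallel annulus. That is, both boundary circles should have the same holonomy sign when oriented by a common longitudinal direction, and the interior leaves should spiral toward one circle at both ends.

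We will compute the linearization at each closed leaf: $\frac{d}{d\vartheta}\delta\phi=(g'_\pm/h_\pm)(\phi_i)\,\delta\phi$. At the two zeros of the sinusoid $g_\pm$, the derivative $g'_\pm$ has opposite signs. We claim $h_\pm$ also takes opposite signs there. This is the key inequality, and we expect it to be the main obstacle. The zeros satisfy $\tan\phi=\mp\nu/\mu$, and $h_\pm(\phi+\pi)=-h_\pm(\phi)$. So $g'/h$ has the same sign at both closed leaves, with $\partial_\vartheta$ as the common orientation.

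The two closed leaves then have equal holonomy sign, so each annulus has one attracting and one repelling end. Note, however, that the Reeb versus parallel distinction is really about the orientation of $V_\pm$ relative to $\partial_\vartheta$. The sign of $h_\pm$ flips once inside each annulus, at its zeros $\tan\phi=\mp1$. These zeros interlace with those of $g_\pm$ because $|\nu|\neq|\mu|$, and this is exactly where the domination hypothesis $\nu<\mu$ enters. With the interlacing, each annulus contains exactly one line where the leaves are tangent to $\partial_\phi$. Following a leaf, it therefore turns by $\pi$ and is asymptotic to the same boundary circle in both directions of the leaf, up to orientation. This is the Reeb picture.

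We will verify the interlacing carefully: $\arctan(\nu/\mu)$ and $\pi/4$ must be distinct modulo $\pi/2$, which holds since $\nu\neq\mu$ and both have the same sign. If the interlacing instead failed, $h_\pm$ would keep a constant sign on each annulus, and we would get parallel annuli. The same-sign hypothesis on $\nu,\mu$ is what rules this out.

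Finally, we transfer the conclusion to the rounded square boundary via the transverse isotopy noted at the start.
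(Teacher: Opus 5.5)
\textbf{There is a genuine gap: the transfer from the round torus to the rounded square.} Your round-torus computation is essentially right. The step that fails is carrying it over to the rounded square, which is the boundary the proposition refers to (and the one matched later, e.g.\ in Theorem~\ref{thm:arbitrary-saddle-count}). Transversality of $Z$ to a family of tori does not make their characteristic foliations conjugate, because $\xi_\pm$ are not $Z$-invariant: $\mathcal L_Z\alpha_+=\mu\alpha_u-\nu\alpha_s$ is not a multiple of $\alpha_+$.

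Concretely, let $\varphi^t$ be the flow of $Z$ and suppose another transverse torus is $\{\varphi^{\tau(p)}(p):p\in T\}$. Since $\alpha_\pm(Z)=0$, its characteristic foliation pulls back to $T$ as $\ker\bigl(\alpha_u\mp e^{-(\mu-\nu)\tau}\alpha_s\bigr)|_{T}$. This depends on the nonconstant function $\tau$ and has to be analysed from scratch.

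The principle is in fact false in this very model. Take the $\vartheta$-invariant torus over a planar curve $c$ transverse to $\nu x\partial_x+\mu y\partial_y$. The foliation of $\ker\alpha_+$ has a compact leaf exactly over $c\cap\{\mu y=\nu x\}$. That line is itself transverse to the planar flow, so $c$ can be chosen, in flow-box coordinates, to cross it more than twice. Section~\ref{s3.1} says the opposite of what you need: characteristic foliations rotate from slice to slice of a flow collar, which is exactly the flexibility used in Lemma~\ref{lem:nonconvex-bicontact-realization}. As written, you have proved the statement only for the boundary $\{x^2+y^2=1\}$.

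\textbf{How to repair it.} Use the route you announced first and then dropped, which is the paper's proof: compute on the faces.
On $x=\pm1$ the line field of $\ker\alpha_+$ is spanned by $\partial_\vartheta+(\mu y\mp\nu)\partial_y$. It has one compact leaf at $y=\pm\nu/\mu$, which lies in $(-1,1)$, away from the corners, because $0<\nu<\mu$. Both leaves have linearized holonomy $d(\delta y)/d\vartheta=\mu\,\delta y$, so they have the same holonomy sign for the common orientation $\partial_\vartheta$ and bound two turning (Reeb) annuli. The $y$-faces are crossed monotonically since $|\nu x|\leq\nu<\mu$. The same holds for $\alpha_-$, and for the sink case with the faces exchanged. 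Since $\alpha_\pm(\partial_\vartheta)\neq0$ at the corners, Remark~\ref{side-boundary-rounding} shows that a small rounding creates no new compact leaf.

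\textbf{Smaller corrections.}
The $\nu$-term in your restriction has the wrong sign; it should be $\alpha_\pm|_T=(\cos\phi\pm\sin\phi)\,d\phi-(\mu\sin\phi\mp\nu\cos\phi)\,d\vartheta$. So $g_\pm$ and $h_\pm$ share a zero only if $\nu=-\mu$. That is excluded because $\nu,\mu$ have the same sign (equivalently, by transversality of $Z$), not by domination.

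Since $g_\pm$ and $h_\pm$ are both first harmonics, antiperiodicity under $\phi\mapsto\phi+\pi$ makes the interlacing and the equal holonomy signs automatic for any nonsingular line field. There is no obstacle there.

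Finally, in a Reeb annulus every interior leaf joins the two boundary circles, since $\phi$ is monotone along $V_\pm$. What distinguishes it from a parallel annulus is that it spirals in the same $\vartheta$-direction at both ends. That is precisely your equal-holonomy-sign criterion.
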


\begin{proof}
Before rounding, on each face $x=\pm1$ the characteristic foliation of $\ker\alpha_+$ is generated by
$$
\partial_\vartheta+(\mu y\mp\nu)\partial_y,
$$
and the corresponding foliation for $\alpha_-$ is obtained by reversing the
sign of $\nu$.  In the repelling case, each has one hyperbolic compact leaf
on each of the two $x$-faces, while the $y$-faces are crossed monotonically.
The complement of the two compact leaves therefore consists of two Reeb annuli.
For the attracting case the same calculation is made on the $y$-faces.
The compact leaves are disjoint from the corners, and
Remark~\ref{side-boundary-rounding} shows that a small rounding preserves
this decomposition.
\end{proof}

\begin{remark}
The orientation-line thickening of a M\"obius band gives the twisted
source and sink models
$$
(\vartheta,x,y)\sim(\vartheta+1,-x,-y)
$$
on $\mathbb R_\vartheta\times[-1,1]^2$. The vector field descends,
and both defining forms change sign. After rounding, the plug is an
oriented solid torus whose periodic orbit has multipliers
$-e^\nu$ and $-e^\mu$.
On the coorientation cover, the deck involution exchanges the two
compact boundary leaves and their complementary Reeb annuli.
Thus each quotient characteristic foliation is noncoorientable
and has one compact leaf. Cutting along that leaf gives a Reeb annulus.
\end{remark}




\subsubsection{Boundary winding and solid-torus caps}
\label{subsec:solid-torus-boundary-parity}
\label{subsec:symmetric-cap-realization}

Fix a meridian-longitude parametrization of an oriented torus $T$.
We use the projective winding $d(S_-,S_+)$ and, for coorientable
bi-foliations, the oriented winding $w(S_-,S_+)=d(S_-,S_+)/2$.
Reversing local representatives and inserting parallel strips preserve
these classes.

\begin{proposition}[Meridional parity]
\label{prop:solid-torus-meridional-parity}
If $(S_-,S_+)$ is the characteristic bi-foliation of an oriented
bi-contact solid-torus plug with meridian $m$, then
$$
d(S_-,S_+)(m)\equiv2\pmod4.
$$
For coorientable boundary bi-foliations this reads $w(S_-,S_+)(m)\equiv1\pmod2$,
which is necessary and sufficient for extension of the frame
formed by an outward normal and the two oriented characteristic directions.
It alone does not establish a contact realization.
\end{proposition}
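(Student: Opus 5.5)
The plan is a topological obstruction argument that uses only the transversality of $X$ to the boundary and the two transverse plane fields $\xi_\pm$; no contact condition is needed. Let $V$ be the solid torus, $T=\partial V$, and $m$ the meridian. The supported line field $\langle X\rangle$ is oriented and transverse to $T$. Along $T$, the characteristic line fields satisfy $S_\pm=\xi_\pm\cap TT$, so $\xi_\pm|_T=\langle X\rangle\oplus S_\pm$. I would work in the normal bundle $\mathcal N_X=TV/\langle X\rangle$, which is oriented and trivial over $V$ because $V$ is a solid torus (fix a framing). Along $T$, projection identifies $TT$ with $\mathcal N_X|_T$, and $S_\pm$ become two transverse line fields in $\mathcal N_X|_T$. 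These line fields extend over all of $V$ as the lines $\xi_\pm/\langle X\rangle$. Since a line field on a disk that is trivialized has zero projective degree on the boundary, the projective degree of $S_\pm$ along $m$, measured in the framing of $\mathcal N_X$ restricted to a meridian disk, is $0$.

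The second step converts this degree to the intrinsic framing of $TT$ used in the definition of $d$. The meridian framing of $TT$ along $m$ is given by (tangent to $m$, tangent to the longitude direction), after straightening $m$ on the disk. Under the identification $TT\cong\mathcal N_X|_T$, I would compare it with the framing of $\mathcal N_X$ that extends over the disk. Since $X$ is transverse, this amounts to comparing the frame $(n,\tau_m,\tau_\ell)$ of $TV|_m$ with the trivialization of $TV$ over the disk, where $n$ is the outward normal. The tangent frame $(\tau_m,n)$ of the disk $D$ along $\partial D$ has rotation number $\chi(D)=1$ relative to a trivialization of $TD$. The $\tau_\ell$ direction is normal to $D$, and it is constant after a homotopy. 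Replacing $n$ by $\pm X$, which is homotopic through vectors transverse to $T$, does not change this count. Hence the angle between the two framings of the plane field along $m$ changes by $\pm 2\pi$. Projective degree counts half-turns, so it shifts by $\pm2$. Combining this with the first step gives $d(S_\pm)(m)=\pm2$ in the torus framing, up to the correction discussed next.

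The main obstacle is the sign bookkeeping together with the mod-$4$ ambiguity. The boundary framing of $T$ and the orientation conventions can change $d(m)$ by an even multiple of $2$. More importantly, the degree in the trivial $\mathcal N_X$ is only determined modulo the choice of trivialization of $\mathcal N_X$ over $V$, which can change by the longitudinal class and so does not affect $m$. To make the argument robust, I would state it as follows: the obstruction class is the relative Euler-type class, and the line field restricted to the disk is a genuine section of $\mathbb{P}(TD)$-type data. This gives $d(m)=2\chi(D)\cdot(\pm1)=\pm2$ exactly. In particular, $d(m)\equiv2\pmod4$, as in the projective winding balance of Lemma~\ref{lem:gradient-winding-balance} with $\Sigma=D$ and the right-hand side $-2\chi(\Sigma)$.

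For the coorientable statement, divide by two to get $w(m)=\pm1$, which is odd. For the extension claim, note that the frame (outward normal, $S_-$, $S_+$ oriented) determines a map $T\to GL^+_3$ up to homotopy. Such a frame extends over $V$ exactly when its restriction to $m$ is nullhomotopic in $\pi_1(SO(3))=\mathbb Z/2$. The computation above shows that this holds exactly when the rotation of the characteristic directions along $m$ relative to the $T$-framing cancels the Euler characteristic contribution $\chi(D)=1$, which is the condition that $w(m)$ is odd. Finally, since the argument uses no contact condition, the last sentence of the statement follows: the parity condition is purely homotopical and therefore cannot by itself establish a contact realization.
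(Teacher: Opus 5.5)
Your final congruence is correct, but you derive it from an identity that is false, so the proof has a genuine gap. In your second step you claim that the $TT$-framing $(\tau_m,\tau_\ell)$, transported to $\mathcal N_X|_m$, has rotation number exactly $\pm\chi(D)=\pm1$ relative to the trivialization of $\mathcal N_X$ over the meridian disk $D$. From this you get $d(m)=\pm2$ ``exactly''. Proposition~\ref{prop:odd-winding-cap-models} contradicts this: it gives bi-contact solid-torus caps with $w(m)=k$ for every odd $k$, so $d(m)=2k$ can be $6,10,\dots$.

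The faulty reduction is ``this amounts to comparing the frame $(n,\tau_m,\tau_\ell)$ of $TV|_m$ with the trivialization of $TV$ over the disk.'' That comparison lives in $\pi_1(SO(3))=\mathbb Z/2$, so it determines the integer rotation number in the plane bundle $\mathcal N_X$ only modulo $2$. The integer itself depends on $X|_D$, viewed as a map $(D,\partial D)\to\mathbb S^2$ in a trivialization of $TV|_D$. Adding one extra wrap of the sphere changes the relative Euler number of $X^\perp\cong\mathcal N_X|_D$ by $\chi(\mathbb S^2)=2$, and hence changes $d(m)$ by $4$.

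Your appeal to sections of $\mathbb P(TD)$, and to Lemma~\ref{lem:gradient-winding-balance} with $\Sigma=D$, has the same flaw. It silently identifies $\mathcal N_X|_D$ with $TD$, which is legitimate only when $X$ is transverse to $D$, as in the suspended setting of that lemma. In the explicit caps with $|k|\geq3$, $X$ is tangent to the meridian disks along interior arcs, and this is exactly how larger $|w(m)|$ arises. The ambiguities you list (boundary framing conventions, the trivialization of $\mathcal N_X$ over $V$) are not the relevant ones.

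The repair is to keep only the mod-$2$ information, which is what the paper does. Over $D$, choose oriented $v_\pm$ with $\xi_\pm=\langle X,v_\pm\rangle$. Then $(X,v_-,v_+)$ is a frame defined on all of $D$, so its boundary loop is trivial in $\pi_1(SO(3))$. Along $m$, write $s_\pm=a_\pm X+b_\pm v_\pm$ with $b_\pm>0$, and use $X\simeq\pm n$ through vectors transverse to $T$. This shows the frame is homotopic to $(\pm n,s_-,s_+)$. The class of $(\pm n,s_-,s_+)$ is $1+d(m)/2 \bmod 2$: the $1$ comes from $(n,\tau_m,\tau_\ell)$, and $d(m)/2$ comes from the full turns of $s_-$ in the $TT$-framing. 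Hence $d(m)/2$ is odd, which gives $d(m)\equiv2\pmod4$ and nothing sharper.

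Your last paragraph on extending the frame is essentially this argument; also note $\pi_2(SO(3))=0$ for the remaining $3$-cell. If you drop the claimed exact value $\pm2$ and run the first part in $\pi_1(SO(3))$ as well, the proof goes through.
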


\begin{proof}
Over a meridional disk the contact directions admit orientations, and
together with the supported vector field give a frame. Along its boundary, the normal and coordinate
tangent frame contributes the nonzero class in
$\pi_1(SO(3))=\mathbb Z/2$, while the characteristic winding contributes
$d(m)/2\bmod2$. Hence $1+d(m)/2=0\bmod2$. For coorientable boundary
bi-foliations, the converse equality permits
extension across the disk, and $\pi_2(SO(3))=0$ permits extension across
the remaining ball.
\end{proof}

\begin{proposition}[Explicit caps with odd oriented winding]
\label{prop:odd-winding-cap-models}
For every odd integer $k$, there are coorientable repelling and attracting
bi-contact solid-torus plugs with oriented winding
$$
w(m)=k,\qquad w(\ell)=0.
$$
Each boundary characteristic foliation has $2|k|$ Reeb annuli. The
repelling model has $(|k|+1)/2$ sources and $(|k|-1)/2$ saddle periodic
orbits. The attracting model has the corresponding sinks and saddles.
\end{proposition}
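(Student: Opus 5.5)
We build the models inductively in $|k|$, starting from the dominated source and sink orbits and adding saddles with suspended-gradient plugs. Only the repelling case needs to be done by hand. The attracting model then comes from time reversal $X\mapsto -X$ together with exchanging the roles of $\xi_-$ and $\xi_+$, for instance after composing with an orientation-reversing reflection of the solid torus so that the orientation conventions match. This swaps sources with sinks and keeps the saddle count.

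\emph{Base case $|k|=1$.} Take the dominated source model $Z=\partial_\vartheta+\nu x\partial_x+\mu y\partial_y$ with $0<\nu<\mu$, and round its corners. By Proposition~\ref{reeb-boundary-source-sink}, each boundary characteristic foliation consists of exactly two Reeb annuli. Both foliations are coorientable, since $\alpha_\pm$ are global forms. On a meridional disk $\{\vartheta=\mathrm{const}\}$, the characteristic directions make one full turn as one goes around the rounded square. From this:
\begin{itemize}
\item we read off $d(m)=\pm2$, hence $w(m)=\pm1$, which is consistent with Proposition~\ref{prop:solid-torus-meridional-parity};
\item the longitude $\vartheta$-circle sees constant directions, so $w(\ell)=0$;
\item the model has one source and no saddles, matching $(|k|+1)/2=1$ and $(|k|-1)/2=0$.
\end{itemize}
To get the opposite sign of $k$, precompose with an orientation-preserving diffeomorphism of the solid torus that reverses both the meridian and longitude orientations. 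Alternatively, use the framing change $m\mapsto -m$, $\ell\mapsto-\ell$, which leaves $w(\ell)=0$ untouched.

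\emph{Inductive step $k\mapsto k\pm2$.} Assume a repelling cap with $w(m)=k$, $2|k|$ Reeb annuli, $(|k|+1)/2$ sources and $(|k|-1)/2$ saddles has been built. Attach to its exit boundary a two-saddle suspended-gradient plug $\mathbb S^1\times\Sigma$, where $\Sigma$ has genus one with one entrance and one exit circle. The entrance is identified with the cap's exit torus, so that the $\theta$-circle goes to the longitude and the entrance circle goes to the meridian. Lemma~\ref{lem:gradient-winding-balance} gives $\rho_{\mathrm{out}}=\rho_{\mathrm{in}}+2$, so $|w(m)|$ increases by $2$. Lemma~\ref{lem:gradient-boundary-simplification} is the opposite move: it decreases the winding by two. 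Here we need instead the increasing version, which we obtain by the same four-circle construction with the bands placed so that the inserted compact leaves have the sign producing turning annuli of the existing sign. Then \eqref{eq:gradient-reeb-substitution}, with signs reversed, turns two gaps into three annuli each. This raises the Reeb annulus count from $2|k|$ to $2|k|+4=2|k\pm2|$.

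The gluing is done as follows. Theorem~\ref{hypb} extends the structure with the prescribed entrance bi-foliation, and the smooth matched gluing (Theorem~\ref{thm:smooth-matched-gluing}) glues it to the cap. Since orbits cross the interface only outward, no new recurrence appears. The result is a repelling solid torus whose saddle count goes up by one per pair of saddle orbits. The source count must go up by one as well, so each step should really adjoin one extra source. Concretely, I would first take the boundary connected union of the current cap with a fresh base source cap, by attaching a pair-of-pants suspended-gradient plug with one saddle, which merges two repelling tori into one. I would then apply Lemma~\ref{lem:gradient-winding-balance} with $\chi(\Sigma)=-1$ to compute the new winding, $k_1+k_2+1$ up to framing signs. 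Merging a $k$-cap with a $\pm1$-cap by one saddle thus gives $k\pm2$, adding one source and one saddle, exactly as the counts require. Since $\Sigma$ has genus zero, the result is again a solid torus, provided the two longitudes are glued fiberwise.

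\emph{Main obstacle.} The hard part is admissibility at the merging step. Each entrance torus must carry a bi-foliation that straddles the stable circles in the admissible order. The pushed-forward exit bi-foliation must satisfy the cyclic order of Definition~\ref{admissible-hyperbolic-boundary}, and it must have exactly $2|k\pm2|$ Reeb annuli with $w(\ell)=0$. I would choose the stable circles inside transverse bands near compact leaves, as in the proof of Lemma~\ref{lem:gradient-boundary-simplification}, and then apply the slight rotation trick to secure exit admissibility. Finally, I would count the compact leaves using the local model \eqref{eq:gradient-saddle-transfer}.
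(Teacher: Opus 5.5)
Your final construction is genuinely different from the paper's and can be completed. It starts from the dominated source and then merges the current cap with a fresh $|k|=1$ source cap through a one-saddle pair-of-pants suspended-gradient plug, gluing longitudes to fibers. As written, however, two things are wrong.

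\textbf{First correction: drop the genus-one detour.} Gluing $\mathbb S^1\times\Sigma$, with $\Sigma$ of genus one and two boundary circles, onto a solid torus gives $\mathbb S^1\times(\text{once-punctured torus})$, not a solid torus. Band placement also does not control the sign of the inserted leaves: \eqref{eq:gradient-saddle-transfer} forces that sign to be negative.

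\textbf{Second correction: the winding arithmetic.} For the pants, Lemma~\ref{lem:gradient-winding-balance} gives $\rho_{\mathrm{out}}=\rho_1+\rho_2+1$ in the pants framing. So merging a $k$-cap with a $\pm1$-cap does not give $k\pm2$ in general; the outcome depends on how each cap is glued.
\begin{itemize}
\item The Reeb count determines the right choice. On the exit torus, each of $S_\pm$ carries the $2|k|+2$ transported compact leaves plus two new ones along $\mathcal L^u$, which persist after your small rotation.
\item These $2|k|+4$ leaves bound only turning annuli exactly when every transported leaf has the same negative holonomy sign, with respect to the fiber orientation, as the new ones.
\item In that case the annular word is all $R_-$ and $\rho_{\mathrm{out}}=|k|+2$. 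Any other choice produces parallel annuli and $|w(m)|\le|k|$.
\item This sign depends only on the fiber orientation, so you can arrange it by gluing each longitude to $\pm h$ appropriately. It is bookkeeping, not an obstruction.
\end{itemize}

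\textbf{The remaining steps go through.} The rest of your ``main obstacle'' works as in the proof of Lemma~\ref{lem:gradient-boundary-simplification}. Put the stable circles in bands beside compact $S_-$-leaves, use the rotation for the transit order and exit straddling, apply Theorem~\ref{hypb} relative to the prescribed entrance, then Theorem~\ref{thm:smooth-matched-gluing}. $w(\ell)=0$ follows from transversality to the longitudinal circles of $\mathcal L^u$. Coorientability survives because $H_1(\mathbb S^1\times P)$ is generated by boundary classes.

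\textbf{Comparison with the paper.} The paper writes the cap down in one step. With $n=|k|$, the annulus field $Y=\epsilon\cos(n\pi x/2)\partial_\vartheta+(-1)^{(n-1)/2}\sin(n\pi x/2)\partial_x$ on $\mathbb S^1_\vartheta\times[-1,1]_x$:
\begin{itemize}
\item is nonsingular, points outward at both boundary circles, and has $n$ periodic orbits alternating between repellers and attractors, beginning and ending with repellers;
\item is thickened to $Y+Kz\partial_z$ by Theorem~\ref{surface-extension-plug};
\item has explicit affine forms giving $n$ hyperbolic compact leaves on each horizontal face with alternating orientations, hence $2n$ Reeb annuli and $|w(m)|=n$;
\item has $\vartheta$-invariant coefficients, so $w(\ell)=0$;
\item under normal expansion, turns annular repellers into sources and annular attractors into saddles.
\end{itemize}
The choice of $\epsilon$ fixes the sign of $k$, and time reversal gives the attracting cap. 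This needs no admissibility check, invariant completion, smoothing or gluing. It also produces caps of the product form $[-1,1]_z\times A$ with reflection symmetry, which is exactly the structure used later in Definition~\ref{def:cap-annular-double} and Proposition~\ref{stable-nonhyperbolic-cap-realization}.

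Your route buys a modular description of these caps as assemblies of elementary dominated sources and one-saddle pants. The price is Theorem~\ref{hypb}, Lemma~\ref{lem:bby-invariant-extension}, a $C^1$ smoothing, and the sign bookkeeping above, and your caps do not come with the annular-double product structure. For the attracting case you need not swap $\xi_\pm$ or reflect: $(\xi_-,\xi_+)$ already supports $-X$ with the reversed line-field orientation.
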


\begin{proof}
Put $n=|k|$ and, on $A=\mathbb S^1_\vartheta\times[-1,1]_x$, take
$$
Y=\epsilon\cos\left(\frac{n\pi x}{2}\right)\partial_\vartheta
 +(-1)^{(n-1)/2}\sin\left(\frac{n\pi x}{2}\right)\partial_x,
\qquad \epsilon\in\{-1,1\}.
$$
This vector field is nonsingular and points outward along both boundary circles.
Its $n$ periodic orbits alternate between annular repellers and
attractors, beginning and ending with repellers. Apply
Theorem~\ref{surface-extension-plug} to
$\overline Y=Y+Kz\partial_z$ for large $K>0$, and round the corners.
The affine supporting forms give $n$ hyperbolic compact leaves on each
horizontal face and none on the side faces. Their longitudinal
orientations alternate, giving $2n$ Reeb annuli and meridional winding
of absolute value $n$. Choose $\epsilon$ to obtain the sign of $k$.
The coefficients are independent of $\vartheta$, so $w(\ell)=0$.
Normal expansion turns annular repellers into sources and annular
attractors into saddles. Time reversal gives the attracting model.
\end{proof}

For these annular models, a prescribed filling meridian $m$ must
admit a complementary longitude $\ell$ with $w(\ell)=0$, and
$w(m)$ must be odd. If the meridional slope is not prescribed,
every winding class $w\not\equiv0\pmod2$ admits a meridian-longitude
basis with these properties. Thus, with a suitable choice of
filling map, the explicit models realize a representative of every
oriented winding class that is nonzero modulo two.
To prescribe the boundary bi-foliation itself, we also specify an auxiliary annular foliation and its cyclic order with both characteristic directions.

\begin{definition}[Compatible annular double]
\label{def:cap-annular-double}
Let $Y$ be a nonsingular Morse-Smale vector field on an oriented annulus $A$, outward
along both boundary circles. On the symmetrically rounded solid torus
$V=[-1,1]_z\times A$, the product foliation
$$
\mathcal H_Y=\operatorname{span}\{Y,\partial_z\}
$$
induces an \emph{annular double} $G_Y$ on $T_Y=\partial V$, with its
boundary orientation. Its arrangement of Reeb components has reflection symmetry.
An ordered cooriented bi-foliation $(S_-,S_+)$ on $T$ admits a
\emph{compatible annular double} if an orientation-preserving
diffeomorphism $h:T_Y\to T$ carries $G_Y$ to a foliation $G$ satisfying
\begin{equation}\label{eq:cap-double-order}
\eta_-(g)\eta_+(g)>0,
\qquad \eta_-\wedge\eta_+>0,
\end{equation}
where $\eta_\pm$ define $S_\pm$ and $g$ generates $G$. This specifies
transversality and cyclic order. For a prescribed filling meridian,
require $h$ to preserve that class.
\end{definition}

\begin{proposition}[Realization of prescribed boundary bi-foliations]
\label{stable-nonhyperbolic-cap-realization}
Every smooth bi-foliation admitting a compatible annular double is
realized exactly, under $h$, on the boundary of a repelling bi-contact
solid-torus plug supporting $\overline Y=Y+Kz\partial_z$, for large $K$.
The boundary adjustment keeps $\overline Y$ fixed and is supported away
from its maximal invariant set. Its periodic orbits are precisely those
of $Y$, becoming sources or saddles as above. Time reversal gives the
corresponding attracting statement with the reversed boundary
flow-orientation convention.
\end{proposition}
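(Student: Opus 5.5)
We start from the explicit model of Theorem~\ref{surface-extension-plug} on $V=[-1,1]_z\times A$ with $\overline Y=Y+Kz\partial_z$, $K>0$ large, after symmetric rounding. The plan has three steps: show that the model's boundary bi-foliation is already ordered relative to the annular double $G_Y$ exactly as in \eqref{eq:cap-double-order}; deform it, through boundary bi-foliations ordered relative to $G_Y$, to $h^*(S_-,S_+)$; and realize this deformation inside a collar of $\partial V$ by flowing along $\overline Y$. The dynamics is fixed, since $\overline Y$ is never changed. Consequently the invariant set is $\{0\}\times\operatorname{Inv}(A,Y)$, the union of the periodic orbits of $Y$, and normal expansion makes them sources or saddles as in Proposition~\ref{prop:odd-winding-cap-models}.

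\textbf{Step 1: the model is ordered relative to $G_Y$.} On the horizontal faces $z=\pm1$, the affine forms of Theorem~\ref{surface-extension-plug} give $S_\pm$ spanned by vectors of the form $Y+O(1/K)Z$, with opposite small tilts. The leaf of $G_Y$ there is $Y$ itself, so $\eta_-(Y)\eta_+(Y)$ has a definite sign. On the side faces, $G_Y$ is spanned by $\partial_z$, and by \eqref{nh-side-characteristic} the characteristic lines are $aT-\eta(T)\partial_z$; these straddle $\partial_z$ in a fixed order. Remark~\ref{side-boundary-rounding} carries the order across the rounded corners. I expect that the sign conventions, after possibly replacing $Y$ by a reflected choice, give exactly \eqref{eq:cap-double-order}.

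\textbf{Step 2: deformation of the boundary data.} Transport the prescribed bi-foliation to $T_Y$ by $h$. The space of ordered cooriented bi-foliations $(S_-,S_+)$ satisfying \eqref{eq:cap-double-order} with respect to the fixed line field $G_Y$ is described pointwise by two angles. These lie in the two complementary open sectors cut out by $G_Y$, or equivalently in prescribed intervals of a lift. Such a space is convex, so the linear interpolation of angles connects the model bi-foliation to the prescribed one. The coorientations agree because both sit in the same sectors.

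\textbf{Step 3: realization in a collar.} The boundary is everywhere outward for $\overline Y$, so a collar is a product $[0,1]\times T_Y$ with $X=\partial_t$ after reparametrization; the slice $t=0$ is interior, where the model is kept. Choose the collar inside the region $|z|>1/2$ or near the side faces, away from the invariant set. By the relative form of Theorem~\ref{prob}, it then suffices to verify that the inner bi-foliation (the model, transported along the flow) and the outer prescribed one satisfy the product cyclic order \eqref{adnec}.

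\textbf{Main obstacle.} Step 3 is the crux. Both bi-foliations straddle $G_Y$ in the same sectors, but the cyclic order \eqref{adnec} additionally requires the outer $S_\pm$ to sit inside the inner pair, that is, to be closer to the flow-invariant direction. I would use the freedom to increase $K$ and to take the inner slice deeper. Along outward flow with strong normal expansion, the model contact planes converge, on the horizontal faces, to the plane $\operatorname{span}\{\overline Y,Y\}$, whose trace is $G_Y$; on the side faces they similarly converge to the plane whose trace is $\partial_z$. Pulling the inner slice back by a long time, or enlarging $K$, therefore makes the inner pair nearly opposite across $G_Y$ with separation close to $\pi$. Any fixed prescribed pair lying strictly in the sectors is then nested inside it, and compactness gives uniformity. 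If the nesting orientation comes out reversed, I would instead compose with Proposition~\ref{two-ended-boundary-realization} to pre-adjust the horizontal faces.

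Time reversal, with $K<0$ and the reversed orientation convention, gives the attracting statement.
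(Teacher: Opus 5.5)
Your proposal is correct and follows the paper's proof. There, the model forms $\alpha^0_\pm=\pm\beta_Z+z\beta_Y-K^{-1}dz$ restrict to $\pm\gamma+\delta$ on $T_Y$, where $\gamma=\beta_Z|_{TT_Y}$ defines $G_Y$, and the target is normalized to $b_\pm\gamma+\delta$ with $b_-<b_+$; the forms transported from the deep slice $\Phi_{-L}(T_Y)$ are then, up to positive scaling, $a^L_\pm\gamma+\delta$ with $a^L_\pm\to\pm\infty$ once $K$ exceeds twice the exponential growth rate of $D\varphi_{-L}$ for $Y$, which is your nesting argument made quantitative before invoking Lemma~\ref{lem:nonconvex-bicontact-realization}.

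Two minor corrections. First, on the horizontal faces the model characteristic lines are the kernels of $\pm\beta_Z+z\beta_Y$ at $z=\pm1$, so they are tilted from $Y$ by an angle independent of $K$, not by $O(1/K)$. Second, no reflection of $Y$ or appeal to Proposition~\ref{two-ended-boundary-realization} is needed, since $\alpha^0_\pm(g)=\delta(g)\neq0$ for $g$ tangent to $G_Y$ and $\alpha^0_-\wedge\alpha^0_+>0$ holds by construction.
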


\begin{proof}
Pull back by $h$. In a positive frame $(Y,Z)$ with dual coframe
$(\beta_Y,\beta_Z)$, the initial supporting forms are
$$
\alpha_\pm^0=\pm\beta_Z+c,\qquad c=z\beta_Y-K^{-1}dz.
$$
Put $\gamma=\beta_Z|_{TT_Y}$ and $\delta=c|_{TT_Y}$.
The order condition, after a common sign choice matching the
coorientations, normalizes the target forms to
$$
\eta_\pm=b_\pm\gamma+\delta,\qquad b_-<b_+.
$$
For the negative-time flow $\varphi_{-L}$ of $Y$, write
$$
\varphi_{-L}^*\beta_Z=A_L\beta_Z,\qquad
\varphi_{-L}^*\beta_Y=\beta_Y+B_L\beta_Z.
$$
Uniform derivative bounds give $A_L\geq C^{-1}e^{-bL}>0$ and
$|B_L|\leq Ce^{bL}$. Choose $K>2b$. If $\Phi_t$ is the flow of
$\overline Y$, the inner characteristic forms transported from
$\Phi_{-L}(T_Y)$ to $T_Y$ are, up to positive scaling,
$$
e^{KL}(\Phi_{-L}^*\alpha_\pm^0)|_{TT_Y}
=a_\pm^L\gamma+\delta,
\qquad a_\pm^L=zB_L\pm e^{KL}A_L.
$$
Thus $a_-^L\to-\infty$ and $a_+^L\to+\infty$ uniformly. For large
$L$, both the original coefficients $(-1,1)$ and the target
$(b_-,b_+)$ lie in $\{(a,b):a_-^L<a<b<a_+^L\}$. Their linear
interpolation is therefore admissible. Apply
Lemma~\ref{lem:nonconvex-bicontact-realization} with identity transport
on this flow collar, preserving the original bi-contact structure near the inner boundary of the collar. The vector field stays fixed,
and the collar misses the maximal invariant set. Finally,
$\dot z=Kz$ places every periodic orbit in the central annulus.
\end{proof}

\begin{remark}
The annular-double hypothesis concerns the whole bi-foliation, which
need not have reflection symmetry. For realization up to
homeomorphism, one may apply the proposition to a smooth representative
of its simultaneous homeomorphism class. Sufficiently small elementary
stabilizations are allowed by
Corollary~\ref{cor:small-nonconvex-realization}. They add parallel strips
without changing winding or the supported dynamics. The realization
uses a sufficiently long flow collar, whose width is not prescribed.
Odd oriented winding alone does not supply a compatible annular double, and
realization of every prescribed bi-foliation with odd oriented winding remains outside this
statement.
\end{remark}


\section{Deformation and gluing}
\label{s4}

In this section, we establish relative realization in a flow collar, coordinated divided deformation,
relative smoothing, and gluing of diffeomorphic characteristic bi-foliations.
We use the unoriented forms of Section~\ref{s2.1}, also for characteristic
foliations. Forms used together have common transition signs, fixed along
parameter intervals. The characteristic bi-foliations remain ordered.
Boundary preparations
are supported in collars disjoint from the maximal invariant set, where the product-plug results apply.
All parameter intervals are compact. Statements relative to a subset of the plug preserve the specified structures on a neighborhood of that subset.

\subsection{Flow collars and relative realization}

Let $U=\Sigma\times[0,1]_t$ be an exit collar, with inner
boundary $t=0$, actual boundary $t=1$, and original supported
vector field $X$ a positive multiple of $\partial_t$.  Orient $\Sigma$ so that $dt\wedge\omega>0$.
We have analyzed the bi-contact geometry of product plugs in Section~\ref{s3.1}. Recall that in a positive local coframe $(e_1,e_2)$ on $\Sigma$, a supporting contact form can be written as
\begin{equation}\label{eq:s4-contact-angle}
\alpha=R(\cos\theta\,e_1+\sin\theta\,e_2)
\quad\Longrightarrow\quad
\alpha\wedge d\alpha
=-R^2\partial_t\theta\,dt\wedge e_1\wedge e_2,
\end{equation}
where $\theta$ is defined modulo $\pi$ and the sign of $-\partial_t\theta$ is the contact sign of $\alpha$.
The following lemma preserves the orbit structure during boundary deformations.

\begin{lemma}[Straightening a boundary deformation]
\label{lem:collar-straightening}
Suppose a smooth family of bi-contact structures on $U$ agrees with the
original bi-contact structure near $t=0$, and its supported oriented line field is positively
transverse to every $\Sigma\times\{t\}$.  Write its normalized vector field as
$Z_r=\partial_t+V_r$.  There is a slice-preserving isotopy $\Phi_r$,
equal to the identity near the inner boundary, such that
$(\Phi_r)_*\partial_t=Z_r$.  Its restriction $h_r$ to $t=1$ is the
collar transit map.  If $Z_0=\partial_t$, then $\Phi_0=\operatorname{id}$.

Extending $\Phi_r$ by the identity gives an oriented smooth orbit
equivalence of the original and deformed plug flows.  Choosing the
deformed vector field to be $(\Phi_r)_*X$ gives an actual conjugacy.
The maximal invariant set is unchanged and is fixed pointwise.
\end{lemma}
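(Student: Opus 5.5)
The plan is to build $\Phi_r$ directly from the flows of the normalized vector fields, starting each orbit on the inner boundary. Positive transversality of the supported line field to every slice $\Sigma\times\{t\}$ means the supported vector field has positive $dt$-component, so it can be rescaled uniquely to have $dt$-component $1$; this is $Z_r=\partial_t+V_r$, with $V_r$ tangent to the slices. Since the line field depends smoothly on $r$, so does $Z_r$. Near $t=0$ the bi-contact structure is the original one, so there $Z_r=\partial_t$ and $V_r=0$. Because $dt(Z_r)=1$, the flow $\psi^s_r$ of $Z_r$ maps $\Sigma\times\{t\}$ into $\Sigma\times\{t+s\}$, as long as it stays in $U$. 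Compactness of $\Sigma$ and of the parameter interval rule out escape through anything other than the slices, so every orbit starting on $\Sigma\times\{0\}$ reaches $t=1$ in time exactly $1$.

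Next I define
$$\Phi_r(x,t)=\psi_r^{t}(x,0).$$
This map is smooth in $(x,t,r)$ and preserves slices. On each slice it is a diffeomorphism onto that slice, since it equals the time-$t$ flow restricted from $\Sigma\times\{0\}$ to $\Sigma\times\{t\}$, whose inverse is the backward flow. It is therefore a diffeomorphism of $U$. Differentiating in $t$ gives $(\Phi_r)_*\partial_t=Z_r$. Near $t=0$ we have $Z_r=\partial_t$, so $\Phi_r$ is the identity on that neighborhood; the neighborhood can be taken uniform in $r$ by compactness. If $Z_0=\partial_t$, the flow is translation in $t$ and $\Phi_0=\mathrm{id}$. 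The restriction $h_r=\Phi_r|_{t=1}$ sends the point of $\Sigma\times\{1\}$ lying over $x$ to the endpoint of the $Z_r$-orbit through $(x,0)$, which is exactly the collar transit map.

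For the global statements, I extend $\Phi_r$ by the identity to the rest of $M$. This extension is smooth because $\Phi_r$ is the identity near the inner boundary of $U$. On $U$, the original vector field $X=f\partial_t$ with $f>0$ is sent to $(\Phi_r)_*X=(f\circ\Phi_r^{-1})Z_r$, a positive multiple of the deformed supported field. Outside $U$ the structures agree. Hence $\Phi_r$ maps oriented orbits of the original plug flow to oriented orbits of the deformed one, which gives the orbit equivalence. If the deformed supported vector field is chosen to be $(\Phi_r)_*X$, which is legitimate because it positively spans the deformed line field, then $\Phi_r$ is an actual conjugacy.

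The maximal invariant set is unchanged because $\operatorname{Inv}$ is preserved under conjugacy and $\Lambda$ avoids the collar. Indeed, the points of $U$ leave $M$ in forward time within the collar, so they are not in $\operatorname{Inv}$. Since $\Phi_r$ is the identity off $U$, it fixes $\Lambda$ pointwise. The only point requiring care is the uniform transit-time argument, and this is immediate from the normalization $dt(Z_r)=1$.
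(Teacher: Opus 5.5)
Your proposal is correct and follows the paper's route. Your map $\Phi_r(x,t)=\psi_r^{t}(x,0)$ is exactly the paper's $(\theta_{r,t}(x),t)$, where $\theta_{r,t}$ solves $\partial_t\theta_{r,t}=V_r(\theta_{r,t},t)$ with $\theta_{r,0}=\operatorname{id}$, and your extension by the identity and your argument that the exit collar meets no full orbit (so $\Lambda$ is fixed pointwise) match the paper's.
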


\begin{proof}
Solve the nonautonomous equation
$$
\partial_t\theta_{r,t}(x)
=V_r(\theta_{r,t}(x),t),\qquad \theta_{r,0}(x)=x,
$$
and set $\Phi_r(x,t)=(\theta_{r,t}(x),t)$.  Compactness gives the
solution on the whole interval, and uniqueness makes every
$\theta_{r,t}$ a diffeomorphism.  The stated push-forward identity
follows by differentiation.  Since $V_r=0$ near the inner boundary,
$\Phi_r$ extends by the identity.  No full orbit in the plug can meet
an exit collar whose vector field points strictly towards its outer boundary.
The collar therefore misses both maximal invariant sets, and the
extended conjugacy is the identity on them.
\end{proof}

For an entrance collar, apply the construction in the backward flow
direction.

Now, identify the collar slices with $\Sigma$ using the flow of $X$.
Choose compatible local defining forms $e_-,e_+$ for the original characteristic
bi-foliation on $\Sigma\times\{0\}$, with $e_-\wedge e_+>0$, and set
$$
\mathcal C_x=\{a e_-(x)+b e_+(x):a,b>0\}.
$$
Let $(S_-^r,S_+^r)$, $r\in[0,1]$, be a smooth path of ordered
outer boundary bi-foliations, starting at the original
bi-foliation, with compatible local defining forms $\eta_\pm^r$ satisfying
$\eta_-^r\wedge\eta_+^r>0$. The following relative realization
statement follows from Theorem~\ref{prob}. The entrance version
follows by reversing the collar direction and the corresponding
flow-orientation conventions. The local cone condition below is unchanged
when $e_-,e_+,\eta_-^r,\eta_+^r$ change sign together.

\begin{lemma}[Relative realization with fixed supported vector field]
\label{lem:nonconvex-bicontact-realization}
The family $(S_-^r,S_+^r)$ on $\Sigma\times\{1\}$ is induced by
a smooth family of bi-contact structures on
$U=\Sigma\times[0,1]$ supporting the fixed vector field $X$, starting at
the original structure and agreeing with it near
$\Sigma\times\{0\}$, if and only if
$$
\eta_-^r(x),\ \eta_+^r(x)\in\mathcal C_x
\qquad\text{for every }x\in\Sigma,\ r\in[0,1].
$$
The preserved neighborhood of the inner boundary may be chosen
sufficiently small. Where the prescribed bi-foliations are independent
of $r$, the realizing structures may also be chosen independent of $r$.
If one characteristic foliation is unchanged throughout the family,
its original contact structure may be preserved on all of $U$.
The construction is parametric for compact parameter spaces and
requires no convexity or Morse-Smale assumption.
\end{lemma}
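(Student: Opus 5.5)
The plan is to reduce everything to the product extension, Theorem~\ref{prob}, on the collar $U=\Sigma\times[0,1]$ with $X$ reparametrized to $\partial_t$. Reparametrizing does not change the supported line field, so supporting $X$ and supporting $\partial_t$ are the same condition. The equivalence is then a translation of the cyclic-order condition \eqref{adnec} into the cone condition. The inner directions are $S_\pm^{\mathrm{in}}=\ker e_\pm$ and the outer ones are $S_\pm^{\mathrm{out}}=\ker\eta_\pm^r$. Since the flow of $\partial_t$ identifies slices, admissibility means that the four directions occur in the order $\ker e_-,\ker\eta_-^r,\ker\eta_+^r,\ker e_+$ in the oriented projective line. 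Passing to defining covectors in a common half-plane, this says precisely that $\eta_-^r$ and $\eta_+^r$ lie in the open sector spanned positively by $e_-,e_+$, in the order $e_-,\eta_-^r,\eta_+^r,e_+$. Given $\eta_-^r\wedge\eta_+^r>0$, this is the same as $\eta_\pm^r\in\mathcal C_x$, up to the simultaneous sign conventions of Definition~\ref{admissible-bifoliation}.

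\emph{Necessity.} I would use \eqref{eq:s4-contact-angle}: the angle of $\alpha_+$ decreases strictly and the angle of $\alpha_-$ increases strictly in $t$. Transversality keeps the two angles from ever coinciding modulo $\pi$. Lifting the angles continuously from $t=0$, where $e_-\wedge e_+>0$ fixes their initial separation in $(0,\pi)$, the two angles move toward each other and stay inside the initial sector. At $t=1$ this puts both outer covectors in $\mathcal C_x$.

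\emph{Sufficiency.} Under the cone condition the boundary bi-foliation $(e_\pm;\eta^r_\pm)$ is admissible for every $r$. I would apply the relative parametric form of Theorem~\ref{prob} over the parameter space $[0,1]_r$. The structure is prescribed near $t=0$ by the original one. Near $t=1$ it is prescribed by any germ with the given characteristic bi-foliation, for instance a product collar built from the angle functions themselves. At $r=0$ the original structure is itself an extension, and the homotopy-lifting clause of Theorem~\ref{prob} supplies the family starting from it.

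The remaining claims follow from the explicit convex construction in the proof of Theorem~\ref{prob}. If the target is constant in $r$ on some set, choose the angular and radial interpolations independent of $r$ there. A cutoff in $r$ keeps the construction convex, so this is harmless. If, say, $S_+^r$ is constant, keep $\alpha_+$ equal to the original form on all of $U$. Then only $\alpha_-$ needs to be interpolated, which can be done inside the sector between $\ker e_-$ and the current $\alpha_+$ direction. The cone condition places $\eta_-^r$ in that sector at $t=1$, so the interpolation is possible. No convexity or Morse-Smale hypothesis is used anywhere.

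The main obstacle is near $t=1$. The theorem needs a germ at the outer end realizing $\eta_\pm^r$ with the correct rotation signs. A careless germ could force an additional angular turn, which the cyclic order excludes. I would construct this germ explicitly as $\pi^*$ of slowly rotating forms ending at $\eta^r_\pm$, with rotation directed into the cone. The strict inequalities give uniform margins over the compact parameter space, and these margins make the germ well defined.
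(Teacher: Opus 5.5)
Your proposal is correct and follows essentially the same route as the paper: you identify the cone condition with product admissibility relative to the inner bi-foliation, derive necessity from the opposite rotation of the two angles in the product analysis, and obtain sufficiency and the supplementary claims from the relative parametric, convex angular interpolation of Theorem~\ref{prob}. The difference is one of emphasis. No germ at $t=1$ is needed, and if you do prescribe one, its $r=0$ member must be the original germ. The point the paper makes explicit is instead the shrinking of the preserved inner collar to some $[0,\delta]$, so that, by strictness and compactness, the covectors $\eta_\pm^r$ still lie in the smaller cone determined by the slice $t=\delta$.
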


\begin{proof}
The cone condition, together with $\eta_-^r\wedge\eta_+^r>0$, is
precisely the product admissibility condition of
Definition~\ref{admissible-bifoliation}, relative to the inner
characteristic bi-foliation. Necessity therefore follows from
the product-plug analysis in Section~\ref{s3}.

Conversely, strict admissibility and compactness allow a sufficiently
small inner collar to remain fixed while the boundary bi-foliation on the
complementary collar remains admissible for every $r$. Apply the
relative-parametric form of Theorem~\ref{prob} to that complementary
collar, relative to $r=0$ and to regions where the prescribed
bi-foliations are independent of $r$. Its angular interpolation also allows an unchanged contact
structure to be preserved throughout the collar.
\end{proof}

The cone condition is $C^0$-open, so
Lemma~\ref{lem:nonconvex-bicontact-realization} gives the following corollary.

\begin{corollary}
\label{cor:small-nonconvex-realization}
There is a $C^0$ neighborhood $\mathcal N_U$ of the original outer
characteristic bi-foliation such that every smooth path in $\mathcal N_U$
starting at that bi-foliation is realizable as in
Lemma~\ref{lem:nonconvex-bicontact-realization}.
\end{corollary}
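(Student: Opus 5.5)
The plan is to deduce the corollary from Lemma~\ref{lem:nonconvex-bicontact-realization} by showing that the cone condition there is open in the $C^0$ topology on bi-foliations, uniformly over the compact surface $\Sigma$. The original outer bi-foliation already satisfies the cone condition strictly. After choosing the compatible representatives from the inner collar via the product extension, the strict product admissibility of the original structure places $\eta_\pm^0(x)$ in the interior of the open cone $\mathcal C_x$.

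More precisely, I would work projectively. For each $x$, the cone $\mathcal C_x$ projects to an open interval $I_x\subset\mathbb{RP}^1$ of cotangent directions, with endpoints $[e_-(x)]$ and $[e_+(x)]$. The original outer kernels give two distinct points in $I_x$, in the order forced by $\eta_-^0\wedge\eta_+^0>0$. By compactness of $\Sigma$ and continuity of $e_\pm$ and $\eta_\pm^0$, their angular distance from $\partial I_x$ is bounded below by some $\epsilon>0$.

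I take $\mathcal N_U$ to be the set of bi-foliations whose two line fields are pointwise within angle $\epsilon/2$ of the original ones. This is a $C^0$ condition on line fields. For any smooth path $r\mapsto(S_-^r,S_+^r)$ in $\mathcal N_U$ starting at the original bi-foliation, each projective direction then stays inside $I_x$.

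The remaining step is to choose local defining forms $\eta_\pm^r$ that lie in $\mathcal C_x$ itself, and not merely in $-\mathcal C_x$. Inside $I_x$, each direction has exactly one lift to the open half-plane containing $\mathcal C_x$. Starting from the compatible representatives at $r=0$ and lifting continuously in $r$, the lift never crosses a boundary ray, because the direction never leaves $I_x$. It therefore remains in $\mathcal C_x$.

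Transversality of $S_\pm^r$, which holds because the path consists of bi-foliations, keeps $\eta_-^r\wedge\eta_+^r$ nonzero. Hence its sign stays positive along the path. These lifts have common sign transitions, since they are determined by the cone $\mathcal C_x$, which transforms with $e_\pm$.

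Lemma~\ref{lem:nonconvex-bicontact-realization} then applies and gives the realization. The only mild obstacle is the bookkeeping of signs for unoriented forms, namely checking that the continuous lift in $r$ is globally consistent. This is handled by the observation above that the cone, and hence the lift, is intrinsic up to simultaneous sign change.
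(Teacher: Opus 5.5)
Your proposal is correct and follows the paper's route. The paper's entire justification is that the cone condition of Lemma~\ref{lem:nonconvex-bicontact-realization} is $C^0$-open, after which it applies the lemma; you supply that openness argument explicitly, via a uniform angular margin from compactness and the sign bookkeeping for choosing representatives in $\mathcal C_x$.
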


\subsection{Divided foliations and coordinated deformation}

We use Giroux realization to deform bi-foliations while preserving appropriate dividing curves. See \cite{hondanotes,massot} for convex surface theory.

We use the following form of the dividedness condition.  On an oriented
surface $(\Sigma,\omega)$, use local defining forms $\beta_i$ and
local functions $f_i$ with the same transition signs,
$\beta_j=\varepsilon_{ij}\beta_i$ and $f_j=\varepsilon_{ij}f_i$.
All dividing functions below are understood in this sense.
Writing $\beta,f$ locally, the nonsingular foliation $\ker\beta$
is divided with contact sign $\epsilon\in\{-1,1\}$ if
\begin{equation}\label{eq:Qr-sign}
\epsilon Q>0,\qquad Q=f\,d_\Sigma\beta+\beta\wedge d_\Sigma f.
\end{equation}
The two-form $Q$ is global, and $f\,ds+\beta$ defines an invariant
unoriented contact form of sign $\epsilon$ on the product oriented by
$ds\wedge\omega$. The dividing set $\Gamma=f^{-1}(0)$ is global.
Its signed regions are understood on the coorientation double cover,
where the deck involution exchanges their signs.
If $\beta=\iota_Y\omega$ locally, this expression becomes
\begin{equation}\label{eq:dividing-divergence}
Q=(f\operatorname{div}_\omega Y-Y(f))\omega.
\end{equation}

The cutoffs and convex choices in the usual Giroux flexibility proof
can be made equivariantly on this double cover. The Gray vector field is
invariant because its defining equation is unchanged when the contact
form changes sign. Hence realization descends.

\begin{lemma}[Parametric divided forms]
\label{lem:parametric-divided-forms}
A smooth family $S_q$ of foliations divided by the same
dividing set with its local signs admits smooth defining forms $\beta_q$ satisfying
$$
\epsilon(f\,d_\Sigma\beta_q+\beta_q\wedge d_\Sigma f)>0
$$
for a fixed defining function $f$. The initial form may be prescribed
if it defines $S_0$ with the compatible local signs and satisfies this inequality.

Alternatively, for prescribed smooth defining forms $\eta_q$, if each
admits a dividing function $f$ satisfying
$$
f\,d_\Sigma\eta_q+\eta_q\wedge d_\Sigma f<0,
$$
these functions can be chosen smoothly as $f_q$, with a uniform
strict inequality.
\end{lemma}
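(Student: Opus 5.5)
The plan is to treat both parts of Lemma~\ref{lem:parametric-divided-forms} by the same mechanism. For fixed $f$ and $S_q$, the set of admissible defining forms is convex, so local choices can be glued by a partition of unity and then made equivariant on the coorientation double cover. For a fixed foliation $S=\ker\beta$, replacing $\beta$ by $g\beta$ with $g>0$ gives
$$
Q(g\beta)=g\,Q(\beta)+f\,dg\wedge\beta .
$$
Consider a local convex combination $\sum\rho_i g_i\beta$ with $Q(g_i\beta)$ of sign $\epsilon$. Expanding,
$$
Q\Bigl(\sum\rho_i g_i\beta\Bigr)=\sum\rho_i\,Q(g_i\beta)+f\sum g_i\,d\rho_i\wedge\beta .
$$
The error term carries a factor $f$. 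I would therefore first check that the admissible set for fixed $f$ is genuinely convex in the sense needed. Away from $\Gamma=f^{-1}(0)$, one can use \eqref{eq:dividing-divergence}: there the condition reads $\epsilon(f\operatorname{div}_\omega Y-Y(f))>0$ for $\beta=\iota_Y\omega$. Dividing by $f^2$, this says
$$
\epsilon\,\operatorname{div}_{\omega/f}Y>0 \qquad\text{on }\{f\neq0\}.
$$

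For the first assertion, I would argue as follows. Each $S_q$ is divided by $\Gamma$ with the given signs, so by Giroux's criterion as used in \cite{massot,hondanotes} it admits a defining form $\beta$ and a dividing function $\tilde f_q$ with $\tilde f_q^{-1}(0)=\Gamma$ and the same sign pattern. Then $\tilde f_q=h_q f$ for some positive smooth $h_q$, after arranging transversality of both functions to $\Gamma$. Rescaling $\beta_q\mapsto h_q^{-1}\beta_q$ turns the dividing function into $f$, using
$$
Q(h^{-1}\beta,f)=h^{-2}\,Q(\beta,hf).
$$
This identity is a direct check: $Q$ is bilinear-type, $Q(c\beta,cf)=c^2Q(\beta,f)$, and the pair $(h^{-1}\beta,f)$ equals $h^{-1}(\beta,hf)$. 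The condition for fixed $f$ is then an open condition on $\beta$, and by the scaling identity it is convex under positive combinations up to the $f\,d\rho\wedge\beta$ terms.

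To handle those terms, I would work on a collar of $\Gamma$ in a normal form where $S_q$ is transverse to $\Gamma$. There one can choose $\beta_q$ explicitly, for instance $\beta_q=du+\dots$ with $f=u$, because transversality makes the second term $\beta\wedge df$ dominant. Away from $\Gamma$, I would use the divergence formulation: the condition becomes $\epsilon\,\operatorname{div}_{\omega/f}Y_q>0$. Rescaling $Y_q$ by $e^{\phi}$ adds $Y_q(\phi)$ to the divergence. Since each $S_q$ is divided, there is no closed or recurrent obstruction, and one can make $Y_q(\phi_q)$ large with $\phi_q$ depending smoothly on $q$. I would construct $\phi_q$ by a Lyapunov-type function for the foliation flowing from the positive region toward $\Gamma$; its existence for a divided foliation is equivalent to dividedness. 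Compactness of the parameter space gives uniform choices. Prescribing the initial form $\beta_0$ is achieved by interpolating the parameter-dependent rescaling to the identity at $q=0$, which is allowed by convexity.

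For the second assertion, fix $\eta_q$. The condition $f\,d\eta_q+\eta_q\wedge df<0$ is affine-linear in $f$, so the set of admissible $f$ for each $q$ is convex and open in $C^1$. Local smooth choices near each $q_0$ exist by openness, since $f_{q_0}$ works for nearby $q$. A partition of unity on the compact parameter space then gives a smooth family $f_q$, and compactness makes the strict inequality uniform. Sign transitions are respected because everything is linear in $(\eta,f)$ with common signs, so the construction descends equivariantly.

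I expect the main obstacle to be the first part: producing defining forms for a single fixed $f$ across the family. That step requires a parametric version of the Giroux dividing criterion together with the normalization $\tilde f_q=h_qf$ near $\Gamma$.
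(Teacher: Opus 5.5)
\textbf{Verdict: the second assertion is fine; the first has a genuine gap.}

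Your argument for the second assertion matches the paper's. The inequality is linear in $f$, a choice valid at $q_0$ stays valid nearby by openness, and a partition of unity in the parameter plus compactness finishes.

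The first assertion is where the gap lies. It comes from forming convex combinations with coefficients that vary on $\Sigma$, which produces the $f\,d\rho\wedge\beta$ error you then try to defeat spatially. That spatial route does not close. Making $Y_q(\phi_q)$ uniformly large is impossible along a closed leaf of $S_q$, because $Y_q(\phi)$ integrates to zero over a period. Divided foliations may well have closed leaves; the Morse--Smale foliations with Reeb annuli used throughout the paper are divided. So dividedness is not equivalent to the existence of such a Lyapunov function. Near closed leaves the inequality has to come from the divergence already present. Moreover, the patching between your collar model near $\Gamma$ and the exterior choice is never actually controlled. (Incidentally, \eqref{eq:dividing-divergence} gives $Q=f\operatorname{div}_{\omega/f}(Y)\,\omega$. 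So away from $\Gamma$ the condition reads $\epsilon f\operatorname{div}_{\omega/f}Y>0$, not $\epsilon\operatorname{div}_{\omega/f}Y>0$.)

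\textbf{The fix.} Nothing needs to be constructed on $\Sigma$. Existence at each fixed $q$ is the hypothesis, and only the parametric patching is at stake. The set of admissible defining forms you called convex is genuinely convex under combinations whose coefficients are constant on $\Sigma$, and that is all that is needed.
\begin{enumerate}
\item Fix a smooth preliminary family $\tilde\beta_q$ defining $S_q$ with fixed transition signs.
\item For each $q_0$, dividedness together with your rescaling identity $Q(h^{-1}\beta,f)=h^{-2}Q(\beta,hf)$ gives $g_{q_0}>0$ with $\epsilon Q(g_{q_0}\tilde\beta_{q_0},f)>0$.
\item By compactness of $\Sigma$ and $C^1$-continuity in $q$, this persists for $q$ in a neighborhood $U_{q_0}$.
\item Choose a partition of unity $\rho_i(q)$ on the parameter space subordinate to finitely many $U_{q_i}$, and set $\beta_q=\sum_i\rho_i(q)g_{q_i}\tilde\beta_q$.
\item Each $\rho_i$ is constant on $\Sigma$, so $Q(\beta_q,f)=\sum_i\rho_i(q)\,Q(g_{q_i}\tilde\beta_q,f)$ with no $f\,d\rho\wedge\beta$ term. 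This is a convex combination of forms of sign $\epsilon$, and still $\ker\beta_q=S_q$.
\item A prescribed $\beta_0$ is kept by using its own multiplier, with its partition function equal to $1$ near $q=0$.
\end{enumerate}
This is exactly the paper's argument.
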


\begin{proof}
For the first assertion, choose a preliminary family of defining forms with fixed transition signs. Locally
in the parameter, a positive multiplier making the inequality strict
continues to do so.  A partition of unity in the parameter combines
these choices because their kernels and local signs coincide and the
expression in~\eqref{eq:Qr-sign} is linear in $\beta$.  The initial
choice can be fixed in this construction.  A compatible invariant model
using another defining function with the same zero set and local signs can
first be rescaled to use $f$.
For the second assertion, the expression is linear in $f$.
The same partition of unity argument applies.  Compactness supplies the uniform
strict margin.
\end{proof}

For a bi-contact structure, control of the two contact signs must be
accompanied by transversality of the supported line field to the collar
slices. The next construction supplies both.

\begin{theorem}[Coordinated divided deformation]
\label{prop:coordinated-divided-deformation}
Let $\eta_r,\zeta_r$ be smooth families of local $1$-forms with common transition signs on a closed
oriented surface $(\Sigma,\omega)$, with $\eta_r\wedge\zeta_r=\omega$,
and suppose smooth local functions $f_r$ with the same transition signs satisfy
$$
Q_r:=f_r\,d_\Sigma\eta_r+\eta_r\wedge d_\Sigma f_r
\leq-c\omega,\qquad c>0.
$$
For $b(s)>0$ sufficiently small with $b'(s)>0$, the bi-contact forms
$$
\alpha_-^0=\eta_0+b(s)\zeta_0,\qquad
\alpha_+^0=-\eta_0+b(s)\zeta_0
$$
on $U=\Sigma\times[0,1]_s$ admit a deformation fixed near $s=0$
and inducing the boundary forms
$$
\eta_r+b(1)\zeta_r,\qquad -\eta_r+b(1)\zeta_r
$$
at $s=1$. Throughout, the supported line field remains positively
transverse to the collar slices. If every $\ker\eta_r$ contains
a fixed lamination $L$, the transit maps preserve $L$.

The same construction holds for $C^1$ forms and functions depending
smoothly on $r$ in the $C^1$ topology.
\end{theorem}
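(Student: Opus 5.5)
The plan is to build the deformation explicitly as a two-parameter family of invariant-type forms on $U=\Sigma\times[0,1]_s$ and to verify the two contact signs and the transversality by direct computation. Choose a smooth nondecreasing cutoff $\rho(s)$, equal to $0$ near $s=0$ and to $1$ near $s=1$. For each deformation parameter $r$, set $\sigma=\rho(s)r$ and consider
$$
\alpha_\pm^r=\pm\eta_\sigma+b(s)\zeta_\sigma+\epsilon_\pm\, g(s)f_\sigma\,ds .
$$
Here $g$ is a small auxiliary function, and $\epsilon_\pm$ is chosen so that the $ds$-term contributes with the sign dictated by \eqref{eq:Qr-sign}. Since $\alpha_+-\alpha_-$ is essentially $-2\eta_\sigma$, I would try the simplest choice first: a common term $f_\sigma\,ds$ in both forms, with $\epsilon_\pm=1$. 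The sign bookkeeping is designed so that it gives opposite signs once the sign of $\eta_\sigma$ is flipped. Near $s=0$ we have $\sigma=0$, so the forms can be made to agree with $\alpha_\pm^0$ by letting the $ds$-term be switched on only where $\rho$ varies. Near $s=1$ we have $\sigma=r$, so the slice restrictions are exactly $\pm\eta_r+b(1)\zeta_r$, as required. All transition signs are common, so everything is an unoriented-form statement.

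The key computation is $\alpha\wedge d\alpha$ for $\alpha=\pm\eta_\sigma+b\zeta_\sigma+h\,ds$. Up to the orientation $ds\wedge\omega$, it is the sum of three groups of terms:
\begin{enumerate}
\item the $s$-derivative term $-(\pm\eta_\sigma+b\zeta_\sigma)\wedge\partial_s(\pm\eta_\sigma+b\zeta_\sigma)$, which contains $\mp b'\,\eta_\sigma\wedge\zeta_\sigma=\mp b'\omega$ together with terms carrying $\rho' r$-derivatives of $\eta,\zeta$ (the latter of size $O(b)$ when paired across $\eta$ and $\zeta$, but $\eta\wedge\partial_s\eta$ is not small);
\item the divided term $h\,d_\Sigma(\pm\eta)+(\pm\eta)\wedge d_\Sigma h$, which equals $\pm Q_\sigma$ when $h=f_\sigma$;
\item error terms of order $b$ times derivatives.
\end{enumerate}
The hypothesis $Q_\sigma\le -c\omega$ is what controls the dangerous unsigned term $\eta_\sigma\wedge\partial_s\eta_\sigma=\rho'\,\eta_\sigma\wedge\partial_r\eta_\sigma$. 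To exploit it, I would take $h=Nf_\sigma$ with $N$ large, or equivalently slow the cutoff so that $\rho'$ is small relative to $N c$. Then $\mp NQ_\sigma$ dominates with sign $\pm$, giving a positive form for $\alpha_+$ and a negative form for $\alpha_-$. Finally, $b$ is taken small so that the remaining $O(b)$ terms are absorbed. The monotonicity $b'>0$ contributes with the correct sign and is what makes the undeformed forms $\alpha^0_\pm$ bi-contact in the first place.

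Near $s=0$ the $ds$-term must vanish for the forms to equal $\alpha^0_\pm$, so I would interpolate $h=N\chi(s)f_\sigma$, where $\chi$ is a cutoff equal to $0$ near $0$ and $1$ on $\operatorname{supp}\rho'$. This introduces the term $\chi'\,ds\wedge$ in $d\alpha$, but it is wedged against $ds$ and so drops out. Terms of the form $N\chi\,\eta\wedge d_\Sigma f$ are already part of $NQ$. On the support of $\chi'$, where $\rho\equiv0$, there is an extra term $N\chi(f_0 d\eta_0+\eta_0\wedge df_0)=N\chi Q_0$, which again has the correct sign. So the contact conditions hold uniformly by compactness of $\Sigma\times[0,1]^2$.

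For transversality, the supported line is $\ker\alpha_-\cap\ker\alpha_+$. Subtracting the two forms shows this line lies in $\ker\eta_\sigma$; adding them shows it satisfies $b\zeta_\sigma+h\,ds=0$. Taking $v\in\ker\eta_\sigma$ with $\zeta_\sigma(v)=1$, the supported direction is $\partial_s-(b/h)\,v$ wherever $h\ne0$, and exactly $\partial_s$ where $h=0$. It therefore always has a nonzero $\partial_s$-component, which gives positive transversality once orientations are checked against the convention $\alpha_-\wedge\alpha_+>0$. Since the tangential part of the line lies in $\ker\eta_\sigma$, the transit maps preserve any lamination $L$ contained in every $\ker\eta_r$. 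The $C^1$ version follows because only first derivatives of $\eta,\zeta,f$ in $r$ and on $\Sigma$ enter the computation.

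The main obstacle is the sign bookkeeping of the $h\,ds$ term. The same function $h$ must appear in both forms, to keep the supported line in $\ker\eta$ and transverse to the slices, while producing opposite contact signs. I expect this to work because the divided term is odd in the sign of $\eta$, but the choice of the common sign of $h$ relative to the orientation $ds\wedge\omega$ has to be checked carefully against $b'>0$.
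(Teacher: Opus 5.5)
Your construction is essentially the paper's: a $ds$-term proportional to $f_\sigma$ shared by both forms, the parameter cut off as $\sigma=\rho(s)r$, the divided term dominating the unsigned term coming from $\partial_s\eta_\sigma$, $b$ small so that $|bR_\sigma|\le c\omega/2$ with $R=f\,d_\Sigma\zeta+\zeta\wedge d_\Sigma f$, and the supported line lying over $\ker\eta_\sigma$. Three points need correcting, all easily.

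First, the sign check you flagged. With the common term $+N\chi f_\sigma\,ds$ (your $\epsilon_\pm=1$), the form $\pm\eta_\sigma+b\zeta_\sigma+N\chi f_\sigma\,ds$ satisfies $\alpha\wedge d\alpha=ds\wedge\bigl[N\chi(\pm Q_\sigma+bR_\sigma)\mp b'\omega-r\rho'\,\beta\wedge(\pm\dot\eta_\sigma+b\dot\zeta_\sigma)\bigr]$, where $\beta$ is the slice part. The two signed terms therefore agree, and the form containing $+\eta_\sigma$ is the \emph{negative} one, matching $\alpha_-^0=\eta_0+b\zeta_0$ in the statement. Your labels $\alpha_\pm^r=\pm\eta_\sigma+\cdots$ and the sentence ``$\mp NQ_\sigma$ dominates with sign $\pm$'' are reversed. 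With the correct labels, $(\alpha_-\wedge\alpha_+)|_{T\Sigma_s}=2b\omega>0$, and this is what makes the transversality \emph{positive}.

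Second, the supported direction is $\partial_s-(h/b)Y_\sigma$, where $\eta_\sigma(Y_\sigma)=0$ and $\zeta_\sigma(Y_\sigma)=1$, not $\partial_s-(b/h)v$. It is smooth with $ds$-component $1$ everywhere because $b>0$, so no case distinction on $h$ is needed.

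Third, and this is the only point that affects the statement itself: your coefficient $h=N\chi(s)f_\sigma$ has no factor $r$. At $r=0$ your family is therefore $\pm\eta_0+b\zeta_0+N\chi f_0\,ds\neq\alpha^0_\pm$. It is thus not a deformation \emph{of} the given forms, and its supported field at $r=0$ is not $\partial_s$, which the straightening lemma needs in order to have $\Phi_0=\mathrm{id}$.

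The paper uses the coefficient $Cr\,a(s)f_q$ instead. This costs nothing, because the dangerous term also carries a factor $r$: $\partial_s\eta_\sigma=r\rho'\dot\eta_\sigma$. Hence the estimate $Nc/2>K\|\rho'\|_\infty$ is unchanged. Alternatively, you can precede your family by $\tau\mapsto\pm\eta_0+b\zeta_0+\tau N\chi f_0\,ds$. This path stays bi-contact, since $\tau N\chi(\pm Q_0+bR_0)$ and $\mp b'\omega$ have the same sign.

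Finally, ``slow the cutoff'' is not an alternative to large $N$ on a fixed collar. Since $\int_0^1\rho'=1$, you always have $\|\rho'\|_\infty\geq1$, so $N$ must be taken large.
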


\begin{proof}
On $\Sigma\times[0,1]_s$, choose $\chi=0$ near $0$, $\chi=1$ near $1$,
and $a\geq0$ zero near both ends and equal to one on
$\operatorname{supp}\chi'$.  Put $q=r\chi(s)$, let $C$ be a constant and set
\begin{equation}\label{eq:s4-coordinated-pair}
\begin{split}
\alpha_-^r&=Cr a(s)f_q\,ds+\eta_q+b(s)\zeta_q,\\
\alpha_+^r&=Cr a(s)f_q\,ds-\eta_q+b(s)\zeta_q.
\end{split}
\end{equation}
These forms agree with $\alpha_\pm^0$ near $s=0$ and induce the prescribed boundary forms at $s=1$. Put
$R_q=f_qd_\Sigma\zeta_q+\zeta_q\wedge d_\Sigma f_q$ and choose $b$
small enough that $|bR_q|\leq c\omega/2$.
Writing $\beta_-=\eta_q+b\zeta_q$, direct calculation gives
\begin{equation}\label{eq:s4-coordinated-contact}
\begin{split}
\alpha_-^r\wedge d\alpha_-^r
=ds\wedge\bigl[&Cr a(Q_q+bR_q)-b'\omega\\
&-r\chi'\beta_-\wedge(\dot\eta_q+b\dot\zeta_q)\bigr],
\end{split}
\end{equation}
where the dots denote differentiation in $q$.
Replacing $\eta_q$ by $-\eta_q$ gives the formula for $\alpha_+^r$.
The terms $Q_q$ and $-b'\omega$ change sign.
The last wedge product in each formula has absolute value at most $K\omega$
uniformly for some $K>0$. Taking
$C>2K\|\chi'\|_\infty/c$ gives the negative contact sign for
$\alpha_-^r$ and the positive sign for $\alpha_+^r$ on the whole collar.
In addition,
$$
(\alpha_-^r|_{T\Sigma_s})\wedge(\alpha_+^r|_{T\Sigma_s})
=2b(s)\omega>0.
$$
If $\eta_q(Y_q)=0$ and $\zeta_q(Y_q)=1$, the normalized supported vector field is
\begin{equation}\label{eq:s4-coordinated-field}
Z_r=\partial_s-\frac{Cr a(s)f_q}{b(s)}Y_q.
\end{equation}
The local vector field $Y_q$ changes sign with $f_q$, so $Z_r$ is
global. The contact estimates involve global two-forms and are unchanged
by the local sign choices.
Along $L$, the tangential component of $Z_r$ is tangent to its leaves.
Uniqueness for the time-dependent $C^1$ differential equation implies that the transit maps preserve these leaves and hence $L$. Only first spatial derivatives
enter the calculation, so the stated $C^1$-regularity version follows
as well.
\end{proof}

This theorem starts from the displayed collar model, which is chosen as
part of the initial supporting structure in extension arguments. If the
supported vector field $X$ and the original bi-contact structure near the inner
boundary are to be preserved, the prescribed
outer foliations must satisfy the cone condition of
Lemma~\ref{lem:nonconvex-bicontact-realization}.
Rescaling $(f_r,\eta_r)$ by a positive function and $\zeta_r$ by its
reciprocal preserves the dividedness inequality and allows the sizes of
the opposite endpoint tilts to be adjusted.

\subsection{Relative smoothing and gluing}

There are two smoothing problems. A family of $C^1$ supporting bi-contact forms can be smoothed because the contact and transversality conditions are $C^1$-open.
At an identified boundary, the contact plane fields are initially only
piecewise smooth, and smoothing must also preserve the monotonicity
in~\eqref{eq:s4-contact-angle}.

\begin{lemma}[Relative smoothing of a supporting family]
\label{lem:relative-pair-smoothing}
Let $(\alpha_-^r,\alpha_+^r)$, $r\in[0,1]$, be a family of $C^1$
bi-contact forms on a compact manifold $M$, continuous in the $C^1$
topology. Assume that on each chosen deformation collar
$U=\Sigma\times[0,1]_s$, the supported line field is positively
transverse to every surface $\Sigma\times\{s\}$ throughout the family.
 The family admits smooth
approximation, uniformly in $C^1(M)$, through bi-contact forms with these properties.

The approximation may agree with the original family on prescribed neighborhoods where it is jointly smooth in the spatial variables and the parameter. For a fixed smooth nonvanishing vector field $X$, it may also preserve $\alpha_\pm^r(X)=0$ on prescribed regions or at a prescribed parameter. At a prescribed
parameter it may also preserve the identification of the ordered
boundary characteristic bi-foliations by a fixed smooth diffeomorphism,
including any prescribed compatible local comparison signs.
These prescriptions are assumed compatible.

On each collar, the supported vector fields normalized by $ds(Z_r)=1$
and their transit maps converge in $C^1$, uniformly in $r$.
\end{lemma}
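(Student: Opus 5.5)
The plan is to regard the family as a single $C^1$ map $(r,x)\mapsto(\alpha_-^r,\alpha_+^r)(x)$ on $[0,1]\times M$, continuous in $C^1$. The approximation is then produced by convolution in the joint variables $(r,x)$, patched with a partition of unity. All the required conditions are open in the $C^1$ topology, uniformly over the compact set $[0,1]\times M$:
\begin{itemize}
\item the two contact inequalities $\pm\alpha_\pm\wedge d\alpha_\pm>0$;
\item transversality of $\ker\alpha_-\cap\ker\alpha_+$;
\item on each collar, $\alpha_-\wedge\alpha_+|_{T\Sigma_s}>0$, which is equivalent to the supported line field being positively transverse to the slices.
\end{itemize}
Because these conditions involve only first derivatives, it suffices to make the approximation $\varepsilon$-close in $C^1$, with $\varepsilon$ below the uniform margins. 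Unoriented forms cause no difficulty: I would work with local representatives over charts with fixed transition signs, or equivariantly on the coorientation double cover. Mollification commutes with multiplication by locally constant signs, so the result descends.

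For the relative statements I would use a cutoff $\rho$ on $[0,1]\times M$ equal to $1$ on the prescribed neighborhoods where the family is jointly smooth, and set $\tilde\alpha=\rho\,\alpha+(1-\rho)\,\alpha*\kappa_\delta$. The error $\alpha-\alpha*\kappa_\delta$ is $C^1$-small. The term $d\rho\wedge(\alpha-\alpha*\kappa_\delta)$ is also small once $\delta$ is chosen after $\rho$, so openness is preserved.

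To keep $\alpha_\pm^r(X)=0$ on prescribed regions or at a prescribed parameter, I would smooth first and then project. Fix a smooth $1$-form $\nu$ with $\nu(X)=1$, and replace $\tilde\alpha$ by $\tilde\alpha-\tilde\alpha(X)\,\nu$, cut off to the region where vanishing is required. Since $\alpha(X)=0$ there, the correction $\tilde\alpha(X)\,\nu$ is $C^1$-small, so openness again applies.

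The main obstacle is preserving the boundary identification by a fixed diffeomorphism $\psi$ at a prescribed parameter $r_0$, together with its comparison signs. My approach is to smooth the boundary restrictions first. I would take the ordered boundary bi-foliation at $r_0$ on the exit side, transport it to the entrance side by $\psi$, and average it with the given one there (both are $C^0$-close, so they define a common smooth ordered bi-foliation). This gives smooth boundary defining forms $\lambda_\pm$ on $\partial M$ satisfying $\psi^*\lambda_\pm^{\mathrm{out}}=\lambda_\pm^{\mathrm{in}}$ up to the prescribed signs, and $C^1$-close to the originals. Next I would extend $\lambda_\pm$ into collars, constant along the flow of a fixed transverse field, and interpolate them with the mollified forms by a cutoff in $s$ supported in a thin layer, applied only near $r=r_0$. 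The mixed terms $ds\wedge\chi'(s)(\cdot)$ that arise are controlled because the difference between the two forms is $C^0$-small of order $\varepsilon$. The cutoff therefore costs only $\varepsilon/\text{width}$, and I would choose the width after $\varepsilon$ so that this stays small. Sign compatibility must be tracked throughout, but it is fixed along the interval.

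Finally, on each collar the normalized supported vector field is $Z_r$, with $ds(Z_r)=1$, spanning $\ker\alpha_-^r\cap\ker\alpha_+^r$. It depends on the forms algebraically and pointwise, so $C^1$ convergence of the forms implies $C^0$ convergence of $Z_r$, uniformly in $r$. To upgrade this to $C^1$ convergence of $Z_r$ and of the transit maps, I would also control the spatial first derivatives of the forms' coefficients, which is the $C^1$ closeness already assumed. Then continuous dependence of ODE solutions and of their first variations on the vector field in $C^1$ gives $C^1$ convergence of the transit maps.
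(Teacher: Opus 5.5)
Apart from the boundary identification, your outline is essentially the paper's argument: joint mollification on sign-compatible charts or the coorientation cover, cutoffs for the relative statements, projection into $\operatorname{Ann}(X)$, $C^1$-openness, and ODE dependence for the transit maps.

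\textbf{The gap.} The step you flag as the main obstacle fails as written. To obtain the boundary restriction $\lambda$ exactly you need $\chi(0)=1$. So along $\partial M$ your interpolated form $\hat\alpha=\chi\,\pi^*\lambda+(1-\chi)\tilde\alpha$ coincides with the extension $\pi^*\lambda$, where $\pi$ is the collar projection along your transverse field $V$. That extension annihilates $V$ and is $V$-invariant, so $\pi^*\lambda\wedge d\pi^*\lambda=\pi^*(\lambda\wedge d\lambda)=0$. At $s=0$ only the cutoff term survives:
\[
\hat\alpha\wedge d\hat\alpha\big|_{s=0}=\chi'(0)\,ds\wedge\lambda\wedge\tilde\alpha|_{T\partial M}.
\]
This vanishes for a cutoff that is constant near $s=0$. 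In any case it is $O(\varepsilon\|\chi'\|_\infty)$ with uncontrolled sign, so the interpolated forms are not contact along $\partial M$.

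The underlying error is the claim that the two forms differ by $O(\varepsilon)$; only their boundary restrictions do. The difference contains the normal component $\tilde\alpha(V)\,ds$, which your interpolation discards. When $\tilde\alpha(V)=0$, as in a flow collar of $X$ where $\alpha(X)=0$ is to be kept, the contact condition $ds\wedge\gamma_s\wedge\partial_s\gamma_s\neq0$ forces the tangential part $\gamma_s$ to rotate at a rate bounded below. Then the difference is comparable to $s$ on the layer. In neither case is the cost of the cutoff of order $\varepsilon/\text{width}$.

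\textbf{The repair.} This is what the paper does: correct only the boundary restriction and leave the normal behaviour of $\tilde\alpha$ untouched. Take $\pi$ to be the projection of a fixed flow collar of $X$, and cutoffs with $\chi(0)=\rho(r_0)=1$, and set
\[
\hat\alpha^r=\tilde\alpha^r+\rho(r)\chi(s)\,\pi^*\bigl(\lambda-\tilde\alpha^r|_{T\partial M}\bigr).
\]
The correction annihilates $X$ and produces the restriction $\lambda$ at $r_0$. Its $C^1$ norm is bounded by a constant times $(1+\|\chi'\|_\infty)\sup_{r\in\operatorname{supp}\rho}\|\lambda-\tilde\alpha^r|_{T\partial M}\|_{C^1}$. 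This is small if you fix the collar width first, then shrink $\operatorname{supp}\rho$ using $C^1$-continuity in $r$, and only then choose $\varepsilon$. Now $C^1$-openness applies.

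\textbf{A second, smaller issue.} Only kernels are identified, not forms. The original restrictions satisfy $\psi^*\beta^{\mathrm{out}}=\sigma u\,\beta^{\mathrm{in}}$, with the prescribed locally constant signs $\sigma$ and a $C^1$ function $u>0$ that need not equal $1$. Requiring $\psi^*\lambda^{\mathrm{out}}=\pm\lambda^{\mathrm{in}}$ is therefore incompatible with $C^1$-closeness to the originals. Instead, approximate $\beta^{\mathrm{in}}$ and $u$ by smooth $\lambda^{\mathrm{in}}$ and $\tilde u$, and define $\lambda^{\mathrm{out}}$ by $\psi^*\lambda^{\mathrm{out}}=\sigma\tilde u\,\lambda^{\mathrm{in}}$. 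No averaging is needed, since the two bi-foliations already coincide at $r_0$.
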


\begin{proof}
Apply relative parametric $C^1$ approximation on the common coorientation
double cover and take the odd part of the approximating forms.
Work in $\operatorname{Ann}(X)$ wherever $X$ must remain supported
and preserve the prescribed smooth portions of the family.

To preserve a boundary identification, write the boundary restrictions,
for each of the two contact forms, as $\beta_{\mathrm{out}}=\varepsilon u\phi^*\beta_{\mathrm{in}}$,
in compatible local representatives, with $u>0$ and prescribed locally
constant comparison signs $\varepsilon\in\{-1,1\}$. The function $u$ is global.
Approximate $\beta_{\mathrm{in}}$ and $u$ smoothly, keep $\varepsilon$ fixed, and set
$$
\widetilde\beta_{\mathrm{out}}
=\varepsilon\widetilde u\,\phi^*\widetilde\beta_{\mathrm{in}}.
$$
Correct the preliminary approximation to have these boundary
restrictions using collar and parameter cutoffs, within
$\operatorname{Ann}(X)$ where required.

The contact signs and transversality conditions are $C^1$-open,
so sufficiently close approximation preserves them uniformly.
The normalized supported vector fields depend continuously in $C^1$ on
the defining forms. Continuous dependence of solutions of the flow and variational
equations then gives $C^1$ convergence of the transit maps.
\end{proof}

Next, diffeomorphic characteristic bi-foliations allow the bi-contact structures to be glued after a $C^0$-small deformation near the identified boundary, preserving the supported vector field.

\begin{theorem}[Smooth gluing of bi-contact structures]
\label{thm:smooth-matched-gluing}
Let $\phi:\Sigma_{\mathrm{out}}\to\Sigma_{\mathrm{in}}$ be an
orientation-compatible diffeomorphism between unions of exit and
entrance boundary components, possibly of the same bi-contact plug.
Suppose $\phi$ identifies their ordered characteristic bi-foliations.

After arbitrarily $C^0$-small deformations supported in arbitrarily
thin boundary collars, the bi-contact structures glue smoothly,
preserving the supported vector fields and the characteristic
bi-foliations on the identified boundary. The quotient is given
the smooth structure determined by the flow collars.

Any chosen coorientations extend when they are consistent
under $\phi$. The construction is relative away from the identified
boundary and parametric for compact smooth families.
\end{theorem}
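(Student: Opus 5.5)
The plan is to reduce to a local problem in a pair of flow collars and then solve it with the angular-interpolation description of product plugs from Section~\ref{s3.1}. Choose thin collars $U_{\mathrm{out}}=\Sigma_{\mathrm{out}}\times[-1,0]_t$ and $U_{\mathrm{in}}=\Sigma_{\mathrm{in}}\times[0,1]_t$, where $X$ is a positive multiple of $\partial_t$ after reparametrization. Use $\phi$ and the flow to identify $U_{\mathrm{out}}\cup_\phi U_{\mathrm{in}}$ with $\Sigma\times[-1,1]_t$. This collar chart is the smooth structure on the quotient referred to in the statement, and $X$ becomes a positive multiple of $\partial_t$ across $t=0$.

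In a $t$-invariant positive coframe, write each supporting form as $\alpha_\pm=R_\pm(\cos\theta_\pm e_1+\sin\theta_\pm e_2)$, as in \eqref{eq:s4-contact-angle}. The glued plane fields are then continuous across $t=0$, because $\phi$ identifies $S_\pm$. Their angle functions, however, are only piecewise smooth in $t$, and their $t$-derivatives jump at $t=0$. The sign conditions nevertheless hold on both sides: $\partial_t\theta_+<0$ and $\partial_t\theta_->0$, and $\theta_-\neq\theta_+$ mod $\pi$.

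The smoothing step is where I expect the difficulty. I need $C^0$-small smooth replacements $\widetilde\theta_\pm$ that agree with $\theta_\pm$ outside $|t|<\delta$, stay strictly monotone, and equal $\theta_\pm$ at $t=0$, so that the boundary bi-foliation is preserved. My first attempt is to smooth the speed rather than the angle. Take $-\partial_t\theta_+$, a positive function that is piecewise smooth with a jump at $t=0$, and replace it on $|t|<\delta$ by a smooth positive interpolation. Choose the interpolation so that its integral over $[-\delta,0]$ and over $[0,\delta]$ each matches the original. Then $\widetilde\theta_+(0)=\theta_+(0)$ and $\widetilde\theta_+$ agrees with $\theta_+$ at $t=\pm\delta$. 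This is the same convexity device used in the proof of Theorem~\ref{prob}.

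Because the angle changes by at most $\sup|\partial_t\theta_\pm|\cdot\delta$ on the modified region, the deformation is $C^0$-small once $\delta$ is small. The two angles remain in disjoint sectors, since they were uniformly separated at $t=0$ and move by $O(\delta)$. I smooth the positive radial function $R_\pm$ by any cutoff convolution, fixing its value at $t=0$. Neither transversality nor the contact sign depends on $R$, since the sign is governed by $-R^2\partial_t\theta$. Only the angular data therefore needs care.

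The resulting forms are smooth and contact of the correct signs, and their kernels are transverse. They still contain $\partial_t$, so the supported vector field is unchanged. They also induce the original bi-foliation on $\Sigma\times\{0\}$. The change is supported in $|t|<\delta$, which is in the thin collars.

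The unoriented and coorientation issues are handled as elsewhere in the paper. I work with local representatives having common transition signs, chosen compatibly under $\phi$. Since the angle speeds and radii are global scalar quantities, the interpolations are independent of the sign choices. If coorientations are consistent under $\phi$, the global forms simply glue.

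For compact smooth families, all choices are taken with uniform $\delta$ and uniform separation margins. The speed interpolations come from a convex space with continuous parameter dependence, using the parametric form of Theorem~\ref{prob}. The construction leaves everything outside the collars untouched, which gives the relative statement. As a final check on smoothness in $(t,x)$ jointly, I would use Lemma~\ref{lem:relative-pair-smoothing} to absorb any residual $C^1$ issue in $x$, preserving $\alpha_\pm(X)=0$ and the boundary identification.
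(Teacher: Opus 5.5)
Your proposal is correct and follows essentially the same route as the paper. In flow-time coordinates across the identified boundary, you smooth the positive angular speeds $\mp\partial_t\theta_\pm$ near $t=0$ while preserving their integrals over each half-collar, so the angles stay strictly monotone, keep their values at $t=0$, agree with the originals near the collar ends, and move only $C^0$-slightly; the paper does exactly this, with the minor differences that it builds the coframe from defining forms of the identified bi-foliation (which is what makes the angle increments well defined up to simultaneous sign) and never smooths the radial factor, since only the plane fields need to glue.
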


\begin{proof}
Use flow-time coordinates $\Sigma\times(-\delta,\delta)_t$ across the
identified boundary, with the prescribed vector fields equal to
$\partial_t$. Orientation compatibility makes the two local comparison
signs equal. Choose compatible local representatives agreeing at $t=0$
after positive rescaling. Their transition signs then agree across the boundary.
The contact plane fields are continuous across $t=0$ and smooth on each closed half-collar.
Choose a positive local $t$-invariant coframe using defining forms for the
identified characteristic bi-foliation. For each contact plane field,
write the angle increment from the identified boundary as
$u(x,t)=\theta(x,t)-\theta(x,0)$. These increments have real lifts
that agree on overlaps under simultaneous sign reversal.

For the decreasing angle, $g=-\partial_tu>0$ on both sides.
Smooth $g$ by positive averaging in a very small neighborhood of $0$.
Correct the two integral errors by bumps supported in the interiors
of the two half-collars, so that
$$
\int_{-\delta}^0\widetilde g\,dt=\int_{-\delta}^0g\,dt,
\qquad
\int_0^\delta\widetilde g\,dt=\int_0^\delta g\,dt.
$$
The errors tend uniformly to zero with the averaging width.  Since
$g$ has a positive lower bound, these corrections preserve positivity.
Set $\widetilde u(x,t)=-\int_0^t\widetilde g(x,s)\,ds$.
It is smooth, has the original value at $0$, and agrees with $u$ near
both collar ends.  Treat the increasing angle with the opposite sign.

The two changes in angle are arbitrarily $C^0$-small, so the contact
planes remain transverse.  Interpolating the angular speeds gives the
deformation through contact structures on each half-collar.  All forms
annihilate $\partial_t$, and the bi-foliations on the gluing locus stay fixed.  The
choices and estimates are uniform in compact auxiliary parameters.
\end{proof}

\subsection{Stabilization and gluing after boundary preparation}
The $C^0$-openness of admissibility allows two parallel
strips to be inserted near a compact characteristic leaf.
We call this an \emph{elementary stabilization}.

\begin{definition}[Elementary stabilization]
\label{def:elementary-stabilization}
In an annular neighborhood of a compact leaf of one member of a
Morse-Smale characteristic bi-foliation, consider the model
$$
S_{-\sigma}=\operatorname{span}\{\partial_y\},\qquad
S_\sigma=\operatorname{span}
\{\partial_\vartheta+f(y)\partial_y\},
$$
where $f$ has one simple zero.  An elementary stabilization replaces
$f$, relative to the annulus boundary, by a $C^0$-close function with
two additional simple zeros, one attracting and one repelling.
It adds two parallel strips and leaves the reduced Reeb arrangement
and the homotopy class of the projective directions unchanged.
\end{definition}

\begin{lemma}[Realization of stabilization]
\label{lem:stabilization-realization}
Any finite pattern of elementary stabilizations can be carried out by an arbitrarily $C^0$-small path of characteristic bi-foliations and
realized in a fixed flow collar with the supported vector field unchanged.
Each insertion passes through one saddle-node parameter. The endpoint
foliations are Morse-Smale.
\end{lemma}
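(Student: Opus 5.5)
The plan is to reduce to a one-dimensional problem for the slope function $f$ in the model of Definition~\ref{def:elementary-stabilization}, and then to invoke Lemma~\ref{lem:nonconvex-bicontact-realization} through Corollary~\ref{cor:small-nonconvex-realization}. The stabilizations in a finite pattern sit in pairwise disjoint annular neighborhoods of compact leaves, so they can be handled one at a time, or simultaneously with disjoint supports. Fix one such annulus $A=\mathbb S^1_\vartheta\times[-1,1]_y$, with $S_{-\sigma}=\operatorname{span}\{\partial_y\}$ held fixed and $S_\sigma=\operatorname{span}\{\partial_\vartheta+f(y)\partial_y\}$, where $f$ has a single simple zero at $y_0$.

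\textbf{Step 1: the deformation of $f$.} I will write down an explicit one-parameter family $f_r=f+\varepsilon\,\rho(y)\,h_r(y)$. Here $\rho$ is a bump function equal to $1$ near a point $y_1\neq y_0$ and supported away from $y_0$ and from $\partial A$. The family $h_r$ is chosen so that:
\begin{itemize}
\item for $r<1/2$, $f_r$ has no new zero;
\item at $r=1/2$, $f_r$ acquires a single degenerate quadratic zero (the saddle-node);
\item for $r>1/2$, this zero splits into two simple zeros, one attracting and one repelling.
\end{itemize}
The point $y_1$ is placed where $|f|$ is small, close to $y_0$, so that $\sup|f_r-f|$ can be made arbitrarily small. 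For example, take $f_r=f-\varepsilon(r-\tfrac12)\rho$ after arranging that $f$ has a quadratic tangency profile near $y_1$ relative to the cutoff. Generic choice of $\rho$ makes the zeros transverse for $r\neq 1/2$, so the endpoint foliations are Morse-Smale. Since $f_r$ is the slope of a graph over $\partial_\vartheta$, it never becomes vertical, so $S_\sigma^r$ stays transverse to $S_{-\sigma}$ throughout. The foliation $S_\sigma$ turns by an amount that is $O(\varepsilon)$.

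\textbf{Step 2: realization in a fixed flow collar.} The path $(S_-^r,S_+^r)$ is $C^0$-close to the original bi-foliation, uniformly in $r$. It is also constant in $r$ outside the bumps, and one member is unchanged along the whole path. For $\varepsilon$ small, Corollary~\ref{cor:small-nonconvex-realization} gives a family of bi-contact structures on the collar $U$ supporting the fixed vector field $X$. This family agrees with the original structure near the inner boundary and induces the path on the outer boundary.

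Concretely, the cone condition of Lemma~\ref{lem:nonconvex-bicontact-realization} must be checked. The defining form of $S_\sigma^r$ is $dy-f_r\,d\vartheta$. It differs from the original by $O(\varepsilon)$, so it stays in the open cone $\mathcal C_x$ spanned by the original $e_\pm$, and the unchanged member lies on the boundary ray of that cone. The lemma also allows the unchanged characteristic foliation's contact structure to be kept throughout the collar, and keeps the structure $r$-independent away from the bumps. Doing this for all annuli at once, with disjoint supports, handles the finite pattern in a single collar.

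\textbf{Step 3: bookkeeping.} Each insertion adds two consecutive compact leaves of opposite holonomy type next to the old one. Consecutive leaves of opposite type bound a parallel strip $P$, so the reduced Reeb arrangement is unchanged. The turning of each direction stays within an interval of length $O(\varepsilon)$, so the projective homotopy class is also unchanged.

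\textbf{Main obstacle.} The delicate point is the cone condition at the unchanged foliation. The unchanged member sits on the boundary of $\mathcal C_x$ rather than strictly inside it, which seems to conflict with strict admissibility. I would first treat the deformed member as the only moving direction and apply the ``one foliation unchanged'' clause of Lemma~\ref{lem:nonconvex-bicontact-realization} directly. If strictness is genuinely needed, I would instead tilt the fixed member slightly into the cone at the start and untilt it at the end, using the same corollary.
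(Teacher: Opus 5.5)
Your strategy is the paper's. A small bump on one side of the zero of $f$ creates two simple zeros through one nondegenerate double zero. The $\partial_\vartheta$-coefficient stays equal to one, so transversality to $\partial_y$ is automatic. Shrinking the support toward the core, where $|f|$ is small, makes the path $C^0$-small. Corollary~\ref{cor:small-nonconvex-realization}, applied relative to the complement of the supports, then realizes the path with $X$ fixed. Three points need repair.

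First, you take the model of Definition~\ref{def:elementary-stabilization} as given. Straighten $S_{-\sigma}$ to $\partial_y$ and put the compact leaf at $y=0$. You then only have $S_\sigma=\operatorname{span}\{\partial_\vartheta+F(\vartheta,y)\partial_y\}$, with $F$ depending on $\vartheta$. Compact leaves are then fixed points of a return map, so your count of zeros of a function of $y$ does not apply. The paper first rescales $y$ periodically to get $F=\lambda y+O(y^2)$ with $\lambda\neq0$, and then interpolates to $F=\lambda y$ on a thin annulus. This preliminary step creates no compact leaf, keeps transversality, and is $C^0$-small. It must be included in the path, and the same corollary realizes it.

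Second, your sample family $f-\varepsilon(r-\tfrac12)\rho$, with $\rho\equiv1$ near $y_1$, has no saddle-node at $(\tfrac12,y_1)$, because $f(y_1)\neq0$. A correct version is $f_r=f+rb$. Take $b$ supported on one side of $y_0$, of sign opposite to $f$ there. Shape it so that $-f/b$ has a single critical point on $\{b\neq0\}$, a nondegenerate minimum $m<1$. The new zeros of $f_r$ are the level set $\{-f/b=r\}$. There are none for $r<m$, one nondegenerate double zero at $r=m$, and two simple zeros of opposite type for $r>m$.

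Third, your ``main obstacle'' comes from misreading $\mathcal C_x$. The cone is spanned by defining forms of the bi-foliation on the \emph{inner} slice $\Sigma\times\{0\}$. By the product cyclic order \eqref{adnec}, the original outer forms, including the unchanged member, lie strictly inside this open cone. If the unchanged member were on a boundary ray, the $r=0$ bi-foliation would already violate the necessary cone condition. So no tilting is needed. $C^0$-smallness of the moving member is all the corollary requires. The last clause of Lemma~\ref{lem:nonconvex-bicontact-realization} lets you keep the unchanged member's contact structure on the whole collar.
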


\begin{proof}
Both foliations are coorientable on an annular neighborhood of the compact leaf.
The annular model may first be arranged by a small modification.
Straighten the other foliation and put the compact leaf at $y=0$.
The first direction is $\partial_\vartheta+F(\vartheta,y)\partial_y$.
A periodic rescaling of $y$ makes
$F(\vartheta,y)=\lambda y+O(y^2)$, with $\lambda\ne0$ by hyperbolic
holonomy.  On a sufficiently small annulus, interpolate to
$F=\lambda y$ near its center.  This creates no additional compact
leaf and preserves transversality to $\partial_y$.

Now modify $f$ by a bump on one side of its zero, creating two simple
zeros through one nondegenerate double zero.  The coefficient of
$\partial_\vartheta$ remains one.  Shrinking the annular supports
makes the whole path arbitrarily $C^0$-small.  For finitely many
insertions, keep the combined path in $\mathcal N_U$ and apply
Corollary~\ref{cor:small-nonconvex-realization}, relative to the
complement of those supports.
\end{proof}

Note that these homotopies are covered by relative flow-collar realization, while the deformation is not through divided characteristic foliations.

We now combine the boundary preparation and gluing results.

\begin{theorem}[Gluing after boundary preparation]
\label{thm:stable-gluing}
Let $\phi:\Sigma_{\mathrm{out}}\to\Sigma_{\mathrm{in}}$ be an
orientation-compatible smooth diffeomorphism between unions of exit
and entrance boundary components of finitely many bi-contact plugs.
Suppose finite compositions of the boundary deformations above,
satisfying their respective hypotheses, produce smooth characteristic
bi-foliations that $\phi$ identifies with their ordering.

Then the prepared bi-contact structures glue to a smooth bi-contact
structure on the quotient, after smoothing in arbitrarily thin
boundary collars. Before gluing, each preparation preserves the
original plug flow up to smooth oriented orbit equivalence and fixes
its maximal invariant set pointwise. Preparations using only
realizations with a fixed vector field, including small stabilizations, keep the
supported vector field unchanged.

The boundary components identified by $\phi$ may belong to the same plug.
\end{theorem}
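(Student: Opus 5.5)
The proof is essentially an assembly of the collar results of this section. We argue componentwise on the boundary. Each preparation step is one of three kinds: a relative realization with fixed vector field (Lemma~\ref{lem:nonconvex-bicontact-realization}, Corollary~\ref{cor:small-nonconvex-realization}, Lemma~\ref{lem:stabilization-realization}); a coordinated divided deformation (Theorem~\ref{prop:coordinated-divided-deformation}); or a $C^1$ smoothing (Lemma~\ref{lem:relative-pair-smoothing}). Each is supported in a flow collar $U=\Sigma\times[0,1]$ of a boundary component, disjoint from the maximal invariant set.

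First, choose nested disjoint collars, one for each step of the finite composition. We make them successively thinner toward the actual boundary. Each step is then applied relative to the inner boundary of its own collar, and the prescribed structure is preserved near it. The composed deformation is therefore a well-defined smooth family of bi-contact structures on each plug. In every step the supported line field stays positively transverse to the collar slices. For fixed-vector-field realizations this is automatic. For coordinated divided deformations it holds by the explicit formula \eqref{eq:s4-coordinated-field}.

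Second, establish the dynamical claims. Lemma~\ref{lem:collar-straightening} applies to each collar in turn. It gives a slice-preserving isotopy, equal to the identity near the inner boundary, that carries $\partial_t$ to the normalized new supported field. Composing these isotopies over the nested collars and extending by the identity gives a smooth oriented orbit equivalence between the original and prepared plug flows. This equivalence fixes $\operatorname{Inv}(M,X)$ pointwise, since no full orbit meets an exit or entrance collar. When only fixed-$X$ realizations occur, the isotopies are trivial and $X$ itself is unchanged.

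Third, glue. By hypothesis the prepared boundary bi-foliations are smooth and identified by $\phi$ with their ordering. If some steps were only $C^1$, first apply Lemma~\ref{lem:relative-pair-smoothing}. It preserves the contact signs, the transversality to the slices, and the boundary identification by $\phi$ with its comparison signs, and it keeps $\alpha_\pm(X)=0$ where $X$ is fixed. Then Theorem~\ref{thm:smooth-matched-gluing} glues the structures across the identified boundary after $C^0$-small changes in thin collars. These changes preserve the supported fields and the characteristic bi-foliations. Since that theorem allows $\phi$ to pair components of the same plug, the self-gluing case is covered.

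The main obstacle is compatibility between the collars. The smoothing and gluing collars must lie outside the preparation collars, or at least must not destroy the outer boundary data that the preparations produced. I would handle this by choosing the thin gluing collar last and inside the outermost preparation collar. The construction of Theorem~\ref{thm:smooth-matched-gluing} changes only the angular speeds, with fixed endpoint values. So it preserves both the boundary bi-foliation and the transversality already established. The $C^0$-smallness then keeps the cone conditions of the earlier steps strict.
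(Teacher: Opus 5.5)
Your proposal is correct and follows the paper's proof. The paper likewise performs the preparations in boundary collars and smooths the finite-regularity steps with Lemma~\ref{lem:relative-pair-smoothing}, preserving the $\phi$-identification and $\alpha_\pm(X)=0$ where required; it then uses Lemma~\ref{lem:collar-straightening} for the orbit equivalence fixing $\operatorname{Inv}(M,X)$ and Theorem~\ref{thm:smooth-matched-gluing} for the gluing, including self-gluing. One detail the paper states explicitly that you should add: take the smoothing relative to the inner collar boundaries and the initial parameter, parameter-independent near those boundaries, so that Lemma~\ref{lem:collar-straightening} applies to the smoothed family and not only to the pre-smoothing one.
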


\begin{proof}
Perform the preparations in disjoint boundary collars. For finite-regularity
models, apply Lemma~\ref{lem:relative-pair-smoothing}, preserving the original
smooth bi-contact structure outside the deformation collars, near their
inner boundaries, and at the initial parameter. Where the prescribed forms
are only $C^1$, preserve the condition of supporting $X$ instead, and make
the approximation parameter-independent near the inner boundaries.
Preserve the final identification of the smooth characteristic
bi-foliations under $\phi$.
Lemma~\ref{lem:collar-straightening} gives the dynamical assertion for
each preparation, and Theorem~\ref{thm:smooth-matched-gluing} glues the
prepared bi-contact structures.
\end{proof}

A homeomorphism of the initial characteristic bi-foliations does not itself supply
the relative deformation or the final gluing diffeomorphism.  In the
filling MS construction of Section~\ref{s5}, the common completion and
coordinated divided deformation supply the required deformation and identification.  For the prescribed
caps of Section~\ref{s3}, they are supplied by the compatible annular
double and its flow-collar realization.

\begin{remark}[Absorbing the preparation into the gluing map]
\label{rem:realization-input-self-gluing}
Let $h_{\mathrm{in},r}$ and $h_{\mathrm{out},r}$ be the boundary
restrictions of the collar straightening maps, from the original plug
to the prepared plug. Using the entrance-to-exit convention of
Section~\ref{s5}, returning to the original plug vector field replaces a map
$\psi:\Sigma_{\mathrm{in}}\to\Sigma_{\mathrm{out}}$ by
$$
\widehat\psi_r
=h_{\mathrm{out},r}^{-1}\circ\psi\circ h_{\mathrm{in},r}.
$$
If the two boundary isotopies preserve $\mathcal L^s$ and
$\mathcal L^u$ setwise, respectively, then $h_{\mathrm{out},r}$
carries the two laminations for the modified gluing map to those for the original map.
Thus strong transversality persists.  After smooth approximation the
same conclusion follows from ordinary transversality along the path
and its isotopy stability discussed in Section~\ref{s5}.
The assertion that no invariant set is created concerns the boundary
preparation of each plug.  Self-gluing can create return orbits and
requires the separate dynamical estimates in Section~\ref{s5}.
\end{remark}
\section{Building Anosov flows}
\label{s5}

We combine the boundary preparation of Section~\ref{s4} with the
strong extension theorem of Section~\ref{s3} to establish hyperbolicity
after gluing hyperbolic plugs. We use the unoriented forms of
Section~\ref{s2}, with boundary coframes and jets taken modulo
simultaneous sign as in Section~\ref{s3}. Hyperbolic plugs have one stable and
one unstable normal direction. Attracting and repelling basic pieces
are allowed. The self-gluing theorem concerns filling MS laminations
and keeps the plug vector field fixed.

\subsection{Acyclic gluing}

We begin with a contact geometric version of
\cite[Proposition~1.1]{bby}, concerning the gluing of two distinct
hyperbolic plugs.

\begin{theorem}[Acyclic bi-contact gluing]
\label{thm:acyclic-hyperbolic-gluing}
Let $(M_i,X_i)$, $i=1,2$, satisfy the hypotheses of
Theorem~\ref{hyps} and carry admissible ordered boundary
bi-foliations. Suppose an orientation-compatible diffeomorphism
$$
\phi:\Sigma_1\longrightarrow\Sigma_2,
\qquad
\Sigma_1\subset\partial_{\mathrm{out}}M_1,
\quad
\Sigma_2\subset\partial_{\mathrm{in}}M_2
$$
between unions of boundary components identifies these characteristic
bi-foliations with their ordering.
Then the induced vector field $X$ on $M_1\sqcup_\phi M_2$ admits a
smooth strongly adapted positive contact form $\alpha_+$ whose supporting
bi-contact structure induces the prescribed boundary bi-foliations. Any defining
coframe $(\mu,\lambda)$ for the bi-foliation on the identified surface
$\Sigma$ may be prescribed as
$$
j_X^1\alpha_+|_\Sigma=(\lambda,\mu).
$$
In particular, the quotient is a hyperbolic bi-contact plug.
\end{theorem}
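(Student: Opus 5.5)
The plan is to build the strongly adapted form plug by plug, taking a single prescribed boundary jet on the identified surface $\Sigma$ as the common input, and then to glue the two pieces along $\Sigma$.

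\emph{Step 1: the prescribed jet on $\Sigma$.} Fix any defining coframe $(\mu,\lambda)$ for the common ordered bi-foliation on $\Sigma$, modulo simultaneous sign. Regard it as a jet on $\Sigma_2\subset\partial_{\mathrm{in}}M_2$, and via $\phi$ as a jet on $\Sigma_1\subset\partial_{\mathrm{out}}M_1$. Orientation compatibility makes the comparison signs match.

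\emph{Step 2: completion on $M_2$.} Apply Lemma~\ref{lem:one-sided-strong-admissibility} to $M_2$ with entrance coframe $(\mu,\lambda)$ on $\Sigma_2$. Its remaining entrance components carry their admissible bi-foliations with arbitrary defining coframes, and the given admissible exit bi-foliation is preserved. This yields a strongly admissible boundary $X_2$-jet on $\partial M_2$ whose entrance part on $\Sigma_2$ is exactly $(\lambda,\mu)$.

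\emph{Step 3: completion on $M_1$.} Apply the exit version of Lemma~\ref{lem:one-sided-strong-admissibility} to $M_1$ with exit coframe $\phi^*(\mu,\lambda)$ on $\Sigma_1$. This gives a strongly admissible jet on $\partial M_1$ restricting to $\phi^*(\lambda,\mu)$ there.

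A subtlety arises here. The lemma prescribes a full coframe only on one whole side, while fixing just the bi-foliation on the other. I would apply it componentwise on the relevant side, or note that its proof uses the rescaling function $h$ only on the opposite side. Then the coframe on the unglued boundary components of that same side can also be fixed arbitrarily.

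\emph{Step 4: extension and gluing.} Theorem~\ref{hyps} extends each strongly admissible jet to a strongly adapted form $\alpha_+^{(i)}$ on $M_i$. Along $\Sigma$, the two forms have the same boundary first $X$-jet $(\alpha_+,\mathcal L_X\alpha_+)$ in flow-time coordinates, so the glued form is $C^1$ across $\Sigma$.

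The main obstacle is smoothness across $\Sigma$, since higher derivatives need not match. I would handle it by first using the relative statement of Theorem~\ref{hyps} to arrange that, in flow collars $\Sigma\times(-\delta,0]$ and $\Sigma\times[0,\delta)$, each form is given in the product model of Theorem~\ref{pros}. There the curvature function $p=-(u_{\theta\theta}+u)>0$ and the angular speed are free data. The prescription near $\Sigma$ is a strongly adapted form on a two-sided collar around $\Sigma$ realizing the jet. I would take it to be a single smooth product-model form across $\Sigma$, namely a smooth positive angular speed and a smooth positive $p$ with the given value and derivative at $t=0$. With that form fixed on a thin neighborhood of $\Sigma$, I would then extend on each side.

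To make this work, Steps 2 and 3 should be performed with boundary taken at $t=\pm\delta/2$, that is, with the jets induced by this smooth model. Strong admissibility is open and the model is $C^1$-close to the original jet, so the jets at $t=\pm\delta/2$ remain strongly admissible. Theorem~\ref{hyps}, relative to a neighborhood of $\partial M_i$, then yields forms agreeing with the model near $\Sigma$. The result is smooth.

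\emph{Conclusion.} The glued form is strongly adapted, and its kernel pair $(\ker\mathcal L_X\alpha_+,\ker\alpha_+)$ induces the prescribed bi-foliations. Since strong adaptation holds on a neighborhood of the maximal invariant set of the quotient, Theorem~\ref{adapted} shows the quotient is a hyperbolic bi-contact plug.
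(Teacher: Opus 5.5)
Your proposal is correct and follows the paper's proof. You fix the coframe, apply Lemma~\ref{lem:one-sided-strong-admissibility} with the exit coframe fixed on $M_1$ and the entrance coframe fixed on $M_2$, prescribe a single smooth strongly adapted collar model across $\Sigma$, extend relative to it by Theorem~\ref{hyps}, and conclude with Theorem~\ref{adapted}; the paper's collar model is the explicit $\alpha_+(t)=\cosh(t)\lambda+\sinh(t)\mu$, which realizes $(\lambda,\mu)$ at $t=0$ and satisfies $\mathcal L_X^2\alpha_+=\alpha_+$. Your shift of the boundary to $t=\pm\delta/2$ is unnecessary, since Theorem~\ref{hyps} already extends relative to a strongly adapted form prescribed near $\partial M_i$ that realizes the given jet.
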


\begin{proof}
Choose compatible local boundary defining forms $(\mu_i,\lambda_i)$ agreeing with the
prescribed coframe under $\phi$. Apply
Lemma~\ref{lem:one-sided-strong-admissibility} with the exit coframe
fixed on $M_1$ and the entrance coframe fixed on $M_2$.
In flow coordinates across $\Sigma$, prescribe the strongly adapted collar model
$$
\alpha_+(t)=\cosh(t)\lambda+\sinh(t)\mu,
\qquad X=\partial_t.
$$
This formula respects the common transition signs of $\lambda,\mu$.
Theorem~\ref{hyps} extends the resulting jets relative to this model.
The extensions glue smoothly, and Theorem~\ref{adapted} gives
hyperbolicity.
\end{proof}

\begin{remark}
In the above theorem, admissibility already implies
$\phi_*(\mathcal L_1^u)\pitchfork\mathcal L_2^s$ as their tangent
directions satisfy $\mu(v_u)\lambda(v_u)>0$ and
$\mu(v_s)\lambda(v_s)<0$, respectively. Hence the hypotheses of \cite[Proposition~1.1]{bby} hold.
Hyperbolicity could also be proved by gluing the admissible
characteristic bi-foliations and applying \cite{arh}, since acyclic
gluing creates no new periodic orbits or invariant irrational tori.
\end{remark}

\subsection{Self-gluing with fixed plug flow}

Self-gluing of a possibly disconnected hyperbolic plug may produce
new periodic orbits and invariant sets. We give a contact geometric
proof of hyperbolicity in the filling MS case, answering
\cite[Question~1.4]{bby}.

Write $\Lambda=\operatorname{Inv}(M,X)$ and let
$$
\Gamma:\partial_{\mathrm{in}}M\setminus\mathcal L^s
\longrightarrow\partial_{\mathrm{out}}M\setminus\mathcal L^u
$$
be the crossing map. We use the entrance-to-exit convention for a
gluing map $\psi:\Sigma_{\mathrm{in}}\to\Sigma_{\mathrm{out}}$.
Strong transversality means strong transversality of
$\psi_*(\mathcal L^s\cap\Sigma_{\mathrm{in}})$ and
$\mathcal L^u\cap\Sigma_{\mathrm{out}}$, as defined in
Section~\ref{s2.3}. Our main self-gluing theorem is the following.

\begin{theorem}[Contact geometric self-gluing]
\label{thm:MS-filling-self-gluing}
Let $(M,X)$ be a smooth hyperbolic plug
with filling MS boundary laminations, and let
$\psi_0:\Sigma_{\mathrm{in}}\to\Sigma_{\mathrm{out}}$
be an orientation-compatible strongly transverse gluing map.

There exist a smooth strongly adapted positive contact form
$\alpha_+$ for $X$ and a smooth isotopy $(\psi_r)_{r\in[0,1]}$
through strongly transverse maps such that the extension of $\psi_1$ to boundary neighborhoods
using the flow of $X$ preserves $\alpha_+$.

Thus $\alpha_+$ descends to a strongly adapted contact form on
$M_{\psi_1}$. Consequently, $M_{\psi_1}$ with the induced flow is a hyperbolic
bi-contact plug, and the flow is Anosov when $M_{\psi_1}$ is closed.
\end{theorem}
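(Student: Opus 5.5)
The plan is to reduce self-gluing to the strong extension theorem, Theorem~\ref{hyps}. We produce one boundary $X$-jet $(\lambda,\mu)$ on $\partial M$ with three properties: it is strongly admissible, it is $\psi_1$-equivariant, meaning $\psi_1^*(\lambda^{\mathrm{out}},\mu^{\mathrm{out}})=(\lambda^{\mathrm{in}},\mu^{\mathrm{in}})$ on $\Sigma_{\mathrm{in}}$ up to the common sign, and it is realized in flow collars by a model invariant under $\psi_1$. Extending this jet then gives $\alpha_+$. Preservation by $\psi_1$ follows from uniqueness of the collar model, as in the proof of Theorem~\ref{thm:acyclic-hyperbolic-gluing}. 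Hyperbolicity of $M_{\psi_1}$ then follows from Theorem~\ref{adapted}, which applies to the descended form.

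\textbf{Step 1: common completion.} Strong transversality of $\psi_{0*}\mathcal L^s$ and $\mathcal L^u$ on $\Sigma_{\mathrm{out}}$ makes every complementary region a quadrilateral with alternating sides. So the two laminations extend simultaneously to a transverse pair of foliations $(F^s,F^u)$ on $\Sigma_{\mathrm{out}}$: foliate each quadrilateral by a product bi-foliation. The filling MS condition makes this extension unique up to homeomorphism. Pull $F^s$ back to $\Sigma_{\mathrm{in}}$ by $\psi_0$ and extend it invariantly through $M$ using Lemma~\ref{lem:bby-invariant-extension}. Do the same for $F^u$ in the reversed direction. Lemma~\ref{lem:regularity-invariant-completions} upgrades these invariant completions to $C^{1+\vartheta}$.

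\textbf{Step 2: defining the jet.} On $\Sigma_{\mathrm{out}}$, take defining forms $e^u$ and $e^s$ vanishing on the leaves of $F^u$ and $F^s$, respectively. Set $\lambda^{\mathrm{out}}=pe^u+qe^s$ and $\mu^{\mathrm{out}}=re^u+se^s$, with signs chosen so that $(\mu,\lambda)$ straddles $T\mathcal L^u$ in the order of Definition~\ref{admissible-hyperbolic-boundary}. Define the entrance jet by pullback under $\psi$; it straddles $\mathcal L^s$ because $\psi_*\mathcal L^s\subset F^s$. Near the laminations, strong admissibility of the crossing-map inequalities comes from Lemma~\ref{lem:long-tube-jet-matching}. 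Orbits entering close to $\mathcal L^s$, or leaving close to $\mathcal L^u$, have long transit times, and the exponentially wide interval \eqref{eq:long-tube-radial-interval} absorbs the mismatch.

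\textbf{Step 3: stretching (main obstacle).} The difficulty lies on the compact transit region of complementary strips, where transit is short. There the chord inequalities \eqref{sadin} and \eqref{sadout} for the pair $\Gamma^*(\lambda^{\mathrm{out}},\mu^{\mathrm{out}})$ and $\psi^*(\lambda^{\mathrm{out}},\mu^{\mathrm{out}})$ need not hold. This is where the isotopy $\psi_r$ enters. Isotope $\psi$ through strongly transverse maps, keeping $\psi_*\mathcal L^s$ inside leaves of $F^s$, by a leafwise contraction that squeezes each quadrilateral strip toward the laminations. After this stretching, every transit orbit from $\psi^{-1}(\text{strip})$ either has a long transit time or picks up uniformly controlled holonomy distortion; the $C^{1+\vartheta}$ regularity of the completions makes the $C^1$ data of $\Gamma$ bounded in the adapted coframe $(e^u,e^s)$. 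The transition matrix between the $\psi$-transported coframe and the $\Gamma$-transported coframe then becomes dominated by its diagonal expansion/contraction terms. This yields the inequalities of Lemma~\ref{lem:one-sided-strong-admissibility}, with $d>1$ and $a/\Delta>1$, after a global radial rescaling $h$ that is itself $\psi$-equivariant. This uniform distortion estimate is the step I expect to require the most work. If it fails for a given stretch, I would try composing further with the angular scaling of Lemma~\ref{scaling-strong-admissibility} on both collars.

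\textbf{Step 4: realizing and gluing.} In flow coordinates across the identified surface, use the model $\alpha_+=\cosh(t)\lambda+\sinh(t)\mu$. Apply Theorem~\ref{hyps} relative to this model and to a strongly adapted form near $\Lambda$. Any collar preparations made with Theorem~\ref{prop:coordinated-divided-deformation}, which preserves the leaves of $F^{s}$ and $F^{u}$, are absorbed into $\psi_r$ via Remark~\ref{rem:realization-input-self-gluing}, so $X$ is unchanged and strong transversality persists. Smooth with Lemma~\ref{lem:relative-pair-smoothing}, preserving the $\psi_1$-identification. The resulting form descends to $M_{\psi_1}$, and Theorem~\ref{adapted} gives the conclusion.
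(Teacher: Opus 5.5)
There is a genuine gap, and it comes before your Step~3. Nothing in the proposal makes the $\psi$-pulled-back jet even ordinarily admissible with respect to the crossing map $\Gamma$ on the compact transit region, and the stretching you describe cannot create this.

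The problem starts in Step~1. The pullback $\psi_0^*F^s$ contains $\mathcal L^s$, so it is not a valid input for Lemma~\ref{lem:bby-invariant-extension}, which extends entrance foliations \emph{transverse} to $\mathcal L^s$. The invariant object actually attached to $F^s$ is its backward completion $\Gamma^{-1}_*F^s$. This also completes $\mathcal L^s$, but in general it is not $\psi_0^*F^s$. Likewise, the forward completion of $\psi_0^*F^u$ completes $\mathcal L^u$ but is not $F^u$.

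So your coframe $(e^u,e^s)$ is adapted to $\psi$ but not to $\Gamma$. In particular, $D\Gamma$ is not diagonal in it. Nothing places $\Gamma_*(\ker\lambda^{\mathrm{in}})$ in the unstable exit sector at $\Gamma(p)$, or $\Gamma^{-1}_*(\ker\lambda^{\mathrm{out}})$ in the stable entrance sector at $p$. These sector positions compare the jet at two unrelated points $\psi(p)$ and $\Gamma(p)$ of $\Sigma_{\mathrm{out}}$, and they are exactly what make the transition matrix $A$ positive. A leafwise squeeze along foliations that $\Gamma$ does not intertwine neither diagonalizes $D\Gamma$ nor moves these directions into the right sectors.

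The $\psi$-equivariant rescaling does not help either. Multiplying $\lambda^{\mathrm{out}}$ by $h$ forces multiplying $\lambda^{\mathrm{in}}$ by $h\circ\psi$. The condition of Lemma~\ref{lem:one-sided-strong-admissibility} then becomes $1/d<(h\circ\Gamma)/(h\circ\psi)<a/\Delta$. This is a cocycle inequality along the return map $\psi^{-1}\circ\Gamma$, not a pointwise choice. Its solvability is essentially a uniform expansion statement about the glued dynamics, which is what you are trying to prove.

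The missing ingredient is the matched boundary preparation of Lemma~\ref{lem:MS-filling-matched-preparation}. It uses uniqueness of completions, but for a different pair than yours. The steps are:
\begin{enumerate}
\item Take a completion $F$ of $\psi^{-1}_*\mathcal L^u$ on $\Sigma_{\mathrm{in}}$ transverse to $\mathcal L^s$.
\item Both $\Gamma_*F$ and $\psi_*F$ complete $\mathcal L^u$, so they are conjugate MS foliations and can be joined by a $C^1$ path of completions.
\item Realize that path by the coordinated divided deformation of Theorem~\ref{prop:coordinated-divided-deformation} in an exit collar; its transit maps preserve $\mathcal L^u$.
\item Extend by Theorem~\ref{hypb}, smooth, and absorb the collar change into the gluing map via Remark~\ref{rem:realization-input-self-gluing}.
\end{enumerate}
This yields a genuine supporting bi-contact structure for $X$ whose ordered boundary bi-foliations are identified by $\psi_{\mathrm{prep}}$.

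Only then does stretching work. The scattering frame is built from $\ker\lambda^{\mathrm{in}}$, $\ker\lambda^{\mathrm{out}}$ and their $\Gamma$-completions, so $D\Gamma$ is diagonal in it by construction. Cone invariance of the prepared structure puts the completions strictly inside the correct sectors. The strip stretching of Lemma~\ref{lem:relative-boundary-stretching} is performed along $G^u_{\mathrm{in}}$ in strips of $\partial_{\mathrm{in}}M\setminus\mathcal L^s$, not in quadrilaterals of $\psi_*\mathcal L^s\cup\mathcal L^u$, and dually at the exit. It then gives $d>1$ and $a>\Delta$ with no rescaling at all, as in Lemma~\ref{lem:MS-filling-strong-jets}.

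Your Step~4 agrees with Proposition~\ref{prop:strongly-adapted-gluing} once such a jet is available.
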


\begin{remark}
If $\psi_0$ identifies a smooth admissible boundary
bi-foliation $(S_-,S_+)$, the construction
may be chosen so that every $\psi_r$ identifies $S_+$ and
$
\ker(\alpha_+|_{T\partial M})=S_+.
$
Thus the positive characteristic foliation can be prescribed,
while its defining form and derivative along $X$ may change.
\end{remark}

The same approach gives a gluing result for partially hyperbolic plugs. 

\begin{corollary}[Partially hyperbolic self-gluing]
\label{cor:partially-hyperbolic-self-gluing}
Let $(M,X)$ be a smooth, structurally generic partially hyperbolic bi-contact plug.
Suppose that removing disjoint repelling neighborhoods $U_i$
of its source periodic orbits gives a hyperbolic plug
$$
N=M\setminus\bigcup_i\operatorname{int}U_i
$$
with filling MS boundary laminations.
Let $\psi_0:\Sigma_{\mathrm{in}}\to\Sigma_{\mathrm{out}}$
be an orientation-compatible strongly transverse gluing map
between unions of original boundary components.

If the construction of
Theorem~\ref{thm:MS-filling-self-gluing} on $N$ is performed
relative to supporting bi-contact structures agreeing with
those on the $U_i$ near their attaching boundaries, then
$\psi_0$ is isotopic through strongly transverse maps to
$\psi_1$ such that the induced flow on $M_{\psi_1}$ is
partially hyperbolic. In particular, complete gluing produces
a closed partially hyperbolic flow.
\end{corollary}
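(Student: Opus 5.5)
The plan is to run the proof of Theorem~\ref{thm:MS-filling-self-gluing} on the hyperbolic plug $N$, with all boundary preparation and extension done relative to the fixed structures near $\partial U_i$. The new boundary tori $\partial U_i$ are entrance components of $N$ that are never glued. The dominated source neighborhoods $U_i$ carry bi-contact structures (for instance the model of the dominated source orbit, or simply the given restriction of the plug structure). Near $\partial U_i$ we fix the characteristic bi-foliation and a positive jet. Since $\partial U_i$ is not among the glued components, the isotopy $\psi_r$ and the collar preparations of Section~\ref{s4} are supported near $\Sigma_{\mathrm{in}}\cup\Sigma_{\mathrm{out}}$. The extension theorems (Theorems~\ref{hypb} and~\ref{hyps}) are relative to prescribed structures near all of $\partial N$. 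Hence the resulting strongly adapted form $\alpha_+$ on $N$ is only needed away from a collar of $\partial U_i$. There we instead keep the supporting bi-contact structure $(\xi_-,\xi_+)$ agreeing with that of $U_i$. Concretely, I would apply Theorem~\ref{hypb} to obtain a bi-contact structure on $N$ equal to the given one near $\partial U_i$, and use the strongly adapted form only on the complement of a thin collar of $\partial U_i$. In that collar I interpolate between $(\ker\mathcal L_X\alpha_+,\ker\alpha_+)$ and the prescribed structure. This is possible by Theorem~\ref{prob}, provided the two induced bi-foliations are admissible on the product collar. I would achieve admissibility by choosing the prescribed boundary data near $\partial U_i$ first, and then running Theorem~\ref{hyps} with a strongly admissible jet whose kernels are that bi-foliation, via Lemma~\ref{lem:one-sided-strong-admissibility}.

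After gluing by $\psi_1$, we get a bi-contact structure on $M_{\psi_1}$ supporting $X$. By Theorem~\ref{dom}, the maximal invariant set $K$ of the glued plug is dominated, $E\prec F$, where $F$ is the direction made expanding by $\alpha_+$. To obtain partial hyperbolicity I would invoke Lemma~\ref{lem:partially-hyperbolic-plug}. This requires showing that the nonwandering set of $K$ lies in finitely many compact invariant sets on which $F$ expands uniformly. The candidates are the following.
\begin{enumerate}
\item The source orbits in the $U_i$. There $F$ is the strong unstable direction, with multiplier $e^{\mu t}$, $\mu>0$, by the dominated source model.
\item The maximal invariant set of $N_{\psi_1}=N/\psi_1$. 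It is saddle hyperbolic because $\alpha_+$ descends to a strongly adapted form there (Theorem~\ref{adapted}). Its unstable bundle is $F$ by uniqueness of dominated splittings.
\end{enumerate}

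The key dynamical claim is that every nonwandering point of the glued flow lies in one of these two sets. An orbit entering some $U_i$ from $N_{\psi_1}$ is impossible, because $\partial U_i$ is an exit boundary for $U_i$ (the sources repel), so orbits only pass from $U_i$ into $N$. Thus the $U_i$ form a filtration level. Any recurrent orbit either stays in $U_i$, and is then the source orbit, or stays in $N_{\psi_1}$. Wandering orbits crossing $\partial U_i$ are not nonwandering, by the standard filtration argument: a neighborhood of a point in $U_i\setminus\{\text{source}\}$ flows out and never returns.

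The main obstacle I expect is the compatibility of the strongly adapted form with the fixed structure near $\partial U_i$. Theorem~\ref{thm:MS-filling-self-gluing} may modify the boundary jet on all entrance components, including $\partial U_i$, by stretching. So I must check that its proof can be run with the jet on $\partial U_i$ fixed and only its bi-foliation matched. I would try first to carry strong adaptation only near $K\cap N_{\psi_1}$, where Theorem~\ref{adapted} needs it. Then I would drop strong adaptation in the collar of $\partial U_i$, keeping only the bi-contact condition through Theorem~\ref{prob}. Complete gluing leaves only the $U_i$ boundaries, which are absorbed into $M$, so the quotient is closed. Lemma~\ref{lem:partially-hyperbolic-plug} then gives a closed partially hyperbolic flow.
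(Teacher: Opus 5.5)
Your proposal is correct and follows the paper's proof. You run the self-gluing construction on $N$ relative to the structures near $\partial U_i$ and reattach the $U_i$ to obtain a supporting bi-contact structure, hence domination. You then use that no orbit re-enters a source neighborhood to place the nonwandering set in $\operatorname{Inv}(N_{\psi_1})$ together with the source orbits, note that $F$ expands uniformly on each, and conclude with Lemma~\ref{lem:partially-hyperbolic-plug}.

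The compatibility issue you flag near $\partial U_i$ is exactly what the corollary assumes as a hypothesis: the construction is performed relative to the $U_i$ structures. Your proposed workaround via Theorem~\ref{prob} is therefore unnecessary. It would also still require checking admissibility along transit orbits from $\partial U_i$ to the prepared exit components, which you have not done.
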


\begin{proof}
Apply Theorem~\ref{thm:MS-filling-self-gluing} to $N$, preserving
the supporting bi-contact structures near the attaching boundaries
as assumed, and reattach the source neighborhoods $U_i$ along
these unchanged collars. The resulting flow is induced by $X$,
and the supporting bi-contact structures glue smoothly, giving
a dominated normal splitting $E\oplus F$.

No orbit can re-enter a source neighborhood after leaving it.
Hence the nonwandering set of the resulting maximal invariant
set is contained in the hyperbolic maximal invariant set of
$N_{\psi_1}$ and the finitely many original source periodic orbits.
On the hyperbolic set, $F$ is the unstable normal line. On each
source orbit, both normal directions expand. Thus $F$ is uniformly
expanding on each of these compact invariant sets.
Lemma~\ref{lem:partially-hyperbolic-plug} gives partial hyperbolicity.
\end{proof}

\begin{corollary}[Anosov completion by reflection]
\label{cor:Anosov-doubling}
Let $(M,X)$ be a smooth hyperbolic plug
with filling MS boundary laminations. Its oriented
double $D(M)$ admits an Anosov vector field $Y$ and a smooth
embedding $\iota:M\hookrightarrow D(M)$ such that
$$
\iota_*X=Y|_{\iota(M)}.
$$
\end{corollary}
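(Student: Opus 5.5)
We deduce Corollary~\ref{cor:Anosov-doubling} from Theorem~\ref{thm:MS-filling-self-gluing}, applied to the disjoint union $P=(M,X)\sqcup(-M,-X)$. Here $-M$ is $M$ with reversed orientation. Replacing $X$ by $-X$ exchanges entrance and exit boundaries and stable and unstable sets. Thus $\operatorname{Inv}(-M,-X)=\Lambda$ is still saddle hyperbolic, and its entrance lamination is $\mathcal L^u$ while its exit lamination is $\mathcal L^s$. Both are filling MS laminations, so $P$ is a smooth hyperbolic plug with filling MS boundary laminations. Since the theorem allows disconnected plugs, it applies to $P$.

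\textbf{Gluing maps.} Take as reference the canonical identifications $\mathrm{id}:\partial_{\mathrm{in}}M\to\partial_{\mathrm{out}}(-M)$ and $\mathrm{id}:\partial_{\mathrm{in}}(-M)\to\partial_{\mathrm{out}}M$ (both are $\partial_{\mathrm{in}}M$, respectively $\partial_{\mathrm{out}}M$, as sets). For the first map, the image of the entrance lamination $\mathcal L^s$ of $M$ is compared with the exit lamination of $-M$ on the same surface, which is again $\mathcal L^s$. These coincide, so the identity is not transverse. The same happens with $\mathcal L^u$ for the second map.

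We therefore compose each identity with a diffeomorphism $\delta$ isotopic to the identity, chosen so that $\delta_*\mathcal L^s$ and $\mathcal L^s$ are strongly transverse on $\partial_{\mathrm{in}}M$, and similarly for $\mathcal L^u$ on $\partial_{\mathrm{out}}M$. To construct $\delta$, extend $\mathcal L^s$ to a foliation $\mathcal F$; this is possible because filling MS laminations are prefoliations. Then choose $\delta$ close to the identity, for instance the time-$\epsilon$ map of a flow rotating transversely to $\mathcal F$ near its compact leaves. The complementary regions are strips whose widths tend to $0$ at both ends and which are asymptotic to compact leaves with contracting holonomy. In each strip, the displaced leaves cross the original ones and cut it into quadrilaterals with alternating sides.

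Orientation compatibility holds automatically. With respect to induced boundary orientations, $\partial(-M)=-\partial M$, so the identity is orientation reversing, as required of a gluing map.

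\textbf{Conclusion.} Theorem~\ref{thm:MS-filling-self-gluing} supplies an isotopy $\psi_r$ through strongly transverse maps, starting at $\psi_0=\delta$. The quotient $P_{\psi_1}$ carries an Anosov flow induced by $X$ on $M$ and by $-X$ on $-M$, and it is closed because all boundary components are glued. Each $\psi_1$ is isotopic to the identity, so $P_{\psi_1}$ is diffeomorphic to $P_{\mathrm{id}}=M\cup_{\partial M}(-M)=D(M)$. The orientations of $M$ and $-M$ glue to the orientation of the double.

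Let $F$ be such a diffeomorphism and set $Y=F_*(\text{induced field})$. Let $\iota$ be the composition of the inclusion $M\hookrightarrow P_{\psi_1}$ with $F$. Then $\iota_*X=Y|_{\iota(M)}$, because the theorem keeps $X$ unchanged on $M$. The quotient's smooth structure, given by flow collars, makes $\iota$ a smooth embedding.

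\textbf{Main obstacle.} The delicate step is producing the strongly transverse displacement $\delta$ isotopic to the identity. Everything else is formal. Near compact leaves a small rotation suffices. Along a strip one must check that the displaced leaves still meet both boundary leaves, so that every region is a quadrilateral. I would first reduce to the normal form in which strip widths shrink at both ends and leaves spiral onto compact leaves. Then I would verify that a displacement transverse to $\mathcal F$ and supported near the compact leaves, arranged with opposite twisting directions on the two sides, creates crossings in every strip.
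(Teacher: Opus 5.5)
The reduction is the same as the paper's: you apply Theorem~\ref{thm:MS-filling-self-gluing} to $(M,X)\sqcup(-M,-X)$, use that reversing the flow swaps $\mathcal L^s$ and $\mathcal L^u$, replace the canonical identifications by isotopic strongly transverse maps, and identify the quotient with $D(M)$ through the isotopy class. Those formal steps are fine. The gap is the step you flag yourself, the construction of $\delta$, and the mechanisms you suggest do not produce it.

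\emph{Support near the compact leaves fails.} Every noncompact leaf of a filling MS lamination crosses the whole annulus between two consecutive compact leaves. So $\delta$ would fix arcs of $\mathcal L$, and $\delta(\mathcal L)$ would share those arcs with $\mathcal L$; the two laminations would not even be transverse. The displacement has to rotate the completion $\mathcal F=\ker\beta$ at every point of $\mathcal L$.

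\emph{An everywhere transverse push also fails.} For the time-$\epsilon$ map of a field $W$, put $f=\beta(W)$. Then $\mathcal L_W\beta\wedge\beta=-(f\,d\beta+\beta\wedge df)$, so to first order the rotation of $\delta_*\mathcal F$ against $\mathcal F$ is $\epsilon(f\,d\beta+\beta\wedge df)$. If $W$ is everywhere transverse to a coorientable $\mathcal F$, then $f>0$ and this form equals $f^2\,d(\beta/f)$, which has zero integral over the torus. Worse, consecutive compact leaves alternate between attracting and repelling. By Liouville's formula for the holonomy, $d(\beta/f)$ is positive somewhere on each repelling leaf and negative somewhere on each attracting leaf. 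Every noncompact leaf spirals from one to the other, so the rotation changes sign along it, and displaced leaves fold back there.

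For a concrete failure, take a parallel annulus with leaves $x=g(y)$, $g'>0$. The push along $\partial_y$ is transverse to $\mathcal F$. Yet a translated leaf $x=g(y-\epsilon)+1$ enters a complementary strip and leaves through the same side, cutting off a bigon. A noncoorientable completion admits no transverse flow at all.

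What is needed is a function $f$ with $f\,d\beta+\beta\wedge df$ nowhere zero. That is exactly the dividedness condition \eqref{eq:Qr-sign}. It holds for MS completions by Giroux's criterion, applied on the coorientation cover as in Section~\ref{s4}. This is the paper's route: the divided model of an MS completion supplies $W$ with $\beta(W)=f$. This $W$ is tangent to $\mathcal F$ along the dividing curves, one in the middle of each annulus between consecutive compact leaves, and pushes in opposite transverse directions on their two sides. Its small time map therefore displaces $\mathcal F$ transverse to itself everywhere. Smoothing preserves transversality, and the filling condition gives the alternating quadrilaterals.

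Your ``opposite twisting on the two sides'' is the right instinct, but the sign change must sit on these dividing curves, not near the compact leaves, and the displacement must be nonzero on all of $\mathcal L$.
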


\begin{proof}
Consider $(M,X)\sqcup(-M,-X)$. Reversing the flow exchanges
the entrance and exit laminations, so the canonical boundary
identifications identify equal laminations.

On each boundary component, choose an MS completion of the
common lamination. The unoriented divided models of Section~\ref{s4}
give a small isotopic displacement of this completion transverse
to itself. Smoothing preserves transversality.
The filling condition then makes the displaced laminations
strongly transverse. Thus the canonical identifications can be
replaced by isotopic smooth strongly transverse gluing maps.

Apply Theorem~\ref{thm:MS-filling-self-gluing} to the disconnected
plug. The resulting closed quotient carries an Anosov vector
field induced by $X$ and $-X$. Since all boundary identifications
are isotopic to the canonical ones, the quotient is diffeomorphic
to $D(M)$, and the inclusion of the first copy gives the required
embedding.
\end{proof}

Corollary~\ref{cor:Anosov-doubling} strengthens the
embedding conclusion of \cite[Theorem~1.12(1)]{bby} by preserving
the original vector field and provides an alternative realization of the embedding by
doubling.

The proof of Theorem~\ref{thm:MS-filling-self-gluing} first prepares boundary characteristic bi-foliations using
ordinary admissibility. Regular invariant completions then provide
bounded holonomy distortion, allowing the crossing map to be stretched
until the boundary forms define a strongly admissible $X$-jet.
Theorem~\ref{hyps} extends this jet relative to strongly adapted
collar models that agree under gluing. Hyperbolicity follows from Theorem~\ref{adapted}.
The relative assertion preserves the positive characteristic
foliation while allowing its defining form and first derivative
to change.

\begin{proposition}[Gluing strongly admissible boundary jets]
\label{prop:strongly-adapted-gluing}
Let $(M,X)$ satisfy the hypotheses of Theorem~\ref{hyps}, and let
$\psi:\Sigma_{\mathrm{in}}\to\Sigma_{\mathrm{out}}$ be an
orientation-compatible boundary identification. Suppose a smooth
strongly admissible boundary $X$-jet $j=(\lambda,\mu)$ satisfies
\begin{equation}\label{eq:s5-jet-identification}
\psi^*j^{\mathrm{out}}=j^{\mathrm{in}}.
\end{equation}
Then the jet is realized by a smooth strongly adapted positive
contact form on $M_\psi$,
preserving any prescribed compatible strongly adapted form near
$\Lambda$. In particular, the induced flow has a hyperbolic maximal
invariant set.
\end{proposition}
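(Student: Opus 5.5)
The plan is to build the form on $M$ so that it already agrees with a fixed collar model on both sides of the identified surface, and then pass to the quotient. First I fix flow-time coordinates near the gluing locus. Let $\Sigma_{\mathrm{in}}\times[0,\delta)_t$ be the entrance collar and $\Sigma_{\mathrm{out}}\times(-\delta,0]_t$ the exit collar, with $X=\partial_t$ in both. Extending $\psi$ by the flow identifies these into a single collar $\Sigma\times(-\delta,\delta)$ in $M_\psi$. The identification \eqref{eq:s5-jet-identification} says the jets agree under $\psi$. Write $(\lambda,\mu)$ for the common jet on $\Sigma$. On this collar I prescribe the model used in the proof of Theorem~\ref{thm:acyclic-hyperbolic-gluing},
$$
\alpha_+(t)=\cosh(t)\lambda+\sinh(t)\mu .
$$
Since $\mu\wedge\lambda>0$, this is strongly adapted for $X=\partial_t$. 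Indeed $\mathcal L_X\alpha_+=\sinh(t)\lambda+\cosh(t)\mu$ and $\mathcal L_X^2\alpha_+=\alpha_+$, so both contact inequalities reduce to $\det$ conditions with the right signs; only $t$-independent two-dimensional data enter, up to a term which I control by shrinking $\delta$ if $d_\Sigma\lambda,d_\Sigma\mu$ contribute. More carefully, the contact terms are $\mu\wedge\lambda$ plus $O(\delta)$ corrections, so small $\delta$ suffices. The model's first $X$-jet at $t=0$ is $(\lambda,\mu)$. It uses common transition signs, so it is a legitimate unoriented form, and its restrictions to both half-collars in $M$ are compatible under $\psi$ by construction.

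Next I apply Theorem~\ref{hyps} to the plug $M$ with the given strongly admissible jet $j$. I do this relative to two pieces of data: the model form on the two half-collars near $\Sigma_{\mathrm{in}}$ and $\Sigma_{\mathrm{out}}$, and the prescribed strongly adapted form near $\Lambda$. On the remaining boundary components I also choose collar models realizing $j$. Theorem~\ref{hyps} allows agreement with these prescribed forms on smaller neighborhoods. Shrinking $\delta$ accordingly, the extension $\alpha_+$ equals the model on $\Sigma_{\mathrm{in}}\times[0,\delta')$ and on $\Sigma_{\mathrm{out}}\times(-\delta',0]$. Hence the flow extension of $\psi$ pulls back $\alpha_+$ to itself, and $\alpha_+$ descends to a smooth form on $M_\psi$. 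Smoothness across $\Sigma$ is automatic, because the model is a single smooth form on $\Sigma\times(-\delta',\delta')$. The descended form satisfies $\alpha_+(X)=0$, and $\mathcal L_X\alpha_+$ is negative contact everywhere, since both conditions are local.

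Finally, the maximal invariant set of the induced flow on $M_\psi$ is compact, and the form is defined on all of $M_\psi$, so in particular on a neighborhood of that set. Theorem~\ref{adapted} then gives saddle hyperbolicity. The main obstacle is the compatibility of the model with the relative hypotheses of Theorem~\ref{hyps}. The prescribed boundary form must itself realize a strongly admissible jet, with the long-tube estimates of Lemma~\ref{lem:long-tube-jet-matching} unaffected. This holds because only the jet at the boundary enters those estimates, and the model has exactly the jet $j$.

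A secondary point concerns sign consistency of the unoriented forms across $\psi$. Orientation compatibility of $\psi$ gives equal comparison signs, as in Theorem~\ref{thm:smooth-matched-gluing}, so the local representatives glue.
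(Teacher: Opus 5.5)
Your proposal is correct and is essentially the paper's proof: the same $\cosh(t)\lambda+\sinh(t)\mu$ collar model on both sides of the identified surface, the relative form of Theorem~\ref{hyps} with the prescribed form near $\Lambda$, smooth descent to $M_\psi$, and Theorem~\ref{adapted}. One simplification: tangential three-forms vanish, so $\alpha_+\wedge d\alpha_+=dt\wedge\mu\wedge\lambda$ and $\mathcal L_X\alpha_+\wedge d(\mathcal L_X\alpha_+)=-dt\wedge\mu\wedge\lambda$ hold exactly; hence the model is strongly adapted on the whole collar, and no shrinking of $\delta$ is needed to make it so.
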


\begin{proof}
Realize the identified jet by the same
strongly adapted collar model on both sides, as in the acyclic
construction. Theorem~\ref{hyps} extends $j$ relative to this model
and the prescribed strongly adapted form near $\Lambda$. The extensions therefore glue smoothly,
and Theorem~\ref{adapted} gives hyperbolicity.
\end{proof}

\subsection{Regularity and holonomy distortion}
We first establish regularity estimates for the invariant foliations and control the holonomy distortion of their boundary restrictions. These estimates will be used to prepare strongly admissible boundary jets for gluing.

We prove regularity of the invariant plane fields given by
Lemma~\ref{lem:bby-invariant-extension}. Throughout, regularity of
a foliation refers to its tangent distribution.

\begin{lemma}[Regularity of invariant completions]
\label{lem:regularity-invariant-completions}
Let $(M,X)$ be a smooth three-dimensional hyperbolic plug whose
boundary laminations are prefoliations. There is $\vartheta_*>0$
such that, for $0<\vartheta<\vartheta_*$, the invariant completion
of any $C^{1+\vartheta}$ entrance foliation transverse to
$\mathcal L^s$ is $C^{1+\vartheta}$ on $M$. It contains
$W^u(\Lambda)$ and agrees with the flow transport away from this
set. In particular, its exit restriction completes $\mathcal L^u$.
For a smooth entrance foliation, it is smooth away from $W^u(\Lambda)$.
The analogous statement holds with entrance and exit interchanged
and time reversed.
\end{lemma}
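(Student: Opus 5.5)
The plan is to describe the invariant completion as an invariant plane field containing $X$, that is, as an invariant section of the projectivized normal bundle $\mathbb P(\mathcal N_X)$. Such a field is determined by its restriction to $\partial_{\mathrm{in}}M$ and is transported by the linear Poincar\'e flow $P_X^t$. Away from $W^u(\Lambda)$, every point lies on a finite transit orbit from the entrance. There the completion is the flow transport of the entrance foliation, so it has the regularity of the entrance foliation: $C^{1+\vartheta}$ in general, and $C^\infty$ for smooth data. The whole difficulty is regularity across $W^u(\Lambda)$ and, near $\Lambda$, across the intersection of $W^u(\Lambda)$ with the transit region.

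Near $\Lambda$ I would fix a small isolating neighborhood $N$ and work with the projectivized linear Poincar\'e flow there. By domination (Theorem~\ref{dom}), on $N$ there is a forward-invariant cone field around $E^u$. Its complement around $E^s$ is backward invariant. Forward transport of an entrance line field transverse to $\mathcal L^s$ enters the unstable cone (inclination lemma). Then:
\begin{enumerate}
\item On $W^u(\Lambda)$, set the plane field to be $\langle X\rangle\oplus E^u$, i.e.\ the tangent planes of $W^u$. This is the limit of the transported field, so the completion is continuous. This recovers Lemma~\ref{lem:bby-invariant-extension} and shows the completion contains $W^u(\Lambda)$.
\item For the $C^{1+\vartheta}$ estimate, view the completion as the unique invariant section of a graph transform on the bundle of lines in the unstable cone over a fundamental domain. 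The graph transform acts on $C^1$ sections, in the spirit of the $C^r$-section theorem of Hirsch--Pugh--Shub.
\item The fiber contraction rate of $P_X^t$ on lines near $E^u$ is $\|P^t|_{E^s}\|/\|P^t|_{E^u}\|\leq Ce^{-\lambda t}$. For H\"older-$\vartheta$ derivatives, the base dilation enters through the backward expansion along the flow transverse to $W^u$.
\end{enumerate}
The fiber-bunching condition then reads $\mathrm{(contraction)}\cdot\mathrm{(base\ Lipschitz)}^{1+\vartheta}<1$. I would check it along orbits approaching $W^u$. A point at distance $\delta$ from $W^u(\Lambda)$ at the entrance spends time $\sim\frac{1}{\chi_s}\log(1/\delta)$ near $\Lambda$. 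During that time the transverse distances scale by rates controlled by the stable and unstable exponents, so one gets an estimate of the form $|D\,\text{slope}|\lesssim\delta^{\kappa}$ for some $\kappa>0$ depending on the ratio of exponents. Defining $\vartheta_*$ as the supremum of exponents allowed by this bunching, which is positive by uniform hyperbolicity and compactness, gives the $C^{1+\vartheta}$ bound for $\vartheta<\vartheta_*$. The hypothesis that the entrance data are $C^{1+\vartheta}$, rather than merely smooth, enters as the initial H\"older seminorm, which the graph transform contracts.

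I expect the main obstacle to be matching the $C^{1+\vartheta}$ seminorm uniformly across $W^u(\Lambda)$, near the points where orbits leave $N$ and pass to $\partial_{\mathrm{out}}M$. There the transit time is unbounded, and the estimates from the hyperbolic region must be combined with the transport along the bounded exit collar. I would handle this by showing the derivative of the slope function decays like a positive power of the distance to $W^u$. The rates would be uniform by compactness of $\Lambda$ and of the prefoliation data. This power-law decay makes the derivative extend continuously by its value along $W^u$ and gives the H\"older modulus. Finally, restricting to $\partial_{\mathrm{out}}M$, the completion contains $W^u(\Lambda)\cap\partial_{\mathrm{out}}M=\mathcal L^u$ as leaves. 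The time-reversed statement follows by applying everything to $-X$.
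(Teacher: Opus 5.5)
Your plan is essentially the paper's proof. It runs a graph transform for the invariant line field in $\mathbb P(\mathcal N_X)$ near $E^u$ under the bunching condition $(a_t/b_t)\,a_t^{-(1+\vartheta)}<1$, with $a_t,b_t$ the norms of $P_X^t$ on $E^s,E^u$; the paper makes this explicit by measuring these lines with a strongly adapted form $\alpha_+$, so that $e^{-Rt}\le a_t\le e^{-\kappa t}$ and $e^{\kappa t}\le b_t\le e^{Rt}$ on $\Lambda$, which gives $\vartheta_*=\kappa/R$.

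The separate power-law matching near the exit in your last paragraph is not needed. It is also imprecise as stated: what decays at a power rate is the tail of the derivative series generated by the transform, not the derivative of the slope itself. The paper instead extends the contraction estimate to every orbit segment of length $T$ in $M$, whose middle portion lies near $\Lambda$ and whose two end portions have uniformly bounded duration. It uses entrance transport as the boundary condition of the transform, so the uniform $C^{1+\vartheta}$ bounds on the iterates pass directly to the limit.
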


\begin{proof}
If $\Lambda=\varnothing$, finite-time transport suffices. Otherwise,
let $E^u$ be the continuous normal completion given by
Lemma~\ref{lem:bby-invariant-extension}, and choose a transverse
stable completion $E^s$. By
Lemma~\ref{lem:one-sided-strong-admissibility} and Theorem~\ref{hyps},
choose a strongly adapted form $\alpha_+$ whose stable and unstable
projective sectors contain $E^s,E^u$, respectively, preserving a fixed strongly
adapted form near $\Lambda$.

Write $\varphi^t$ for the flow of $X$ and $P_X^t$ for the linear
Poincar\'e flow on $\mathcal N_X=TM/\mathbb RX$, so that
$P_X^t[v]=[D\varphi^t v]$.
Using the line norms $\|v\|_\alpha=|\alpha_+(v)|$, write
$$
a_t(x)=\|P_X^t|_{E_x^s}\|_\alpha,\qquad
b_t(x)=\|P_X^t|_{E_x^u}\|_\alpha
$$
for the operator norms on the invariant normal lines.
Strong adaptation and compactness give constants $R\geq\kappa>0$,
depending only on the chosen form near $\Lambda$, such that
$$
e^{-Rt}\leq a_t(x)\leq e^{-\kappa t},
\qquad e^{\kappa t}\leq b_t(x)\leq e^{Rt}
\qquad(x\in\Lambda,\ t\geq0).
$$
Fix $0<\vartheta<\vartheta_*:=\kappa/R$.

Near $E^u$ in $\mathbb P(\mathcal N_X)$, use the smooth coordinate
$$
z_x(\ell)=\frac{(\mathcal L_X\alpha_+)(v)}{\alpha_+(v)},
\qquad 0\ne v\in\ell.
$$
The norms and this ratio are unchanged by sign reversal and hence
are globally defined.
Write the induced projective flow as
$(x,z)\mapsto(\varphi^t x,G_t(x,z))$ and set $u(x)=z_x(E_x^u)$.
In adapted unit frames, the slope map near $E^u$ is
$\zeta\mapsto(a_t/b_t)\zeta$. The inverse tangent flow has norm
$a_t^{-1}$ when the hyperbolic splitting is orthogonal and $X$ has
unit length. Thus, in a fixed smooth metric with $\|X\|=1$,
$$
|\partial_zG_t(x,u(x))|\,
\|D\varphi^{-t}_{\varphi^t x}\|^{1+\vartheta}
\leq C\frac{a_t(x)^{-\vartheta}}{b_t(x)}
\leq Ce^{-(\kappa-\vartheta R)t},\qquad x\in\Lambda.
$$
The adapted slope and tangent norm give $C=1$. The smooth coordinate
$z$ and smooth metric introduce a uniform comparison factor.

Choose a fixed time step for which this product is less than $1/2$
on $\Lambda$, and hence on a neighborhood of $\Lambda$.
The compact sets
$\{x\in M:\varphi^{[-A,A]}x\subset M\}$ decrease to $\Lambda$
as $A\to\infty$.
Consequently, every sufficiently long orbit segment in $M$ lies in
this neighborhood except for two end portions of uniformly bounded
duration. The chain rule and submultiplicativity of operator norms
allow us to multiply the fixed-time estimates along the middle
portion. Uniform bounds for the end portions and the remaining
fraction of a time step then give uniform decay of the displayed
product along long orbit segments. By continuity, we can therefore
choose $T>0$ and $\epsilon>0$ such that
$$
|\partial_zG_T(x,z)|\,
\|D\varphi^{-T}_{\varphi^T x}\|^{1+\vartheta}\leq\frac14
$$
whenever $|z-u(x)|\leq\epsilon$ and $\varphi^{[0,T]}x\subset M$.
Since $\|D\varphi^{-T}\|\geq1$, the maps $z\mapsto G_T(x,z)$ contract distances by a factor at most
$1/4$ and preserve these intervals about the invariant section $u$.

Choose $v_0\in C^{1+\vartheta}(M)$ uniformly $\epsilon$-close to $u$
and equal to the transported entrance section on a fixed entrance
collar of flow duration less than $T$. Define
$$
v_{n+1}(x)=G_T(\varphi^{-T}x,v_n(\varphi^{-T}x))
$$
when $\varphi^{[-T,0]}x\subset M$, and use entrance transport
otherwise. Near the boundary between these regions, backward images
from the first region lie in the fixed entrance collar, so both
formulas give entrance transport.
Thus every $v_n$ is $C^{1+\vartheta}$ and $v_n\to u$ uniformly.

For $y=\varphi^{-T}x$, differentiation gives
$$
Dv_{n+1}(x)=
\bigl[D_1G_T(y,v_n(y))
+\partial_zG_T(y,v_n(y))Dv_n(y)\bigr]D\varphi^{-T}_x,
$$
where $D_1$ denotes differentiation with respect to the base point.
The coefficient of $Dv_n$ has norm at most $1/4$, so
$$
\|Dv_{n+1}\|_\infty\leq B_1+\frac14\|Dv_n\|_\infty
$$
for a constant $B_1$ independent of $n$. Hence the first derivatives
are uniformly bounded. In H\"older differences, the additional
factor $\|D\varphi^{-T}\|^\vartheta$ is already included in the
time-$T$ estimate. In sufficiently small fixed normal coordinate charts,
its strict bound $1/4$ therefore gives
$$
[Dv_{n+1}]_\vartheta\leq B_2+\frac12[Dv_n]_\vartheta.
$$
Here $[\cdot]_\vartheta$ denotes the H\"older seminorm. The increase
to $1/2$ allows for comparing nearby points and coordinate norms.
The remaining terms and pairs of separated points are controlled by
smoothness of the finite-time maps, the uniform first derivative
bound, and the fixed entrance foliation. Thus $B_2$ is independent of $n$.

The iterates are uniformly bounded in $C^{1+\vartheta}$.
Arzel\`a-Ascoli gives a subsequence converging in $C^1$ to $u$,
and the H\"older bound passes to $Du$.
For a smooth entrance foliation, finite-time transport gives smoothness
away from $W^u(\Lambda)$. Reverse time for the exit assertion.
\end{proof}

We also need the following control of holonomy distortion, with
$0<\vartheta<1$ chosen as in
Lemma~\ref{lem:regularity-invariant-completions}.

\begin{lemma}[Uniform distortion of full strip holonomy]
\label{lem:MS-filling-distortion}
Let $F$ be a nonsingular $C^{1+\vartheta}$ Morse-Smale foliation
on a compact surface, with hyperbolic compact-leaf holonomy and a
filling MS sublamination $\mathcal L$. Let $G$ be a fixed smooth
transverse foliation. There is $C_{\mathrm{hol}}\geq1$ such that every $F$-holonomy
$H:I\to J$ between full $G$-segments crossing the same complementary
strip satisfies, in arclength coordinates for a fixed smooth
Riemannian metric,
$$
C_{\mathrm{hol}}^{-1}\leq\frac{|DH(x)|}{|DH(y)|}\leq C_{\mathrm{hol}}
\qquad(x,y\in I).
$$
Here a full segment is a component of a $G$-leaf in the strip,
with its endpoints on the boundary leaves.
\end{lemma}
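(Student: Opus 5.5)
The plan is a Denjoy-type distortion argument: bound the sum of log-derivative oscillations along a holonomy path and control it using hyperbolicity near the compact leaves. First, I would reduce to a normal form near the ends of the strips. By the filling MS hypothesis, each complementary strip of $\mathcal L$ has width tending to $0$ at both ends, and its boundary leaves are asymptotic to compact leaves of $\mathcal L$. Every compact leaf $c$ has hyperbolic holonomy. So in an annular collar $A_c$ of $c$ I would choose $C^{1+\vartheta}$ coordinates $(\vartheta,y)$ with the following properties: $G$ is (after a smooth isotopy, harmless for the metric comparison) the vertical foliation $\operatorname{span}\{\partial_y\}$, and $F$ is spanned by $\partial_\vartheta+F(\vartheta,y)\partial_y$ with $F(\vartheta,y)=\lambda y+O(|y|^{1+\vartheta})$ and $\lambda\neq0$. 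Outside the union of these collars, the strips meet a compact region in finitely many compact rectangles foliated by $F$ and $G$. There the full $G$-segments are uniformly short, uniformly transverse to $F$, and vary in a compact family. Holonomies confined to this region therefore have uniformly bounded distortion by $C^{1+\vartheta}$ compactness alone.

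Second, I would decompose a general full-strip holonomy $H:I\to J$. The $F$-leaves through $I$ run along the strip. Each leaf enters and exits the collars of the compact leaves at the two ends of the strip a bounded number of times, because a strip is a disk bounded by two leaves asymptotic to compact leaves and spirals toward them only at its two ends. So $H$ factors as a bounded number of pieces. Each piece is either a holonomy in the compact part or a holonomy along $F$ inside a collar $A_c$ between two vertical segments. In the latter case, the segments lie in $\{|y|\le\delta\}$ and are full segments of a strip that has become exponentially thin.

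Third, the key estimate is for the collar pieces, and I expect it to be the main obstacle. For a leaf $y(\vartheta)$ solving $y'=F(\vartheta,y)$, the derivative of holonomy along a transverse segment satisfies
\[
\log|DH(y_0)|=\int_0^{\Theta}\partial_yF(\vartheta,y(\vartheta))\,d\vartheta .
\]
For two points $y_0,y_1$ of the same full segment, the difference of the logarithms is then bounded by
\[
\int_0^\Theta|\partial_yF(\vartheta,y(\vartheta))-\partial_yF(\vartheta,\tilde y(\vartheta))|\,d\vartheta\le C\int_0^\Theta|y(\vartheta)-\tilde y(\vartheta)|^\vartheta d\vartheta .
\]
Here I use $\partial_yF\in C^\vartheta$. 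The separation $|y-\tilde y|$ is at most the strip width at $\vartheta$. Because the strip lies between two leaves approaching $c$ at the exponential rate $e^{\lambda\vartheta}$ (sign according to orientation), the width is at most $Ce^{-|\lambda|s}$, where $s$ is the distance along the collar from the thick end. The integral is thus dominated by a geometric series $\sum e^{-\vartheta|\lambda|s}$, bounded independently of $\Theta$ and of the strip. The delicate point is uniformity: the number of collar windings is unbounded over all strips and segments. What is needed is that the width is summable in Hölder power along every leaf, uniformly. I would justify this by comparing the two boundary leaves of the strip to $c$ through the linear model $y'=\lambda y$, with $O(|y|^{1+\vartheta})$ errors absorbed for $\delta$ small. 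This requires that both boundary leaves approach $c$ from the same side, which holds because the strip itself accumulates on $c$. If a strip runs between two different compact leaves, the two ends are handled separately in their own collars.

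Finally, I would combine the bounded number of pieces by the chain rule, which gives the multiplicative bound $C_{\mathrm{hol}}$. The arclength comparison between the straightened coordinates and the fixed smooth metric introduces only a uniform factor, since the coordinate changes are $C^{1+\vartheta}$ on compact sets. The symmetric lower bound follows by applying the same estimate to $H^{-1}$, which is again a full-strip holonomy.
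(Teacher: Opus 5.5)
Your overall strategy matches the paper's. You split a full-strip holonomy into at most two pieces inside collars of the compact leaves at the ends of the strip and one middle piece. You then bound the collar pieces by a Denjoy-type sum of H\"older oscillations over geometrically shrinking separations. Your collar estimate integrates $\partial_yF$ along leaves between two vertical segments; this is a continuous-time version of the paper's bound for iterates of the first-return holonomy $h$ with $h'\leq q<1$, and it is sound. It is even simpler to bound $|y-\tilde y|$ directly by Gr\"onwall than through the strip width, since $\partial_yF$ stays close to the contracting value $\lambda$ on $\{|y|\leq\delta\}$.

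The gap is in the middle piece. You assert that outside the collars the strips meet a compact region in finitely many compact rectangles, so that compactness alone controls the holonomy there. This fails in general, because a filling MS lamination can have infinitely many complementary strips. For instance, when $\mathcal L$ comes from a nontrivial saddle basic set, as for derived-from-Anosov plugs, it is transversally a Cantor set. Then infinitely many ever thinner strips cross the region outside any collars of the compact leaves. (The finiteness used in the proof of Lemma~\ref{lem:relative-boundary-stretching} concerns the complement of a neighborhood of the whole lamination, not of its compact leaves.) Compactness of the region also gives no bound on how long an $F$-leaf can stay in it. Without such a bound, uniform over all strips, the middle holonomies do not form a compact family, and their derivatives need not be uniformly bounded.

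The missing step is the paper's Morse--Smale argument.
\begin{itemize}
\item Take collars around all compact leaves of $F$; these lie in $\mathcal L$, since a strip is a disk. Make the collars trapping, which your normal form already gives, with boundary circles $\{|y|=\delta\}$ transverse to $F$. Then each leaf of a strip leaves a repelling collar at most once and enters an attracting collar at most once.
\item Suppose $F$-segments disjoint from the open collars had unbounded length. A limit of their midpoints would then lie on a leaf disjoint from all open collars. This contradicts the fact that every half-leaf of a Morse--Smale foliation accumulates on a compact leaf.
\end{itemize}
This gives a uniform length bound for the middle piece. You also need that a strip meets each collar boundary circle in a single embedded arc, so the partial turns before a full vertical cross-section lies inside the collar are bounded. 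With both facts, the middle holonomy has derivative bounded above and below independently of the strip, and your chain-rule conclusion follows.
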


\begin{proof}
Orientations are chosen only on individual strips and compact-leaf annuli.
Near a compact leaf, let $h$ denote its holonomy map in the contracting
direction. On a sufficiently small transversal,
$$
0<m\leq h'\leq q<1,\qquad
|\log h'(u)-\log h'(v)|\leq K|u-v|^\vartheta.
$$
Consequently, for every $n\geq1$,
$$
\left|\log\frac{(h^n)'(u)}{(h^n)'(v)}\right|=\left| \log\frac{\prod_{j=0}^{n-1}h'(h^j(u))}{\prod_{j=0}^{n-1}h'(h^j(v))}\right|
\leq K\sum_{j=0}^{n-1}|h^j(u)-h^j(v)|^\vartheta
\leq\frac{K|u-v|^\vartheta}{1-q^\vartheta}.
$$
Inverse branches have the same distortion bound on their image
intervals. Partial turns and changes between finitely many
transverse charts contribute uniformly bounded factors.

Choose disjoint attracting or repelling annular neighborhoods of
the finitely many compact leaves. Outside them, $F$-leaf segments
have uniformly bounded length. Otherwise, a limiting half-orbit
would avoid all these neighborhoods, contrary to the Morse-Smale
limit-set description. Holonomy along this bounded middle region
has derivative and inverse derivative uniformly bounded.
Every full strip holonomy is a composition of at most two
holonomy maps contained in the chosen compact-leaf neighborhoods
and a holonomy map along leaf segments of uniformly bounded length. The preceding estimates
give a uniform distortion bound, independently of the number of
turns or the strip. This is the decomposition of
\cite[Lemma~6.8]{bby}, at the present regularity.
\end{proof}

For later use, observe that
$$
\frac{\ell(J)}{\ell(I)}
=\frac{1}{\ell(I)}\int_I |DH(y)|\,dy.
$$
Bounded distortion therefore compares each value of $|DH|$
with this average. For an increasing diffeomorphism $h:I\to I$,
the chain rule also gives
\begin{equation}\label{eq:s5-holonomy-length}
\begin{aligned}
C_{\mathrm{hol}}^{-1}\frac{\ell(J)}{\ell(I)}
&\leq |DH(x)|
\leq C_{\mathrm{hol}}\frac{\ell(J)}{\ell(I)},\\
D(H\circ h\circ H^{-1})(Hx)
&=DH(hx)\,Dh(x)\,DH^{-1}(Hx)
=\frac{DH(hx)}{DH(x)}Dh(x).
\end{aligned}
\end{equation}
The first estimate gives a uniform positive lower bound for
$|DH^{-1}|$ when $I$ is fixed and $\ell(J)$ is uniformly bounded.
We use this to transfer crossing-map expansion across a strip.
The second shows that extending $h$ by holonomy changes its
derivative by at most the factor $C_{\mathrm{hol}}$, which
controls the subsequent interpolation to the identity near
the strip ends.

\subsection{Admissible boundary preparation}

In this subsection, we explain the boundary preparation required for self-gluing. For our boundary laminations, which admit $C^1$ extensions of their tangent line fields, an isotopy of the gluing map through transverse
configurations preserves the two laminations up to ambient isotopy,
and hence preserves their complementary quadrilaterals.
Thus strong transversality persists along such isotopies and is
$C^1$-open.



\begin{lemma}[Boundary preparation in the filling case]
\label{lem:MS-filling-matched-preparation}
Under the hypotheses of Theorem~\ref{thm:MS-filling-self-gluing},
the map $\psi$ is isotopic through strongly transverse maps to
$\psi_{\mathrm{prep}}$ for which $X$ admits a smooth supporting
bi-contact structure whose ordered boundary characteristic bi-foliations are identified by $\psi_{\mathrm{prep}}$. Its defining forms
may be scaled so that, on the identified components,
$$
\psi_{\mathrm{prep}}^*\beta_\pm^{\mathrm{out}}
=\beta_\pm^{\mathrm{in}}.
$$
\end{lemma}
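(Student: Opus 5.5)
We first build a common target bi-foliation on the glued surfaces and then move both the boundary data and the gluing map to it. On each identified pair of components, strong transversality of $\psi_*\mathcal L^s$ and $\mathcal L^u$ lets us extend the two laminations simultaneously to a transverse pair of foliations on $\Sigma_{\mathrm{out}}$. This is done quadrilateral by quadrilateral: in each complementary disk with alternating sides, we interpolate the two families of boundary leaves by a product grid. Call the results $F^u\supset\mathcal L^u$ and $F^s\supset\psi_*\mathcal L^s$. Because the laminations are filling MS, these completions are unique up to homeomorphism, and we may take them Morse-Smale with hyperbolic compact leaves. Their transport $\psi^*F^s$ on $\Sigma_{\mathrm{in}}$ contains $\mathcal L^s$.

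Next we transport the pair through the plug with Lemma~\ref{lem:bby-invariant-extension}. We apply it to $\psi^*F^s$ in forward time and to $F^u$ in backward time, and use Lemma~\ref{lem:regularity-invariant-completions} for $C^{1+\vartheta}$ control. This gives invariant completions whose restrictions to the opposite boundary contain the opposite laminations. The candidate boundary bi-foliation is then built from these by small tilts. On the exit we take $S^{\mathrm{out}}_\pm$ as small rotations of $F^u$ in the two directions, and on the entrance $S^{\mathrm{in}}_\pm$ as the corresponding rotations of $\psi^*F^u$. The tilt sizes are chosen so that three conditions hold:
\begin{enumerate}
  \item the straddling conditions (2) and (3) of Definition~\ref{admissible-hyperbolic-boundary} hold along $\mathcal L^s$ and $\mathcal L^u$;
  \item the product cyclic order (1) holds after transport by $\Gamma$;
  \item the two sides are identified by $\psi$.
\end{enumerate}

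The cyclic order (1) is the delicate point. On transit orbits near $\mathcal L^s$, the inclination lemma pushes the entrance directions toward the unstable direction. Away from the laminations, we instead adjust the tilts via the filling hypothesis, exactly as in the long-tube argument in Theorem~\ref{hypb}. Along each complementary strip, the tilts can be made uniform using the distortion bound of Lemma~\ref{lem:MS-filling-distortion}. Once these conditions hold, Theorem~\ref{hypb} produces a supporting bi-contact structure realizing this boundary data.

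To keep the gluing map honest, we do not change $X$. Instead we reach the target bi-foliation by boundary deformations in collars, using the coordinated divided deformation of Theorem~\ref{prop:coordinated-divided-deformation}. Its dividing functions come from Lemma~\ref{lem:parametric-divided-forms}, applied to the Morse-Smale completions; they are divided because each has hyperbolic compact leaves and no Reeb obstruction. These deformations preserve the laminations setwise, by the leaf-preservation clause of that theorem. Lemma~\ref{lem:collar-straightening} and Remark~\ref{rem:realization-input-self-gluing} then absorb the collar transit maps into an isotopy $\widehat\psi_r$. This isotopy stays strongly transverse because transversality is preserved along it and strong transversality persists under transverse isotopies. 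After smoothing by Lemma~\ref{lem:relative-pair-smoothing}, keeping the identification, $\psi_{\mathrm{prep}}$ carries $S_\pm^{\mathrm{in}}$ to $S_\pm^{\mathrm{out}}$.

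The final scaling is routine. The two sides define the same kernels, so $\psi_{\mathrm{prep}}^*\beta_\pm^{\mathrm{out}}=u_\pm\beta_\pm^{\mathrm{in}}$ with $u_\pm>0$ after fixing compatible signs. Rescaling $\beta_\pm^{\mathrm{in}}$ by $u_\pm$ preserves the characteristic foliations and all admissibility conditions. We then extend the rescaling into a collar and re-apply Theorem~\ref{hypb} relative to the rescaled boundary forms.

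I expect the main obstacle to be the cyclic-order condition (1) of admissibility. It must hold on all transit orbits simultaneously, including those crossing a strip many times near compact leaves, where the entrance and exit tilts are coupled through $\Gamma$ and $\psi$. I would handle it by choosing the entrance foliations as $\Gamma$-pullbacks of the exit ones off the laminations. Then condition (1) reduces to choosing the ordered tilts consistently, which the cone condition of Lemma~\ref{lem:nonconvex-bicontact-realization} allows.
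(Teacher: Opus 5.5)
There is a genuine gap in your central step. You take exit tilts of $F^u$ and entrance tilts of $\psi^*F^u$ and claim they satisfy, all at once, the cyclic order (1), the straddling conditions and the $\psi$-identification. You then conclude that Theorem~\ref{hypb} realizes boundary data identified by $\psi$ itself.

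This fails for small tilts. Condition (1) at $q=\Gamma(p)$ forces the transported pair $D\Gamma_p(S^{\mathrm{in}}_\pm(p))$ into the narrow sector of $S^{\mathrm{out}}_\pm(q)$ around $F^u$. So $\Gamma_*(\psi^*F^u)$, together with its transported tilts, must stay within the exit tilt angle of $F^u$. Inclination gives this near $\mathcal L^u$. On the compact transit region, however, $\Gamma_*(\psi^*F^u)$ and $F^u$ are unrelated foliations.

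Enlarging or varying the tilts does not help. The identification ties the entrance sector at $p$ to the exit sector at $\psi(p)$. Together with (1), this says the exit sectors form a cone field strictly invariant under the return map $\Gamma\circ\psi^{-1}$. That is a domination condition for the glued flow, and there is no reason for it to hold for the original $\psi$. This is exactly why the lemma only produces $\psi_{\mathrm{prep}}$ after an isotopy.

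Your proposal is also internally inconsistent here. If your data were realizable for $\psi$, the later deformation and isotopy would be superfluous, and you never say from which bi-foliation to which the deformation runs. Your closing fix, taking entrance foliations as $\Gamma$-pullbacks, secures (1) but abandons the identification. The cone condition of Lemma~\ref{lem:nonconvex-bicontact-realization} cannot restore it. With $X$ fixed, the exit sector straddling $\mathcal L^u$ can only widen along a collar, so small tilts about one completion of $\mathcal L^u$ never become small tilts about a different one. Separately, your transport step is misassigned. $\psi^*F^s\supset\mathcal L^s$ and $F^u\supset\mathcal L^u$ contain the laminations rather than cross them, so the invariant extension applies to $\psi^*F^u$ forward and to $F^s$ backward.

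The missing idea is to decouple (1) from the identification.
\begin{enumerate}
\item Put $F=\psi^*F^u$. This is a completion of $\psi^{-1}_*\mathcal L^u$ transverse to $\mathcal L^s$, and this is where strong transversality enters.
\item Let $F_u$ be the completion of $\Gamma_*F$ from Lemma~\ref{lem:regularity-invariant-completions}. This is the corrected version of your transport step, which you never use.
\item Small opposite exit tilts of $F_u$, with smaller entrance tilts of $F$, are admissible, so Theorem~\ref{hypb} applies. They are not identified by $\psi$, since $\psi$ sends the entrance tilts to tilts around $G=\psi_*F=F^u$.
\item Both $F_u$ and $G$ complete $\mathcal L^u$. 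Choose a real lift of their relative angle that vanishes on $\mathcal L^u$, is small near it, and extends over the finitely many disks containing the compact parts of the strips. Interpolating gives a $C^1$ path $H^r$ of MS completions of $\mathcal L^u$ from $F_u$ to $G$, by \cite[Lemma~3.19]{bby}.
\item The $C^1$ version of Theorem~\ref{prop:coordinated-divided-deformation}, with dividing functions from Lemma~\ref{lem:parametric-divided-forms}, realizes $H^r$ in an exit collar. Unlike fixed-$X$ realization, the supported field acquires a tangential component. The transit maps still preserve $\mathcal L^u$, because every $H^r$ contains it.
\item Straightening by Lemma~\ref{lem:collar-straightening} and absorbing the transit map into the gluing map as in Remark~\ref{rem:realization-input-self-gluing} produces the strongly transverse isotopy to $\psi_{\mathrm{prep}}$.
\end{enumerate}
The final normalization then needs only a positive rescaling of the global forms, with no second application of Theorem~\ref{hypb}.
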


\begin{proof}
By \cite[Lemma~3.26]{bby} and
Lemma~\ref{lem:regularity-invariant-completions}, choose a completion
$F$ of $\psi^{-1}_*\mathcal L^u$ transverse to $\mathcal L^s$,
with the regularity required by that lemma. This is achieved by using
a regular completion near the lamination and smoothing elsewhere.
Transversality is preserved since the tangent directions agree on
the lamination. On the remaining entrance components, choose $F$
smooth and transverse to $\mathcal L^s$. Its forward transport
completes to a $C^1$ exit foliation $F_u$.

On $\Sigma_{\mathrm{out}}$, both $F_u$ and $G=\psi_*F$ complete
$\mathcal L^u$, so they are topologically conjugate MS foliations
by \cite[Lemma~3.19]{bby}. To construct a $C^1$ path, choose the small
real lift of their relative projective angle near $\mathcal L^u$, equal to zero
on $\mathcal L^u$. The remaining compact portions of the complementary
strips lie in finitely many disjoint disks with boundaries in that
neighborhood. On each disk this angle has a real lift,
which can be chosen to agree with the prescribed lift along its
connected boundary. Interpolating this lift gives completions $H^r$
from $F_u$ to $G$, depending smoothly on $r$ in the $C^1$ topology.
Every $H^r$ is MS by the same lemma. Its compact leaves remain
hyperbolic. Transverse accumulation of $\mathcal L^u$ on these
leaves forces $C^1$ tangent line fields agreeing on $\mathcal L^u$ to
have the same first derivatives there, and hence the same linear
holonomy.

Choose small opposite exit tilts of $F_u$ and sufficiently smaller
opposite entrance tilts of $F$ to obtain admissible bi-foliations on the boundary.
Indeed, $F$ is transverse to $\mathcal L^s$, and the exit
bi-foliation straddles $\mathcal L^u$. Near $\mathcal L^s$, uniform inclination
places the transported entrance directions between the exit
directions. On the compact remainder, the required order follows
from $\Gamma_*F=F_u$ by decreasing the entrance tilts.

The Morse-Smale dividedness criterion \cite{hondanotes} applies on
the coorientation cover. For an odd defining form, taking the odd part
of a dividing function averages the original inequality with its deck
transform, preserving strictness.
The second assertion of Lemma~\ref{lem:parametric-divided-forms}
allows us to apply the $C^1$ version of
Theorem~\ref{prop:coordinated-divided-deformation} to $H^r$.
Use the collar model from that theorem and extend its
bi-contact structure over $M$ by Theorem~\ref{hypb}.
The coordinated deformation, with the endpoint tilts adjusted
by rescaling, makes the exit bi-foliation equal to the
$\psi$-image of the entrance bi-foliation.
Its transit maps preserve $\mathcal L^u$, since every $H^r$
contains this lamination.

Apply the relative smoothing of
Lemma~\ref{lem:relative-pair-smoothing}, preserving the endpoint
boundary identification, and absorb the deformation into the
gluing map as in
Remark~\ref{rem:realization-input-self-gluing}.
This gives a smooth supporting bi-contact structure for $X$
and a strongly transverse isotopy ending at
$\psi_{\mathrm{prep}}$, which identifies its boundary
characteristic bi-foliations.
Positive rescaling of the defining forms gives the stated
normalization. Orientation compatibility makes the two local
comparison signs agree.
\end{proof}

\subsection{Stretching and strongly admissible preparation}

After boundary preparation, denote the gluing map again by $\psi$
and set $(\mu,\lambda)=(\beta_-,\beta_+)|_{\partial M}$.
The construction below also applies to any smooth supporting
bi-contact forms with the same boundary identification.
We use $j=(\lambda,\mu)$ as a formal boundary $X$-jet, to be
realized by the strong extension theorem.
The unstable projective sector is $\{\mu(v)\lambda(v)>0\}$, and the
stable projective sector is $\{\mu(v)\lambda(v)<0\}$. The contact signs
give forward invariance of the first and backward invariance of
the second because the off-diagonal coefficients of
$\mathcal L_X(\beta_-,\beta_+)$ in this coframe are positive.
Local representatives are transported consistently along the flow
and the isotopies below.

To preserve the positive characteristic foliation, choose
$$
G^u_{\mathrm{in}}=\ker\lambda^{\mathrm{in}},
\qquad G^s_{\mathrm{out}}=\ker\lambda^{\mathrm{out}},
$$
locally oriented by $\mu^{\mathrm{in}}>0$ and $\mu^{\mathrm{out}}<0$,
respectively. Admissibility makes these directions transverse to
$\mathcal L^s$ and $\mathcal L^u$, respectively. Complete
$G^s_{\mathrm{out}}$ backward and $G^u_{\mathrm{in}}$ forward to obtain
$G^s_{\mathrm{in}}$ and $G^u_{\mathrm{out}}$, respectively.
Strict cone invariance places these completions inside the stable
entrance and unstable exit sectors. Thus we obtain transverse boundary bi-foliations
$$
(G^s_{\mathrm{in}},G^u_{\mathrm{in}}),\qquad
(G^s_{\mathrm{out}},G^u_{\mathrm{out}}),\qquad
\Gamma_*G^\sigma_{\mathrm{in}}=G^\sigma_{\mathrm{out}}.
$$
By Lemma~\ref{lem:regularity-invariant-completions}, the completed
foliations are $C^{1+\vartheta}$ and smooth off the respective
laminations, which they contain. They are MS with hyperbolic
compact-leaf holonomy, so Lemma~\ref{lem:MS-filling-distortion}
applies to both boundary bi-foliations. We call these four foliations
a scattering frame.
Fix a smooth Riemannian metric on $\partial M$. All norms,
lengths, angles, and distortion constants below refer to this metric.
The metric and foliations remain fixed as $L$ and $\epsilon$ vary.

\begin{lemma}[Relative strip stretching]
\label{lem:relative-boundary-stretching}
For every $L>1$ and $\epsilon>0$ there are smooth diffeomorphisms
$$
h_{\mathrm{in}}:\partial_{\mathrm{in}}M\to\partial_{\mathrm{in}}M,
\qquad
h_{\mathrm{out}}:\partial_{\mathrm{out}}M\to\partial_{\mathrm{out}}M,
$$
isotopic to the identity, with the following properties.
\begin{enumerate}
\item $h_{\mathrm{in}}$ preserves every $G^u_{\mathrm{in}}$-leaf
and is the identity near $\mathcal L^s$.
$h_{\mathrm{out}}$ preserves every $G^s_{\mathrm{out}}$-leaf
and is the identity near $\mathcal L^u$.
\item $(h_{\mathrm{in}}^{-1})_*G^s_{\mathrm{in}}$ and
$(h_{\mathrm{out}})_*G^u_{\mathrm{out}}$ are $\epsilon$-close
in tangent direction to $G^s_{\mathrm{in}}$ and $G^u_{\mathrm{out}}$.
\item $D(\Gamma\circ h_{\mathrm{in}})$ expands
$G^u_{\mathrm{in}}$ by more than $L$, and
$D(\Gamma^{-1}\circ h_{\mathrm{out}}^{-1})$ expands
$G^s_{\mathrm{out}}$ by more than $L$, on
$\partial_{\mathrm{in}}M\setminus\mathcal L^s$ and
$\partial_{\mathrm{out}}M\setminus\mathcal L^u$, respectively.
\end{enumerate}
The isotopies also preserve the leaves and are the identity
on fixed neighborhoods of the respective laminations.
\end{lemma}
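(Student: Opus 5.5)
We construct $h_{\mathrm{in}}$ strip by strip; $h_{\mathrm{out}}$ is the time-reversed construction with the roles of $(G^s,G^u)$ and $(\mathcal L^s,\mathcal L^u)$ exchanged. Since $\mathcal L^s$ is filling, $\partial_{\mathrm{in}}M\setminus\mathcal L^s$ is a union of strips. Their widths tend to zero at both ends. Each strip is foliated by full $G^u_{\mathrm{in}}$-segments transverse to its boundary leaves. Fix a neighborhood $N_s$ of $\mathcal L^s$. By compactness, only finitely many strips are not contained in $N_s$. The compact portions of these strips outside $N_s$ are finitely many rectangles
\[
R\simeq I_R\times[0,1],
\]
where the second factor parametrizes the $G^u_{\mathrm{in}}$-segments by normalized arclength and $I_R$ is a transverse parameter along $G^s_{\mathrm{in}}$. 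We take $h_{\mathrm{in}}$ to be the identity on $N_s$. On each such $R$ we define it as a fiberwise diffeomorphism $(x,y)\mapsto(x,\sigma_x(y))$ of the $G^u$-segments, fixing a neighborhood of $\partial R$. Item (1) then holds by construction, and so does the leaf-preserving isotopy: interpolate $\sigma_x$ linearly to the identity, which stays monotone.

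The expansion in item (3) is a statement only on the strips. Write $\Gamma$ restricted to a $G^u_{\mathrm{in}}$-segment $J$ as a map onto a $G^u_{\mathrm{out}}$-segment $\Gamma(J)$; this uses $\Gamma_*G^u_{\mathrm{in}}=G^u_{\mathrm{out}}$.

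\textbf{Near the lamination.} Points of $J$ close to $\mathcal L^s$ have long transit times. By the long-tube estimate, as in Lemma~\ref{lem:long-tube-jet-matching}, hyperbolicity gives $|D\Gamma|_{G^u}|\ge C^{-1}e^{\chi\tau}$. Shrinking $N_s$ therefore gives expansion larger than $L$ there, with $h_{\mathrm{in}}=\mathrm{id}$.

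\textbf{On the compact rectangles.} Here $|D\Gamma|_{G^u}|$ is bounded below by some $c_R>0$, but it may be small. This is where we stretch. Choose $\sigma_x$ to compress a large interior portion of the segment into a tiny subinterval, so that $\sigma_x'\ge L/c_R$ on the complementary region where the derivative is evaluated. This is impossible as a diffeomorphism of a fixed interval, since the average of $\sigma_x'$ is $1$.

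So the correct mechanism must push the small-expansion region of each segment toward the ends near $\mathcal L^s$, where $\Gamma$ expands exponentially. We therefore allow $h_{\mathrm{in}}$ to slide points along the full $G^u$-segment, up to a thinner neighborhood $N_s'\subset N_s$ where it is the identity. We take $\sigma$ on the full segment to be the time-$\tau$ map of a vector field along $G^u_{\mathrm{in}}$ whose flow moves the compact middle portion close to the lamination ends. Then the derivative of $\Gamma\circ h_{\mathrm{in}}$ is
\[
D\Gamma(h_{\mathrm{in}}(p))\,Dh_{\mathrm{in}}(p).
\]
The first factor is exponentially large because $h_{\mathrm{in}}(p)$ lies deep near $\mathcal L^s$. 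The loss in the second factor is controlled by the bounded distortion of Lemma~\ref{lem:MS-filling-distortion} together with \eqref{eq:s5-holonomy-length}: transit estimates comparing $D\Gamma$ at nearby points along a strip are uniform, independent of the number of turns. The total number of strips requiring modification is finite, so the constants are uniform.

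\textbf{Item (2).} Since $h_{\mathrm{in}}$ preserves $G^u$-leaves, $(h_{\mathrm{in}}^{-1})_*G^s_{\mathrm{in}}$ differs from $G^s_{\mathrm{in}}$ only by the shear $\partial_x\sigma_x/\sigma'_x$. We make this $\epsilon$-small by choosing $\sigma_x$ to depend on $x$ very slowly, using a cutoff in $x$ of small slope. Its derivative in $x$ is then bounded by $\epsilon$ times the $y$-derivative. Strip widths give room for this because the $x$-direction of each rectangle has a definite length.

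\textbf{Main obstacle.} The hard step is reconciling the two competing requirements. The push toward $\mathcal L^s$ must produce expansion greater than $L$ while $Dh_{\mathrm{in}}$ remains comparable on $G^u$. At the same time, the $x$-dependence must stay small enough that the shear remains below $\epsilon$. I would first attempt this with the flow of $\rho(x)\,\partial_y$, where $\partial_y$ is the unit vector field along $G^u$ scaled so that its flow is a contraction toward the lamination ends. Here $\rho$ is a cutoff in $x$ of small slope, and the distortion bound makes the resulting derivative ratios uniform.
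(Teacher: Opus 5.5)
Your reduction is sound: $h_{\mathrm{in}}$ is the identity near $\mathcal L^s$, where $\Gamma$ already expands $G^u_{\mathrm{in}}$ by more than $L$, and only finitely many strips need modification. The genuine gap is in the one-dimensional stretching along a full segment.

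Since $h_{\mathrm{in}}$ preserves each $G^u_{\mathrm{in}}$-leaf and is the identity near $\mathcal L^s$, it maps each full segment onto itself. Some points $p$ must therefore land in the slow middle region. There $D\Gamma(h_{\mathrm{in}}(p))$ is only bounded below by $c_R$, and nothing in your sliding-flow construction forces $Dh_{\mathrm{in}}(p)>L/c_R$. So your claim that the first factor is exponentially large cannot hold at every point. Lemma~\ref{lem:MS-filling-distortion} cannot fill this gap either: it compares $G^s$-holonomies between different segments and says nothing about the derivative of a map along one segment.

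The missing idea is that $\Gamma$ carries each full $G^u_{\mathrm{in}}$-segment $I$ onto an entire noncompact $G^u_{\mathrm{out}}$-leaf. In arclength coordinates, $g=\Gamma|_I:(0,\ell(I))\to\mathbb R$ is then an increasing diffeomorphism onto $\mathbb R$ with $g'\to\infty$ at both ends. Hence for every $A>1$ there is an increasing $\widetilde g$ equal to $g$ near the ends with $\widetilde g'>A$ \cite[Lemma~6.12]{bby}. Setting $h=g^{-1}\circ\widetilde g$ gives $D(\Gamma\circ h)=\widetilde g'>A$ directly on all of $I$.

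Spreading $h$ across the strip, and item (2), also need a different construction. In normalized-arclength coordinates the $G^s_{\mathrm{in}}$-leaves are not the lines $y=\mathrm{const}$. If $h_{\mathrm{in}}(x,y)=(x,\sigma_x(y))$, then $h_{\mathrm{in}}^{-1}$ carries a $G^s_{\mathrm{in}}$-leaf of coordinate slope $s$ to slope $(s-\partial_x\sigma_x)/\sigma_x'$. Where $\sigma_x'$ is far from $1$, this is in general far from the slope of $G^s_{\mathrm{in}}$ at the image point, however slowly $\sigma_x$ varies in $x$.

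The paper instead extends $h$ by conjugating with $G^s_{\mathrm{in}}$-holonomy, $h_J=H_JhH_J^{-1}$, which preserves $G^s_{\mathrm{in}}$ exactly. This is where bounded distortion enters. One has $\Gamma\circ h_J=K_J\circ(\Gamma\circ h)\circ H_J^{-1}$, where $K_J$ is a $G^s_{\mathrm{out}}$-holonomy with bounded derivative. Combined with the lower bound on $|DH_J^{-1}|$ from \eqref{eq:s5-holonomy-length}, this gives expansion greater than $c_IA$ throughout the strip.

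Finally, the cutoff cannot sit inside a rectangle of fixed length, for two reasons. Its slope must be below $\epsilon/K_h$, and $K_h$ depends on $h$ and hence on $L$. Also, on segments that still cross the slow region, a map close to the identity does not expand by $L$. The paper interpolates by $H_th_{\chi(t)}H_t^{-1}$ far out along the spiraling strip ends. There the transition interval can be made arbitrarily long, and the original expansion already exceeds $LC_{\mathrm{hol}}/\min\{1,m_h\}$. Together with $D\Psi\geq C_{\mathrm{hol}}^{-1}\min\{1,m_h\}$, this keeps the expansion above $L$ throughout the cutoff.
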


\begin{proof}
We construct $h_{\mathrm{in}}$ using the strip method of
\cite[Proposition~6.2]{bby}. Applying the same construction to
$-X$ gives a map $k$ on $\partial_{\mathrm{out}}M$. Then
$h_{\mathrm{out}}=k^{-1}$ has the required exit properties.

By \cite[Lemma~3.24]{bby} applied to $-X$, the derivative of
$\Gamma^{-1}$ along $G^u_{\mathrm{out}}$ tends uniformly to zero
near $\mathcal L^u$. Since $\Gamma$ sends points approaching
$\mathcal L^s$ to points approaching $\mathcal L^u$, its expansion
along $G^u_{\mathrm{in}}$ tends uniformly to infinity near
$\mathcal L^s$. Choose an open neighborhood $U$ of $\mathcal L^s$
where this expansion exceeds $L$ on $U\setminus\mathcal L^s$.
The complementary strips form a disjoint open cover of the compact
set $\partial_{\mathrm{in}}M\setminus U$, so only finitely many
meet this set. The other strips need no modification.

In a selected strip, choose a full $G^u_{\mathrm{in}}$-segment $I$.
Its image under $\Gamma$ is an entire noncompact $G^u_{\mathrm{out}}$-leaf.
In compatible local arclength coordinates, $\Gamma|_I$ is an increasing diffeomorphism
$g:(0,\ell(I))\to\mathbb R$, with $g'$ tending to infinity at both
ends. For any $A>1$, \cite[Lemma~6.12]{bby} gives an increasing
$\widetilde g$ equal to $g$ near the ends with $\widetilde g'>A$.
Set $h=g^{-1}\circ\widetilde g$.

Extend $h$ by $G^s_{\mathrm{in}}$-holonomy. For a full segment $J$,
write $H_J:I\to J$ for this holonomy and
$h_J=H_JhH_J^{-1}$. The map
$K_J=\Gamma H_J\Gamma^{-1}:\Gamma(I)\to\Gamma(J)$ is
$G^s_{\mathrm{out}}$-holonomy, and
$$
\Gamma\circ h_J
=K_J\circ(\Gamma\circ h)\circ H_J^{-1}.
$$
Full transverse segments in filling strips have uniformly bounded
length. Since $G^s_{\mathrm{out}}$ is smooth and uniformly transverse
to $G^u_{\mathrm{out}}$, the derivatives of $K_J$ and $K_J^{-1}$
are uniformly bounded. Equation~\eqref{eq:s5-holonomy-length} gives
$$
|DH_J^{-1}|\geq
\frac{\ell(I)}{C_{\mathrm{hol}}\sup_J\ell(J)}>0.
$$
Thus expansion greater than $A$ on $I$ gives expansion greater
than $c_I A$ throughout the strip, for $c_I>0$ independent of $h$.
Choose $A$ so that $c_I A>L$.

We now cut off the strip map at both spiraling ends.
On the same reference segment $I$, set
$$
h_\tau=(1-\tau)\operatorname{id}+\tau h,\qquad 0\leq\tau\leq1.
$$
Near each limiting compact leaf, choose an annular coordinate
whose fibers are segments of $G^u_{\mathrm{in}}$, and let
$t\geq0$ be the lifted angular coordinate directed towards the end.
Write $H_t:I\to I_t$ for $G^s_{\mathrm{in}}$-holonomy to the full
fiber segment $I_t$. Choose a smooth cutoff $\chi(t)$ decreasing
from $1$ to $0$, constant near the ends of its transition interval,
and set
$$
\Psi|_{I_t}=H_t h_{\chi(t)}H_t^{-1}.
$$
Outside the two end regions, keep the original holonomy extension.
These definitions agree where $\chi=1$.

Put $m_h=\inf_I Dh>0$ and $M_h=\sup_I Dh$. Bounded distortion gives
$$
C_{\mathrm{hol}}^{-1}\min\{1,m_h\}
\leq D(\Psi|_{I_t})
\leq C_{\mathrm{hol}}\max\{1,M_h\}.
$$
Differentiating in $t$, the additional component transverse to
$G^s_{\mathrm{in}}$ is
$$
DH_t(h_{\chi(t)}(z))\,(h(z)-z)\,\chi'(t).
$$
The maps $H_t$ have uniformly bounded derivative. After a fixed
initial holonomy, they follow contracting compact-leaf holonomy.
The vector field tangent to $G^s_{\mathrm{in}}$, normalized by the angular
coordinate, is also uniformly bounded.
Together with the preceding positive lower bound, this yields
$$
\angle(\Psi_*G^s_{\mathrm{in}},G^s_{\mathrm{in}})
+\angle((\Psi^{-1})_*G^s_{\mathrm{in}},G^s_{\mathrm{in}})
\leq K_h\|\chi'\|_\infty,
$$
where $K_h$ is independent of the position and length of the
transition interval.

Choose the cutoffs after the finitely many maps $h$ have been fixed.
Start each transition sufficiently far along its end that the
original crossing expansion exceeds
$LC_{\mathrm{hol}}/\min\{1,m_h\}$ on every full segment there.
The cutoff preserves each segment, so the lower derivative bound
preserves expansion greater than $L$. Make the transition interval
long enough that $K_h\|\chi'\|_\infty<\epsilon$.
Elsewhere the original strip map already gives expansion greater
than $L$ and preserves $G^s_{\mathrm{in}}$.

The resulting maps are supported in a compact set disjoint from the lamination.
The holonomies are smooth there, so the maps extend smoothly by the identity.
Use $H_Jh_rH_J^{-1}$ in the middle of each strip and
$H_t h_{r\chi(t)}H_t^{-1}$ at its ends to obtain the required
leaf-preserving isotopies.
\end{proof}

We now use these diffeomorphisms to make the boundary jet strongly
admissible while preserving the positive characteristic foliation.

\begin{lemma}[Stretching the boundary jet]
\label{lem:MS-filling-strong-jets}
Let $\psi$ be strongly transverse and let $(\beta_-,\beta_+)$ be
smooth bi-contact forms supporting $X$ whose formal boundary
jet $j=(\lambda,\mu)=(\beta_+,\beta_-)|_{\partial M}$ satisfies
\eqref{eq:s5-jet-identification}, as in
Lemma~\ref{lem:MS-filling-matched-preparation}.
There is a strongly transverse isotopy to a map $\widehat\psi$
and a smooth strongly admissible boundary $X$-jet
$\widehat j=(\widehat\lambda,\widehat\mu)$ satisfying
\eqref{eq:s5-jet-identification} with $\psi$ replaced by
$\widehat\psi$. The jet agrees with $(\lambda,\mu)$ near
$\mathcal L^s\cup\mathcal L^u$, and
$\ker\widehat\lambda=\ker\lambda$ on $\partial M$.
Throughout the isotopy, the gluing maps identify these positive
characteristic foliations.
\end{lemma}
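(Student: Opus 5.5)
The plan is to modify the gluing map by the stretching diffeomorphisms of Lemma~\ref{lem:relative-boundary-stretching}, and then change the jet only by positive rescalings that preserve the kernel of $\lambda$. The target is the two chord inequalities of strong admissibility on transit orbits.

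\textbf{Step 1 (the new gluing map and jet).} Fix $L$ and $\epsilon$, take $h_{\mathrm{in}}$ and $h_{\mathrm{out}}$ from Lemma~\ref{lem:relative-boundary-stretching}, and let $h_{\mathrm{in},r}$ and $h_{\mathrm{out},r}$ be the leaf-preserving isotopies. Since $h_{\mathrm{in}}$ preserves $G^u_{\mathrm{in}}=\ker\lambda^{\mathrm{in}}$ leafwise, $h_{\mathrm{in}}^*\lambda^{\mathrm{in}}$ is a positive multiple of $\lambda^{\mathrm{in}}$; the same holds at the exit with $G^s_{\mathrm{out}}=\ker\lambda^{\mathrm{out}}$. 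Define
$$\widehat\psi_r=h_{\mathrm{out},r}^{-1}\circ\psi\circ h_{\mathrm{in},r}.$$
This is the reverse of the absorption in Remark~\ref{rem:realization-input-self-gluing}. Each $\widehat\psi_r$ still identifies $\ker\lambda$. The isotopies are the identity near $\mathcal L^s$ and $\mathcal L^u$, so the laminations are preserved and strong transversality persists along the path.

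Set $\widehat\lambda=\lambda$ near the laminations and on the exit. On the entrance, take $\widehat\lambda^{\mathrm{in}}=\widehat\psi^*\lambda^{\mathrm{out}}$, which is a positive rescaling of $\lambda^{\mathrm{in}}$ equal to $\lambda^{\mathrm{in}}$ near $\mathcal L^s$. For $\widehat\mu$, take the pullbacks $h^*\mu$ on the respective boundaries, made compatible with $\widehat\psi$ in the same way. These remain $\epsilon$-close in direction to the original $\mu$, by item (2) together with the fact that $\ker\mu$ lies in the stable/unstable sectors spanned near $G^s,G^u$. Admissibility of $(\widehat\mu,\widehat\lambda)$ is $C^0$-open, so it survives for small $\epsilon$; the identity near the laminations preserves straddling there.

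\textbf{Step 2 (chord inequalities).} In the transit coframe, the crossing map now has transition matrix with respect to $(\mu^{\mathrm{in}},\lambda^{\mathrm{in}})$ whose $\lambda$-entry records the expansion of $D(\Gamma\circ h_{\mathrm{in}})$ along $G^u_{\mathrm{in}}$. By item (3) this entry exceeds $cL$, and symmetrically at the exit. Following the computation of Lemma~\ref{lem:one-sided-strong-admissibility}, strong admissibility with rescaling factor $h=1$ amounts to $d>1$ and $a/\Delta>1$. These follow once the diagonal expansions dominate the bounded off-diagonal terms. Near the laminations they are already supplied by Lemma~\ref{lem:long-tube-jet-matching}, using the long transits and the fixed strongly adapted form near $\Lambda$.

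The uniform control needed on the compact remainder comes from three facts:
\begin{itemize}
\item the distortion bound of Lemma~\ref{lem:MS-filling-distortion};
\item the fact that the scattering frame and metric are fixed independently of $L$ and $\epsilon$;
\item the $\epsilon$-closeness of the transported transverse foliations.
\end{itemize}
Together these bound the off-diagonal entries independently of $L$.

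\textbf{Step 3 (smoothness).} The foliations $G$ are only $C^{1+\vartheta}$ at the laminations. However, the maps $h$ are the identity there and smooth on their compact support, so the resulting jet is smooth.

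I expect the main obstacle to be Step 2: showing that the off-diagonal (shear) contributions stay bounded uniformly in $L$ while the diagonal expansion exceeds $L$. This is exactly where the smallness of $\epsilon$ and the bounded holonomy distortion must be used. I would first verify the estimate on a single strip in the annular cutoff coordinates of Lemma~\ref{lem:relative-boundary-stretching}, and only then globalize by finiteness of the strips being modified.
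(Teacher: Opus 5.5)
Your overall plan matches the paper's: stretch with Lemma~\ref{lem:relative-boundary-stretching}, modify the gluing map, transport the jet, and check $d>1$ and $a>\Delta$. Your remarks on strong transversality, on $\ker\lambda$ being identified, and on smoothness are fine. However, Step~2 is wrong as you describe it and would not go through.

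In the coframe $(\mu,\lambda)$, the transition matrix $A$ of the effective crossing map does \emph{not} have off-diagonal entries bounded independently of $L$. All four entries grow like the $G^u$-expansion $E$. Indeed, if $b,c$ stayed bounded while $a,d\to\infty$, then $\Delta=ad-bc\approx ad$ and $a/\Delta\approx 1/d<1$, which is the opposite of what you need. Since $a>\Delta$ is equivalent to $bc>a(d-1)$, the off-diagonal product must nearly cancel $ad$.

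The missing idea is to diagonalize. Work in unit frames $(u_0,s_0)$ and $(u_1,s_1)$ tangent to $G^u_{\mathrm{in}},(h_{\mathrm{in}}^{-1})_*G^s_{\mathrm{in}}$ and $(h_{\mathrm{out}})_*G^u_{\mathrm{out}},G^s_{\mathrm{out}}$. In these frames $\Gamma_L=h_{\mathrm{out}}\circ\Gamma\circ h_{\mathrm{in}}$ acts as $\mathrm{diag}(E,F)$ with $E\geq c_0L$ and $F\leq(c_0L)^{-1}$. This bound uses item (3) together with $L$-independent lower bounds for $Dh_{\mathrm{out}}$ on $G^u_{\mathrm{out}}$ and $Dh_{\mathrm{in}}^{-1}$ on $G^s_{\mathrm{in}}$, which come from bounded full leaf segments and holonomy derivatives.

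Then $A=B_1\,\mathrm{diag}(E,F)\,B_0^{-1}$. The coframe matrices have zero entries from $\lambda^{\mathrm{in}}(u_0)=\lambda^{\mathrm{out}}(s_1)=0$, and all other entries have absolute value in a fixed interval $[m_0,M_0]$ with $m_0>0$. Securing this, by taking $\epsilon$ below half the sector margins, is the real role of item (2), not bounding $b$ and $c$. One obtains $d\gtrsim E$, $\Delta\asymp EF$, hence $a/\Delta\gtrsim F^{-1}$, and $b>0$ once $E/F>(M_0/m_0)^2$.

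That last inequality exposes a second gap. Transit admissibility of the new jet does not follow from $C^0$-openness. The transported $\ker\widehat\mu$ is not $\epsilon$-close to $\ker\mu$: item (2) only controls the images of $G^s_{\mathrm{in}}$ and $G^u_{\mathrm{out}}$. Meanwhile the stretching derivative along $G^u_{\mathrm{in}}$ (resp.\ $G^s_{\mathrm{out}}$) is unbounded in $L$ and tilts $\ker\mu$ strongly. More basically, after pulling back, admissibility is the cyclic-order condition for the original jet relative to $\Gamma_L$, which is not a perturbation of $\Gamma$. So the positivity $a,b,c,d>0$ that your use of Lemma~\ref{lem:one-sided-strong-admissibility} presupposes must come from the same frame computation.

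Finally, your formula stretches in the wrong direction at the entrance. With $\widehat\psi=h_{\mathrm{out}}^{-1}\circ\psi\circ h_{\mathrm{in}}$, the return map is conjugate to $h_{\mathrm{out}}\circ\Gamma\circ h_{\mathrm{in}}^{-1}\circ\psi^{-1}$, and item (3) says nothing about $\Gamma\circ h_{\mathrm{in}}^{-1}$. You need
\[
\widehat\psi=h_{\mathrm{out}}^{-1}\circ\psi\circ h_{\mathrm{in}}^{-1},\qquad
\widehat j^{\mathrm{in}}=(h_{\mathrm{in}}^{-1})^*j^{\mathrm{in}},\qquad
\widehat j^{\mathrm{out}}=h_{\mathrm{out}}^*j^{\mathrm{out}}.
\]
Pulling back by $h_{\mathrm{in}}$ then turns the chord inequalities for $(\widehat j,\Gamma)$ into those for $(j,\Gamma_L)$. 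Keeping $\widehat\lambda^{\mathrm{out}}=\lambda^{\mathrm{out}}$ instead only rescales $\lambda$ by an $L$-independent bounded factor, which is harmless.
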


\begin{proof}
Fix the scattering frame and boundary metric chosen above.
Choose $\epsilon>0$ smaller than half the minimum angular
distance of $G^s_{\mathrm{in}}$ and $G^u_{\mathrm{out}}$
from the boundaries of their respective sectors.
For a parameter $L>1$, apply
Lemma~\ref{lem:relative-boundary-stretching}.
We will choose $L$ after obtaining estimates uniform in $L$. Put
$$
\Gamma_L=h_{\mathrm{out}}\circ\Gamma\circ h_{\mathrm{in}},
\qquad
\widehat\psi=h_{\mathrm{out}}^{-1}\circ\psi\circ
h_{\mathrm{in}}^{-1}.
$$
We estimate the auxiliary crossing map $\Gamma_L$ on the whole
transit domain. At a point $x$, choose locally positive unit vectors
$u_0,s_0$ tangent to
$G^u_{\mathrm{in}},(h_{\mathrm{in}}^{-1})_*G^s_{\mathrm{in}}$.
At $\Gamma_L(x)$ choose positive unit vectors $u_1,s_1$ tangent to
$(h_{\mathrm{out}})_*G^u_{\mathrm{out}},G^s_{\mathrm{out}}$.
Then
\begin{equation}\label{eq:MS-stretched-frame}
D\Gamma_L(u_0)=E u_1,\qquad D\Gamma_L(s_0)=F s_1,
\qquad E\geq c_0L,\quad 0<F\leq(c_0L)^{-1},
\end{equation}
where $c_0>0$ is independent of $L$.
Indeed, each leaf-preserving map moves points along full segments
of its fixed smooth foliation, whose lengths are uniformly bounded.
On the quotient by that tangent direction, its derivative is the
corresponding holonomy derivative and is uniformly bounded above
and away from zero. The angular control makes the original and
transported complementary directions uniformly transverse to the
preserved foliation. Thus $Dh_{\mathrm{out}}$ on
$G^u_{\mathrm{out}}$ and $D(h_{\mathrm{in}}^{-1})$ on
$G^s_{\mathrm{in}}$ have norm bounded below independently of $L$.
Combining these bounds with the two expansions in
Lemma~\ref{lem:relative-boundary-stretching} gives
\eqref{eq:MS-stretched-frame}. Compare \cite[Claims~6.3-6.5]{bby}.
The local orientations are transported consistently, so $E,F>0$.

At $x$ and $\Gamma_L(x)$, respectively, the coframe
matrices in these unit frames have the form
$$
B_0=\begin{pmatrix}p_0&-q_0\\0&w_0\end{pmatrix},
\qquad
B_1=\begin{pmatrix}p_1&-q_1\\r_1&0\end{pmatrix}.
$$
Simultaneous reversal of the local forms and frame vectors leaves
these matrices unchanged.
Here $p_0,q_0,w_0,p_1,q_1,r_1\in[m_0,M_0]$ for constants
$0<m_0<M_0$ independent of $L$. The zero entries follow from
$\lambda^{\mathrm{in}}(u_0)=\lambda^{\mathrm{out}}(s_1)=0$.
The bounds follow from compactness and the fixed angular margins.
Write
$$
\begin{pmatrix}\Gamma_L^*\mu^{\mathrm{out}}\\
\Gamma_L^*\lambda^{\mathrm{out}}\end{pmatrix}
=A\begin{pmatrix}\mu^{\mathrm{in}}\\\lambda^{\mathrm{in}}\end{pmatrix},
\qquad
A=\begin{pmatrix}a&b\\c&d\end{pmatrix}
=B_1\begin{pmatrix}E&0\\0&F\end{pmatrix}B_0^{-1}.
$$
Put $\Delta=\det A=(q_1r_1/(p_0w_0))EF>0$.
Direct multiplication gives
$$
d=\frac{r_1Eq_0}{p_0w_0}
\geq\frac{m_0^2}{M_0^2}E,
\qquad
\frac a\Delta
=\frac{p_1w_0}{q_1r_1F}
\geq\frac{m_0^2}{M_0^2}F^{-1}.
$$
Also $a,c,d>0$, while $b>0$ once $E/F>(M_0/m_0)^2$.
Choose $L>\max\{1,M_0^2/(c_0m_0^2)\}$. Then the ordinary
admissible order holds, and $d>1$, $a>\Delta$. With
$\omega_{\mathrm{in}}=\mu^{\mathrm{in}}\wedge\lambda^{\mathrm{in}}$,
these last inequalities are exactly
$$
\begin{aligned}
(\Gamma_L^*\lambda^{\mathrm{out}}-\lambda^{\mathrm{in}})
\wedge\mu^{\mathrm{in}}&=-(d-1)\omega_{\mathrm{in}}<0,\\
(\Gamma_L^*\lambda^{\mathrm{out}}-\lambda^{\mathrm{in}})
\wedge\Gamma_L^*\mu^{\mathrm{out}}&=(a-\Delta)\omega_{\mathrm{in}}>0.
\end{aligned}
$$

Set
$$
\widehat j^{\mathrm{in}}=(h_{\mathrm{in}}^{-1})^*j^{\mathrm{in}},
\qquad
\widehat j^{\mathrm{out}}=h_{\mathrm{out}}^*j^{\mathrm{out}}.
$$
Pulling back by $h_{\mathrm{in}}$ identifies their crossing
inequalities for $\Gamma$ with those just proved for $j$ and
$\Gamma_L$. The lamination conditions are unchanged, since the
maps are the identity near the laminations. Thus $\widehat j$
is strongly admissible on the whole boundary.

By Remark~\ref{rem:realization-input-self-gluing}, applied with
boundary maps $h_{\mathrm{in},r}^{-1}$ and $h_{\mathrm{out},r}$,
the maps
$$
\psi_r=h_{\mathrm{out},r}^{-1}\circ\psi\circ
h_{\mathrm{in},r}^{-1}
$$
form a strongly transverse isotopy. The pullback definitions give
$\widehat\psi^*\widehat j^{\mathrm{out}}
=\widehat j^{\mathrm{in}}$.
Finally, the leaf-preserving property in
Lemma~\ref{lem:relative-boundary-stretching} gives
$\ker\widehat\lambda=\ker\lambda$ and ensures that every $\psi_r$
identifies the original positive characteristic foliations.
\end{proof}


\subsection{Proof of Theorem~\ref{thm:MS-filling-self-gluing}}

\begin{proof}[Proof of Theorem~\ref{thm:MS-filling-self-gluing}]
Apply Lemma~\ref{lem:MS-filling-matched-preparation}, construct the
invariant boundary foliations, and apply
Lemma~\ref{lem:MS-filling-strong-jets}.
Proposition~\ref{prop:strongly-adapted-gluing} gives hyperbolicity
of the quotient from the resulting strongly admissible boundary jet.
Concatenate the preparation and stretching isotopies, reparametrizing
smoothly at their joining point. After absorbing the collar
preparations into the gluing maps as in
Remark~\ref{rem:realization-input-self-gluing}, the plug vector field
is $X$ throughout.
For complete gluing the quotient is closed, so its maximal invariant
set is the whole manifold and the induced vector field is Anosov.

For the relative assertion, choose compatible local defining coframes for the
prescribed admissible completion $(S_-,S_+)$ agreeing under $\psi$,
and extend them by Theorem~\ref{hypb}. Apply
Lemma~\ref{lem:MS-filling-strong-jets} directly to these supporting
bi-contact forms, omitting boundary preparation. It preserves $S_+$, and
Proposition~\ref{prop:strongly-adapted-gluing} realizes the resulting jet by a strongly
adapted form on the quotient with the required characteristic
foliation.
\end{proof}

\section{Building projectively Anosov flows}
\label{s6}

We apply the extension and gluing results to constructions with periodic
sources and sinks, invariant tori, and prescribed plug dynamics. We use
the unoriented forms of Section~\ref{s2}. Boundary preparations
refer to the operations of Section~\ref{s4}, with their stated hypotheses. They preserve the original plug flow up to smooth oriented orbit
equivalence and are fixed near its maximal invariant set.

\subsection{Flows without saddle periodic orbits}

For a structurally generic projectively Anosov flow with no saddle
hyperbolic periodic orbits, Corollary~\ref{corgeneric} gives a spectral
decomposition consisting of periodic sources and sinks. There is
exactly one of each. Their complement is connected, and its backward
and forward limiting periodic orbits are locally constant. Removing
transverse solid-torus neighborhoods of these two orbits leaves a
product plug. We use this decomposition to obtain the
following classification.

\begin{theorem}[Classification without saddle periodic orbits]
\label{thm:two-orbit-projectively-anosov}
A connected closed oriented $3$-manifold $M$ admits a structurally
generic projectively Anosov flow with no saddle hyperbolic periodic
orbits if and only if $M$ is diffeomorphic to $L(n,1)$ or
$L(4n,2n+1)$ for some
$n\in\mathbb Z$, where $L(0,1)=\mathbb S^2\times\mathbb S^1$.

After a suitable orientation-preserving identification, every such
flow is homotopic through projectively Anosov flows to a standard
model $X_n$ or $X_n^{\mathrm{tw}}$, according as it is transversely
orientable or not.
\end{theorem}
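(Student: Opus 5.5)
The plan is to reduce the flow to a gluing of two solid tori along a product plug, and then read off both the manifold and the homotopy class from the boundary data. Let $X$ be structurally generic projectively Anosov with no saddle periodic orbits. By Corollary~\ref{corgeneric}, the nonwandering set consists of finitely many sources and sinks. Connectedness of $M$ and the filtration argument recalled before the theorem give exactly one source $\gamma_r$ and one sink $\gamma_a$. Removing filtrating solid-torus neighborhoods $V_r,V_a$ leaves a product bi-contact plug $[0,1]\times\mathbb T^2$, with entrance $\partial V_r$ and exit $\partial V_a$. Hence $M=V_r\cup_\phi V_a$ for some gluing $\phi$ of boundary tori, so $M$ has Heegaard genus at most one: it is a lens space or $\mathbb S^2\times\mathbb S^1$.

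To restrict the gluing, I would use the local models near $\gamma_r,\gamma_a$. Each orbit is a dominated source or sink, so its normal multipliers are either both positive (untwisted) or both negative (the twisted M\"obius model). Homotope the bi-contact structure near each orbit to the model of Proposition~\ref{reeb-boundary-source-sink}, or to its twisted remark version. The product plug carries the characteristic bi-foliation on $\partial V_r$ to one on $\partial V_a$ in admissible cyclic order, so projective winding classes agree. Their compact leaves are then isotopic curves, and the Reeb decomposition is preserved up to the admissible homotopy of Remark~\ref{product-homotopy-type}.

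\begin{itemize}
\item In the untwisted case, each boundary torus has two compact leaves, each isotopic to a longitude $\ell$ of its solid torus, with $d(m)=\pm2$ as in Proposition~\ref{prop:solid-torus-meridional-parity}. Matching gives $\phi(\ell_r)=\pm\ell_a$. Since $\phi$ reverses orientation, $\phi(m_r)=n\ell_a\pm m_a$. This yields $L(n,1)$. The integer $n$ is constrained by the winding balance $d_r(m_r)=d_a(\phi(m_r))$, which is automatically satisfiable.
\item In the twisted case, the single compact leaf is a $(2,1)$-type curve. Equating the projective classes (odd values) should force the gluing matrix into the family $L(4n,2n+1)$.
\item A mixed case, with one twisted and one untwisted orbit, is excluded: one coorientation parity is even and the other is not, so the bi-foliations cannot be admissibly related.
\end{itemize}

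Checking this lattice arithmetic is the main obstacle. I would carry it out by writing $\phi$ in meridian–longitude bases and imposing that the compact-leaf slopes and the $d$ classes agree.

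For the converse and the normal form, I build $X_n$ and $X_n^{\mathrm{tw}}$ directly. Take the source and sink models with matched boundary bi-foliations, connect them by a product plug via Theorem~\ref{prob}, and glue with Theorem~\ref{thm:smooth-matched-gluing}. For homotopy uniqueness, given any such flow, apply a family version of the preceding decomposition:
\begin{itemize}
\item Homotope the structure near each orbit to the linear model. Domination constants vary in a convex set of pairs $\nu<\mu$ of fixed sign, and Theorem~\ref{dom}'s cone argument allows interpolation.
\item Identify the boundary bi-foliations with those of the model, using contractibility of admissible bi-foliations within a winding class (Remark~\ref{product-homotopy-type}).
\item Lift this homotopy to the product plug by the relative parametric part of Theorem~\ref{prob}.
\end{itemize}
The remaining freedom is the circle factor of the component and the mapping-class choice of $\phi$. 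The circle factor is absorbed by rotating the models. The mapping-class choice is absorbed by the orientation-preserving identification allowed in the statement. Transverse orientability corresponds exactly to the untwisted case, which gives the split into $X_n$ versus $X_n^{\mathrm{tw}}$.
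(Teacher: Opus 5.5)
Your \emph{only if} direction follows the paper's proof: the decomposition $V_r\cup P\cup V_a$, the exclusion of the mixed case by coorientability, and the matching of compact-leaf slopes. Your winding-class route to the slopes works once you note two facts: $d\neq0$ by meridional parity, and the compact-leaf class spans $\ker d$. The paper instead uses a Poincar\'e--Bendixson argument on the coorientation cover.

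The twisted arithmetic you left open does close up. Let $\lambda$ be the half-turn longitude and $c=2\lambda-m$ the compact-leaf class. Then $\phi(c_r)=\pm c_a$ together with orientation reversal forces $\phi(m_r)=\pm\bigl(4n\lambda_a-(2n+1)m_a\bigr)$ for some $n\in\mathbb Z$. A small correction: the equation $d_r(m_r)=d_a(\phi(m_r))$ constrains only signs, not $n$.

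The genuine gap is in the \emph{if} direction. The sentence ``take the source and sink models with matched boundary bi-foliations'' is exactly what has to be proved when $n\neq0$. The standard models match under the $n=0$ gluing. But $\psi_n$ differs from that gluing by the twist $\tau_n(\theta,\varphi)=(\theta+n\varphi,\varphi)$ along the compact-leaf slope, and $\tau_n^*$ shears every characteristic direction that has a meridional component.

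Since $\tau_n$ fixes the winding class, the topological necessary condition holds. However, Theorem~\ref{prob} needs the strict pointwise cyclic order of Definition~\ref{admissible-bifoliation} between the $\psi_n$-transported source bi-foliation and the sink bi-foliation. Nothing you cite produces it. In the $\theta$-invariant standard models, the $\tau_n$-pullback even coincides with the original line fields along the compact leaves. So neither exact matching nor the strict order is available there.

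The missing idea is a boundary preparation that realizes the twist. The paper writes the central form as $\eta=A(\varphi)\,d\theta+B(\varphi)\,d\varphi$ with dividing function $f(\varphi)$, and uses the shear $\eta_r=A\,d\theta+(B+rnA)\,d\varphi$. For this shear, $f\,d\eta_r+\eta_r\wedge df=(Af'-fA')\,d\theta\wedge d\varphi$ is independent of $r$. Theorem~\ref{prop:coordinated-divided-deformation} therefore realizes the path in a collar, with endpoint the $\tau_n$-pullback. The collar transit map is then absorbed into the gluing map via Lemma~\ref{lem:collar-straightening}. Without this step, your construction yields only $X_0$ and $X_0^{\mathrm{tw}}$, that is, only $\mathbb S^2\times\mathbb S^1$.

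In the homotopy argument, Remark~\ref{product-homotopy-type} says each component has the homotopy type of a circle, not that it is contractible. What is actually used is a different space: with the entrance bi-foliation fixed, the admissible exit bi-foliations form a convex set ($0<a_-<a_+<1$ in angular coordinates). Only path-connectedness is needed to produce a homotopy, so there is no circle factor to absorb by rotating the models.
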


\begin{proof}
Use the standard source and sink models of
Proposition~\ref{reeb-boundary-source-sink} and its twisted version.
In local Floquet coordinates they have the form
$$
X=\partial_\theta+\nu x\partial_x+\mu y\partial_y,
$$
with $0<\nu<\mu$ at the source and $\nu<\mu<0$ at the sink.
Use two untwisted models or two twisted models, with identified ordered
boundary bi-foliations. Their characteristic foliations have two Reeb
annuli in the first case and one in the second.
Write $(\lambda_R,m_R)$ and $(\lambda_A,m_A)$ for longitude-meridian
bases. In the untwisted case the compact-leaf slope is $\lambda$, and
the framed gluing corresponding to $n$ is
\begin{equation}\label{eq:s6-source-sink-gluing}
(\psi_n)_*(\lambda_R)=\lambda_A,
\qquad
(\psi_n)_*(m_R)=n\lambda_A-m_A.
\end{equation}

For a twisted model, choose the half-turn longitude so that the
compact-leaf slope is $c=2\lambda-m$. The corresponding gluings are
\begin{equation}\label{eq:s6-twisted-source-sink-gluing}
\begin{aligned}
(\psi_n^{\mathrm{tw}})_*(c_R)&=c_A,\\
(\psi_n^{\mathrm{tw}})_*(\lambda_R)&=(n+1)c_A-\lambda_A,\\
(\psi_n^{\mathrm{tw}})_*(m_R)&=4n\lambda_A-(2n+1)m_A.
\end{aligned}
\end{equation}
Thus the resulting manifolds are $L(n,1)$ and $L(4n,2n+1)$, respectively.

All these gluings can be realized by boundary preparation. Indeed,
in coordinates with $\theta$ along the compact-leaf slope, write locally
$\eta=A(\varphi)d\theta+B(\varphi)d\varphi$, with dividing function
$f=f(\varphi)$. The shear
$$
\eta_r=A(\varphi)d\theta+
\bigl(B(\varphi)+rnA(\varphi)\bigr)d\varphi,
\qquad r\in[0,1],
$$
preserves the dividing inequality, since
$$
f\,d\eta_r+\eta_r\wedge df
=(Af'-fA')\,d\theta\wedge d\varphi.
$$
The coefficients and dividing function have common transition signs.
Applying the same shear to the complementary form preserves their
wedge product. Theorem~\ref{prop:coordinated-divided-deformation}
therefore realizes this path in a boundary collar, relative to the
supporting bi-contact structure near the periodic orbit. Its endpoint
is the pullback by
$\tau_n(\theta,\varphi)=(\theta+n\varphi,\varphi)$.
Starting with the matching gluing at $n=0$, this gives
\eqref{eq:s6-source-sink-gluing} or
\eqref{eq:s6-twisted-source-sink-gluing}.

Choose a smaller transverse source neighborhood
$V_R'\subset\operatorname{int}(V_R)$. The shell
$P=V_R\setminus\operatorname{int}(V_R')$ is a product
bi-contact plug whose outer boundary bi-foliation matches that of the sink.
Theorem~\ref{thm:smooth-matched-gluing} gives a smooth bi-contact
structure on
$$
V_R'\cup P\cup_\psi V_A.
$$
Here $\psi$ is the chosen gluing.
The supporting bi-contact structure makes the resulting model
$X_n$ or $X_n^{\mathrm{tw}}$ projectively Anosov.
Every orbit crossing the product travels from the source neighborhood
to the sink neighborhood, so the two original hyperbolic periodic
orbits form the entire nonwandering set. Thus both models are Morse-Smale.

Now let $X$ satisfy the hypotheses. After a positive time change,
Floquet coordinates and a $C^1$-small cutoff of the nonlinear terms
make $X$ linear near its two periodic orbits. Projective hyperbolicity
and structural stability persist. For
$$
\beta_u=dy-\mu y\,d\theta,
\qquad
\beta_s=dx-\nu x\,d\theta,
$$
the supporting forms near each orbit can be written locally, up to positive
factors, as
$\alpha_+=\beta_u-e^{b_+}\beta_s$ and
$\alpha_-=\beta_u+e^{b_-}\beta_s$.
Their contact conditions are $Xb_\pm<\mu-\nu$. A slow cutoff
interpolating $b_\pm$ to zero therefore gives the standard bi-contact
models near the periodic orbits, without changing the structures
outside these neighborhoods. The rates can likewise be joined to
those of the reference models within the ordered source and sink ranges.

Remove standard neighborhoods $V_R,V_A$. The remainder has empty
maximal invariant set, so flowing its entrance boundary to its exit
boundary identifies it, after a positive time change, with
$(\mathbb T^2\times[0,1],\partial_t)$. Let $\Gamma$ be its crossing
map. Product admissibility makes
$\Gamma_*\mathcal S_+^R$ transverse to $\mathcal S_+^A$.
The two periodic neighborhoods are both untwisted or both twisted,
since their boundary bi-foliations have the same coorientability.
Two transverse Reeb-annulus foliations have the same compact-leaf
slope. To see this, pass to their common coorientation cover.
A vector field directing one lifted foliation
points inward along both boundary circles of a Reeb annulus of the
other, or outward along both. After reversing time if necessary,
Poincar\'e-Bendixson gives an essential periodic orbit inside the
annulus. Thus $\Gamma$ identifies the compact-leaf slopes up to sign.
With compatible orientations of the framed bases, $\Gamma$ has
the action in \eqref{eq:s6-source-sink-gluing} or
\eqref{eq:s6-twisted-source-sink-gluing} for some $n$.
This gives the asserted list of manifolds.

It remains to compare $X$ with the corresponding standard model. Normalize the periodic
neighborhoods and fix the entrance characteristic bi-foliation on the
complementary product. The admissible exit bi-foliations form a
contractible space. Their angles in the interval between the entrance
directions satisfy $0<a_-<a_+<1$, and these inequalities are convex.

Using the standard neighborhood identifications in the fixed framed
gluing class, realize this boundary homotopy on the adjacent periodic
neighborhoods, preserving the standard models near the periodic orbits. The relative and parametric
statement of Theorem~\ref{prob} extends it over the product, and its
fixed-boundary contractibility removes the remaining choice of
supporting extension. After an orientation-preserving identification
$h$ to the corresponding lens space, this gives the required homotopy
from $h_*X$ to $X_n$ or $X_n^{\mathrm{tw}}$.

Throughout the homotopy, transversality to the product slices prevents
additional recurrent orbits, and the source and sink remain hyperbolic.
Hence every intermediate flow is Morse-Smale, and structural stability
gives topological orbit equivalence of the endpoints.
\end{proof}

\subsection{Suspended-gradient plugs and saddle counts}

We replace an elementary attracting solid torus by one containing
$k$ saddle and $k+1$ attracting periodic orbits, while preserving
the vector field and both supporting contact forms near its boundary.

\begin{theorem}[Arbitrarily many saddle periodic orbits]
\label{thm:arbitrary-saddle-count}
For every integer $k\geq0$, there is a closed oriented $3$-manifold
carrying a transversely oriented projectively Anosov Morse-Smale flow
with exactly $k$ saddle periodic orbits. The flow may be chosen to
have one repelling periodic orbit and $k+1$ attracting periodic
orbits. In particular, it has $2k+2$ periodic orbits in total and
zero topological entropy.
\end{theorem}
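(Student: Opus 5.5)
Start from the two-orbit model $X_n$ of Theorem~\ref{thm:two-orbit-projectively-anosov} on $L(n,1)$, with one source and one sink, and replace the standard attracting solid torus $V_A$ by an attracting bi-contact solid torus containing $k$ saddle and $k+1$ attracting orbits. The case $k=0$ is $X_n$ itself, so assume $k\geq1$. The natural candidate is the attracting cap of Proposition~\ref{prop:odd-winding-cap-models} with oriented winding $w(m)=\pm(2k+1)$. It has $(|w|+1)/2=k+1$ sinks and $(|w|-1)/2=k$ saddle orbits. It is the time reversal of the normal thickening of an annular vector field $Y$ with $2k+1$ alternating periodic orbits, thickened by $Kz\partial_z$ with $K<0$. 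Its boundary bi-foliation is coorientable, so the flow is transversely oriented near it.

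The matching repelling piece should be taken with the same boundary data, and not as the elementary source. Take a suspended-gradient or twisted-saddle-free repelling piece: the repelling cap of Proposition~\ref{prop:odd-winding-cap-models} with the same odd winding would add saddles. Instead, precompose the annular double of $Y$ on the exit of a standard source torus by boundary preparation. Concretely, I would take the repelling model to be the elementary source $V_R$. Its boundary has two Reeb annuli and $w(m_R)=\pm1$. Then apply Lemma~\ref{lem:gradient-boundary-simplification} in reverse (time-reversed, at the entrance of the attracting cap) to reduce the $2(2k+1)$ Reeb annuli of the cap boundary to two. This does not obviously work, since each two-saddle block adds saddles.

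A cleaner route, which I would actually pursue, avoids saddle blocks. Realize the cap boundary bi-foliation on the exit of a product collar around $V_R$ using Proposition~\ref{stable-nonhyperbolic-cap-realization} run on the source side. Choose $Y'$ on the annulus for the repelling side to be the field with a single repelling orbit, and the annular double of the sink side to be compatible with its image. Admissibility through the product plug is governed by Remark~\ref{product-homotopy-type} and Lemma~\ref{lem:nonconvex-bicontact-realization}. Only the projective winding class must agree across a product, and between the two caps it may differ only through the gluing map. So I would choose the gluing so that the meridian of the sink cap, which has $w=2k+1$, is sent to a curve on which the source boundary has winding $2k+1$. Since the source bi-foliation has $w=(1,0)$ in (meridian, longitude) coordinates and its winding class is linear, this means mapping $m_A$ to $(2k+1)m_R+j\lambda_R$ with $\gcd(2k+1,j)=1$. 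The resulting closed manifold is a lens space or $S^2\times S^1$-type union of two solid tori; its identity does not matter, since the statement only asks for some closed oriented $3$-manifold.

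Once the classes are matched, first prepare both boundary bi-foliations to be identified. Use the compatible annular double hypothesis on the sink side together with Proposition~\ref{stable-nonhyperbolic-cap-realization}, and on the source side use a product collar and Theorem~\ref{prob}, where prescribed admissible exit data suffices. Then glue by Theorem~\ref{thm:smooth-matched-gluing}, or Theorem~\ref{thm:stable-gluing}. The result is supported by a bi-contact structure, so it is projectively Anosov by Theorem~\ref{dom}, with coorientable planes and hence transversely oriented.

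For the dynamics, the product region is crossed once, from the source torus to the sink cap, so no orbit returns. The nonwandering set is therefore the source together with the $2k+1$ orbits in the cap. These are hyperbolic, and the transversality of stable and unstable manifolds is inherited from the Morse-Smale annular field $Y$ and the normal expansion. The flow is thus Morse-Smale with $2k+2$ periodic orbits, and a finite set of periodic orbits forces zero topological entropy.

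The main obstacle is the boundary matching step. The exit bi-foliation transported from the source must be made to coincide exactly with one admitting a compatible annular double for the cap, through an admissible product. I would first try to do this by a shear as in the proof of Theorem~\ref{thm:two-orbit-projectively-anosov}, combined with elementary stabilizations (Lemma~\ref{lem:stabilization-realization}) that insert parallel strips. These should raise the source side's Reeb arrangement, pulled back under the chosen gluing, to the $2(2k+1)$-annulus arrangement of the cap.
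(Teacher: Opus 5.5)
\textbf{There is a genuine gap.} The plan fails at the step you flag as the main obstacle, and the failure is not technical: the boundary matching is impossible for $k\geq1$.

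You are right that only the projective winding class has to agree across a product. In a product region with flow transverse to the slices, each characteristic line field varies continuously through line fields on the torus (compare Remark~\ref{product-homotopy-type}). So the gluing map must pull back the winding class on one side to the class on the other, up to sign.

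The two classes cannot be matched:
\begin{itemize}
\item The elementary source of Proposition~\ref{reeb-boundary-source-sink} has two Reeb annuli with longitudinal compact leaves. So $w(\lambda_R)=0$ and $w(m_R)=\pm1$, and its class is the primitive class $\pm m_R^*$.
\item The cap of Proposition~\ref{prop:odd-winding-cap-models} with $w(m_A)=\pm(2k+1)$ also has $w(\ell_A)=0$. Its class is therefore $\pm(2k+1)m_A^*$, which is divisible by $2k+1\geq3$.
\end{itemize}
Divisibility is a diffeomorphism invariant, so no gluing map exists. Your computation only checked the image of $m_A$. The condition $w(\ell_A)=0$ forces $\phi(\ell_A)=\pm\lambda_R$, and then the matrix of $\phi$ has determinant $\pm(2k+1)$.

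The repair tools you propose cannot change this:
\begin{itemize}
\item The shear from the proof of Theorem~\ref{thm:two-orbit-projectively-anosov} only changes the gluing diffeomorphism.
\item Elementary stabilizations, by definition, preserve both the reduced Reeb arrangement and the homotopy class of the projective directions. They can never turn two turning annuli into $2(2k+1)$.
\item Running Proposition~\ref{stable-nonhyperbolic-cap-realization} on the source side with a one-orbit annular field again gives a primitive boundary class, so it meets the same obstruction.
\end{itemize}

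\textbf{The missing idea} is to place the $k$ saddles and $k+1$ sinks inside a solid torus whose boundary data coincide exactly with those of the elementary sink. Then no matching is needed at all. This is what the paper does.
\begin{itemize}
\item On $\mathbb S^1_\theta\times D$, take $X=\partial_\theta-ax\,\partial_x+g(x,y)\,\partial_y$.
\item Here $g=f(y)$ near $x=0$, where $f$ has $2k+1$ alternating simple zeros, and $g=-y$ near $\partial D$.
\item Choose $a>\|\partial_xg\|_\infty+\|\partial_yg\|_\infty$.
\end{itemize}
The forms $\alpha_\pm=\beta_u\mp\beta_s$, with $\beta_u=dy-g\,d\theta$ and $\beta_s=dx+ax\,d\theta$, have the correct contact signs because the strong $x$-contraction dominates the derivatives of $g$. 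Near the boundary, the field and both forms equal the elementary attracting model with rates $-a<-1$. So replacing the sink cap of $X_n$ gives a closed bi-contact structure directly.

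This is consistent with a primitive boundary class: the orbits now lie along a segment in a meridian disk, and Poincar\'e-Hopf gives $(k+1)-k=1$. Absence of saddle connections, and hence the Morse-Smale property, follows from the Lyapunov function $Bx^2-\int_0^y f$.
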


\begin{proof}
Let $D$ be a slightly rounded square $[-1,1]^2$. Choose a smooth
function $f:[-1,1]\to\mathbb R$, equal to $-y$ near the endpoints,
with exactly $2k+1$ simple zeros
$$
y_1<\cdots<y_{2k+1}.
$$
Choose $f$ linear near each zero. Its derivatives at these zeros
alternate in sign, with $f'(y_i)<0$ for odd $i$ and $f'(y_i)>0$
for even $i$. Using a cutoff in $x$, choose a smooth function $g$ on
$D$ such that
$$
g(x,y)=f(y)\quad\text{near }x=0,
\qquad
g(x,y)=-y\quad\text{near }\partial D.
$$
Choose
$$
a>\|\partial_x g\|_\infty+\|\partial_y g\|_\infty
$$
and set, on $V=\mathbb S^1_\theta\times D$,
$$
X=\partial_\theta+Y,
\qquad
Y=-ax\partial_x+g(x,y)\partial_y.
$$
Define
$$
\beta_u=dy-g\,d\theta,
\qquad
\beta_s=dx+ax\,d\theta,
\qquad
\alpha_\pm=\beta_u\mp\beta_s.
$$
For $\Omega=d\theta\wedge dx\wedge dy$, direct calculation gives
$$
\begin{aligned}
\alpha_+\wedge d\alpha_+
&=(a+\partial_y g+\partial_x g)\Omega>0,\\
\alpha_-\wedge d\alpha_-
&=-(a+\partial_y g-\partial_x g)\Omega<0,\\
\alpha_-\wedge\alpha_+&=2\iota_X\Omega.
\end{aligned}
$$
Thus $(\ker\alpha_-,\ker\alpha_+)$ supports $X$. Near $\partial V$,
both the vector field and the forms agree exactly with the elementary
attracting model of Proposition~\ref{reeb-boundary-source-sink},
with rates $-a<-1<0$.

Since $\dot x=-ax$, the periodic orbits are precisely
$\gamma_i=\mathbb S^1\times\{(0,y_i)\}$, with normal exponents
$-a$ and $f'(y_i)$. Hence there are $k$ saddles and $k+1$ sinks.
The vector field $Y$ is gradient-like. For sufficiently large $B>0$,
$$
F(x,y)=Bx^2-\int_0^y f(s)\,ds
$$
strictly decreases outside its critical points, since
$YF=-2aBx^2-f(y)g(x,y)$ and $g=f$ near $x=0$.
Every saddle's unstable branches lie in $x=0$ and converge to the
adjacent sinks. There are therefore no saddle connections.

Removing small standard neighborhoods of the $k+1$ sinks gives a
hyperbolic suspension plug
$$
M_k=\mathbb S^1\times\Sigma_k,
\qquad
\Sigma_k=D\setminus\bigcup_{j=1}^{k+1}\operatorname{int}D_j,
$$
with one entrance, $k+1$ exits, and precisely the $k$ saddle orbits
as its maximal invariant set. Its induced boundary bi-foliation is
admissible. Each exit bi-foliation is exactly that of the elementary
attracting cap removed there, while the entrance bi-foliation is unchanged
from the original attracting model.

Take a source-sink example from
Theorem~\ref{thm:two-orbit-projectively-anosov}, choosing its
elementary attracting model with these rates, and replace that cap
by $V$. The vector fields and both contact forms agree on a boundary
collar, so the replacement directly gives a smooth closed bi-contact
structure and hence a projectively Anosov flow.

No recurrent orbit crosses the boundary of $V$. The nonwandering
set consequently consists of the original source and the $2k+1$
periodic orbits in $V$. There are no saddle connections, and
intersections involving the unstable manifold of the source or
the stable manifold of a sink are automatically transverse. The
flow is therefore Morse-Smale, with the asserted orbit count and
zero topological entropy.
\end{proof}

\begin{remark}
One can show that, for every $n\in\mathbb Z$ and $k\geq0$,
$L(n,1)$ admits a projectively Anosov Morse-Smale flow with exactly
one repelling periodic orbit, $k$ saddle periodic orbits, and
$k+1$ attracting periodic orbits.
\end{remark}
\subsection{Capping by finite Morse-Smale pieces}
\label{subsec:finite-PA-capping}

The saddle attachments of Section~\ref{s3} prepare
boundary bi-foliations for the non-hyperbolic caps without assuming
coorientability, winding parity, or symmetry. This builds on the finite capping constructions
of \cite{bbp}.

\begin{lemma}[Preparation for finite capping]
\label{lem:finite-cap-preparation}
Let $T$ be a boundary component of a bi-contact plug.
Finitely many twisted-saddle and two-saddle attachments and product
collars produce a terminal boundary bi-foliation admitting a
compatible annular double, with the convention for an attracting
cap at an exit and a repelling cap at an entrance. The characteristic
bi-foliation on $T$ is preserved, and the added pieces have only
finitely many periodic orbits, all hyperbolic.
\end{lemma}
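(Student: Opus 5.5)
We argue at an exit component $T$; the entrance case follows by reversing time, which swaps attracting and repelling caps. The plan is a sequence of attachments of plugs to $T$, each with entrance bi-foliation exactly the current one. Each is glued by Theorem~\ref{thm:smooth-matched-gluing}, so the characteristic bi-foliation on $T$ is preserved at every step.

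\textbf{Step 1: coorientability and winding parity.} If the bi-foliation on $T$ is noncoorientable, attach one twisted-saddle block $Q$. Proposition~\ref{prop:twisted-saddle-preparation} allows the bi-foliation to be prescribed exactly on its entrance, and it yields a coorientable exit. If the resulting oriented winding is even, attach two further twisted blocks; by the same proposition, these take even oriented winding to odd. After at most three twisted saddles, the exit bi-foliation is coorientable with odd oriented winding. In particular it is nonzero modulo two, as required by Proposition~\ref{prop:solid-torus-meridional-parity} for any solid-torus cap.

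\textbf{Step 2: Reeb simplification.} Apply a small perturbation in a product collar, realized by Corollary~\ref{cor:small-nonconvex-realization}, so that the bi-foliation is Morse-Smale with hyperbolic compact leaves. Then iterate Lemma~\ref{lem:gradient-boundary-simplification}. Each two-saddle block strictly lowers the number of positive turning annuli of $S_-$ and creates none in either foliation. After finitely many blocks, both foliations have only negative turning annuli. Odd oriented winding is preserved, since $w$ drops by $2$ each time, so the arrangement is nonempty. The new bands can be chosen disjoint from previous exit laminations, as in the remark following that lemma. This avoids saddle connections, so the only added periodic orbits are the hyperbolic saddles of the blocks.

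\textbf{Step 3: producing the annular double.} Let $G$ be the common reduced Reeb arrangement. Choose an annular Morse-Smale field $Y$ as in Proposition~\ref{prop:odd-winding-cap-models}, with $|k|$ periodic orbits matching the compact leaves of $G$ and the meridian given by the winding class. Its double $G_Y$ then has the same arrangement of Reeb components as $G$, possibly after elementary stabilizations (Lemma~\ref{lem:stabilization-realization}) that insert parallel strips to match the counts. Within a product collar, deform the bi-foliation using Lemma~\ref{lem:nonconvex-bicontact-realization}. The target is a foliation $h_*G_Y$ satisfying the order condition \eqref{eq:cap-double-order}: it must be transverse to both $S_\pm$ and lie in the sector where $\eta_-(g)\eta_+(g)>0$. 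Concretely, take $g$ inside the positive cone $\mathcal C$ between the two directions. Tilt the compact leaves of $G$ so that they are straight in the common slope, and give their Reeb turning the same sense in both foliations. Only negative turning survives Step 2, and that is exactly what makes a single foliation squeezed between $S_-$ and $S_+$ possible.

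The main obstacle is this last matching. We must show that once only negative turning annuli remain, a foliation in the cone between $S_-$ and $S_+$ exists that is diffeomorphic to a reflection-symmetric annular double, with $h$ preserving the meridian class. I would first try the straightforward construction: interpolate angularly between $S_-$ and $S_+$ (the angle bisector). This is automatically in the sector, and its compact leaves and Reeb annuli are inherited from the common arrangement. I would then use stabilizations and the freedom in choosing $Y$ to realize the resulting sequence of compact-leaf holonomy signs as an annular double. If the symmetry fails, I would add one more two-saddle block or stabilization pair to balance the arrangement.
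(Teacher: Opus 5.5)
Your Steps~1 and~2 follow the paper's proof: at most three twisted blocks by Proposition~\ref{prop:twisted-saddle-preparation}, a Morse--Smale perturbation in a product collar by Corollary~\ref{cor:small-nonconvex-realization}, and iteration of Lemma~\ref{lem:gradient-boundary-simplification}. Two details are inaccurate, however.

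First, that lemma only eliminates positive turning annuli of $S_-$. It does not clear them from $S_+$, and transverse foliations need not share compact leaves, so there is no \emph{common} reduced arrangement. The paper works with $S_-$ alone. It chooses $G=\langle g\rangle$ $C^1$-close to $S_-$, so $G$ inherits the Morse--Smale arrangement of $S_-$.

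Second, $G$ must lie in the attracting-cap quadrant $\eta_-(g)\eta_+(g)<0$. At an exit this is the time reversal of \eqref{eq:cap-double-order}, not the inequality itself. This also shows that squeezing a foliation between $S_-$ and $S_+$ is never the difficulty. Removing the positive turning annuli serves a combinatorial purpose instead.

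The genuine gap is Step~3, which you leave open. Two ingredients are missing.

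\textbf{Normal form and symmetrization.} Suppose only $R_-$ and $P$ annuli remain and the oriented winding is $-n$ with $n$ odd. Then there are $2n$ turning annuli, and the word of $G$ is $R_-P^{2p_1}\cdots R_-P^{2p_{2n}}$. Each exponent is even because the compact-leaf signs between consecutive $R_-$ annuli must alternate; a repeated sign would create another turning annulus. Now insert cancelling pairs of compact leaves in the auxiliary foliation $G$ alone. This is arbitrarily $C^0$-small, preserves the quadrant, and touches neither $S_\pm$. It equalizes the exponents to give $(R_-P^{2p})^{2n}$. This word has a reflection-symmetric representative $H$. Oddness of $n$ is what makes cutting through two opposite turning annuli yield an annular field with opposite transverse directions at its two boundary circles. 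That field can be oriented outward at both circles, and its annular double is $H$. Your bisector candidate gives no such control: a line field between $S_-$ and $S_+$ need not be Morse--Smale and does not inherit either arrangement.

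\textbf{Realization.} $H$ need not be smoothly conjugate to $G$, nor lie in the cone of the current bi-foliation. Lemma~\ref{lem:nonconvex-bicontact-realization}, with $X$ fixed, only reaches outer bi-foliations whose defining covectors stay in $\mathcal C_x$. The paper therefore proceeds in two stages:
\begin{enumerate}
\item It uses Theorem~\ref{prob} to reach small tilts about $G$.
\item It joins $G$ to $H$ through nonsingular Morse--Smale foliations divided by transverse circles, with forms and dividing functions supplied by Lemma~\ref{lem:parametric-divided-forms}. It then realizes this path by the coordinated divided deformation of Theorem~\ref{prop:coordinated-divided-deformation}.
\end{enumerate}
The second stage changes the vector field in the added collar but keeps it transverse to the slices, so no periodic orbits are created. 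Without these two steps, the existence of a compatible annular double is not established.
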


\begin{proof}
By Proposition~\ref{prop:twisted-saddle-preparation}, at most three
twisted-saddle attachments make the terminal bi-foliation coorientable
with odd oriented winding, preserving the bi-foliation on $T$.
Add a product collar. By Peixoto's density theorem and
Corollary~\ref{cor:small-nonconvex-realization}, its outer
characteristic bi-foliation may be made Morse-Smale by a
$C^1$-small smooth perturbation, preserving transversality and
odd oriented winding while keeping the supported vector field and the
bi-foliation on $T$ fixed. Consider an exit component.
Reverse time for an entrance.
Repeatedly apply Lemma~\ref{lem:gradient-boundary-simplification}
until $S_-$ has only negative-turning Reeb annuli and parallel
annuli. Each attachment changes its oriented winding by $-2$, so the final
oriented winding is $-n$ for some positive odd integer $n$.

Write the current characteristic bi-foliation as
$S_\pm=\ker\eta_\pm$. Choose a smooth foliation $G=\langle g\rangle$,
sufficiently $C^1$-close to $S_-$, with
$\eta_-(g)\eta_+(g)<0$, the attracting cap quadrant.
It has the same Morse-Smale arrangement as $S_-$. In the notation
of Section~\ref{subsubsec:gradient-topological-admissibility}, its
cyclic annular word is
$$
R_-P^{2p_1}\cdots R_-P^{2p_{2n}},
\qquad p_i\geq0.
$$
There are $2n$ turning annuli, and the intervening parallel annuli
occur in pairs because the adjacent turning annuli have the same
holonomy sign. Inserting cancelling pairs of compact leaves in
$G$ increases any $p_i$ by one. These changes affect only $G$ and
can be made arbitrarily $C^0$-small, preserving the quadrant
condition. Taking $p=\max_i p_i$ gives the word
$$
\bigl(R_-P^{2p}\bigr)^{2n}.
$$

This word has a smooth reflection-symmetric representative $H$.
Cutting through the middle of two opposite turning annuli gives
a vector field on an annulus with opposite transverse directions at its two
boundary circles, since $n$ is odd. Orient it outward along both
circles. Its annular double has the foliation $H$. Use the
time-reversed model for the attracting cap.

Straighten the compact leaves and interpolate their hyperbolic
local models and the transverse components between them,
preserving their signs. This gives a smooth path from $G$ to $H$
through nonsingular Morse-Smale foliations. Each is divided by
transverse circles in the complementary annuli.
Lemma~\ref{lem:parametric-divided-forms} supplies smooth defining
forms and dividing functions along this path.

Choose the attracting version of the collar model in
Theorem~\ref{prop:coordinated-divided-deformation}, with sufficiently
small tilts about $G$ in the admissible order. Theorem~\ref{prob}
joins the current boundary bi-foliation to that on the inner boundary
of this model, preserving the model near the joining torus. The coordinated
divided deformation then realizes the path from $G$ to $H$ on
this collar. The terminal bi-foliation consists of small tilts
about $H$ in the attracting cap quadrant and therefore admits
the required compatible annular double. The supported vector fields
remain transverse to every collar slice, so the collars introduce
no periodic orbits. Each twisted block contributes one saddle orbit
and each two-saddle block contributes two.
\end{proof}

\begin{theorem}[Finite projectively Anosov capping]
\label{thm:finite-PA-capping}
Let $(M,\xi_-,\xi_+,X)$ be a bi-contact plug, and let $\mathcal T$
be any union of boundary components. Attaching
finitely many saddle plugs and attracting or
repelling solid-torus caps produces a bi-contact plug
$(\widehat M,\widehat\xi_-,\widehat\xi_+,\widehat X)$ with
$$
\partial\widehat M=\partial M\setminus\mathcal T,
\qquad \widehat X|_M=X.
$$
The original contact structures are preserved outside arbitrarily
small collars of $\mathcal T$. The added nonwandering dynamics
consist of finitely many hyperbolic periodic orbits, and every
orbit crossing an attaching torus is wandering.

If $\mathcal T=\partial M$, the resulting closed flow is
projectively Anosov and has the same topological entropy as
$X|_{\operatorname{Inv}(M,X)}$. It has finitely many periodic
orbits whenever $(M,X)$ does. If $(M,X)$ is partially hyperbolic and
$\mathcal T\subset\partial_{\mathrm{in}}M$, the resulting plug
is partially hyperbolic.
\end{theorem}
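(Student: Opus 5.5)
The plan is to treat each boundary component $T\subset\mathcal T$ separately, in disjoint collars. At an exit component we attach an attracting cap; at an entrance component, a repelling cap.

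\textbf{Preparing and capping one component.} Apply Lemma~\ref{lem:finite-cap-preparation} to $T$. This yields a finite chain of twisted-saddle blocks (Proposition~\ref{prop:twisted-saddle-preparation}), two-saddle suspended-gradient blocks (Lemma~\ref{lem:gradient-boundary-simplification}) and product collars. The chain starts at the given characteristic bi-foliation on $T$ and ends at a terminal bi-foliation that admits a compatible annular double of the correct type. Each interface between successive pieces carries identical ordered bi-foliations. We therefore glue consecutive pieces with Theorem~\ref{thm:smooth-matched-gluing}, or with Theorem~\ref{thm:stable-gluing} for the collar preparations. These operations modify the contact structures only in arbitrarily thin collars, and they keep $X$ fixed on $M$; the relative realization of Lemma~\ref{lem:nonconvex-bicontact-realization} handles the first interface. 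On the terminal torus, Proposition~\ref{stable-nonhyperbolic-cap-realization} realizes the terminal bi-foliation exactly on the boundary of a solid-torus cap supporting $\overline Y=Y+Kz\partial_z$, and one last application of Theorem~\ref{thm:smooth-matched-gluing} closes off $T$. Since every identification is orientation compatible, the result is an oriented bi-contact plug $\widehat M$ with $\partial\widehat M=\partial M\setminus\mathcal T$ and $\widehat X|_M=X$.

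\textbf{Dynamics.} The flow crosses each attaching torus in only one direction: outward at exits, where the attached pieces lie downstream, and inward at entrances. Hence no orbit can cross an attaching torus twice, and every orbit through an interface is wandering. The nonwandering set of $\widehat X$ is then the union of the nonwandering set of $\operatorname{Inv}(M,X)$ with the nonwandering sets of the attached pieces. The latter are finitely many hyperbolic periodic orbits: the saddle orbits of the blocks, together with the sources or sinks and saddles of the caps. Topological entropy is the supremum over the components of a filtration of the nonwandering set, and finitely many hyperbolic periodic orbits contribute zero. So when $\mathcal T=\partial M$ the entropy equals that of $X|_{\operatorname{Inv}(M,X)}$, and the count of periodic orbits stays finite whenever it was finite before. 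In that case $\widehat M$ is closed and carries a supporting bi-contact structure, so Theorem~\ref{dom} shows that $\widehat X$ is projectively Anosov.

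\textbf{Partial hyperbolicity.} Now suppose $\mathcal T\subset\partial_{\mathrm{in}}M$. Then only time-reversed preparations and repelling caps are attached. The added nonwandering orbits are saddles, where $F$ is the unstable direction, and sources, where $F$ expands; no attractors appear. The nonwandering set of $\operatorname{Inv}(\widehat M,\widehat X)$ is therefore contained in finitely many compact invariant sets on each of which $F$ expands uniformly: the original partially hyperbolic set together with these orbits. Lemma~\ref{lem:partially-hyperbolic-plug} then gives partial hyperbolicity.

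\textbf{Main obstacle.} I expect the hard step to be keeping the added dynamics finite and hyperbolic across the successive blocks. We must rule out saddle connections between consecutive saddle blocks, so that their maximal invariant sets do not merge, and make sure the collar deformations create no recurrence. For the latter, transversality of the supported field to the collar slices (Theorem~\ref{prop:coordinated-divided-deformation}) suffices. For the former, the first thing to try is to choose the new stable bands away from the previous exit laminations, as noted after Lemma~\ref{lem:gradient-boundary-simplification}.
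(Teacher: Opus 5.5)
Your proposal is correct and follows essentially the same route as the paper: perform Lemma~\ref{lem:finite-cap-preparation} in the added pieces, close off with the caps of Proposition~\ref{stable-nonhyperbolic-cap-realization} via Theorem~\ref{thm:smooth-matched-gluing}, and then obtain the dynamical, entropy and partial hyperbolicity claims from one-directional crossing of the attaching tori, the fact that entropy is carried by the nonwandering set, and Lemma~\ref{lem:partially-hyperbolic-plug}. The obstacle you anticipate is milder than you fear, because your own one-way crossing argument already makes any connecting orbit between successive blocks wandering, so it cannot add nonwandering dynamics; likewise no extra realization is needed at the first interface, since the preparation lemma keeps the bi-foliation on $T$ fixed.
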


\begin{proof}
Apply Lemma~\ref{lem:finite-cap-preparation} at each selected
component. Proposition~\ref{stable-nonhyperbolic-cap-realization}
supplies the terminal solid-torus caps, and
Theorem~\ref{thm:smooth-matched-gluing} attaches them. Perform the
preparations in the added pieces. Smooth gluing preserves $X$ on
$M$ and changes its contact structures only in arbitrarily small
collars of the attaching tori.

The attachments form directed chains from repelling caps towards
$M$ at its entrances and from $M$ towards attracting caps at its
exits. Every orbit crossing an attaching torus is therefore
wandering. Each added piece has Morse-Smale dynamics with finitely
many hyperbolic periodic orbits, which gives the assertion about
nonwandering dynamics. When all boundary components are capped,
the closed supporting bi-contact structure gives projective
Anosovity.

Only finitely many periodic orbits are added to the nonwandering
set, and $\widehat X|_M=X$. Since periodic orbits have zero
entropy and entropy is determined by the nonwandering set
\cite[Theorem~4.2.26]{hflows}, we obtain
$
h_{\mathrm{top}}(\widehat X)
=h_{\mathrm{top}}\bigl(X|_{\operatorname{Inv}(M,X)}\bigr).
$

If $(M,X)$ is partially hyperbolic and only entrance components
are capped, the added periodic orbits are saddles and sources.
The dominant normal direction expands uniformly on the original
maximal invariant set and on each added periodic orbit.
Lemma~\ref{lem:partially-hyperbolic-plug} therefore gives partial
hyperbolicity of the resulting plug.
\end{proof}

For a hyperbolic plug, it suffices to prescribe an admissible
boundary bi-foliation and apply Theorem~\ref{hypb} first.
The complete piece attached to an original
boundary component need not itself be a solid torus.

\subsection{Toric transitions and stacks}
\label{subsec:toric-stacks}

The normal models of Section~\ref{s3} realize prescribed nonsingular
flows on invariant tori. We glue these models using
Proposition~\ref{two-ended-boundary-realization} and
Theorem~\ref{thm:smooth-matched-gluing}.

Let $Y_i$ be smooth nonsingular vector fields on oriented tori $T_i$,
$i=1,2$, and let $g:T_1\to T_2$ be an orientation-preserving
diffeomorphism with
$g_*\langle Y_1\rangle\pitchfork\langle Y_2\rangle$. Choose a
bi-foliation $(S_-,S_+)$ with projective cyclic order
$$
\langle Y_1\rangle,\quad S_-,\quad
(g^{-1})_*\langle Y_2\rangle,\quad S_+.
$$
This is core-compatible on the positive face of the first normal
model and, after applying $g$, on the negative face of the second.
For opposite normal types, put
$$
\delta(g;Y_1,Y_2)
=\operatorname{sgn}\omega_1\bigl(Y_1,(g^{-1})_*Y_2\bigr),
$$
where $\omega_1$ is a positive area form. Ordinary defining forms on
successive models match after simultaneous sign changes satisfying
\begin{equation}\label{eq:s6-toric-coorientation}
\sigma_2=-\delta(g;Y_1,Y_2)\sigma_1.
\end{equation}
The contact structures glue regardless of these signs.

\begin{lemma}[Toric transition]
\label{lem:toric-transition}
Let $Y_R,Y_A$ be smooth nonsingular vector fields on oriented tori
$T_R,T_A$, and let $g:T_R\to T_A$ be an orientation-preserving
diffeomorphism with
$g_*\langle Y_R\rangle\pitchfork\langle Y_A\rangle$.
Joining their normal half-models along $g$ gives a product carrying
a bi-contact structure supporting $X$, with
$$
X|_{T_R}=Y_R,\qquad X|_{T_A}=Y_A.
$$
The boundary tori are normally repelling and attracting,
respectively. The vector field is transverse to the intermediate tori,
and every interior point $x$ satisfies
$$
\alpha_X(x)\subset T_R,\qquad \omega_X(x)\subset T_A.
$$
\end{lemma}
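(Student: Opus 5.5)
The plan is to build the transition from two normal half-models of Theorem~\ref{surface-extension-plug} and glue them along $g$. Take the repelling thickening $[0,1]_\theta\times T_R$ with $\overline Y_R=Y_R+K\theta\partial_\theta$, $K>0$; this is the positive half of the model of Theorem~\ref{surface-extension-plug}, and its face $\{1\}\times T_R$ is an exit. Take the attracting thickening $[-1,0]_\theta\times T_A$ with $\overline Y_A=Y_A-K\theta\partial_\theta$. Its face $\{-1\}\times T_A$ is an entrance, since there $\dot\theta=K>0$. The maximal invariant sets are then the core tori, and every other point leaves through the glued faces.

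Next choose a bi-foliation $(S_-,S_+)$ on $T_R$ with the projective cyclic order
$$
\langle Y_R\rangle,\quad S_-,\quad (g^{-1})_*\langle Y_A\rangle,\quad S_+ .
$$
Such a choice exists because $g_*\langle Y_R\rangle\pitchfork\langle Y_A\rangle$. The two nonsingular line fields then split the projective line at each point into two open arcs, and we take $S_-$ in one arc and $S_+$ in the other, smoothly, by a convex choice of angles. As noted before the lemma, this bi-foliation is core-compatible on the positive face of the first model. After pushing forward by $g$, it is core-compatible on the negative face of the second model.

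Before invoking the realization result, I must check the orientation conventions of core-compatibility on the attracting face, where $K<0$ in the model's normalization. I would do this by rewriting the second model in the form of Proposition~\ref{two-ended-boundary-realization} with coordinate $\theta$ and $K<0$, and checking which face carries which order. This bookkeeping, together with the coorientation signs \eqref{eq:s6-toric-coorientation}, is the main obstacle. If the orders come out reversed, I would swap the roles of $S_\pm$ by inserting a product collar and reapplying Theorem~\ref{prob}. Once the conventions are settled, Proposition~\ref{two-ended-boundary-realization}, after increasing $|K|$, realizes $(S_-,S_+)$ on the exit face of the first model and $g_*(S_-,S_+)$ on the entrance face of the second. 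Both realizations keep the affine model near the cores, so $X|_{T_R}=Y_R$ and $X|_{T_A}=Y_A$.

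Then glue the exit face $\{1\}\times T_R$ to the entrance face $\{-1\}\times T_A$ via $g$. The map $g$ is orientation-compatible because it is orientation preserving and the induced boundary orientations are opposite. It identifies the ordered characteristic bi-foliations, so Theorem~\ref{thm:smooth-matched-gluing} gives a smooth bi-contact structure supporting the glued $X$, unchanged away from thin collars. The result is a product $T^2\times[0,1]$. The field $X$ has positive $\theta$-component off the cores, hence is transverse to all intermediate tori $\{\theta=c\}$, and the cores are normally repelling and attracting respectively.

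Finally, take an interior point $x$. Since $d\theta(X)>0$ on the interior, the $\theta$-coordinate along the orbit of $x$ is strictly monotone. In the affine region near $T_R$ we have $\theta(t)=\theta_0e^{Kt}$, so backward orbits converge to $T_R$; likewise forward orbits converge to $T_A$. Hence $\alpha_X(x)\subset T_R$ and $\omega_X(x)\subset T_A$.
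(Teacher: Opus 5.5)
Your proposal is correct and follows the paper's proof: a repelling positive half-model and an attracting negative half-model, realization of the identified core-compatible bi-foliations by Proposition~\ref{two-ended-boundary-realization}, gluing by Theorem~\ref{thm:smooth-matched-gluing} with the supported vector fields preserved, and orbit limits read off from the normal models. The step you flag as the main obstacle is not one. Core-compatibility is a condition on kernels, stated and proved independently of $\operatorname{sgn}K$; the sign enters only through $\alpha_-=-\operatorname{sgn}(K)\widetilde\alpha_-$. The signs in \eqref{eq:s6-toric-coorientation} concern coorientations, not the gluing of the unoriented structures. So your fallback of ``swapping $S_\pm$'' with a product collar is never needed, and it would not work anyway, since admissibility keeps both exit directions in the arc from $S_-$ to $S_+$.
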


\begin{proof}
Use a repelling positive half-model and an attracting negative
half-model. Proposition~\ref{two-ended-boundary-realization}
realizes the identified core-compatible bi-foliations above.
Theorem~\ref{thm:smooth-matched-gluing} glues them while preserving
the supported vector fields. The normal models give the orbit limits.
\end{proof}

Iterating along finitely many successive tori in
$\mathbb T^2\times[0,1]$ gives a finite stack whenever consecutive
prescribed vector fields are transverse under the product identifications.
Either alternating assignment of normal types is possible, and
the prescribed vector fields are preserved. Every intervening orbit
approaches the repelling torus backward in time and the attracting
torus forward in time. The absolute normal rates can be made
arbitrarily large relative to the tangential derivatives.
When the vector fields on the tori are not prescribed, they may be chosen
Morse-Smale in the required open sectors. Peixoto's density theorem
provides a small $C^1$ perturbation preserving all strict sector
conditions.

\begin{theorem}[Cyclic toric stacks]
\label{thm:cyclic-toric-stack}
Let $Y_i$ be smooth nonsingular vector fields on oriented tori $T_i$,
$1\leq i\leq2m$, and let $g_i:T_i\to T_{i+1}$ be
orientation-preserving diffeomorphisms, with indices modulo $2m$.
Suppose
$$
(g_i)_*\langle Y_i\rangle\pitchfork\langle Y_{i+1}\rangle.
$$
The torus bundle with monodromy $g_{2m}\circ\cdots\circ g_1$
admits a bi-contact structure supporting a projectively
Anosov vector field $X$ with $X|_{T_i}=Y_i$. These invariant tori alternate
between normal repulsion and attraction. Every other orbit is
wandering and approaches a repelling torus backward in time and
an attracting torus forward in time.
The supporting contact structures are coorientable precisely when
$\prod_{i=1}^{2m}\delta(g_i;Y_i,Y_{i+1})=1$.
\end{theorem}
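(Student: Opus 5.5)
The approach is to build the bundle as a cyclic chain of $2m$ toric transitions from Lemma~\ref{lem:toric-transition} and then close the chain up. Assign normal types so that $T_i$ is repelling for odd $i$ and attracting for even $i$. For each $i$, the strict transversality $(g_i)_*\langle Y_i\rangle\pitchfork\langle Y_{i+1}\rangle$ lets us choose an intermediate bi-foliation $(S_-^i,S_+^i)$ in the projective cyclic order $\langle Y_i\rangle, S_-^i,(g_i^{-1})_*\langle Y_{i+1}\rangle, S_+^i$. The order is reversed appropriately when $T_i$ is attracting and $T_{i+1}$ repelling. Such a choice is core-compatible on the relevant face of the normal model over $T_i$, and after applying $g_i$ it is core-compatible on the opposite face of the model over $T_{i+1}$.

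Proposition~\ref{two-ended-boundary-realization} is what lets the two horizontal faces of each normal thickening $[-1,1]_\theta\times T_i$ carry independently prescribed bi-foliations. It uses the positive face toward $T_{i+1}$ and the negative face toward $T_{i-1}$, with $|K_i|$ large and the sign of $K_i$ set by the normal type. For each $i$, the face of the $T_i$-model facing $T_{i+1}$ is then glued to the face of the $T_{i+1}$-model facing $T_i$ via $g_i$. Theorem~\ref{thm:smooth-matched-gluing} performs this gluing, preserving the supported vector fields $Y_i+K_i\theta\partial_\theta$.

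The $2m$ gluings close up cyclically, so the result is the mapping torus of $g_{2m}\circ\cdots\circ g_1$. It carries a smooth bi-contact structure supporting $X$, and $X|_{T_i}=Y_i$. Since the manifold is closed, Theorem~\ref{dom} gives projective Anosovity.

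The dynamical assertions follow segment by segment. On the interior of the piece between $T_i$ and $T_{i+1}$, $X$ is transverse to the slices, and $\dot\theta=K\theta$ in each half-model. Hence every orbit there has $\alpha$-limit in the repelling torus and $\omega$-limit in the attracting torus, exactly as in Lemma~\ref{lem:toric-transition}. Such an orbit cannot return, so it is wandering.

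The step I expect to require the most care is coorientability. Transversality of the contact structures causes no trouble, but the defining forms of the successive models must be matched. The plan is to fix ordinary, globally defined forms on each normal model; on each $T_i\times[-1,1]$ this is possible because the affine formulas are global once a frame $(Y_i,Z_i)$ is chosen. Across the $i$-th gluing, the forms match only after multiplying the next model's forms by the sign $\sigma_{i+1}=-\delta(g_i;Y_i,Y_{i+1})\sigma_i$, as in Equation~\eqref{eq:s6-toric-coorientation}.

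Going once around the cycle changes the sign by $\prod_i(-\delta_i)=(-1)^{2m}\prod_i\delta_i=\prod_i\delta_i$. A global coorientation therefore exists exactly when this product is $1$, and both contact structures share the same sign transitions by the convention of Section~\ref{s2.1}. Conversely, if the product is $-1$, the gluing forces a sign flip along a loop transverse to the tori, so no global coorientation exists. The point to check here is that the sign rule is forced rather than a mere choice. The ordered cyclic-order condition fixes the relative signs of $\eta_\pm$ at each face, and a sign change of the chosen frame $Z_i$ flips both models consistently, so the rule should not depend on auxiliary choices.
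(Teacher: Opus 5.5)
Your proposal is correct and follows the paper's own proof. Both take alternating full normal models whose two faces are prescribed independently by Proposition~\ref{two-ended-boundary-realization}, glue them cyclically with Theorem~\ref{thm:smooth-matched-gluing}, get projective Anosovity from the closed bi-contact structure and the orbit description from $\dot\theta=K\theta$, and read off coorientability from the sign rule \eqref{eq:s6-toric-coorientation}, where the even number of factors gives $\prod(-\delta_i)=\prod\delta_i$.

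One small correction: drop the remark that the cyclic order must be "reversed" on segments running from an attracting torus to a repelling one. Core-compatibility does not depend on the sign of $K$, which only affects the relative sign of the defining forms. So the same order $\langle Y_i\rangle, S_-^i, (g_i^{-1})_*\langle Y_{i+1}\rangle, S_+^i$ works for every $i$, and reversing it would actually break core-compatibility on the positive face.
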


\begin{proof}
Join alternating full normal models by the preceding construction.
The identifications give the stated monodromy, and the
supporting bi-contact structure gives projective Anosovity.
The normal models give the orbit description.
Equation~\eqref{eq:s6-toric-coorientation} gives the coorientability criterion.
\end{proof}

\begin{corollary}
\label{cor:pa-on-every-torus-bundle}
Every oriented torus bundle over $\mathbb S^1$ admits a cooriented
bi-contact structure supporting a projectively Anosov flow of the
preceding type, with any prescribed even number $2m\geq2$ of
distinguished invariant tori.
\end{corollary}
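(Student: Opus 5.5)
The plan is to reduce the corollary to Theorem~\ref{thm:cyclic-toric-stack}: pick the tori, the vector fields and the gluing maps so that the product of the prescribed diffeomorphisms is the bundle monodromy and the coorientability sign is $+1$. Let the bundle have monodromy $A\in SL(2,\mathbb Z)$, realized by a linear automorphism of $\mathbb T^2=\mathbb R^2/\mathbb Z^2$; up to diffeomorphism the bundle depends only on the conjugacy class of $A$. Take $T_i=\mathbb T^2$ for $1\le i\le 2m$, with $g_i=\mathrm{id}$ for $i<2m$ and $g_{2m}=A$. The composition is then $A$, and the torus bundle in Theorem~\ref{thm:cyclic-toric-stack} is the given one.

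For the $Y_i$ I would use constant, i.e.\ linear, nonsingular vector fields $Y_i=v_i\in\mathbb R^2\setminus\{0\}$. The transversality conditions become $v_i\not\parallel v_{i+1}$ for $i<2m$ and $Av_{2m}\not\parallel v_1$. Since $\delta(g;Y_i,Y_{i+1})=\operatorname{sgn}\det\bigl(v_i,\,g^{-1}v_{i+1}\bigr)$ for linear $g$, the coorientability criterion asks for
$$
\prod_{i=1}^{2m-1}\operatorname{sgn}\det(v_i,v_{i+1})\cdot\operatorname{sgn}\det(v_{2m},A^{-1}v_1)=1 .
$$

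The one real issue is arranging this sign. The last factor is determined by the pair $(v_{2m},v_1)$ together with $A$, and the remaining factors can be adjusted freely. If $m\ge2$, choose $v_1$ and $v_{2m}$ with $Av_{2m}\not\parallel v_1$, and choose the intermediate $v_i$ generically so that all consecutive pairs are transverse. Replacing $v_2$ by $-v_2$ keeps every transversality but multiplies the product by $(-1)^2=1$, because $v_2$ enters two factors. So instead flip $v_{2m}$ by an independent choice: flipping it changes the last factor and $\det(v_{2m-1},v_{2m})$ together, which again gives no net change. The right move is to re-choose $v_2$ on the other side of the line $\mathbb R v_1$ while keeping $v_2$ on the same side of $\mathbb R v_3$; this is possible because $\mathbb R v_1\neq\mathbb R v_3$. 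That flips exactly one factor, so both signs of the product are attainable.

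If $m=1$, the product is $\operatorname{sgn}\det(v_1,v_2)\cdot\operatorname{sgn}\det(v_2,A^{-1}v_1)$. Fix $v_1$ and $w=A^{-1}v_1$. When $w\not\parallel v_1$, choose $v_2$ in the open sector between $v_1$ and $w$ on the side making both determinants have the same sign. When $w\parallel v_1$, any $v_2\not\parallel v_1$ gives $\det(v_2,w)=\pm\det(v_2,v_1)$, so the product is $\mp1$. In that case I would first replace $v_1$ by a vector $v_1'$ not parallel to the eigendirection of $A$. Such a vector exists unless $A=\pm\mathrm{id}$, and for $A=\pm\mathrm{id}$ the sign works out directly: $\det(v_1,v_2)\det(v_2,\pm v_1)=\mp\det(v_1,v_2)^2$, so $A=-\mathrm{id}$ is fine. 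For $A=\mathrm{id}$, or more generally whenever the sector argument fails, I would replace the given linear representative by a conjugate, or precompose $g_{2m}$ with an isotopic diffeomorphism. Since any $v$ gives $\operatorname{sgn}\det(v,v)=0$, the needed freedom has to come from using nonlinear $Y_i$. I expect this small-$m$ degenerate case to be the main obstacle. The fallback is to allow $Y_i$ to be nonconstant nonsingular fields and to compute $\delta$ pointwise as the paper's definition demands.

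Once the signs are arranged, Theorem~\ref{thm:cyclic-toric-stack} supplies the cooriented bi-contact structure, the projectively Anosov flow, and the alternating invariant tori.
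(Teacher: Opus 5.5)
Your reduction to Theorem~\ref{thm:cyclic-toric-stack} with $g_i=\mathrm{id}$ for $i<2m$, $g_{2m}=A$ and constant fields $v_i$ is the paper's. Your sector arguments are correct for $m\geq2$ and for $m=1$ with $A\neq\mathrm{id}$. The paper handles all of these cases at once: let $\Theta\in(0,2\pi]$ be the oriented angle from $v_1$ to $A^{-1}v_1$, chosen $<2\pi$ when $2m=2$, and put $v_i$ at lifted angle $(i-1)\Theta/(2m)$. Every increment then lies in $(0,\pi)$, so every $\delta$ equals $+1$.

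The gap is the case $A=\mathrm{id}$, $2m=2$, i.e.\ $\mathbb T^3$ with two invariant tori. The corollary asserts this case, but you only flag it as an obstacle. None of your suggested remedies works on its own:
\begin{itemize}
\item Conjugating does nothing, since the identity has only itself as a conjugate.
\item If $g_1=g_2=\mathrm{id}$, then for any transverse nonsingular $Y_1,Y_2$, constant or not, the sign product is $\operatorname{sgn}\omega(Y_1,Y_2)\operatorname{sgn}\omega(Y_2,Y_1)=-1$.
\item If the fields stay constant and only $g_2$ is changed within the identity isotopy class, you would need $(g_2^{-1})_*v_1$ to lie everywhere in the open half-plane bounded by $\mathbb Rv_2$ that contains $-v_1$. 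Lift $g_2^{-1}$ to $\tilde g=\mathrm{id}+(\text{bounded periodic map})$ on $\mathbb R^2$. Then $t\mapsto\det(v_2,\tilde g(x+tv_1))$ would be monotone at a uniform rate opposite in sign to $\det(v_2,v_1)$. Yet it equals $t\det(v_2,v_1)$ up to a bounded error, a contradiction.
\end{itemize}
So you need nonconstant fields and a non-identity gluing map isotopic to the identity simultaneously.

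The paper supplies exactly this. On $(\mathbb R/2\pi\mathbb Z)^2$ with $\omega=du\wedge dv$, take
\[
Y_1=\cos u\,\partial_u+\sin u\,\partial_v,\qquad
Y_2=-\sin u\,\partial_u+\cos u\,\partial_v,\qquad
g_1=\mathrm{id},\quad g_2(u,v)=(u+\pi,v).
\]
Then $\omega(Y_1,Y_2)=1$, $(g_2^{-1})_*Y_1=-Y_1$ and $(g_2)_*Y_2=-Y_2$. Hence both transversality conditions hold, and $\delta(g_2;Y_2,Y_1)=\operatorname{sgn}\omega(Y_2,-Y_1)=+1$, so the product of the two signs is $+1$. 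Since $g_2$ is isotopic to the identity through translations, the bundle is $\mathbb T^3$. With this case added, your argument becomes a complete proof.
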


\begin{proof}
Represent the monodromy by $A\in\mathrm{SL}(2,\mathbb Z)$.
Unless $A=I$ and $2m=2$, take $g_i=\operatorname{id}$ for $i<2m$
and $g_{2m}=A$. Choose a nonzero constant vector $v_1$ and let
$\Theta\in(0,2\pi]$ be the oriented angle from $v_1$ to
$A^{-1}v_1$, choosing $v_1$ so that $\Theta<2\pi$ when $2m=2$.
Take constant vector fields $v_i$ at lifted angles
$(i-1)\Theta/(2m)$ from $v_1$. All successive increments lie in
$(0,\pi)$, so all signs $\delta$ are positive.

For $A=I$ and $2m=2$, on
$(\mathbb R/2\pi\mathbb Z)^2$ take
$$
Y_1=\cos u\,\partial_u+\sin u\,\partial_v,
\qquad
Y_2=-\sin u\,\partial_u+\cos u\,\partial_v,
$$
with $g_1=\operatorname{id}$ and $g_2(u,v)=(u+\pi,v)$.
Then $(g_2^{-1})_*Y_1=-Y_1$, so both signs $\delta$ are positive,
and $g_2$ is isotopic to the identity. Apply
Theorem~\ref{thm:cyclic-toric-stack} in both cases.
\end{proof}

\begin{remark}\label{rem:noda-regularity}
These relative constructions complement Noda's realization of
nonsingular torus flows and his examples on torus mapping tori
\cite[Proposition~3.3 and the remark following Example~3.1]{noda2000}.
No smoothness of the global weak invariant plane fields is asserted.
The regular classification of \cite{noda,asaoka} requires additional
regularity.
\end{remark}
\subsection{Doubling bi-contact plugs}
\label{subsec:bicontact-doubling}

The following toric insertion requires coorientable boundary
bi-foliations. A nowhere-vanishing vector field transverse to a
line field on an oriented torus coorients that line field.

\begin{theorem}[Bi-contact doubling]
\label{thm:bicontact-doubling}
Let $(M,\xi_-,\xi_+,X)$ be a compact bi-contact plug with coorientable
boundary characteristic bi-foliations.
Its oriented double $D(M)$ admits a projectively Anosov vector field
$\widehat X$ restricting to $X$ on both copies. Its supporting
bi-contact structure agrees with the original structures outside
arbitrarily small boundary collars, with the contact signs
interchanged on $-M$.

Each doubled boundary component gives an invariant torus carrying
a Morse-Smale flow, normally repelling at an entrance and normally
attracting at an exit. No orbit passes between the two copies,
and the only additional nonwandering orbits are finitely many
hyperbolic periodic orbits.

If the original contact structures are coorientable and all boundary
components have the same type, the doubled structures can be chosen
coorientable. If all are entrance components and the maximal
invariant set of $X$ is partially hyperbolic, $\widehat X$ can also
be chosen partially hyperbolic.
\end{theorem}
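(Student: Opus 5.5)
The plan is to insert, at each boundary component $T$ of $M$, a \emph{toric transition} of Lemma~\ref{lem:toric-transition} type, in its symmetric full normal model. Concretely, we build a slab $T\times[-1,1]_\theta$ carrying $\overline Y=Y+K\theta\partial_\theta$. The central invariant torus $\{0\}\times T$ carries a nonsingular Morse--Smale vector field $Y$, and the two faces $\{\pm1\}\times T$ are glued to the copy of $T$ in $M$ and in $-M$. At an entrance component we take $K>0$, so the slab is a repelling plug whose faces are exits. These exits glue to the entrances of $M$ and of $-M$; note that $-X$ on $-M$ is not used, since $\widehat X$ restricts to $X$ on both copies. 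Because $-M$ carries the opposite orientation, the same vector field $X$ on $-M$ is supported by $(\xi_+,\xi_-)$ with signs interchanged, and the entrance/exit types are unchanged. Hence the doubled component has the same type on both sides of the slab. At an exit component we use $K<0$, giving an attracting plug. No orbit can cross the central torus, so no orbit passes between copies.

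First, we choose $Y$ on $T$. Coorientability of the boundary bi-foliation $(S_-,S_+)$ on $T$ lets us pick a nonvanishing vector field transverse to both foliations and lying in the sector required by core-compatibility. At an entrance the required order is $\ell_Y,S_-,S_+$ on the $+1$ face. On the $-1$ face the orientation reversal of $-M$ swaps the roles of $S_\pm$, so the required order $\ell_Y,S_+,S_-$ is exactly what is induced. We check the analogous orders at an exit. We then perturb $Y$ to be Morse--Smale inside this open sector using Peixoto density; its periodic orbits are hyperbolic, which gives the Morse--Smale torus dynamics and the finitely many added hyperbolic periodic orbits. Second, we apply Proposition~\ref{two-ended-boundary-realization}, for $|K|$ large, to realize independently the two prescribed core-compatible bi-foliations on the faces $\pm1$: one from $M$, and its mirror from $-M$. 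Third, Theorem~\ref{thm:smooth-matched-gluing} glues the slab to both copies, preserving $X$ and changing the contact structures only in thin collars. The resulting closed bi-contact structure gives projective Anosovity by Theorem~\ref{dom}.

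For coorientability, the original forms are cooriented, and the slab forms have defining forms whose signs are fixed by the lifted orders. We must check that the sign conventions match on both faces, in the spirit of \eqref{eq:s6-toric-coorientation}. When all components are of the same type, a single global sign choice works. Mixed types can force a sign flip across $-M$, which is why the hypothesis is needed.

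For partial hyperbolicity, assume all components are entrances. Then all inserted tori are repelling with $K>0$ large. On a repelling periodic orbit of $Y$ both normal directions expand. On an attracting orbit of $Y$ one obtains a saddle whose dominating direction is the normal $\partial_\theta$ direction, which expands at rate $K$. The nonwandering set therefore lies in $\operatorname{Inv}(M,X)$, its mirror copy, and finitely many periodic orbits with expanding dominant direction, and Lemma~\ref{lem:partially-hyperbolic-plug} applies.

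I expect the main obstacle to be the orientation and sign bookkeeping on the $-M$ side. One must verify that the characteristic bi-foliation induced on $\partial(-M)$, with contact signs interchanged, is core-compatible for the same $Y$ on the opposite face. One must also verify that the gluing maps are orientation-compatible, so that the quotient is indeed $D(M)$.
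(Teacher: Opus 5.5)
Your proposal is correct and follows essentially the same route as the paper: at each boundary torus you insert the full normal model $Y_T+K_Ts\partial_s$ on $[-1,1]_s\times T$, with $Y_T$ Morse--Smale in the appropriate open sector and $\operatorname{sgn}K_T$ fixed by the boundary type, then realize both faces by Proposition~\ref{two-ended-boundary-realization}, glue by Theorem~\ref{thm:smooth-matched-gluing}, and obtain partial hyperbolicity from Lemma~\ref{lem:partially-hyperbolic-plug}. The sign bookkeeping you left open (including the exit case) is settled in the paper as follows: set $\varepsilon_T=-1$ at entrances and $\varepsilon_T=1$ at exits, choose $\eta_+(Y_T)>0$ and $-\varepsilon_T\eta_-(Y_T)>0$, orient $T$ by $-\varepsilon_T\eta_-\wedge\eta_+$, prescribe $(\eta_-,\eta_+)$ at $s=1$ and $(\varepsilon_T\eta_+,\varepsilon_T\eta_-)$ at $s=-1$, and use $(\varepsilon_T\alpha_+,\varepsilon_T\alpha_-)$ on $-M$, which is a global coorientation when $\varepsilon_T$ is constant.
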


\begin{proof}
If $\partial M=\varnothing$, take the disjoint union of the two
copies. Otherwise, keep $X$ on $-M$ and interchange the contact
structures. Choose local defining forms $\alpha_\pm$ on $M$,
ordinary near each boundary component.
On each boundary torus $T$, write $\eta_\pm=\alpha_\pm|_{TT}$ and
put $\varepsilon_T=-1$ at an entrance and $\varepsilon_T=1$ at an
exit. Choose a smooth nonsingular Morse-Smale vector field $Y_T$ satisfying
$$
\eta_+(Y_T)>0,\qquad -\varepsilon_T\eta_-(Y_T)>0.
$$
Such a vector field is obtained by a small perturbation within this open
sector.

Insert the normal model
$$
Z_T=Y_T+K_Ts\partial_s
\quad\text{on }[-1,1]_s\times T,
\qquad
\operatorname{sgn}K_T=-\varepsilon_T,
$$
with $|K_T|$ sufficiently large and $T$ oriented by
$-\varepsilon_T\eta_-\wedge\eta_+$.
The boundary bi-foliations defined by
$$
(\eta_-,\eta_+)\quad\text{at }s=1,
\qquad
(\varepsilon_T\eta_+,\varepsilon_T\eta_-)
\quad\text{at }s=-1
$$
are core-compatible. Proposition~\ref{two-ended-boundary-realization}
realizes them, and Theorem~\ref{thm:smooth-matched-gluing} attaches
the model to both copies while preserving the supported vector fields.
These product insertions give $D(M)$.

The contact planes glue independently at each boundary component.
When the original structures are coorientable and all components
have the same type, using
$(\varepsilon_T\alpha_+,\varepsilon_T\alpha_-)$ throughout $-M$
also gives global coorientations.
The resulting bi-contact structure supports a projectively Anosov
vector field. The equation $\dot s=K_Ts$ gives the normal types and prevents
passage between the copies. Every orbit crossing an attaching torus
is wandering. The added recurrent orbits are therefore the finitely
many periodic orbits of the $Y_T$. They are sources and saddles at entrances,
and sinks and saddles at exits.

Finally, suppose every boundary component is an entrance and the
original maximal invariant set is partially hyperbolic.
Choose $K_T>0$ so that normal expansion dominates the tangential
derivatives. The dominant normal direction then expands uniformly
on the inserted tori and on both original maximal invariant sets.
Every remaining orbit is wandering, so
Lemma~\ref{lem:partially-hyperbolic-plug} gives partial hyperbolicity.
\end{proof}

The vector fields on the tori may instead be prescribed arbitrarily within the
indicated sectors. Only the assertion about additional nonwandering orbits requires the
Morse-Smale choice. In particular, every hyperbolic bi-contact plug
with coorientable boundary bi-foliations
embeds in a closed projectively Anosov flow on its oriented double,
preserving its vector field.




\section{Building Liouville domains}
\label{sec:liouville-domains-repelling-plugs}

Throughout this section, we assume that the contact structures are
cooriented and use the ordinary notion of global contact forms (in contrast to unoriented contact forms).

We extend the Liouville construction of \cite{mit,hoz3,hoz6} to
partially hyperbolic bi-contact plugs, preserving their supporting
contact structures.

\begin{lemma}[An expanding defining form]
\label{lem:expanding-form-partially-hyperbolic-plug}
Let $(M,\xi_-,\xi_+,X)$ be a compact cooriented bi-contact plug
whose maximal invariant set $\Lambda$ is partially hyperbolic,
with normal splitting $E\oplus F$ and $F$ uniformly expanding.
There is a nowhere-vanishing continuous $1$-form $\alpha$ on $M$,
with continuous Lie derivative along $X$, such that
$$
\alpha(X)=0,\qquad \mathcal L_X\alpha=r\alpha,\qquad r>0,
$$
where $r$ is continuous and $\ker\alpha|_\Lambda$ is the inverse
image of $E$ under $TM\to\mathcal N_X$.
\end{lemma}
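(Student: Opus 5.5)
The plan is to build $\alpha$ as a rescaled dual form of an $X$-invariant plane field. Concretely, I would first extend the weak bundle $E$ to a continuous $P_X^t$-invariant line field $\widetilde E\subset\mathcal N_X$ over all of $M$. Then I would take $\alpha_0$ to be a continuous form annihilating the plane field $\widetilde E\oplus\langle X\rangle$, normalized by a norm on the quotient line $\mathcal N_X/\widetilde E$. Finally I would correct its growth rate by a Lyapunov rescaling $\alpha=e^{Cg}\alpha_0$.

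Since the plane field is invariant, $\mathcal L_X\alpha_0$ vanishes on it, so $\mathcal L_X\alpha_0=r_0\alpha_0$ automatically. Here $r_0$ is the infinitesimal growth rate of the quotient norm along the linear Poincar\'e flow. Coorientation of $\xi_\pm$ fixes the sign of $\alpha_0$ globally, since $\widetilde E$ will lie in the sector between the two characteristic directions.

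\textbf{Step 1: the invariant extension of $E$.} This mirrors Lemma~\ref{lem:bby-invariant-extension} and is the step I expect to be the main obstacle. The stable projective sector $\{\alpha_-\alpha_+<0\}$ of the bi-contact structure is strictly backward invariant, and by domination it is uniformly contracted under backward transport near $\Lambda$.
\begin{enumerate}
\item At a point $x$ whose forward orbit stays in $M$, that is $x\in W^s(\Lambda)$, define $\widetilde E_x$ as the limit of the pullbacks $P_X^{-t}$ of the stable sector at $\varphi^t x$. Domination makes the nested cones shrink to a line, and on $\Lambda$ this line is $E$.
\item At the remaining points, choose on $\partial_{\mathrm{out}}M$ a continuous line field inside the stable sector and transport it backward along transit orbits.
\end{enumerate}
Continuity across $W^s(\Lambda)$ is where care is needed. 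I would argue it as follows: points near $W^s(\Lambda)$ spend a long time near $\Lambda$ before exiting. The backward-transported exit line therefore lies in the nested cone of depth $t$, and this cone is uniformly thin, by the exponential estimate behind Theorem~\ref{dom}. The cones vary continuously in $x$, which gives continuity of $\widetilde E$.

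\textbf{Step 2: positive rate near $\Lambda$.} On $\Lambda$ the quotient $\mathcal N_X/E$ is identified with $F$, which expands uniformly. I would replace a fixed quotient norm $\|\cdot\|$ by the averaged norm
$$\|v\|_T=\int_0^T\|P_X^t v\|\,dt.$$
For large $T$ its infinitesimal rate is $T^{-1}\log$ of the ratio of the endpoint norms, up to bounded error, and hence is positive on $\Lambda$. By continuity it stays positive, say $r_0\geq c>0$, on a neighborhood $U$ of $\Lambda$. The rate $r_0$ is continuous because $\widetilde E$ and the flow derivative are continuous. On $M\setminus U$, $r_0$ is merely bounded, $|r_0|\leq R$.

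\textbf{Step 3: Lyapunov rescaling.} Since $\Lambda$ is the maximal invariant set, every orbit off $\Lambda$ comes from $\partial_{\mathrm{in}}M$ or from $\Lambda$ and goes to $\partial_{\mathrm{out}}M$ or to $\Lambda$. A Conley-type Lyapunov function $g$ therefore exists: it is smooth, has $Xg\geq0$ everywhere, and satisfies $Xg\geq1$ on the compact set $M\setminus U$ after shrinking $U$. For instance, $g$ can be built from transit-time coordinates in the collars, interpolated using the compactness of $M\setminus U$ and the fact that orbits spend uniformly bounded time there.

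Set $\alpha=e^{Cg}\alpha_0$. Then $\mathcal L_X\alpha=(r_0+C\,Xg)\alpha$. With $C>R$, the rate $r=r_0+C\,Xg$ is positive everywhere and continuous. The kernel of $\alpha$ is unchanged, so on $\Lambda$ it is the preimage of $E$, as required.
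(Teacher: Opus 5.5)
Your proposal is correct. Step~1 is the paper's first step: backward transport of exit lines in the stable cone, with cone contraction along forward-trapped orbits. Your $\alpha_0$ is the paper's unit form $\beta$. The routes diverge only in how the rate is made positive on all of $M$.

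\textbf{The paper's route.} It extends the flow and $\beta$ beyond $\partial_{\mathrm{out}}M$, arranging $\rho=1$ past a collar. It then takes the single average $\alpha=\int_0^T(\varphi^t)^*\beta\,dt$, whose rate is $(c_T-1)/\int_0^Tc_t\,dt$. Positivity of $c_T-1$ everywhere uses two facts. The first is the ergodic averaging argument of Lemma~\ref{lem:partially-hyperbolic-plug}, which gives uniformly positive $\rho$-averages along long orbit segments in $M$. The second is that an orbit exiting early soon reaches the region where $\rho=1$.

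\textbf{Your route.} Your $\|\cdot\|_T$ is exactly the dual of that averaged form, but you use it only near $\Lambda$. There, positivity needs only uniform expansion of $F$ (with $T$ large) and continuity. You pay for the rest of $M$ with the factor $e^{Cg}$.

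\textbf{What each buys.} Your argument stays inside $M$ and avoids invariant measures, at the cost of a Lyapunov function. The paper's argument is a single formula, at the cost of the flow extension and the uniform averaging estimate over all long segments.

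Your $g$ can be made explicit, and it should be chosen after $U$, not by shrinking $U$. The compact sets $K_A=\{x:\varphi^{[-A,A]}x\subset M\}$ decrease to $\Lambda$, so pick $A$ with $K_A\subset U$. Put $g=\phi(\tau_{\mathrm{in}})-\phi(\tau_{\mathrm{out}})$, where:
\begin{itemize}
\item $\tau_{\mathrm{in}}$ and $\tau_{\mathrm{out}}$ are the backward and forward transit times to $\partial M$;
\item $\phi$ is nondecreasing, with $\phi'=1$ on $[0,A]$ and $\phi$ constant on $[A+1,\infty)$.
\end{itemize}
Then $g$ is smooth, because the transit times blow up near the backward- and forward-trapped sets. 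Moreover $Xg=\phi'(\tau_{\mathrm{in}})+\phi'(\tau_{\mathrm{out}})$, which is nonnegative everywhere and at least $1$ off $K_A$.

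Two small corrections. First, the infinitesimal rate of $\|\cdot\|_T$ is exactly $(\|P_X^Tv\|-\|v\|)/\|v\|_T$, not $T^{-1}\log$ of the endpoint ratio; the sign conclusion is unaffected. Second, $\|\cdot\|_T$ is only defined where $\varphi^{[0,T]}x\subset M$. You must either splice it to a fixed quotient norm with a cutoff supported in that region, or extend the flow past the exit as the paper does. Splicing is harmless, since any continuous norm with continuous $X$-derivative has bounded rate on compact $M$, which is all Step~3 requires.
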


\begin{proof}
Backward transport of smooth exit lines in the dominated cone,
together with cone contraction on forward trapped orbits, gives
a continuous invariant extension $\widetilde E$ over $M$.
Choose a unit defining form $\beta$ for its inverse image in $TM$.
Invariance gives $\mathcal L_X\beta=\rho\beta$, with $\rho$ continuous.

Extend the flow and $\beta$ beyond the exit boundary, keeping
$\ker\beta$ invariant and arranging $\rho=1$ beyond a fixed collar.
Write $(\varphi^t)^*\beta=c_t\beta$, so that
$$
c_t(x)=\exp\!\left(\int_0^t\rho(\varphi^u x)\,du\right).
$$
Uniform expansion of $\mathcal N_X/E$ on $\Lambda$ and the averaging
argument in the proof of Lemma~\ref{lem:partially-hyperbolic-plug}
give uniformly positive averages of $\rho$ along sufficiently long
orbit segments contained in $M$. After exiting, an orbit spends
bounded time in the transition collar and then enters the region where $\rho=1$.
Thus $c_T>1$ on $M$ for some fixed $T>0$.

Set
$$
\alpha=\int_0^T(\varphi^t)^*\beta\,dt
=\left(\int_0^T c_t\,dt\right)\beta.
$$
Then
$$
\mathcal L_X\alpha
=(\varphi^T)^*\beta-\beta
=\frac{c_T-1}{\int_0^T c_t\,dt}\,\alpha,
$$
which proves the claim.
\end{proof}

\begin{remark}
If $\widetilde E$ is $C^1$, the construction gives a $C^1$ form
$\alpha$. In general, branching at repelling periodic orbits
is possible and $C^1$ regularity does not always hold.
\end{remark}

\begin{theorem}[Liouville structures from partially hyperbolic plugs]
\label{thm:liouville-domain-repelling-plug}
Let $(M,\xi_-,\xi_+,X)$ be a compact cooriented bi-contact plug
whose maximal invariant set is partially hyperbolic.
There are smooth contact forms $\alpha_\pm$ defining $\xi_\pm$
such that
$$
\lambda=(1-s)\alpha_-+(1+s)\alpha_+
$$
is a nowhere-vanishing Liouville form on $[-1,1]_s\times M$, with
Liouville vector field
$$
Z=A\partial_s+BX,\qquad B>0.
$$
The horizontal faces are convex. The vertical faces are convex
over $\partial_{\mathrm{out}}M$ and concave over
$\partial_{\mathrm{in}}M$. If $M$ is repelling, rounding the corners
gives a Liouville domain whose skeleton projects homeomorphically onto the
maximal invariant set of $X$.
\end{theorem}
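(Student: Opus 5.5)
Expanding $\lambda\wedge d\lambda$ reduces the Liouville condition to two strict inequalities on $M$, one at each end $s=\pm1$. The plan is to build $\alpha_\pm$ from the expanding form of Lemma~\ref{lem:expanding-form-partially-hyperbolic-plug}, smooth it, and then read off the Liouville field, the boundary behaviour and the skeleton.

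\emph{Reduction.} On $W=[-1,1]_s\times M$ put $\beta_s=(1-s)\alpha_-+(1+s)\alpha_+$. Then
$$
d\lambda=ds\wedge(\alpha_+-\alpha_-)+(1-s)\,d\alpha_-+(1+s)\,d\alpha_+ ,
$$
and the $ds$-free part of $d\lambda\wedge d\lambda$ is a $4$-form pulled back from the $3$-manifold $M$, hence zero. So
$$
d\lambda\wedge d\lambda=2\,ds\wedge(\alpha_+-\alpha_-)\wedge\bigl((1-s)\,d\alpha_-+(1+s)\,d\alpha_+\bigr).
$$
This is affine in $s$, so nondegeneracy on all of $W$ follows from strict positivity at $s=\pm1$:
$$
(\alpha_+-\alpha_-)\wedge d\alpha_+>0,
\qquad
(\alpha_+-\alpha_-)\wedge d\alpha_->0.
$$
Since every form annihilates $X$, contracting with $X$ and using Cartan's formula rewrites both as pointwise conditions on the normal bundle $\mathcal N_X$:
$$
(\alpha_+-\alpha_-)\wedge\mathcal L_X\alpha_\pm
$$
must have a fixed sign relative to $\iota_X\Omega$. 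These are open conditions involving only the forms and their $X$-derivatives, i.e.\ $C^0$ conditions on the first $X$-jets.

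\emph{Choice of $\alpha_\pm$.} Take $\alpha$ from Lemma~\ref{lem:expanding-form-partially-hyperbolic-plug}, so $\alpha(X)=0$ and $\mathcal L_X\alpha=r\alpha$ with $r>0$. Its kernel is an invariant line field which, by domination, lies strictly inside the projective sector between $\xi_-$ and $\xi_+$ on the side of $E$. Using compatible coorientations, rescale defining forms of $\xi_\pm$ so that
$$
\alpha_\pm=\sigma\pm\alpha
$$
for a single $1$-form $\sigma$ annihilating $X$. This is possible because, in the $2$-dimensional annihilator of $X$, any two independent covectors with $\ker\alpha$ between their kernels can be normalized to differ by $2\alpha$. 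Then $\alpha_+-\alpha_-=2\alpha$, and
$$
\alpha\wedge\mathcal L_X\alpha_\pm=\alpha\wedge\mathcal L_X\sigma\pm r\,\alpha\wedge\alpha=\alpha\wedge\mathcal L_X\sigma .
$$
The two conditions therefore collapse into one. I expect the contact conditions $\pm\alpha_\pm\wedge d\alpha_\pm>0$ to force $\alpha\wedge\mathcal L_X\sigma$ to have the required sign, after possibly replacing $\alpha$ by $c\alpha$ with $c>0$ constant. The idea is that $\pm\alpha_\pm\wedge\mathcal L_X\alpha_\pm>0$, combined with $\mathcal L_X\alpha=r\alpha$, isolates $\alpha\wedge\mathcal L_X\sigma$ up to a positive multiple of $r$. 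The positivity of $r$ is exactly what partial hyperbolicity contributes. I will do this sign bookkeeping in a local normal coframe $(\alpha,\sigma)$.

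\emph{Regularity (main obstacle).} The form $\alpha$ is only continuous, with continuous $\mathcal L_X\alpha$. The invariant line field may branch at repelling orbits (see the remark after Lemma~\ref{lem:expanding-form-partially-hyperbolic-plug}). The plan is to approximate $\alpha$ by a smooth $\widetilde\alpha$ that still annihilates $X$, such that both $\widetilde\alpha$ and $\mathcal L_X\widetilde\alpha$ are $C^0$-close to $\alpha$ and $\mathcal L_X\alpha$.
\begin{enumerate}
\item First mollify transversally to the flow on flow boxes.
\item Then average along the flow, $\int_0^\tau(\varphi^t)^*(\cdot)\,dt$, so that the $X$-derivative is controlled by the endpoint difference and becomes $C^0$-close to $r\alpha$.
\end{enumerate}
The strict inequalities only involve $C^0$ data of $\widetilde\alpha$ and $\mathcal L_X\widetilde\alpha$, so they persist. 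I then set $\widetilde\alpha_\pm=\sigma\pm\widetilde\alpha$. To keep $\ker\widetilde\alpha_\pm=\xi_\pm$ exactly, I define $\widetilde\alpha_\pm$ as the unique defining forms of the fixed planes $\xi_\pm$ satisfying $\widetilde\alpha_+-\widetilde\alpha_-=2\widetilde\alpha$. This is smooth because $\ker\widetilde\alpha$ stays strictly between $\xi_-$ and $\xi_+$. I expect the delicate point to be near the exit boundary, where the flow has to be extended as in Lemma~\ref{lem:expanding-form-partially-hyperbolic-plug}.

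\emph{The Liouville field and the geometric assertions.} Solve $\iota_Z d\lambda=\lambda$ with the ansatz $Z=A\partial_s+BX$. Since $\iota_X\beta_s=0$, the $ds$-component gives $-2B\,\iota_X\alpha=0$, which holds automatically. The $M$-part reads
$$
2A\,\alpha+B\,\mathcal L_X\beta_s=\beta_s .
$$
Its two scalar components in the coframe $(\alpha,\sigma)$ determine $A$ and $B$. The same sign computation as above should give $B>0$; this is where partial hyperbolicity enters once more.
\begin{itemize}
\item At $s=\pm1$ one has $\lambda=2\alpha_\pm$ with $\pm A>0$. Hence $Z$ points outward on both horizontal faces, which are therefore convex.
\item On the vertical faces, $B>0$ makes $Z$ point outward exactly over $\partial_{\mathrm{out}}M$ and inward over $\partial_{\mathrm{in}}M$, giving the convex and concave behaviour.
\item For a repelling plug, all vertical faces are convex. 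Standard corner rounding then yields a Liouville domain.
\item Since the projected trajectories of $Z$ are positive reparametrizations of $X$-trajectories, the skeleton consists of points whose forward $Z$-orbit stays in $W$. Its projection is therefore $\operatorname{Inv}(M,X)$. On each fibre over the skeleton the $s$-dynamics has a unique trapped point, which gives the homeomorphism onto $\operatorname{Inv}(M,X)$.
\end{itemize}
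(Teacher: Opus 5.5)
Your reduction of the Liouville condition is correct, but the normalization $\alpha_\pm=\sigma\pm\alpha$ is where the proof fails, and the step you left as ``I expect'' is false. Take cooriented defining forms $\gamma_\pm$ of $\xi_\pm$ with $\gamma_-\wedge\gamma_+>0$ on $\mathcal N_X$.

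\textbf{Why the normalization fails.} The line $\ker\alpha|_\Lambda=E$ lies in the stable sector $\{\gamma_-(v)\gamma_+(v)<0\}$. Evaluating $h_+\gamma_+-h_-\gamma_-=2\alpha$ on $E$ forces $h_+h_-<0$. So the normalization is not compatible with the coorientations, and $\alpha_-\wedge\alpha_+=2\sigma\wedge\alpha$ is a negative multiple of $\iota_X\Omega$. Write $\mathcal L_X\sigma=p\sigma+q\alpha$. The two contact conditions then give only $p<r-|q|$. The nondegeneracy condition and the $\sigma$-component of $\iota_Zd\lambda=\lambda$ give
\[
(d\lambda)^2\neq0\iff p\neq0,\qquad B=\frac1p .
\]
As your own computation shows, $r$ has cancelled, so partial hyperbolicity never enters.

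Moreover, $\mathcal L_X(\alpha\wedge\sigma)=(r+p)\,\alpha\wedge\sigma$, and $r$ integrates to the log of the multiplier along $F$. Hence over a periodic orbit of period $T$,
\[
\int_0^Tp\,dt=\log|m_E|,
\]
where $m_E$ is the multiplier along $E$. On every saddle orbit this is negative, so $B>0$ fails there. No rescaling $\alpha\mapsto c\alpha$ helps, since it only adds $X\log c$ to $p$, and no $C^0$-small smoothing helps either. If $\Lambda$ is connected and also contains a source orbit, $p$ must vanish somewhere and $d\lambda$ degenerates.

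Concretely, take $X=\partial_\vartheta+\nu x\partial_x+\mu y\partial_y$ with $\nu<0<\mu$, and set $\alpha_u=dy-\mu y\,d\vartheta$, $\alpha_s=dx-\nu x\,d\vartheta$, $\gamma_\pm=\alpha_u\mp\alpha_s$, $\alpha=\alpha_u$. Your recipe gives $\alpha_+=\alpha_u-\alpha_s$, $\alpha_-=-\alpha_u-\alpha_s$ and $\lambda=2s\alpha_u-2\alpha_s$, hence $B=1/\nu<0$. The vertical faces therefore come out with the wrong convexity.

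\textbf{The missing idea.} The expanding form must be the common part of $\alpha_\pm$, not their half-difference. The difference must annihilate an invariant completion $\widetilde F$ of the dominating direction, obtained by the forward analogue of the construction in Lemma~\ref{lem:expanding-form-partially-hyperbolic-plug}; your proposal never introduces $\widetilde F$. This is the paper's route. Rescale $\gamma_\pm$ positively so that both agree with $\alpha$ on $\widetilde F$. This gives $\widehat\alpha_-=\alpha-\beta$ and $\widehat\alpha_+=\alpha+f\beta$, with $\beta(X)=\beta(\widetilde F)=0$ and $f>0$. Then $\widehat\lambda=2\alpha+\bigl((1+s)f-(1-s)\bigr)\beta$, and since $\mathcal L_X\beta$ is a multiple of $\beta$,
\[
-\partial_s\widehat\lambda\wedge\mathcal L_X\widehat\lambda=2r\,\widehat\alpha_-\wedge\widehat\alpha_+,\qquad B=\frac1r>0.
\]
So $r>0$ is used exactly here, and it suffices.

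The invariance of $\ker\beta$ is essential. With a non-invariant complement, for instance the coorientation-compatible choice $\alpha_\pm=\alpha\pm\sigma$, a cross term survives and one needs $r>|q|$. The contact conditions only give $r-p>|q|$, which does not imply this. With the paper's choice, your remaining steps go through essentially as you describe: smoothing the rescalings together with their $X$-derivatives (enough, because the criterion is $C^0$-open in these data), then the face analysis and the skeleton.
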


\begin{proof}
Choose $\alpha$ from the preceding lemma. The analogous forward
construction gives an invariant completion $\widetilde F$ in the
dominant normal cone. Choose smooth defining forms $\gamma_\pm$
of $\xi_\pm$. After choosing the sign of $\alpha$, positively
rescale them to agree with $\alpha$ on $\widetilde F$:
$$
\widehat\alpha_-=h_-\gamma_-=\alpha-\beta,
\qquad
\widehat\alpha_+=h_+\gamma_+=\alpha+f\beta,
\qquad h_\pm,f>0,
$$
where $\beta$ annihilates $X$ and $\widetilde F$.
These forms and functions are continuous with continuous derivatives
along $X$. Since $\mathcal L_X\beta$ is a multiple of $\beta$,
the interpolation
$\widehat\lambda=(1-s)\widehat\alpha_-+(1+s)\widehat\alpha_+$
satisfies
$$
-\partial_s\widehat\lambda\wedge\mathcal L_X\widehat\lambda
=2r\,\widehat\alpha_-\wedge\widehat\alpha_+.
$$
Smoothly approximate $h_\pm$ and their derivatives $Xh_\pm$
uniformly, using smoothing in flow boxes. The resulting positive
smooth rescalings $\alpha_\pm$ still define $\xi_\pm$ and satisfy
\begin{equation}
-\partial_s\lambda\wedge\mathcal L_X\lambda
=\Delta\,\alpha_-\wedge\alpha_+,
\qquad \Delta>0.
\label{eq:liouville-interpolation-criterion}
\end{equation}

Take $\Omega>0$ with $\iota_X\Omega=\alpha_-\wedge\alpha_+$.
Cartan's formula gives
\begin{equation}
(d\lambda)^2=2\Delta\,ds\wedge\Omega>0,
\label{eq:liouville-square}
\end{equation}
and solving $A\partial_s\lambda+B\mathcal L_X\lambda=\lambda$
gives $Z=A\partial_s+(2/\Delta)X$.
Linear independence of $\alpha_-,\alpha_+$ makes $\lambda$ nowhere zero.
The restrictions $2\alpha_\pm$ give convex horizontal faces,
and $B>0$ gives the asserted vertical boundary signs.
When all faces are convex, the corners can be rounded while preserving
outward transversality.

The skeleton consists of the points whose Liouville trajectories
remain in the domain for all time. By domination, these are
precisely the points over $\operatorname{Inv}(M,X)$ where the
normal covector $\lambda_{(s,x)}$ annihilates the dominated line
$E_x$. The linear interpolation determines a unique such $s$
in each fiber, depending continuously on $x$. Thus projection
restricts to a homeomorphism, and sufficiently small corner
rounding leaves the skeleton unchanged.
\end{proof}

\begin{remark}[Strongly adapted contact forms]
\label{lem:scaled-strong-adaptation-liouville}
\label{rem:liouville-scaling-constant}
For a hyperbolic plug with a strongly adapted positive contact
form $\alpha_+$, set $\alpha_-=\mathcal L_X\alpha_+$ and
\begin{equation}
\lambda_C=C(1-s)\alpha_-+(1+s)\alpha_+.
\label{eq:scaled-liouville-form}
\end{equation}
Writing $\Omega=\alpha_+\wedge d\alpha_+$ and
$$
-\alpha_-\wedge d\alpha_-=a\Omega,\qquad
\alpha_+\wedge d\alpha_-=b\Omega,\qquad a>0,
$$
one obtains
$$
(d\lambda_C)^2
=2\bigl[C(1-s)(b+Ca)+(1+s)\bigr]\,ds\wedge\Omega>0
$$
whenever
\begin{equation}
C>\max_{x\in M}\max\{0,-b(x)/a(x)\}.
\label{eq:C-liouville-bound}
\end{equation}
This gives the same boundary and projection properties directly.
\footnote{The factor $C$ was omitted from the corresponding
formula in Corollary~1.1 of \cite{hoz7}.}
\end{remark}

For general $M$, the construction gives an exact symplectic
cobordism with corners whose negative and positive faces are
$$
\partial_-W=[-1,1]\times\partial_{\mathrm{in}}M,
\qquad
\partial_+W=(\{-1,1\}\times M)
\cup([-1,1]\times\partial_{\mathrm{out}}M).
$$
All boundary faces are convex precisely when
$\partial_{\mathrm{in}}M=\varnothing$. Rounding then gives a
Liouville domain. This applies to cooriented repelling hyperbolic
plugs, to partially hyperbolic plugs after capping all entrance
components under the hypotheses of Theorem~\ref{thm:finite-PA-capping},
and to the cooriented partially hyperbolic doubles of
Theorem~\ref{thm:bicontact-doubling}.


\Addresses

\end{document}

%% file: bicontact_tikz_figures.tex
\providecolor{adaptblue}{RGB}{0,105,210}
\providecolor{adaptred}{RGB}{205,45,45}

\newcommand{\ProductAdmissibilityDiagram}{%
\begin{tikzpicture}[
    x=1cm,y=1cm,
    line cap=round,
    line join=round,
    >=Latex,
    direction/.style={line width=0.8pt},
    orderarrow/.style={-{Latex[length=2.2mm]},line width=0.75pt}
]
    \coordinate (O) at (0,0);

    \draw[direction] (-2.35, 1.36) -- ( 2.35,-1.36); 
    \draw[direction] (-1.45, 2.07) -- ( 1.45,-2.07); 
    \draw[direction] (-1.45,-2.07) -- ( 1.45, 2.07); 
    \draw[direction] (-2.35,-1.36) -- ( 2.35, 1.36); 

    \fill (O) circle (1.25pt);

    \draw[orderarrow] (-1.18,1.48) to[bend right=18] (-1.60,1.20);
    \draw[orderarrow] ( 1.18,1.48) to[bend left=18]  ( 1.60,1.20);

    \node[fill=white,inner sep=1.2pt] at (-1.32,2.22)
        {$S_-^{\mathrm{in}}$};
    \node[fill=white,inner sep=1.2pt,anchor=east] at (-2.22,1.02)
        {$S_-^{\mathrm{out}}$};
    \node[fill=white,inner sep=1.2pt] at (1.32,2.22)
        {$S_+^{\mathrm{in}}$};
    \node[fill=white,inner sep=1.2pt,anchor=west] at (2.22,1.02)
        {$S_+^{\mathrm{out}}$};
\end{tikzpicture}%
}

\newcommand{\StrongAdmissibilityDiagram}{%
\begin{tikzpicture}[
    x=1cm,y=1cm,
    line cap=round,
    line join=round,
    >=Latex,
    axis/.style={-{Latex[length=2.5mm]},line width=0.8pt},
    tangent/.style={-{Latex[length=2.4mm]},adaptblue,line width=1.15pt}
]
    \coordinate (A) at (0,3.05);
    \coordinate (B) at (4.15,0);

    \draw[axis] (-0.55,0) -- (5.35,0)
        node[below right=-1pt] {$\langle\alpha_{\mathrm{out}}\rangle$};
    \draw[axis] (0,-0.62) -- (0,4.05)
        node[above left=-1pt] {$\langle\alpha_{\mathrm{in}}\rangle$};

    \draw[densely dashed,gray!75,line width=0.8pt] (A) -- (B);

    \fill[adaptblue!10]
        (A) -- ++(0,-1.08) -- ++(0.82,0.47) -- cycle;
    \fill[adaptblue!10]
        (B) -- ++(0.98,0) -- ++(-0.10,-0.48) -- cycle;

    \draw[gray!75,line width=0.65pt] (A) -- ++(0,-1.08);
    \draw[gray!75,line width=0.65pt] (A) -- ++(0.82,-0.61);
    \draw[gray!75,line width=0.65pt] (B) -- ++(0.98,0);
    \draw[gray!75,line width=0.65pt] (B) -- ++(0.88,-0.48);

    \draw[tangent] (A) -- ++(0.36,-0.88)
        node[pos=0.72,right=5pt] {$\dot\lambda_+^{\mathrm{in}}$};
    \draw[tangent] (B) -- ++(0.82,-0.20)
        node[pos=0.48,above=5pt] {$\dot\lambda_+^{\mathrm{out}}$};

    \fill (A) circle (1.2pt);
    \fill (B) circle (1.2pt);
    \node[left=6pt]  at (A) {$\lambda_+^{\mathrm{in}}$};
    \node[below=4pt] at (B) {$\lambda_+^{\mathrm{out}}$};
\end{tikzpicture}%
}

\newcommand{\HyperbolicAdmissibilityDiagram}{%
\begin{tikzpicture}[
    x=1cm,y=1cm,
    scale=0.93,
    every node/.style={transform shape},
    line cap=round,
    line join=round,
    >=Latex,
    direction/.style={line width=0.75pt},
    stablelamination/.style={adaptblue,line width=3.2pt},
    unstablelamination/.style={adaptred,line width=3.2pt},
    orderarrow/.style={-{Latex[length=2.0mm]},line width=0.72pt}
]
    \begin{scope}
        \draw[direction] (-1.78, 1.02)--( 1.78,-1.02);
        \draw[direction] (-1.10, 1.58)--( 1.10,-1.58);
        \draw[direction] (-1.10,-1.58)--( 1.10, 1.58);
        \draw[direction] (-1.78,-1.02)--( 1.78, 1.02);
        \fill (0,0) circle (1.1pt);
        \draw[orderarrow] (-0.92,1.14) to[bend right=18] (-1.28,0.92);
        \draw[orderarrow] ( 0.92,1.14) to[bend left=18]  ( 1.28,0.92);
        \node[fill=white,inner sep=1pt] at (-1.12,1.76)
            {$\Gamma_*S_-^{\mathrm{in}}$};
        \node[fill=white,inner sep=1pt,anchor=east] at (-1.70,0.70)
            {$S_-^{\mathrm{out}}$};
        \node[fill=white,inner sep=1pt] at (1.12,1.76)
            {$\Gamma_*S_+^{\mathrm{in}}$};
        \node[fill=white,inner sep=1pt,anchor=west] at (1.70,0.70)
            {$S_+^{\mathrm{out}}$};
        \node[font=\small] at (0,-2.03) {Transit orbit};
    \end{scope}

    \begin{scope}[xshift=5.30cm]
        \draw[stablelamination] (-1.85,0)--(1.85,0);
        \draw[direction] (-1.08,1.55)--(1.08,-1.55);
        \draw[direction] (-1.08,-1.55)--(1.08,1.55);
        \fill (0,0) circle (1.1pt);
        \node[fill=white,inner sep=1pt] at (-1.06,1.75)
            {$S_-^{\mathrm{in}}$};
        \node[fill=white,inner sep=1pt] at (1.06,1.75)
            {$S_+^{\mathrm{in}}$};
        \node[below right=1pt,text=adaptblue] at (1.45,0)
            {$\mathcal L^s$};
        \node[font=\small] at (0,-2.03) {Near entrance lamination};
    \end{scope}

    \begin{scope}[xshift=10.10cm]
        \draw[unstablelamination] (0,-1.72)--(0,1.72);
        \draw[direction] (-1.16,1.55)--(1.16,-1.55);
        \draw[direction] (-1.16,-1.55)--(1.16,1.55);
        \fill (0,0) circle (1.1pt);
        \node[fill=white,inner sep=1pt] at (-1.12,1.75)
            {$S_-^{\mathrm{out}}$};
        \node[fill=white,inner sep=1pt] at (1.12,1.75)
            {$S_+^{\mathrm{out}}$};
        \node[below right=1pt,text=adaptred] at (0,-1.52)
            {$\mathcal L^u$};
        \node[font=\small] at (0,-2.03) {Near exit lamination};
    \end{scope}
\end{tikzpicture}%
}